\documentclass[11pt,reqno]{amsart}

\usepackage[a4paper,margin=1.1in]{geometry}
\usepackage[T1]{fontenc}
\usepackage[utf8]{inputenc}
\usepackage{lmodern}
\usepackage{microtype}

\usepackage{fancyhdr}
\makeatletter
\let\ps@plain=\ps@fancy
\makeatother

\usepackage{amsmath,amssymb,amsthm,mathtools,mathrsfs,bm}
\numberwithin{equation}{section}
\allowdisplaybreaks

\usepackage{graphicx}
\usepackage{caption}
\usepackage{placeins}
\usepackage{tikz}

\usepackage{enumitem}
\setlist{itemsep=2pt,topsep=4pt}

\usepackage[dvipsnames]{xcolor}

\usepackage[hidelinks]{hyperref}
\usepackage[nameinlink,noabbrev]{cleveref}

\theoremstyle{plain}
\newtheorem{theorem}{Theorem}[section]
\newtheorem{proposition}[theorem]{Proposition}
\newtheorem{lemma}[theorem]{Lemma}

\theoremstyle{definition}
\newtheorem{definition}[theorem]{Definition}

\theoremstyle{remark}
\newtheorem{remark}[theorem]{Remark}

\newcommand{\R}{\mathbb{R}}
\newcommand{\C}{\mathbb{C}}
\newcommand{\Q}{\mathbb{Q}}

\newcommand{\N}{\mathbb{N}}

\newcommand{\Lc}{\mathcal{L}}
\newcommand{\Rc}{\mathcal{R}}
\newcommand{\Nc}{\mathcal{N}}
\newcommand{\Kc}{\mathcal{K}}

\newcommand{\Oh}{\mathcal{O}}

\newcommand{\ftil}{\widetilde{f}}
\newcommand{\wtil}{\widetilde{w}}
\newcommand{\vtil}{\widetilde{v}}

\newcommand{\Lctil}{\widetilde{\mathcal{L}}}
\newcommand{\Kctil}{\widetilde{\mathcal{K}}}
\newcommand{\Kchat}{\widehat{\mathcal{K}}}
\newcommand{\Gtil}{\widetilde{G}}
\newcommand{\Acn}{\mathcal{A}_0}
\newcommand{\Vtil}{\widetilde{V}}

\newcommand{\cnhat}{\hat{c}_n}
\newcommand{\sumast}{\sideset{}{^*}\sum}
\renewcommand{\Re}{\operatorname{Re}}

\newcommand{\rg}{\operatorname{rg}}
\newcommand{\B}{\mathbb{B}}
\newcommand{\la}{\lambda}

\newcommand{\mc}[1]{\mathcal{#1}}

\newcommand{\abs}[1]{\left\lvert #1\right\rvert}



\title{Existence of a stable shrinker for the corotational harmonic map heat flow in higher space dimensions}

\author{Johannes Angerer}
\address{Universit\"at Innsbruck, Institut f\"ur Mathematik,\\ Technikerstraße 13, 6020 Innsbruck, Austria}
\email{Johannes.Angerer@student.uibk.ac.at}

\author{Sarah Kistner}
\address{Universit\"at Innsbruck, Institut f\"ur Mathematik,\\ Technikerstraße 13, 6020 Innsbruck, Austria}
\email{Sarah.kistner@uibk.ac.at}
\author{Birgit Sch\"orkhuber}
\address{Universit\"at Innsbruck, Institut f\"ur Mathematik,\\ Technikerstraße 13, 6020 Innsbruck, Austria}
\email{Birgit.Schoerkhuber@uibk.ac.at}

\subjclass{}
\keywords{}

\date{}

\begin{document}

\begin{abstract}
We study singularity formation for the heat flow of harmonic maps from $\R^d$ into $\mathbb{S}^d$ in supercritical dimensions $d \in \{3,4,5,6\}$. It is well known that in each of these dimensions there exist infinitely many self-similar solutions that provide examples of loss of regularity in finite time. In this paper, we extend the results of \cite{BieDon18}, \cite{BieDonSch17} for $d=3$ to higher space dimensions $d \in \{4,5,6\}$ and prove the existence of a monotonically increasing self-similar profile $f_0$, which is asymptotically stable under small corotational perturbations. To construct the solution and resolve the spectral problem, we use rigorous computer assistance. As a byproduct of our stability analysis, we also obtain finite-codimension stability of arbitrary self-similar profiles within the corotational class.
\end{abstract}

\maketitle

\section{Introduction}
We consider the heat flow of harmonic maps from $\R^d$ into $\mathbb{S}^d$, which corresponds to the negative $L^2$-gradient flow of the energy functional
\begin{align} 
S(U) = \frac{1}{2} \int_{\R^d} \tilde{h}^{ij}(x) h_{ab}(U(x)) \partial_i U^{a}(x) \partial_j U^{b}(x) \sqrt{|\tilde{h}(x)|} dx,
\end{align}
where $\tilde{h}$ and $h$ are the corresponding metrics on $\R^d$ and $\mathbb{S}^d$, respectively. The flow reads\footnote{Using Einstein's summation convention.}
\begin{align} \label{L2Gradient}
\partial_t U^{a} - \Delta_{\tilde{h}} U^{a} - \tilde{h}^{ij} \Gamma^{a}_{bc}(U) \partial_i U^{b} \partial_j U^{c} = 0, \quad t>0.
\end{align}

It is well known that singularity formation in finite time occurs in $d \geq 2$; see \cite{CorGhi89}, \cite{ChaDinYe92}. Classical results by Struwe \cite{Str1988} imply that scale-invariant, i.e., self-similar, solutions may play an important role in the characterization of blowup in $d \geq 3$. We restrict ourselves to this case for the rest of the paper and refer to \cite{LinWan08} for a more general discussion and further references. \\

To reduce the system of equations \eqref{L2Gradient} by means of corotational symmetry, we choose polar coordinates $(r, \omega) \in [0,\infty) \times \mathbb{S}^{d-1}$ on $\R^d$ and hyperspherical coordinates $(u,\Omega) \in (0, \pi) \times \mathbb{S}^{d-1}$ on $\mathbb{S}^d$, and make the ansatz $U(t, r, \omega) = (u(t,r), \omega)$. Under this symmetry, Eq.~\eqref{L2Gradient} reduces to a single semilinear radial heat equation for $u$, given by
\begin{align} \label{EQu}
\partial_t u(t,r) - \partial^2_r u(t,r) - \frac{d-1}{r} \partial_r u(t,r) + \frac{(d-1)}{2r^2} \sin(2 u(t,r)) = 0.
\end{align}

Within the corotational setting, the scaling law is $u \mapsto u_{\lambda}$, $\lambda >0$, where 
\begin{align*}
u_{\lambda}(t,r) = u(\lambda^2 t,\lambda r),
\end{align*} 
and self-similar solutions are either of the form $f \left( \tfrac{r}{\sqrt{-t}} \right)$ or $f \left( \tfrac{r}{\sqrt{t}} \right)$. The latter are usually referred to as \textit{expanders} and have been studied intensively by Germain and Rupflin \cite{GerRup2011}, as well as by Germain, Ghoul, and Miura \cite{GerGhoMiu2017}.
In contrast, \textit{shrinkers} provide explicit examples of singularity formation in finite time. More precisely, any solution of the profile equation 
\begin{align} \label{Eqf}
f^{\prime \prime}(\rho) + \left( \frac{d-1}{\rho} - \frac{\rho}{2} \right) f^{\prime}(\rho) - \frac{(d-1)}{2 \rho^2} \sin(2 f(\rho))=0, \quad \rho \geq 0, \qquad f(0)=0, \quad f(\infty) = \mathrm{const.}
\end{align}
with $f'(0)  > 0$ gives rise to a solution of \eqref{EQu} of the form 
\begin{align} \label{Shrinker}
u_T(t,r) = f \left( \frac{r}{\sqrt{T-t}} \right), \quad (t,r) \in [0,T) \times [0, \infty),
\end{align}
satisfying 
\[
|\partial_r u(t,0) | \sim (T-t)^{-\frac{1}{2}}, \quad t \to T^{-}.
\]

Interestingly, Bizo\'n and Wasserman \cite{BizonWasserman2015} proved that no such solutions exist in high space dimensions $d \geq 7$. Instead, blowup is of type II, with explicit constructions by Biernat and Seki \cite{BieSek19}, \cite{BieSek20}, as well as by Ghoul, Ibrahim, and Nguyen \cite{GhoIbrNgu19}. We also refer to \cite{Gas02} for results in the more general $k$-equivariant case. \\

The picture changes completely in dimensions $d \in \{3, 4, 5, 6\}$. In this case, Fan proved the existence of an infinite countable family of profiles solving Eq.~\eqref{Eqf}. Moreover, numerical experiments by Biernat and Bizo\'n \cite{BieBiz2011} within the corotational setting suggest that the \textit{ground state} $f_0$ of this family---which can be ordered according to the number of intersections with $\frac{\pi}{2}$---acts as an attractor for generic blowup solutions. In particular, it is expected to be asymptotically stable. A major obstacle to a rigorous stability analysis is the fact that neither $f_0$ nor any other shrinker profile is known in closed form.

In the lowest supercritical dimension $d=3$, this problem has been addressed using rigorous computer-assisted methods. More precisely, Biernat and Donninger \cite{BieDon18} proved, using Chebyshev spectral methods combined with interval arithmetic, that there is a monotonically increasing shrinker profile, called $f_0$, which is spectrally stable with respect to an underlying self-adjoint problem.
Based on this result, Biernat, Donninger, and the third author proved the nonlinear asymptotic stability of $f_0$ under small corotational perturbations \cite{BieDonSch17}. The main goal of this paper is to generalize this result to higher space dimensions $d \in \{4,5,6\}$.

\subsection{The main results}\label{Sec:Main_result}

We study the harmonic map heat flow \eqref{L2Gradient} in \textit{normal coordinates} $(u, \tilde{\Omega})$ on $\mathbb{S}^d$, where $\tilde{\Omega} \in \mathbb{S}^{d-1} \hookrightarrow \R^d$. In this way, we can identify any map into $\mathbb{S}^d$ with a map $U: \R^d \rightarrow \R^d$ of the form $U(x) = (u(x), \tilde{\Omega}(x))$ for $x \in \R^d$; see, e.g., \cite{ShaTah94}. Corotational solutions $U: [0, T) \times \R^d \rightarrow \R^d$ transform to
\begin{align*}
U(t,x) = \frac{x}{|x|} u(t,|x|),
\end{align*}
where $u$ solves equation \eqref{EQu}. A self-similar shrinking solution $u_T$ given by \eqref{Shrinker} leads to a corresponding shrinker $U_T: [0,T) \times \R^d \rightarrow \R^d$ in normal coordinates,
\begin{align*}
U_T(t,x) = \frac{x}{|x|} u_T(t,|x|) = \frac{y}{|y|} f \left( |y| \right),
\end{align*}
where $y := \frac{x}{\sqrt{T-t}}$ is the self-similar spatial variable. Since in every dimension $d \in \{3,4,5,6 \}$ there exists an infinite countable family of shrinking solutions, we divide the stability analysis into two parts. First, we prove a codimension-$N$ stability result for any shrinker $U_T$ with corresponding profile $f$. Then, we show that there exists a profile $f_0$ that solves equation \eqref{Eqf} and is stable in the sense that the only non-negative eigenvalue of the underlying linear operator is $1$, which is due to the freedom of choosing the blowup time $T$.\\

To formulate the first result, let $f \in C^{\infty}([0,\infty))$ be a solution of \eqref{Eqf} and define $F(y) := \frac{y}{|y|} f(|y|)$, as the corresponding self-similar solution to equation \eqref{L2Gradient} in normal coordinates. To study the stability of $F$, we set $T=1$ and consider corotational initial data of the form
\begin{align*}
U(0,x) = F(x) + H_0(x),
\end{align*}
where $H_0(x):= \frac{x}{|x|} h_0(|x|)$ is a perturbation in normal coordinates with radial profile $h_0 : [0, \infty) \rightarrow \R$.

\begin{theorem}[Finite-codimension stability] \label{Th:CodimensionStability}
Let $d \in \{3,4,5,6 \}$, and let $s,k >0$ satisfy
\begin{align*}
\frac{d}{2} < s \leq \frac{d}{2} + \frac{1}{2d}, \quad k =d+3.
\end{align*}
Then there exist $\varepsilon > 0$, $N_f \in \N$, and real valued functions $F^{i}_j \in \dot{H}^s \cap \dot{H}^k(\R^d)$ with $i \in \{1,\dots,N_j \}$ and $j \in \{1,\dots,N_f \}$ such that the following holds. Let $H_0: \R^d \rightarrow \R^d$ be a Schwartz function satisfying
\begin{align*}
\Vert H_0 \Vert_{\dot{H}^s \cap \dot{H}^k(\R^d)} \leq \varepsilon.
\end{align*}
Then there exist constants $a_{i,j}(H_0) \in \R$, $i \in \{1,\dots,N_j \}$ and $j \in \{1,\dots,N_f\}$, depending Lipschitz continuously on $H_0$, such that the initial value problem \eqref{L2Gradient} with initial datum
\begin{align}
U(0, \cdot)&  = F + H_0 + \sum_{j=1}^{N_f} \sum_{i=1}^{N_j} a_{i,j}(H_0) F^{i}_j
\end{align}
has a classical solution $U \in C^{\infty}([0, 1) \times \R^d)$ that blows up at the origin as $t \rightarrow 1^{-}$. Furthermore, $U$ can be decomposed as
\begin{align*}
U(t,x) =F \left( \frac{x}{\sqrt{1-t}} \right) + H \left( t, \frac{x}{\sqrt{1-t}} \right),
\end{align*}
where, for any $p \in [s,k]$, we have
\begin{align} \label{DecayH}
\Vert H(t, \cdot) \Vert_{\dot{H}^p(\R^d)} \rightarrow 0
\end{align}
as $t \rightarrow 1^-$.
\end{theorem}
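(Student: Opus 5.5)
The plan is to work directly with the full vector-valued system \eqref{L2Gradient} in normal coordinates, so that $H_0$ is an arbitrary small Schwartz map $\R^d\to\R^d$, and to run a semigroup / Lyapunov--Perron argument in similarity variables. The Sobolev space $\dot H^s\cap\dot H^k(\R^d)$ with $s>d/2$ is used throughout.

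\emph{Step 1: reformulation.} In normal coordinates the flow takes the form $\partial_t U-\Delta U=\mathcal N(U,\nabla U)$, where $\mathcal N$ is smooth in $U$ and at most quadratic in $\nabla U$. The smoothness of $\mathcal N$ comes from the metric of $\mathbb S^d$ in normal coordinates, written through $|U|$ and $U/|U|$ in a form that is smooth at $U=0$, as in \cite{ShaTah94}. We pass to similarity variables $\tau=-\log(1-t)$, $y=x/\sqrt{1-t}$ and set $U=F+\Phi$. The perturbation then solves
\[
\partial_\tau\Phi=\Lc_0\Phi+\Lc'\Phi+\mathcal M(\Phi),\qquad \Lc_0=\Delta-\tfrac12 y\cdot\nabla .
\]
Here $\Lc'$ is the linearization of $\mathcal N$ at $F$. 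It is a first-order operator with smooth matrix coefficients (smooth because $F$ is smooth at $0$, $f$ being odd-extendable). Its coefficients decay like $|y|^{-2}$, respectively $|y|^{-1}$, as $|y|\to\infty$, since $f(\infty)$ is a constant and $f'$ decays rapidly. The remainder $\mathcal M$ is at least quadratic in $(\Phi,\nabla\Phi)$.

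\emph{Step 2: linear theory.} A scaling computation shows that $\Lc_0$ generates a semigroup on $\mathcal H:=\dot H^s\cap\dot H^k$ with
\[
\|e^{\tau\Lc_0}\|\lesssim e^{-(s/2-d/4)\tau},
\]
which is exponential decay because $s>d/2$. Next I will show that $\Lc'$ is $\Lc_0$-compact on $\mathcal H$. The coefficients are smooth, so the only issue is spatial infinity. There the first-order part $\nabla f$-terms carry extra decay, and the $|y|^{-2}$ zeroth-order part is handled by a cutoff plus a Hardy inequality at the level $\dot H^s$. I expect the restriction $s\le d/2+\tfrac1{2d}$ and the choice $k=d+3$ to enter exactly in these Hardy/interpolation estimates. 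By the Bounded Perturbation Theorem and an analytic Fredholm argument, $\Lc=\Lc_0+\Lc'$ generates a semigroup whose spectrum in $\{\Re\lambda>-\omega_0\}$, for some $0<\omega_0<s/2-d/4$, consists of finitely many eigenvalues $\lambda_1,\dots,\lambda_{N_f}$ with algebraic multiplicities $N_j$. One of them is $\lambda=1$, with eigenfunction $y\cdot\nabla F$ coming from the time-translation symmetry. Let $P$ be the corresponding finite-rank Riesz projection. Then $\|e^{\tau\Lc}(I-P)\|\lesssim e^{-\omega_0\tau}$, obtained via Gearhart--Prüss from uniform resolvent bounds on the remaining half-plane. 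I expect this resolvent bound, for large $|\Im\lambda|$ with a non-self-adjoint first-order perturbation, to be the main obstacle. I would first try to treat $\Lc'$ as a small relative perturbation for $|\lambda|$ large, using a commutator estimate in $\dot H^k$.

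\emph{Step 3: nonlinear argument.} Since $s>d/2$ and $k>s+1$, the space $\mathcal H$ controls $\Phi$ and $\nabla\Phi$ in $L^\infty$, and $\dot H^{s-1}\cap\dot H^{k-1}$ is an algebra modulo low-frequency terms. This yields the local Lipschitz bound $\|\mathcal M(\Phi)-\mathcal M(\Psi)\|\lesssim(\|\Phi\|+\|\Psi\|)\|\Phi-\Psi\|$. The gradient loss in $\mathcal M$ is absorbed by the parabolic smoothing $\|\nabla e^{\tau\Lc_0}\|\lesssim \tau^{-1/2}e^{-c\tau}$. Consider the Duhamel formula with the Lyapunov--Perron correction
\[
\mathcal C(\Phi,u)=P\Bigl(u+\int_0^\infty e^{-\sigma\Lc}\mathcal M(\Phi(\sigma))\,d\sigma\Bigr).
\]
Subtracting $e^{\tau\Lc}\mathcal C$ gives a contraction on the space $\{\|\Phi(\tau)\|\le\delta e^{-\omega\tau}\}$ for small $\Phi(0)=u$. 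Here $u$ is $H_0$ rescaled, and the size of $u$ is controlled by $\varepsilon$.

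\emph{Step 4: modified data and conclusion.} The correction $\mathcal C$ lies in $\operatorname{ran}P$, which is spanned by the generalized eigenfunctions. Undoing the rescaling at $t=0$ turns these into the functions $F^i_j$. Adding $\sum a_{i,j}F^i_j$ to the data, with $a_{i,j}$ the coefficients of $\mathcal C$, cancels $\mathcal C$. The map $H_0\mapsto a_{i,j}$ is Lipschitz because of the contraction. The $F^i_j$ can be chosen real, since $\Lc$ is real and complex-conjugate pairs can be replaced by their real and imaginary parts. Smoothness of $U$ on $[0,1)\times\R^d$ follows from parabolic regularity for Schwartz data. The decay \eqref{DecayH} holds for $p=s$ and $p=k$, and for all $p\in[s,k]$ by interpolation. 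Finally, blowup at the origin follows because $\nabla_x U(t,0)=(1-t)^{-1/2}\bigl(f'(0)I+o(1)\bigr)$ with $f'(0)>0$.
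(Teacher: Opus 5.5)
There is a genuine gap at the core of Step 2. Because you keep the full non-corotational system, $\Lc'$ is a first-order operator and hence \emph{unbounded} on $\mathcal H=\dot H^s\cap\dot H^k$. So the Bounded Perturbation Theorem does not give generation; you would need a Miyadera--Voigt-type argument based on $\|\nabla e^{\tau\Lc_0}\|\lesssim\tau^{-1/2}$.

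More seriously, the finiteness of the spectrum in $\{\Re\lambda>-\omega_0\}$ and the decay $\|e^{\tau\Lc}(I-P)\|\lesssim e^{-\omega_0\tau}$ both rest on uniform resolvent bounds for large $|\operatorname{Im}\lambda|$. You flag these as the main obstacle but do not supply them, and they are not available for free. Because of the dilation part $-\frac12 y\cdot\nabla$, the semigroup $e^{\tau\Lc_0}$ is not analytic on $\dot H^s\cap\dot H^k$: low-frequency approximate eigenfunctions $\hat u\approx|\xi|^{2\lambda-d}$ put the whole line $\Re\lambda=\frac12(\frac d2-s)$ into its spectrum. Hence $\|R_{\Lc_0}(\lambda)\|$ does not decay along vertical lines. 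Smallness of $\Lc'R_{\Lc_0}(\lambda)$ for large $|\operatorname{Im}\lambda|$ would need a real argument exploiting the decay of the coefficients of $\Lc'$, and the proposal does not contain one.

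The first-order structure also leaves a gap in Step 3. The Duhamel term is $e^{(\tau-\sigma)\Lc}\mathcal M(\Phi(\sigma))$, and $\mathcal M(\Phi)$ lies only in $\dot H^{s-1}\cap\dot H^{k-1}$. You therefore need $\tau^{-1/2}$ smoothing and stable-subspace decay for the \emph{perturbed} semigroup on this rougher space, not just for $e^{\tau\Lc_0}$. Finally, $\mathcal N$ is not globally smooth in $U$. In normal coordinates the metric $dr^2+\sin^2 r\,d\Omega^2$, $r=|U|$, degenerates at $r=\pi$, so you would also need $\sup f<\pi$ for the given shrinker.

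The paper sidesteps all of this by using the corotational structure, which is the setting of the theorem: $H_0(x)=\frac{x}{|x|}h_0(|x|)$. Writing $u=rv$ and viewing $v$ as radial on $\R^{d+2}$, the linearization is $L_f=\Delta-\Lambda+V_f$ with $|V_f^{(k)}(y)|\lesssim\langle y\rangle^{-2-k}$. So $L_f'$ is \emph{bounded} on $X_s^k(\R^{d+2})$ by the algebra property. The only nonlinear term is $\frac{d-1}{2r^2}\sin(2u)$, so no derivative loss occurs and no smoothing of the perturbed flow is ever needed. The exponent range in the theorem is exactly that of $X_s^k(\R^n)$ with $n=d+2$, transferred back via $\|x\,\mathbf h(|x|)\|_{\dot H^s\cap\dot H^k(\R^d)}\simeq\|\mathbf h\|_{X_s^k(\R^{d+2})}$.

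The idea that replaces your resolvent analysis is to show that
$S_f^X(\tau)-S_0^X(\tau)=\int_0^\tau S_f^X(\tau-\tau')L_f'S_0^X(\tau')\,d\tau'$
is compact for every $\tau$, by a Kolmogorov--Riesz argument using the decay of $V_f$. Then $S_f^X$ and $S_0^X$ share the essential growth bound, which is at most $\frac12(\frac d2-s)<0$. Standard semigroup theory then gives finitely many eigenvalues in $\{\Re\lambda\ge 0\}$, of finite algebraic multiplicity. They are real, because eigenfunctions in $X_s^k$ are eigenfunctions of the self-adjoint $L_f$ on $L^2_\sigma$. The same theory gives exponential decay on $\ker P_f$, with no Gearhart--Pr\"uss argument and no resolvent bounds. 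Your Steps 3--4 (Lyapunov--Perron correction, choice of the $a_{i,j}$, undoing the variables) then match the paper's.

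If you want to keep the non-corotational framework, which would prove more than the theorem claims, you could import this compactness argument in place of Gearhart--Pr\"uss, using that $\Lc' e^{\sigma\Lc_0}$ is compact with norm $O(\sigma^{-1/2})$. The derivative nonlinearity and the coordinate singularity would still have to be handled separately.
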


Our second result concerns the existence and stability of a shrinker $U_T$ in any space dimension $d \in \{3,4,5,6\}$ without the necessity of correcting the initial condition along a codimension-$N$ Lipschitz manifold. For this, we exploit a rigorous computer-assisted method that allows us to solve the spectral problem.

\begin{theorem} \label{TheoremStability}
Let $d \in \{3,4,5,6 \}$, and let $s,k >0$ satisfy
\begin{align*}
\frac{d}{2} < s \leq \frac{d}{2} + \frac{1}{2d}, \quad k =d+3.
\end{align*}
There exists a self-similar profile $f_0 \in C^{\infty}([0,\infty))$ that solves Eq.~\eqref{Eqf} and defines, for any $T >0$, a self-similar blowup solution
\[
U_T(t,x) :=F_0 \left( \frac{x}{\sqrt{T-t}} \right), \quad F_0(y):=\frac{y}{|y|} f_0 \left( |y| \right),
\]
to Eq.~\eqref{L2Gradient}, such that the following holds. There exists an $\varepsilon >0$ such that for any corotational initial datum of the form
\begin{align*}
U_0 = F_0 + H_0,
\end{align*}
where $H_0: \R^d \rightarrow \R^d$ is a Schwartz function satisfying
\begin{align*}
\Vert H_0 \Vert_{\dot{H}^s \cap \dot{H}^k(\R^d)} \leq \varepsilon,
\end{align*}
there exist $T >0$ and a classical solution $U \in C^{\infty}([0,T) \times \R^d)$ to \eqref{L2Gradient} whose gradient blows up at the origin as $t \rightarrow T^{-}$. Furthermore, $U$ can be decomposed as
\begin{align} \label{Decompo}
U(t,x) = F_0 \left( \frac{x}{\sqrt{T-t}} \right) + H \left( t, \frac{x}{\sqrt{T-t}} \right),
\end{align}
where, for any $p \in [s,k]$, we have
\begin{align*}
\Vert H(t, \cdot) \Vert_{\dot{H}^{p}(\R^d)} \rightarrow 0
\end{align*}
as $t \rightarrow T^{-}$.
\end{theorem}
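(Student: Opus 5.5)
The proof has two parts. First we construct $f_0$ with computer assistance. Then we rerun the semigroup argument behind Theorem~\ref{Th:CodimensionStability}, now knowing the exact unstable spectrum.

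\emph{Construction of $f_0$.} For each $d\in\{4,5,6\}$ we compute a high-precision Chebyshev approximation $f_{\mathrm{app}}$ of a monotone solution of \eqref{Eqf}, following \cite{BieDon18}. The half-line is split into a compact region near $\rho=0$ and a neighbourhood of $\rho=\infty$, mapped to a compact interval. At $\infty$ the expansion $f(\rho)\sim \tfrac{\pi}{2}+c\rho^{-\alpha}+\dots$ (or the corresponding asymptotics) is built in. The equation for the correction $f_0=f_{\mathrm{app}}+\delta$ is written as a fixed-point problem $\delta=\mathcal{L}^{-1}(\text{residual}+\text{nonlinear}(\delta))$ in a weighted $L^\infty$-type space. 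Interval arithmetic bounds the residual and the norm of the inverse of the linearised operator, and the Banach fixed point theorem gives a small ball of radius $\lesssim 10^{-k}$ containing a true solution. Smoothness at $0$ and at $\infty$ follows from ODE regularity, and monotonicity follows from a bound on $f_{\mathrm{app}}'$ that exceeds the error.

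\emph{Spectral problem.} In similarity variables $\tau=-\log(T-t)$, $y=x/\sqrt{T-t}$, write $u=f_0+\phi$. The linearised operator is
\[
L\phi=\phi''+\Big(\tfrac{d-1}{\rho}-\tfrac{\rho}{2}\Big)\phi'-\tfrac{d-1}{\rho^2}\cos(2f_0)\phi ,
\]
which is self-adjoint in $L^2(\rho^{d-1}e^{-\rho^2/4}d\rho)$ and has discrete spectrum. Theorem~\ref{Th:CodimensionStability} (more precisely, the semigroup framework in $\dot H^s\cap\dot H^k$ behind it) gives a decomposition in which only finitely many eigenvalues lie in $\{\Re\lambda\ge -\omega\}$ for some $\omega>0$. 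We must show that the only one is $\lambda=1$, with eigenfunction $\rho f_0'(\rho)$ coming from time translation. Since $\rho f_0'>0$ by monotonicity, Sturm–Liouville oscillation theory makes $1$ the \emph{ground state} eigenvalue, so all other eigenvalues are $<1$. It remains to exclude eigenvalues in $[0,1)$. We would do this as in \cite{BieDon18}: apply a supersymmetric factorisation (or a shift) that removes the eigenvalue $1$, and then show negativity of the resulting operator. Concretely, a quadratic-form estimate for $\cos(2f_0)$ using the rigorous enclosure of $f_0$, combined with a Hardy-type inequality, shows the remaining spectrum lies below $-\omega_0<0$. Alternatively, one can count eigenvalues via a rigorously enclosed Chebyshev/Galerkin truncation together with a Temple–Lehmann-type bound. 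We must also confirm that the eigenvalues of $L$ coincide with those of the generator on the homogeneous Sobolev space; this should follow from the analysis of Theorem~\ref{Th:CodimensionStability}, because eigenfunctions must be smooth and decaying.

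\emph{Nonlinear stability.} The unstable subspace is now one-dimensional and spanned by the symmetry mode, so the Lyapunov–Perron argument from Theorem~\ref{Th:CodimensionStability} gives a codimension-one correction $a(H_0)F^1_1$ with $F^1_1$ proportional to $\tfrac{y}{|y|}|y|f_0'(|y|)$. This correction is exactly what a change of blowup time produces. Rescaling with $T$ near $1$ (so that $F_0$ evaluated with $T$ minus $F_0$ with $1$ is, to first order, $(T-1)F^1_1$) and a fixed-point or intermediate-value argument in $T$ kills the coefficient, as in \cite{BieDonSch17}. This yields the solution, its decomposition \eqref{Decompo} and the decay of $H$ in $\dot H^p$. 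Blowup of the gradient follows from $f_0'(0)>0$.

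The main obstacle is the rigorous spectral exclusion of eigenvalues in $[0,1)$ for $d=4,5,6$. Near $\rho=0$ the potential $\tfrac{d-1}{\rho^2}$ is singular, and the Gaussian weight makes the tail delicate, so sharp enough enclosures of $f_0$ and of the relevant quadratic forms are needed; I would first attempt the factorisation plus an explicit Hardy bound, and fall back to a computer-assisted Galerkin count.
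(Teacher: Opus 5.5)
Your architecture matches the paper's: a computer-assisted construction of $f_0$, a spectral analysis of the self-adjoint linearization, and then the Lyapunov--Perron argument in which the correction along the symmetry mode is removed by choosing $T$. Your nonlinear part is fine. So is your observation that the nodeless mode $yf_0'$ makes the symmetry eigenvalue the top of the spectrum; the paper obtains this from the Picone identity for $\lambda<-1$, in the convention $\mathcal{A}_0=-\mathcal{U}^{-1}L_{f_0}\mathcal{U}$ where that eigenvalue is $-1$.

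The genuine gap is the step you flag yourself: excluding eigenvalues in $[0,1)$. You list strategies but give no argument. The Hardy route is unsubstantiated. After a supersymmetric factorization the partner potential contains $(\log(yf_0'))''$, and nothing indicates that a crude quadratic-form bound can resolve this quantitative statement. A Temple--Lehmann enclosure of the second eigenvalue would need an a priori lower bound on the third, which you do not address.

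The missing idea, which is the approach the paper takes following \cite{BieDon18}, is to reduce everything to one shooting problem. Let $W_\lambda$ solve $\mathcal{A}_0W=\lambda W$, $W(0)=0$, $W'(0)=1$. Oscillation theory \cite{GesSimTes1996} counts the eigenvalues of $\mathcal{A}_0$ in $[\lambda,0)$ through the zeros of $W_0$ and $W_\lambda$. Since $W_{-1}=c\,yf_0'$ and all $W_\lambda$ with $\lambda<-1$ are zero-free, it suffices to show that $W_0$ has exactly one zero on $(0,\infty)$ and $W_0\notin L^2_\rho$. This is a finite check:
\begin{itemize}
\item For $y\ge y^\ast=\frac{d+3}{2}$ the coefficients of the ODE for $q=W_0/y$ have a sign, so $q(y^\ast)<0$ and $q'(y^\ast)<0$ persist on $[y^\ast,\infty)$. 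Hence there are no further zeros. Moreover $W_0$ cannot be a multiple of the normalizable branch $1+O(y^{-2})$, whose $q$ and $q'$ have opposite signs, so it carries the branch $y^{-d}e^{y^2/4}$ and is not in $L^2_\rho$.
\item On $[0,y^\ast]$ a second contraction argument gives $W_0=c_0(\widetilde w_0+\widehat\delta)$ with $\|\widehat\delta\|_{\mathcal Y}\le\widehat\varepsilon$. It reuses $\widetilde{\mathcal{L}}^{-1}$ and the potential error inherited from $\|f_0-\widetilde f_0\|_{\mathcal Y}\le\varepsilon$.
\item From this, $q>0$ on $[0,1]$, $W_0'<0$ on $[1,y^\ast]$, and the signs at $y^\ast$ are verified rigorously.
\end{itemize}

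There are three smaller points. First, in the construction you propose to bound ``the norm of the inverse of the linearised operator''. That operator has no explicit fundamental system, so its Green's function cannot be estimated directly. The paper instead inverts $\widetilde{\mathcal{L}}=\mathcal{L}+P\partial_y+Q$, whose fundamental system is the explicit pair $\widetilde w_0,\widetilde w_1$. It bounds $\widetilde{\mathcal{L}}^{-1}$ via weights with $\widetilde{\mathcal{L}}_0(1/p_1)=1/p_2$ and $(1/p_1)'=1/p_3$, and absorbs $(\widetilde{\mathcal{L}}-\mathcal{L})\delta$ as a small perturbation. The same explicit $\widetilde w_0$ is what makes the zero count above feasible.

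Second, the profile does not tend to $\pi/2$. Lemma~\ref{Lemmaf} only gives convergence to some constant at rate $\rho^{-2}$, and for the constructed $f_0$ the limit exceeds $\pi/2$. The ansatz $2\arctan$ of an odd Chebyshev series in $y/\sqrt{y^2+4}$ builds this in automatically.

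Third, the transfer to $X_s^k$ rests on two facts, not on smoothness of eigenfunctions. Because $X_s^k\hookrightarrow H$ and the generator is a restriction, every unstable eigenvalue on $X_s^k$ is an eigenvalue of the self-adjoint $L_{f_0}$. Self-adjointness on $H$ then rules out a Jordan block at $\lambda=1$. Regularity and decay are only used to see that $f_0'\in X_s^k$.
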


\begin{remark}
We note that the profile satisfies $0 \leq f_0 < \pi$. However, we do not have pointwise control over the perturbation $H$ and thus cannot guarantee that $U$ takes values only in $B^{d}_{\pi}$. In particular, it could wrap around the sphere multiple times. Nevertheless, we do control the gradient of $U$, since Sobolev embedding implies that $\| \nabla H(t,\cdot) \|_{L^{\infty}(\R^d)} \to 0$ as $t \to T^{-}$, which yields
\[
\sqrt{T-t} \| \nabla U(t,\sqrt{T-t} \cdot) - \nabla F_0 \|_{L^{\infty}(\R^d)}  \to 0
\]
as $t \to T^{-}$.
\end{remark}

\subsection{Outline of the proof and novelties}

We briefly outline the main steps of the proof. First, it is convenient to set $rv(t,r) := u(t,r)$ for $r > 0$ and observe that $v$ solves
\begin{align} \label{Eqv}
\partial_t v(t,r) - \partial^2_r v(t,r) - \frac{d+1}{r} \partial_r v(t,r) + \frac{(d-1)}{2r^3} \left( \sin(2 rv(t,r)) -2r v(t,r) \right) = 0.
\end{align}

Given a shrinker profile $f$, we insert a perturbative ansatz into Eq.~\eqref{Eqv}, which we regard as a radial heat equation on $\R^{n}$ with $n := d+2$. Transforming the resulting equation to similarity coordinates $(\tau, \xi) \in [0,\infty) \times \R^n$ yields
\begin{equation*}
  \partial_{\tau}\varphi(\tau,\cdot)
  = L_{f}\varphi(\tau,\cdot) + N(\varphi(\tau,\cdot)), \quad \tau > 0,
\end{equation*}
for the perturbation $\varphi$, where $L_{f} = L_0 + L'_{f}$ with
\begin{align*}
  L_0 u := \Delta u - \tfrac{1}{2}\xi\cdot\nabla u - \tfrac{1}{2}u,
  \qquad
  L'_f u := V_f(|\cdot|)\,u,
\end{align*}
and $V_f$ is the radial potential determined by the profile $f$. The natural function space for the linearized problem is the weighted space $L^{2}_{\sigma}(\R^n)$, $\sigma(x) = e^{-|x|^2/4}$, restricted to radial functions. In this setting, $L_{f}$ is self-adjoint and generates a strongly continuous semigroup $(S(\tau))_{\tau \geq 0}$. Naturally, the spectrum of $L_{f}$ depends on the potential and hence on the profile $f$.
Our key result proves the existence of a non-negative, monotonically increasing profile $f_0$ that is \textit{spectrally stable} modulo symmetry eigenvalues. More precisely, the self-adjoint operator $L_{f_0}$ satisfies
\begin{align*}
\sigma(L_{f_0}) \subset (-\infty,0) \cup \{ 1 \}.
\end{align*} 

The full result is stated in Theorem \ref{Th:Existence_f0}. The proof of Theorem \ref{Th:Existence_f0} follows the general approach of \cite{BieDon18}, which treats the case $d = 3$. We first construct a closed-form approximation $\ftil_0$ with rational coefficients determined using Chebyshev spectral methods. The approximation is then extended to an exact solution $f_0$ by a fixed-point argument. The main difficulty is the construction of the inverse of an approximate linear operator arising in the fixed-point argument and a rigorous proof of explicit bounds. This requires suitable approximations of the associated fundamental system, which is delicate, especially in even space dimensions.

The result on the spectrum of $L_{f_0}$ relies on analyzing the underlying Sturm--Liouville operator and rigorously counting the zeros of solutions to the associated spectral problem. Both the fixed-point construction of the profile $f_0$ and the subsequent spectral analysis require rigorous bounds for rational functions. In contrast to \cite{BieDon18}, where interval arithmetic is used, we implement the \textit{evaluation method} of \cite{DonSch26}. As the spatial dimension increases, the required estimates become more demanding, in particular because the spectral argument requires sharper bounds on the exact solution. For this reason, we optimize the evaluation method for our setting by developing an exact truncated Chebyshev expansion with rigorous tail bounds; see the discussion in Section \ref{Sec:ComAss}. For the sake of completeness, we include $d=3$ and thus reprove the statement in this case. \\

To handle the nonlinear stability problem, we work in the radial intersection Sobolev space
\[
  X := \dot{H}^s_{\mathrm{rad}}(\R^n) \cap \dot{H}^{k}_{\mathrm{rad}}(\R^n)
\]
for $s_c < s < s_c + \frac{1}{2(n-2)} < \frac{n}{2}$, where $s_c$ is the scaling-critical exponent and $k = n+1$. With this choice, one has
\[
X \hookrightarrow C_{\mathrm{rad}}(\R^n) \cap L^{\infty}(\R^n)
\hookrightarrow L^2_{\sigma}(\R^n),
\]
and local Lipschitz bounds for the nonlinearity follow from Schauder-type estimates established in \cite{Glo2025}. The linearized evolution on $X$ is obtained from that on $L^{2}_{\sigma}(\R^n)$ by restriction.
The most crucial step is to transfer the spectral information from Theorem \ref{Th:Existence_f0} to the non-self-adjoint operator $L_{f_0}^{X} := L_{f_0}|_{X}$ and to use it efficiently in order to derive suitable growth bounds for the linearized evolution.
In contrast to \cite{BieDonSch17}, which is based on energy estimates using graph norms, we employ a considerably shorter argument; see \cite{GloKisSch26}. This argument uses the representation
\[
  S_{f_0}^{X}(\tau) - S^{X}_0(\tau)
  = \int_0^{\tau} S_{f_0}^{X}(\tau-\tau') L'_{f_0} S_0^{X}(\tau')\,d\tau',
\]
where $(S^X_0(\tau))_{\tau \geq 0}$ is the semigroup generated by $L^X_0$. Exploiting the decay of $f_0$ at infinity and the explicit form of $S^X_0(\tau)$, we show that the integral defines a compact operator on $X$ for every $\tau \geq 0$. This allows us to apply standard results relating the spectrum of a semigroup to its growth bound and to the spectrum of its generator, thereby yielding the required bounds for the evolution on $X$. The nonlinear problem is then treated by a standard fixed-point argument, including a suitable adjustment of the blowup time. In fact, the stability analysis carries over to arbitrary self-similar profiles, except that only finite-codimension stability can be obtained without a precise spectral analysis.\subsection{Notation and conventions}
 We write $a \lesssim b$ if there exists a constant $C >0$, such that $a \leq C b$ and we write $a \simeq b$ if $a \lesssim b$ and $b \lesssim a$. If the constant $C$ depends on some parameter $\varepsilon$, then we write $a \lesssim_{\varepsilon} b$.
We denote the open ball with radius $R>0$ in $\R^d$ by $B_R(\R^d)$. We also use the common Japanese bracket notation $\langle x \rangle := \sqrt{1+ |x|^2}$.
By $C^{\infty}(\R^d)$ and $\mc S(\R^d)$ we denote the space of smooth functions and the space of Schwartz functions respectively. By $C^{\infty}_{c}(\R^d)$ we denote the standard test space consisting of smooth and compactly supported functions. In case of radial functions we use a lower index $r$ as in $C^{\infty}_{r}(\R^d)$, $\mc S_r(\R^d)$, $C^{\infty}_{c,r}(\R^d)$. For smooth functions being bounded and having all (partial) derivatives bounded we use the lower index $b$ as in $C^{\infty}_b([0,\infty))$. For convenience, we also write $C^{\infty}(\R^d)$, $C^{\infty}_{c}(\R^d)$ and $\mc S(\R^d)$ for sets of vector-valued functions whose every component belongs to that space. For a closed linear operator $(\mathcal{L}, \mathcal{D}(\mathcal{L}))$, we write $\rho(\mathcal{L})$ for the resolvent set, and $\sigma(\mathcal{L}) := \C \setminus \rho(\mathcal{L})$ for the spectrum. Given $\la \in \rho(\mc L)$, we use the following convention for the resolvent $R_{\mathcal{L}}(\lambda) := (\lambda - \mathcal{L})^{-1}$. For $f \in C^{\infty}_{c}(\R^d)$, we use the following definition of the Fourier transform
\begin{align*}
	\hat{f}(\xi) = \mathcal{F}f(\xi) := (2 \pi)^{-\frac{d}{2}} \int_{\R^d} e^{-i \xi \cdot x} f(x) dx, \quad \xi \in \R^d.
\end{align*}	
	
For $f, g \in C^{\infty}_c(\R^d)$ and $s \geq 0$, we define the homogeneous inner product
\begin{align*}
	\langle f,g \rangle_{\dot{H}^s(\R^d)} := \langle | \cdot |^s \mathcal{F} f, | \cdot |^s \mathcal{F} g \rangle_{L^2(\R^d)},
\end{align*}
which induces a homogeneous Sobolev norm on $C^{\infty}_c(\R^d)$
\begin{align}\label{Def:Sob_norm}
	\Vert f \Vert^2_{\dot{H}^s(\R^d)} := \langle f,f \rangle_{\dot{H}^s(\R^d)}.
\end{align}
As usual, the homogeneous Sobolev space $\dot{H}^s(\R^d)$ is defined as the completion of $C^{\infty}_c(\R^d)$ under the norm \eqref{Def:Sob_norm}. Analogously, we define intersection Sobolev spaces $\dot{H}^{s_1}(\R^d) \cap \dot{H}^{s_2}(\R^d)$ and radial intersection Sobolev spaces $\dot{H}_r^{s_1}(\R^d) \cap \dot{H}_r^{s_2}(\R^d)$   as the completion of $C^{\infty}_c(\R^d)$ and $C^{\infty}_{c,r}(\R^d)$, respectively, with respect to the inner product
\begin{align*}
	\langle u,v \rangle_{\dot{H}^{s_1}(\R^d) \cap \dot{H}^{s_2}(\R^d)} := \langle u,v \rangle_{\dot{H}^{s_1}(\R^d)}  + \langle u,v \rangle_{\dot{H}^{s_2}(\R^d)}.
\end{align*}

\section{Formulation of the stability problem - Spectral stability}

In the following, we assume that $f \in C^{\infty}([0,\infty))$ is a solution to Eq.~\eqref{Eqf}. Then we have the following characterization.

\begin{lemma} \label{Lemmaf}
Any solution $f \in C^{\infty}([0,\infty))$ to \eqref{Eqf} satisfies
\begin{align} \label{Lemma1}
f^{(2 \ell)}(0) =0, \quad \ell \in \N_0,
\end{align}
and for each $\ell \in \N$ there exists a constant $C_{\ell} >0$ such that 
\begin{align} \label{Lemma2}
|f^{(\ell)}(\rho)| \leq C_{\ell} \rho^{-2-\ell}, \quad \rho \geq 1.
\end{align}
\end{lemma}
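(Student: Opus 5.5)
For \eqref{Lemma1} I would exploit the oddness hidden in \eqref{Eqf}. Define $\tilde f(\rho) := -f(-\rho)$ for $\rho \le 0$ and $\tilde f := f$ for $\rho\ge 0$. The better route, however, is via Taylor expansion at $\rho=0$. Multiply \eqref{Eqf} by $\rho^2$ to get
\[
\rho^2 f'' + (d-1)\rho f' - \tfrac{\rho^3}{2} f' - \tfrac{d-1}{2}\sin(2f) = 0 .
\]
Since $f$ is smooth on $[0,\infty)$, we can write its Taylor polynomial $f(\rho)=\sum_{j\le m} a_j\rho^j + O(\rho^{m+1})$ with $a_0=f(0)=0$. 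Comparing the coefficient of $\rho^j$ gives
\[
\bigl(j(j-1)+(d-1)j-(d-1)\bigr)a_j = \tfrac{j-2}{2}a_{j-2} + \bigl[\text{coefficient of } \rho^j \text{ in } \tfrac{d-1}{2}(\sin 2f - 2f)\bigr].
\]
The prefactor on the left is $(j-1)(j+d-1)$, which is nonzero for $j\ge 2$. I would then argue by strong induction on even $j$ that $a_j=0$. Assume all even coefficients below $j$ vanish. Then $a_{j-2}=0$. Moreover, $\sin 2f - 2f$ is an odd power series in $f$ starting at $f^3$, and by the hypothesis the truncated $f$ is odd up to order $j-1$, so the coefficient of $\rho^j$ in $\sin 2f-2f$ vanishes. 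Hence $a_j=0$, which gives $f^{(2\ell)}(0)=0$.

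For \eqref{Lemma2} I would first establish the existence of the limit and the rate of convergence. Write the equation in divergence form,
\[
(\rho^{d-1}e^{-\rho^2/4}f')' = \tfrac{d-1}{2}\rho^{d-3}e^{-\rho^2/4}\sin(2f).
\]
The right-hand side is bounded by $C\rho^{d-3}e^{-\rho^2/4}$. Integrating from $\rho$ to $\infty$ requires knowing that $\rho^{d-1}e^{-\rho^2/4}f'(\rho)\to 0$. This is the delicate point: a priori $f'$ could grow like $e^{\rho^2/4}\rho^{1-d}$, the second homogeneous behavior. I would use the assumption in \eqref{Eqf} that $f(\infty)$ exists (is a constant) to exclude this branch, since that branch would force $f$ itself to grow.

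More concretely, I would treat the ODE at infinity perturbatively. Set $g=\rho^2 \cdot \tfrac{d-1}{2\rho^2}\sin 2f$; it is bounded. From $f''-\tfrac{\rho}{2}f' = -\tfrac{d-1}{\rho}f' + \tfrac{d-1}{2\rho^2}\sin 2f$, variation of constants gives
\[
f'(\rho) = e^{\rho^2/4}\Bigl(c - \int_\rho^\infty e^{-s^2/4}\bigl[\ldots\bigr]ds\Bigr),
\]
and boundedness of $f$ forces $c=0$. Laplace-type estimates $\int_\rho^\infty e^{-s^2/4}s^{-a}\,ds \lesssim \rho^{-a-1}e^{-\rho^2/4}$ then yield $f'(\rho)=O(\rho^{-3})$, with a bootstrap absorbing the $\tfrac{d-1}{\rho}f'$ term. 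Before this, I would need $f'$ bounded; I would obtain that from a first crude pass of the same argument.

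Higher derivatives follow by induction. Differentiating the equation $\ell-1$ times and solving for $f^{(\ell+1)}$ (equivalently, using $\tfrac{\rho}{2}f^{(\ell)}$ as the dominant term) gives
\[
\tfrac{\rho}{2}f^{(\ell)} = f^{(\ell+1)} + O(\rho^{-1-\ell}) .
\]
Applying the same integrating-factor representation to $f^{(\ell)}$, with the decaying branch selected, gives $|f^{(\ell)}|\lesssim\rho^{-2-\ell}$. The main obstacle is the first step at infinity: ruling out the growing mode and getting the $\rho^{-3}$ rate for $f'$ cleanly; after that, the higher derivatives are routine.
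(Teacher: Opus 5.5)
Your proof is correct. The paper gives no argument of its own for this lemma. It cites \cite{BieDon18}, Proposition 1.3, for $d=3$ and asserts that the argument does not depend on $d$. Your proposal is a self-contained replacement that makes this $d$-independence explicit:
\begin{itemize}
\item at the origin, $d$ enters essentially only through the factor $(j-1)(j+d-1)$, which is nonzero for every $j\ge 2$;
\item at infinity, $d$ enters only through the power $\rho^{d-1}$ in the integrating factor $\rho^{d-1}e^{-\rho^2/4}$, and this power cancels in the final bound, since $\rho^{1-d}\cdot\rho^{d-4}=\rho^{-3}$.
\end{itemize}
The oddness argument via the Taylor recursion is complete. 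The base case is $a_0=f(0)=0$. Only $a_i$ with $i\le j-2$ enter the $\rho^j$ coefficient of $\sin 2f-2f$, because that expression starts at $f^3$.

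Two remarks on presentation. First, the paragraph beginning ``More concretely'' is a detour you do not need. That paragraph uses only the integrating factor $e^{-\rho^2/4}$, a bootstrap, and a preliminary bound on $f'$. The divergence form alone suffices:
\begin{itemize}
\item the limit $A=\lim_{\rho\to\infty}\rho^{d-1}e^{-\rho^2/4}f'(\rho)$ exists because the right-hand side is integrable;
\item $A\neq 0$ would force $f'$ to have a fixed sign with $|f'|\gtrsim\rho^{1-d}e^{\rho^2/4}$, so $f$ would be unbounded, contradicting $f(\infty)=\mathrm{const}$;
\item then directly $|f'(\rho)|\le\frac{d-1}{2}\rho^{1-d}e^{\rho^2/4}\int_\rho^\infty s^{d-3}e^{-s^2/4}\,ds\lesssim\rho^{-3}$, with no bootstrap.
\end{itemize}

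Second, for $\ell\ge2$, the term $\frac{d-1}{\rho}f^{(\ell)}$ is not a priori $O(\rho^{-1-\ell})$, so it should stay on the left rather than be lumped into the error. The $(\ell-1)$-times differentiated equation then reads $\bigl(\rho^{d-1}e^{-\rho^2/4}f^{(\ell)}\bigr)'=\rho^{d-1}e^{-\rho^2/4}h_\ell$. Up to signs, $h_\ell$ collects three kinds of terms:
\begin{itemize}
\item $\frac{\ell-1}{2}f^{(\ell-1)}$, coming from differentiating $\frac{\rho}{2}f'$; this term is exactly borderline, which is why you need the full inductive bound on $f^{(\ell-1)}$;
\item the terms $\partial^k\bigl(\frac{d-1}{\rho}\bigr)f^{(\ell-k)}$ with $k\ge1$, which are $O(\rho^{-3-\ell})$;
\item $\partial^{\ell-1}\bigl(\frac{d-1}{2\rho^2}\sin 2f\bigr)$, which is $O(\rho^{-1-\ell})$.
\end{itemize}
The decaying branch is selected because $f^{(\ell-1)}$ is bounded by the inductive hypothesis. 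Integrating from $\rho$ to $\infty$ then gives $|f^{(\ell)}|\lesssim\rho^{-2-\ell}$, as you claim.
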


\begin{proof}
For $d=3$ the proof is given in full detail in \cite{BieDon18}, Proposition 1.3. By inspection, one sees that the argument does not depend on $d$ and is true in all higher space dimensions $d \in \{4,5,6 \}$.
\end{proof}

We consider Eq.~\eqref{Eqv} as a starting point for the investigation of the stability of a shrinker profile $f$ under corotational perturbations.
More precisely, we consider the following Cauchy problem for $w(t,x) := v(t,|x|)$, $x \in \R^n$, $n = d+2$, 
\begin{equation} \label{Problemw}
\left\{
\begin{aligned} \partial_t w(t,x) &= \Delta w(t,x) + \frac{n-3}{2|x|^3} \left( 2 |x| w(t,x) - \sin(2 |x| w(t,x)) \right), \quad t>0,\\
w(0,\cdot) &= |\cdot|^{-1} f(|\cdot|) + |\cdot|^{-1} h_0(\cdot)
\end{aligned}
\right.
\end{equation}
where $f$ is a smooth shrinker profile and $h_0: \R^n \rightarrow \R$ a radial perturbation. We note that for $n \in \{5,6,7,8\}$ the existence of shrinkers is guaranteed by \cite{Fan1999}.
We study the equation in similarity coordinates $(\tau, \xi) \in [0,\infty) \times \R^n$,
\begin{align*}
\tau = \tau(t) := \log \left( \frac{T}{T-t} \right) \quad \text{and} \quad \xi := \frac{x}{\sqrt{T-t}},  \quad T >0,
\end{align*}
and set
\begin{align*}
\psi(\tau, \xi) = \sqrt{T} e^{- \frac{\tau}{2} } w(T(1-e^{-\tau}), \sqrt{T} e^{- \frac{\tau}{2} }\xi),
\end{align*}
which yields
\begin{equation} \label{Psi}
\left\{
\begin{aligned} \partial_{\tau} \psi(\tau, \xi) &= \Delta_\xi \psi(\tau, y) -\Lambda \psi(\tau, \xi) + \frac{n-3}{2|\xi|^3} \left( 2 |\xi| \psi(\tau, \xi) - \sin(2 |\xi| \psi(\tau, \xi)) \right), \quad \tau >0,\\
\psi (0, \cdot) &= \sqrt{T} ( \mathbf{f}(\sqrt{T}|\cdot|) +  \mathbf{h}_0(\sqrt{T} |\cdot| )),
\end{aligned}
\right.
\end{equation}
where $\mathbf{f}:=  |\cdot|^{-1} f(|\cdot|)$, $\mathbf{h}_0:=  |\cdot|^{-1} h_0(|\cdot|)$ , and 
\begin{align*}
[\Lambda f](\xi) := \frac{1}{2} \xi \cdot \nabla f(\xi) + \frac{1}{2} f(\xi).
\end{align*}
The ansatz 
\begin{align*}
\psi(\tau, \cdot) = \mathbf{f}(|\cdot|) + \varphi (\tau, \cdot),
\end{align*}
for a radial perturbation $\varphi: \R^n \rightarrow \R$ leads to the evolution equation
\begin{equation} \label{mainproblem}
	\begin{cases}
		\partial_{\tau} \varphi (\tau, \cdot) &= L_f \varphi(\tau, \cdot) +N(\varphi(\tau, \cdot)), \quad \tau >0,\\
		\varphi(0,\cdot) &= U(\mathbf{h}_0,T),
	\end{cases}
\end{equation}
for data
\begin{align} \label{Initialdataoperator}
U(\mathbf{h}_0,T) := \sqrt{T}  \mathbf{h}_0(\sqrt{T}|\cdot|)  +  \sqrt{T} \mathbf{f}(\sqrt{T}|\cdot|) -\mathbf{f}(|\cdot|)
\end{align}
and formal differential operators
\begin{align}\label{Def:LinOp_f}
L_{f}= L_0 + L'_f, \quad L_0 u:= \Delta u- \Lambda u, \quad L'_{f} u = V_{f}(|\cdot|) u,
\end{align}
with the potential
\begin{align} \label{PotentialV}
V_f(y) = \frac{n-3}{y^2}\big (1-\cos(2 f(y))\big ), \quad y \in [0,\infty).
\end{align} 

The nonlinear remainder is given by
\begin{align} \label{Nonlinearity}
[N(\varphi (\tau,\cdot))](\xi) = \frac{n-3}{|\xi|^3} ( \eta^{\prime}(f(|\xi|)) |\xi| \varphi (\tau, \xi) + \eta(f(|\xi|))- \eta(f(|\xi|)+|\xi| \varphi(\tau,\xi))),
\end{align}
where $\eta(z) = \frac{1}{2} \sin(2 z)$ for $z \in \R$. Obviously, the nonlinearity depends on the profile $f$ as well. However, we drop this dependence in the notation as the 
nonlinear part of the argument does not rely on the precise form of the profile.

For later use, we state the following properties of the potential. 

\begin{lemma}\label{Le:Potential}
Let $f \in C^{\infty}([0,\infty))$ be a solution to Eq.~\eqref{Eqf}. Then the corresponding potential satisfies $V_f \in C_b^{\infty}([0,\infty))$ and for every $k \in \N_0$ there is a constant 
$C_k > 0$ such that 
\[ |V_f^{(k)}(y)| \leq C_k \langle y \rangle^{-2-k} \]
for all $y \in [0,\infty)$. 
\end{lemma}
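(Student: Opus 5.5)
The plan is to write $V_f(y) = (n-3)\,G(y)$ with $G(y) := y^{-2}(1-\cos(2f(y))) = 2\,y^{-2}\sin^2(f(y))$. We then treat the regions near zero and near infinity separately, using Lemma~\ref{Lemmaf} in each.

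\emph{Smoothness and boundedness near the origin.} By \eqref{Lemma1}, all even-order derivatives of $f$ vanish at $0$, so $f$ is odd in the Taylor sense. Hence $f(y) = y\,g(y^2)$ for some $g \in C^\infty([0,\infty))$. This follows from the standard fact (Whitney) that a smooth function whose even extension is smooth can be written as a smooth function of $y^2$; here we apply it to $f(y)/y$, which is smooth since $f(0)=0$ and whose odd-order derivatives at $0$ vanish. Then
\[ 2\sin^2(f(y)) = 1-\cos(2 y g(y^2)) \]
is a smooth even function of $y$ vanishing to second order at $0$. Therefore $G(y) = y^{-2}\bigl(1-\cos(2yg(y^2))\bigr)$ extends smoothly to $[0,\infty)$. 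One can see this by writing $1-\cos(2z) = z^2 \Phi(z^2)$ with $\Phi$ entire, which gives $G(y) = 4g(y^2)^2\,\Phi\bigl(y^2 g(y^2)^2\bigr)$. This form is manifestly smooth on $[0,\infty)$, and all derivatives are bounded on $[0,1]$ by compactness.

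\emph{Decay for $y \geq 1$.} The key input is \eqref{Lemma2} with $\ell = 1$: it gives $|f'(\rho)| \lesssim \rho^{-3}$, so $f$ has a limit $f(\infty)$ and $|f(y)-f(\infty)| \lesssim y^{-2}$. Moreover, $h(y) := 1-\cos(2f(y))$ is bounded, and its derivatives satisfy $|h^{(j)}(y)| \lesssim y^{-2-j}$ for $j \geq 1$. This follows from Faà di Bruno: each term of $h^{(j)}$ is a product of derivatives $f^{(\ell_i)}$ with $\sum \ell_i = j$ and $m \geq 1$ factors, so it is bounded by $y^{-2m-j} \leq y^{-2-j}$. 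The Leibniz rule applied to $G = y^{-2} h$ then yields
\[ |G^{(k)}(y)| \lesssim \sum_{j=0}^k y^{-2-(k-j)}\,|h^{(j)}(y)| \lesssim y^{-2-k}. \]
Here the $j=0$ term uses only $|h| \leq 2$, and the $j \geq 1$ terms are even better. Combining this with boundedness on $[0,1]$ gives $|V_f^{(k)}(y)| \leq C_k\langle y\rangle^{-2-k}$ on all of $[0,\infty)$, and in particular $V_f \in C^\infty_b$.

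\emph{Main point of care.} The main obstacle is the smooth extension at $y=0$, namely making the odd-structure argument rigorous. If one prefers to avoid Whitney's theorem, an alternative is to expand via Taylor's theorem with integral remainder: $f(y) = y\int_0^1 f'(ty)\,dt$, so $f(y)/y$ is smooth. Then $\sin^2(f)/y^2 = \bigl(\sin(f)/f\bigr)^2 (f/y)^2$, where $\sin z/z$ is entire. This already gives smoothness on $[0,\infty)$ (one-sided derivatives at $0$), which suffices for the statement. The evenness supplied by \eqref{Lemma1} is only needed later, if smoothness of $V_f(|\cdot|)$ on $\R^n$ is required.
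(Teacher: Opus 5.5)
Your proof is correct and is essentially the paper's argument made explicit: regularity at $y=0$ comes from Lemma~\ref{Lemmaf} plus a Taylor expansion, and your route via $f(y)/y=\int_0^1 f'(ty)\,dt$ and the entire function $\sin z/z$ is the cleanest, needing only $f(0)=0$. The decay for $y\geq 1$ comes from \eqref{Lemma2} via Fa\`a di Bruno and the Leibniz rule, as you do it, and the only blemish is cosmetic: with $1-\cos(2z)=z^2\Phi(z^2)$ one gets $G(y)=g(y^2)^2\,\Phi\bigl(y^2g(y^2)^2\bigr)$, without the factor $4$.
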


\begin{proof}
The decay at infinity is obvious and regularity at zero is obvious from Lemma \ref{Lemmaf} and Taylor expansion.
\end{proof}

\subsection{Self-adjoint linear theory - Spectral stability}\label{Sec:SpectralStab_1}

We set $\sigma(x) := e^{-|x|^2/4}$ for $x \in \R^n$ and define a weighted $L^2$-space of radial functions
\begin{align}\label{L2sigma}
H := \{ u \in L^2_{\sigma}(\R^n) : \text{$u$ is radial} \},
\end{align}
with induced norm $\Vert \cdot \Vert_{H}$ coming from the inner product
\begin{align*}
\langle u, v \rangle_{H} := \int_{\R^n} u(x) \overline{v(x)} \sigma(x) dx, \quad \text{for $u,v \in H$}.
\end{align*}

We note that the choice of a complex function space accommodates spectral theory.  We consider $L_f$ and $L_0$ as defined in Eq.~\eqref{Def:LinOp_f}  on $C^{\infty}_{c,r}(\R^n) \subset H$. Thus, they are densely defined and symmetric operators on $H$. 

It is well-known that 
$(L_0,C^{\infty}_{c,r}(\R^n))$ is closable in $H$ and the closure $L_0 : \mathcal{D}(L_0) \subseteq H\rightarrow H$ is self-adjoint, has compact resolvent and generates a strongly continuous semigroup $(S_0(\tau))_{\tau \geq 0}$ of bounded operators on $H$ given by
\begin{align} \label{Freesemigroup}
[S_0(\tau)u](x) = e^{-\frac{\tau}{2}}(G_{\alpha (\tau)} \ast u)(e^{-\frac{\tau}{2}} x), \quad x \in \R^n,
\end{align}
where $G_{\alpha(\tau)}(x) = e^{-\frac{|x|^2}{4 \alpha(\tau)}}(4 \pi \alpha(\tau))^{-\frac{n}{2}}$ with $\alpha(\tau) := 1-e ^{-\tau}$, see e.g. the proof of Proposition 3.1 in \cite{GloKisSch24}. \\

Lemma \ref{Le:Potential} implies that $L'_{f}$ extends to a bounded operator on $H$ and we obtain the following proposition on the linearized evolution in $H$, which follows from standard arguments.

\begin{proposition}\label{Prop:LinEvol_Lsigma}
The operator $(L_f,C^{\infty}_{c,r}(\R^n))$ is closable in $H$ and the closure $L_f : \mathcal{D}(L_f) \subseteq H \rightarrow H$ is self-adjoint, has compact resolvent and generates a strongly continuous semigroup $(S_f(\tau))_{\tau \geq 0}$ of bounded operators on $H$. Furthermore, the spectrum of $L_f$ consists only of real eigenvalues and each eigenvalue has finite multiplicity.
\end{proposition}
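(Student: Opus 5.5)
The argument is a bounded perturbation of the self-adjoint operator $L_0$ by the multiplication operator $L'_f$.

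\textbf{Step 1: self-adjointness of $L_f$.} By Lemma \ref{Le:Potential}, $V_f$ is real-valued, bounded and smooth on $[0,\infty)$. Hence $L'_f u = V_f(|\cdot|)u$ is a bounded, symmetric operator on $H$. Since $V_f(|\cdot|)$ is radial, it maps radial functions to radial functions, so $L'_f$ is in fact bounded and self-adjoint on $H$.

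On $C^\infty_{c,r}(\R^n)$ we have $L_f = L_0 + L'_f$. A symmetric operator plus a bounded operator is closable, and its closure is the closure of the first summand plus the bounded part. Therefore $(L_f, C^\infty_{c,r}(\R^n))$ is closable, with
\[
\overline{L_f} = L_0 + L'_f, \qquad \mathcal{D}(\overline{L_f}) = \mathcal{D}(L_0),
\]
where $L_0$ denotes the self-adjoint closure recalled above. By the Kato--Rellich theorem with relative bound $0$ (or simply because bounded self-adjoint perturbations preserve self-adjointness), $L_f$ is self-adjoint on $\mathcal{D}(L_0)$.

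\textbf{Step 2: semigroup generation.} The operator $L_0$ generates $(S_0(\tau))_{\tau\ge0}$, given explicitly in \eqref{Freesemigroup}. By the bounded perturbation theorem, $L_f = L_0 + L'_f$ generates a strongly continuous semigroup $(S_f(\tau))_{\tau\ge0}$ satisfying
\[
\|S_f(\tau)\| \le M e^{(\omega + M\|L'_f\|)\tau},
\]
where $M$ and $\omega$ are the growth constants of $S_0$. Alternatively, self-adjointness together with the upper bound $L_f \le L_0 + \|V_f\|_\infty$ shows that $L_f$ is bounded above, so it generates a semigroup via the spectral theorem.

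\textbf{Step 3: compact resolvent.} Fix $\lambda$ real with $\lambda > \|V_f\|_\infty + \sup \sigma(L_0)$, so that $\lambda \in \rho(L_0)\cap\rho(L_f)$. The second resolvent identity gives
\[
R_{L_f}(\lambda) = R_{L_0}(\lambda) + R_{L_f}(\lambda)\, L'_f\, R_{L_0}(\lambda).
\]
Here $R_{L_0}(\lambda)$ is compact, and the second term is a product of bounded operators with the compact operator $R_{L_0}(\lambda)$, hence compact. So $R_{L_f}(\lambda)$ is compact, and by the resolvent identity the resolvent is compact at every point of $\rho(L_f)$.

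\textbf{Step 4: structure of the spectrum.} A self-adjoint operator with compact resolvent has purely discrete spectrum. It consists of isolated real eigenvalues of finite multiplicity, accumulating only at $-\infty$ in our case, since $L_f$ is bounded above.

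None of these steps is a real obstacle. The only point needing care is that $L'_f$ preserves radiality and is genuinely bounded on $L^2_\sigma$, which follows directly from $V_f \in C_b^\infty([0,\infty))$.
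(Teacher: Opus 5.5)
Your proposal is correct and follows the same route as the paper. The paper only observes, via Lemma~\ref{Le:Potential}, that $L'_f$ is a bounded real multiplication operator on $H$, and then appeals to ``standard arguments'' built on the known properties of $L_0$. Your Steps 1--4 write out exactly those standard arguments: a bounded symmetric perturbation preserves self-adjointness, the bounded perturbation theorem gives the semigroup, the second resolvent identity gives compactness of the resolvent, and self-adjointness with compact resolvent gives discrete real spectrum of finite multiplicity.
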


As we will see, the information on the unstable part of the spectrum of $L_f$ in $H$ is the key for the nonlinear stability analysis. Time-translation invariance implies that for any profile $f$, $1 \in \sigma(L_f)$ with eigenfunction $f'$. The crucial part is the detection of further unstable eigenvalues. In the self-adjoint setting, one can use results from the theory of Schrödinger operators provided sufficiently precise information on $f$ and thus on the potential $V_f$ is given. One of our main results of this paper is the following. 

\begin{theorem}\label{Th:Existence_f0}
For every $d \in \{3,4,5,6\}$, there exists a solution $f_0 \in C^{\infty}([0,\infty))$ to Eq.~\eqref{Eqf}, which is monotonically increasing and satisfies $f_0 > 0$ on
$(0,\infty)$. Furthermore, 
\begin{align*}
  \sigma(L_{f_0}) \subset (-\infty,0) \cup \{1\},
\end{align*}
where $\lambda = 1$ is a simple eigenvalue with eigenfunction $f_0'$.
\end{theorem}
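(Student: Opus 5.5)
The proof has two parts, following \cite{BieDon18}. First we construct $f_0$ by a computer-assisted fixed-point argument. Then we count the unstable eigenvalues of $L_{f_0}$ with a Sturm--Liouville oscillation argument.

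\textbf{Existence of $f_0$.} We compute a Chebyshev spectral approximation of the ground state, separately on a bounded interval $[0,\rho_1]$ and on the far region $[\rho_1,\infty)$, after mapping $\rho \mapsto 1/\rho$ or a similar compactifying variable. The coefficients are rounded to rationals. The result is an explicit piecewise rational (or polynomial) function $\ftil_0$ that satisfies the boundary conditions $\ftil_0(0)=0$ and $\ftil_0(\infty)=\mathrm{const}$, and that matches across $\rho_1$ to $C^1$ accuracy. We write $f_0=\ftil_0+\delta$ and linearize Eq.~\eqref{Eqf} around $\ftil_0$:
\[
\Lctil\delta = -R[\ftil_0] + \Nc(\delta).
\]
Here $R$ is the residual, $\Lctil$ is the linearization with potential $\frac{d-1}{\rho^2}\cos(2\ftil_0)$, and $\Nc$ is quadratic in $\delta$. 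We invert $\Lctil$ by variation of constants. This requires an approximate fundamental system with explicit Wronskian, chosen regular at $0$ and with the correct behavior at $\infty$ ($\rho^{-2}$-type versus Gaussian growth). We obtain it as rational approximations plus rigorously bounded errors, via a second contraction or Volterra argument. Then we show that $\delta \mapsto \Lctil^{-1}(-R+\Nc(\delta))$ is a contraction on a small ball of a weighted $L^\infty$ space. The bounds needed are the smallness of $R$ in that norm, the operator norm of $\Lctil^{-1}$, and the Lipschitz constant of $\Nc$. Each of these reduces to positivity or size estimates for explicit rational functions on intervals. We verify these with the evaluation method of \cite{DonSch26} together with truncated Chebyshev expansions and tail bounds. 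Smoothness follows from elliptic regularity of the ODE and Lemma \ref{Lemmaf}. Monotonicity and positivity follow because $\ftil_0'$ is bounded below by an explicit positive quantity exceeding the proven bound on $\delta'$; for this we include $\delta'$ in the fixed-point norm.

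\textbf{Spectral statement.} By Proposition \ref{Prop:LinEvol_Lsigma}, $\sigma(L_{f_0})$ consists of real eigenvalues. Radial eigenfunctions satisfy the ODE
\[
u''+\Bigl(\tfrac{n-1}{\rho}-\tfrac{\rho}{2}\Bigr)u' - \tfrac12 u + V_{f_0}u = \lambda u.
\]
After a Liouville transformation this is a singular Sturm--Liouville problem with limit-point behavior at both ends in $H$. Oscillation theory then says that the number of eigenvalues $\geq 0$ equals the number of zeros in $(0,\infty)$ of the solution $u_0$ at $\lambda=0$ that is regular at $\rho=0$. One has to check that $0$ is not itself an eigenvalue, which is part of the same zero analysis, by examining the asymptotics at infinity.

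The eigenvalue $1$ comes with eigenfunction $f_0'(\rho)/\rho$ in the $v$-variables, more precisely $\rho^{-1}\rho f_0'$ after the $r v=u$ substitution, which we check directly. It is positive by monotonicity, so it is the ground state and $1$ is the top eigenvalue and simple. It then remains to show that $u_0$ has exactly one zero. Two facts give this. First, $\lambda=1$ contributes one eigenvalue above $0$. Second, by Sturm comparison, $u_0$ has at least one zero, and we must exclude a second one. To do so we construct an explicit approximation $\tilde u_0$ of $u_0$, in rational form, on $[0,\rho_2]$, with rigorous error bounds that use the bound on $f_0-\ftil_0$ through $V_{f_0}-V_{\ftil_0}$. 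We show that $\tilde u_0$ has exactly one sign change with nondegenerate slope. On $[\rho_2,\infty)$ we use a Riccati or Prüfer-angle argument, together with the decay $V_{f_0}=O(\rho^{-2})$, to exclude further zeros and to show that $u_0$ grows like the non-$H$ solution, proportional to $e^{\rho^2/4}\rho^{-n+1}$-type growth. This last point excludes $0$ from the spectrum.

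\textbf{Main obstacle.} The hardest step is the rigorous inversion of $\Lctil$: producing fundamental-system approximations with provably small errors, especially in even $d$, where logarithmic terms appear at $\rho=0$ or $\infty$. A close second is that the zero counting needs $\|f_0-\ftil_0\|$ sharp enough to control $u_0$ near its zero. For both we would first try higher-degree Chebyshev fits with the optimized evaluation bounds, and split the interval further where the bounds fail.
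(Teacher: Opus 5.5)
Your outline is sound and has the same overall architecture as the paper. You do a computer-assisted fixed point around a Chebyshev approximant, then an oscillation count at $\lambda=0$ with a rigorous enclosure of the regular solution on a compact interval, a tail argument, and asymptotics to exclude $0$. Where you differ is in how you handle the central difficulty, inverting the linearization.

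You propose to enclose the \emph{true} fundamental system by a Volterra/contraction argument and then invert the exact operator. The paper never does this. It declares the closed-form $\wtil_0,\wtil_1$ to be an exact fundamental system of the modified operator $\Lctil=\Lc+P\partial_y+Q$, where $P,Q$ are given explicitly by Wronskian formulas. It inverts $\Lctil$ through its explicit Green's function and moves $(\Lctil-\Lc)\delta$ into the fixed-point map, at the cost of one extra small constant $c_\Lc$.

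The weights are built so that $\Lctil_0(1/p_1)=1/p_2$ and $(1/p_1)'=1/p_3$. This turns the Green's-function integrals into exact identities such as $I_1-I_0=1$, leaving essentially one Gaussian tail (a Mills-ratio bound) to estimate. So not everything reduces to rational functions, as you suggest. Your route gives an inverse of the genuine linearization, but it needs a second layer of rigorous bounds on exact solutions over all of $(0,\infty)$, including both singular endpoints.

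The paper's device also pays off in the spectral part. At $\lambda=0$ one has $\Acn=\Lc+(V_0-\Vtil_0)$, so $W_0$ is enclosed near $c_0\wtil_0$ by the same $\Lctil^{-1}$ bound, with a cutoff to $[0,y^\ast]$; you would instead build a new approximant. Beyond $y^\ast=(d+3)/2$, a plain sign argument for $q=W_0/y$ (your $u_0$) replaces Riccati/Prüfer.

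For $\lambda<-1$ the paper uses the Picone identity together with the relative count of \cite{GesSimTes1996} on $[\lambda,0)$. You instead note that the nodeless $f_0'$ is the ground state and use the count-from-below form of oscillation theory. That makes the $\lambda<-1$ step unnecessary and is a legitimate simplification.

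Two small corrections. If you use a piecewise approximant, it must be exactly $C^1$ at $\rho_1$; otherwise the residual contains a Dirac mass and is not bounded in any weighted sup norm. The paper avoids this with the single global ansatz $2\arctan$ of odd Chebyshev polynomials in $y/\sqrt{y^2+4}$. Also, Lemma \ref{Lemmaf} presupposes smoothness, so you cannot use it to get smoothness. Regularity at $0$ has to come from $\delta=\Oh(y)$ selecting the regular branch at the singular point.
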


For $d=3$, this has been proven in \cite{BieDon18}, but we include this case for the sake of completeness. For better readability of the paper, the proof, which is based on rigorous computer-assistence, is postponed to Section \ref{Sec:Existence_Spec_Stab}. \\

To study the nonlinear evolution, we have to consider a different functional analytic setup, which will be introduced in the next section.

\section{Nonlinear stability}

\subsection{Setup}
First, state an important feature of (radial) intersection Sobolev spaces.

\begin{lemma} \label{LemmaEmbedding}
For $s_1, s_2 >0$ with $s_1 < \frac{n}{2} < s_2$, the following embedding holds
\begin{align*}
\dot{H}^{s_1}_r(\R^n) \cap \dot{H}^{s_2}_r(\R^n) \hookrightarrow L^{\infty}(\R^n).
\end{align*}
Furthermore, the space $\dot{H}^{s_1}_r(\R^n) \cap \dot{H}^{s_2}_r(\R^n)$ is closed under multiplication, moreover,
\begin{align} \label{Multi}
\Vert u v  \Vert_{\dot{H}^{s_1}_r(\R^n) \cap \dot{H}^{s_2}_r(\R^n)} \lesssim \Vert u  \Vert_{\dot{H}^{s_1}_r(\R^n) \cap \dot{H}^{s_2}_r(\R^n)} \Vert v \Vert_{\dot{H}^{s_1}_r(\R^n) \cap \dot{H}^{s_2}_r(\R^n)}, 
\end{align}
for all $u, v  \in \dot{H}^{s_1}_r(\R^n) \cap \dot{H}^{s_2}_r(\R^n)$.
\end{lemma}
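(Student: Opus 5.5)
The plan is to work on the Fourier side and split into low and high frequencies; the radial assumption is not needed here. For $u \in C^\infty_{c}(\R^n)$ (and then by density for the completion), Fourier inversion gives
\[
\|u\|_{L^\infty} \lesssim \|\hat u\|_{L^1} = \int_{|\xi|\le 1} |\hat u| \, d\xi + \int_{|\xi|>1} |\hat u| \, d\xi .
\]
By Cauchy--Schwarz, the low-frequency part is at most $\big(\int_{|\xi|\le1}|\xi|^{-2s_1}d\xi\big)^{1/2}\|u\|_{\dot H^{s_1}}$, which is finite because $2s_1<n$. The high-frequency part is at most $\big(\int_{|\xi|>1}|\xi|^{-2s_2}d\xi\big)^{1/2}\|u\|_{\dot H^{s_2}}$, which is finite because $2s_2>n$.

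This gives $\|u\|_{L^\infty}\lesssim \|u\|_{\dot H^{s_1}\cap\dot H^{s_2}}$ on the dense class. The same estimate shows that $\hat u\in L^1$ for every element of the completion. Hence the completion is realized as a space of bounded continuous functions, and the embedding holds. One subtlety: because $s_1<n/2$, the space $\dot H^{s_1}$ is a genuine function space (tempered distributions with $\hat u\in L^1_{loc}$), so the identification of the completion is unambiguous.

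For the product estimate (\ref{Multi}), I would use the fractional Leibniz (Kato--Ponce) inequality in homogeneous form: for $s>0$,
\[
\|uv\|_{\dot H^{s}} \lesssim \|u\|_{\dot H^{s}}\|v\|_{L^\infty} + \|u\|_{L^\infty}\|v\|_{\dot H^{s}} .
\]
Applying this with $s=s_1$ and $s=s_2$ and then invoking the $L^\infty$ embedding just proved yields (\ref{Multi}) for $u,v\in C^\infty_{c}$. The bound extends to the completion by density; this uses continuity of multiplication, which in turn follows from the bilinear estimate together with the $L^\infty$ control. Radial symmetry is preserved under multiplication, so the radial subspace is closed under products.

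If I wanted to avoid citing Kato--Ponce, I would prove it directly with a Fourier-side argument. The key inequality is $|\xi|^s\lesssim |\xi-\eta|^s+|\eta|^s$, which gives
\[
|\xi|^s|\widehat{uv}(\xi)|\lesssim (|\cdot|^s|\hat u|)*|\hat v|+|\hat u|*(|\cdot|^s|\hat v|).
\]
Young's inequality ($L^2*L^1\to L^2$) then bounds this by $\|u\|_{\dot H^s}\|\hat v\|_{L^1}+\|\hat u\|_{L^1}\|v\|_{\dot H^s}$. Here $\|\hat v\|_{L^1}$ is exactly the quantity controlled in the first step, which makes the argument self-contained. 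Of the two routes, this one is the cleanest and the one I would actually write.

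The main obstacle is not the estimates themselves but the functional-analytic bookkeeping. One has to make sure that elements of the completion are honest functions, so that pointwise products make sense, and that the bilinear bound passes to limits. Controlling $\|\hat u\|_{L^1}$ handles both points simultaneously.
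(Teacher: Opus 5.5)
Your proof is correct. The embedding step is the standard low/high-frequency split; the paper does not prove it and instead cites \cite{GloKisSch24}. Your first route for the product estimate (homogeneous Kato--Ponce at $s=s_1$ and $s=s_2$, then the $L^\infty$ embedding) is exactly the paper's argument via \cite{GraOh14}.

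The Fourier-side route you say you would actually write is a genuinely different and more elementary argument. It avoids the Littlewood--Paley/paraproduct machinery behind Kato--Ponce and uses only $|\xi|^s\lesssim|\xi-\eta|^s+|\eta|^s$, Plancherel, and Young's inequality $L^2*L^1\to L^2$. The price is that the bound involves $\|\widehat v\|_{L^1}$ rather than the weaker quantity $\|v\|_{L^\infty}$. That costs nothing here, because your proof of the embedding already controls $\|\widehat v\|_{L^1}$ by the intersection norm. The same bound also settles the bookkeeping you flag: elements of the completion are continuous functions with integrable Fourier transform, and products of approximating test functions converge both uniformly and in norm, so the limits agree.

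The paper's route needs only $L^\infty$ control of the undifferentiated factor. It therefore transfers to $L^p$-based spaces, where Plancherel and the Wiener-algebra bound are unavailable. Your route is fully self-contained and gives explicit constants, but it is tied to the $L^2$ structure. As you note, neither argument uses radial symmetry.
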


\begin{proof}
The embedding into $L^{\infty}(\R^n)$ follows from the choice $s_1 < \frac{n}{2} < s_2$. A detailed proof can be found in \cite{GloKisSch24}, Appendix A.1. The algebra property follows by the generalized Leibniz rule, see \cite{GraOh14}, Theorem 1, together with the embedding of $\dot{H}^{s_1}_r(\R^n) \cap \dot{H}^{s_2}_r(\R^n)$ into $L^{\infty}(\R^n)$.
\end{proof}

Now we introduce the main function space of this paper and gather some properties.

\begin{definition}
For $(s,k)$ satisfying 
\begin{align}\label{sk-range}
\frac{n}{2} - 1 < s < \frac{n}{2} - 1 + \frac{1}{2(n-2)}, \quad \text{ and } k = n+1,
\end{align}
we define $X_{s}^{k}(\R^n)$ as the completion of the space of radial test functions $C^{\infty}_{c,r}(\R^n)$ with respect to the inner product
\begin{align*}
	\langle u,v \rangle_{X_s^k(\R^n)} := \langle u,v \rangle_{\dot{H}^{s}(\R^n)}  + \langle u,v \rangle_{\dot{H}^{k}(\R^n)}.
\end{align*}
\end{definition}
 
\begin{lemma} \label{embeddinglemma}
For $(s,k)$ as in Eq.~\eqref{sk-range}, the following embeddings hold
\[ X_s^k(\R^n) \hookrightarrow L^{\infty}(\R^n) \cap C_{r}(\R^n) \hookrightarrow H.\] 
Furthermore, $X_s^k(\R^n)$ is closed under multiplication, i.e.,
\begin{align} \label{Embedding}
\Vert u v \Vert_{X_s^k(\R^n)} \lesssim \Vert u \Vert_{X_s^k(\R^n)} \Vert v \Vert_{X_s^k(\R^n)},
\end{align}
for all $u,v \in X_s^k(\R^n)$.
\end{lemma}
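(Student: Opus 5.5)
The plan is to reduce the first two claims to Lemma \ref{LemmaEmbedding} and prove the final embedding into $H$ directly from the Gaussian weight. With $s_1 := s$ and $s_2 := k$, the range \eqref{sk-range} gives $0 < \frac{n}{2}-1 < s < \frac{n}{2}$, because $\frac{1}{2(n-2)} < 1$ for $n \geq 5$. Also $k = n+1 > \frac{n}{2}$. Hence $X_s^k(\R^n) = \dot{H}^{s}_r(\R^n)\cap \dot{H}^{k}_r(\R^n)$ falls under Lemma \ref{LemmaEmbedding}. That lemma yields both $X_s^k(\R^n) \hookrightarrow L^\infty(\R^n)$ and the algebra estimate \eqref{Embedding} at once.

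For continuity, I would argue on the dense subspace $C^\infty_{c,r}(\R^n)$. There the bound $\|u\|_{L^\infty} \lesssim \|u\|_{X_s^k}$ holds. A Cauchy sequence in $X_s^k$ is therefore uniformly Cauchy, so its limit is a uniform limit of continuous radial functions, and hence lies in $C_r(\R^n)$. One small point to record: the $L^\infty$ bound is actually proved through the Fourier side, via $\|\hat u\|_{L^1} \lesssim \|u\|_{\dot H^{s}} + \|u\|_{\dot H^{k}}$, obtained by splitting $|\xi|\le 1$ and $|\xi|>1$ and using $s<\frac n2<k$ for integrability. This shows each element of the completion is identified with a genuine bounded continuous function, so the completion embeds injectively and the identification is consistent.

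For the embedding into $H$, I would use $\|u\|_H^2 = \int |u|^2 \sigma\,dx \le \|u\|_{L^\infty}^2 \int_{\R^n} e^{-|x|^2/4}\,dx \lesssim \|u\|_{X_s^k}^2$. Radiality is preserved under these limits, so $u \in H$. Together with the previous paragraph this gives the full chain $X_s^k(\R^n) \hookrightarrow L^\infty(\R^n)\cap C_r(\R^n) \hookrightarrow H$.

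The only mildly delicate step is the well-definedness of the completion as a function space, which is handled by the Fourier $L^1$ bound above. No real obstacle is expected, since the heavy lifting (the Leibniz rule in homogeneous spaces) is already contained in Lemma \ref{LemmaEmbedding}.
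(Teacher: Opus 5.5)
Your proposal is correct and follows essentially the same route as the paper. The $L^\infty$ embedding and the algebra estimate \eqref{Embedding} are read off from Lemma \ref{LemmaEmbedding} with $s_1=s<\frac n2<k=s_2$, and the embedding into $H$ follows from $\|u\|_H\le \|u\|_{L^\infty}\,\|\sigma\|_{L^1(\R^n)}^{1/2}$, which is exactly the paper's appeal to the fast decay of the Gaussian weight. Your extra remarks on continuity (uniform limits of radial test functions) and on realizing the completion as functions via the Fourier-side $L^1$ bound make explicit what the paper leaves implicit.
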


\begin{proof}
The $L^{\infty}-$ embedding as well as \eqref{Embedding} follow from Lemma \ref{LemmaEmbedding}. The last embedding is a consequence of the fast decay of the weight function.
\end{proof}

\subsection{The linear evolution on $X_s^k(\R^n) $ }

In this section, we make use of the embedding $X_s^k(\R^n) \hookrightarrow H$ by Lemma \ref{embeddinglemma} to analyze the linearized operator and its corresponding semigroup in $X_s^k(\R^n)$.

\begin{lemma} \label{Functions}
Let $f \in C^{\infty}([0,\infty))$ be a solution to Eq.~\eqref{Eqf}. Then the smooth functions $f'$, $\mathbf{f}=|\cdot|^{-1} f(|\cdot|)$ and $V_f$ belong to $X_s^k(\R^n)$.
\end{lemma}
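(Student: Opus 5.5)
We need to show that the radial functions $f'(|\cdot|)$, $\mathbf{f}$ and $V_f(|\cdot|)$ lie in $X_s^k(\R^n)$. Throughout, $n=d+2$, $k=n+1$, and $s$ lies slightly above $\frac n2-1$.

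The plan is to reduce membership in $X_s^k(\R^n)$ to two properties of a radial function $g$: smoothness on $\R^n$, and polynomial decay of all derivatives at a sufficient rate. Concretely, I would prove the following sufficient criterion. Suppose $g\in C^\infty_r(\R^n)$ and
\[
|\partial^\alpha g(x)|\lesssim \langle x\rangle^{-a-|\alpha|}\quad\text{for all } |\alpha|\le k, \text{ with } a>\frac n2-s .
\]
Then $g\in \dot H^s(\R^n)\cap\dot H^k(\R^n)$, and $g$ is the limit of $\chi(\cdot/R)g$ in the $X_s^k$-norm, where $\chi\in C^\infty_{c,r}$ equals $1$ near the origin.

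The $\dot H^k$ part is immediate, since $\nabla^k g\in L^2$ as soon as $2(a+k)>n$. For the fractional $\dot H^s$ part, write $s=m+\theta$ with $m$ an integer and $\theta\in[0,1)$. Interpolating between $\dot H^m$ and $\dot H^{m+1}$ gives
\[
\|g\|_{\dot H^s}\lesssim \|\nabla^m g\|_{L^2}^{1-\theta}\|\nabla^{m+1}g\|_{L^2}^{\theta},
\]
and both factors are finite when $a+m>\frac n2$. Here $m=\lfloor s\rfloor$, so $m>s-1$, and the condition $a+m>\frac n2$ already follows from $a>\frac n2-s+1$. This is a cruder requirement than $a>\frac n2-s$, but it is still satisfied in every case below.

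Next I would check that the truncations converge. We have $\|(1-\chi(\cdot/R))g\|_{\dot H^j}\to0$ for $j=m,m+1,k$ by dominated convergence, because the derivatives falling on $\chi(\cdot/R)$ produce factors $R^{-i}\lesssim\langle x\rangle^{-i}$ on the support of the cutoff. The interpolation inequality then carries this convergence over to $\dot H^s$. One also needs truncation to stay inside the completion, i.e. to avoid $\dot H^s$ elements modulo polynomials. This is harmless because $s<\frac n2$, so $\dot H^s$ is a genuine function space and the approximants are test functions.

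Now verify the hypotheses for each function.

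For $V_f$: Lemma \ref{Le:Potential} gives $V_f\in C_b^\infty([0,\infty))$ with $|V_f^{(j)}(y)|\lesssim\langle y\rangle^{-2-j}$. Smoothness of $V_f(|x|)$ on $\R^n$ needs $V_f$ to be even. Since $f$ is odd by Lemma \ref{Lemmaf}, $1-\cos(2f)$ is even, and $y^{-2}$ times an even function vanishing to second order is again even and smooth. Converting radial derivatives to Cartesian ones uses $\partial^\alpha[V(|x|)]$, a sum of $V^{(j)}(|x|)$ times homogeneous terms of degree $j-|\alpha|$. Near the origin the evenness handles this, and for $|x|\ge1$ it preserves decay of order $\langle x\rangle^{-2-|\alpha|}$. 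So $a=2$, and $2>\frac n2-s+1$ holds because $\frac n2-s<1$.

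For $f'$: this is even, since $f$ is odd. Lemma \ref{Lemmaf} gives decay of order $\rho^{-3-\ell}$ for $f^{(1+\ell)}$, so $a=3$.

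For $\mathbf f=f(\rho)/\rho$: this is even and smooth, because $f$ is odd with $f(0)=0$. At infinity $f\to f(\infty)$, and integrating the bound on $f'$ gives $f(\rho)=f(\infty)+O(\rho^{-2})$. Hence $\mathbf f$ decays only like $\rho^{-1}$, with $\partial_\rho^\ell\mathbf f=O(\rho^{-1-\ell})$ by Leibniz, so $a=1$.

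This last case is the main obstacle, since $a=1$ needs the sharper criterion $1>\frac n2-s$, i.e. $s>\frac n2-1$. That is exactly the lower bound in \eqref{sk-range}, so the crude interpolation above does not suffice. I would decompose
\[
\mathbf f=f(\infty)\,\chi_\infty(\rho)\rho^{-1}+r(\rho),
\]
where $\chi_\infty$ is a cutoff to large $\rho$ and the remainder $r$ decays like $\rho^{-3}$, so $r$ is covered by the criterion. For the model term $\chi_\infty|x|^{-1}$ I would use the Fourier side. The function $|x|^{-1}$ has Fourier transform $c|\xi|^{1-n}$, so $|\xi|^{2s}|\xi|^{2-2n}$ is integrable near $\xi=0$ precisely when $2s+2-2n+n>0$, i.e. $s>\frac n2-1$. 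The high-frequency part is smooth and rapidly decaying because the singularity at the origin has been cut away. Alternatively, this case can be quoted from \cite{GloKisSch24}, where functions behaving like $|x|^{-1}$ at infinity are placed in the same intersection spaces.
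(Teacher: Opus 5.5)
Your argument is correct. For $f'$ and $V_f$ it is the paper's argument: the decay bounds from Lemmas \ref{Lemmaf} and \ref{Le:Potential} place these functions in $\dot H^{\lfloor s\rfloor}_r\cap\dot H^k_r(\R^n)$, using $\lfloor s\rfloor>\frac n2-2$, and interpolation then gives membership in $X_s^k(\R^n)$. It makes no difference that you interpolate between $\lfloor s\rfloor$ and $\lfloor s\rfloor+1$ rather than between $\lfloor s\rfloor$ and $k$.

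The routes differ only for $\mathbf f$. You correctly observe that, when $f(\infty)\neq0$, the $|x|^{-1}$ tail keeps $\mathbf f$ out of $\dot H^{\lfloor s\rfloor}(\R^n)$, so integer-order interpolation cannot work.

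\begin{itemize}
\item The paper feeds the symbol bounds $|\partial^\beta\mathbf f(x)|\lesssim\langle x\rangle^{-1-|\beta|}$ into Lemma 2.1 of \cite{DonSchWit2026}. In effect it invokes (a version of) the sharp criterion $a>\frac n2-s$, which you state but do not prove.
\item You instead use the extra structure $f(\rho)=f(\infty)+O(\rho^{-2})$ to split off the explicit tail $f(\infty)\chi_\infty|x|^{-1}$. The remainder has $a=3$ and is covered by your cruder criterion. You compute the tail's norm directly from $\widehat{|x|^{-1}}=c|\xi|^{1-n}$.
\end{itemize}

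The paper's route is shorter and needs only symbol-type bounds. Yours is self-contained and shows why the lower bound $s>\frac n2-1$ in \eqref{sk-range} is exactly what is needed; it is sharp whenever $f(\infty)\neq0$.

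One small point: your truncation argument for density relies on integer interpolation, so it does not apply to the tail term. Scaling fixes this. For large $R$, the cut-off tail $(1-\chi(\cdot/R))|\cdot|^{-1}$ equals $R^{-1}\phi(\cdot/R)$ with $\phi:=(1-\chi)|\cdot|^{-1}\in\dot H^s\cap\dot H^k$ by your Fourier computation. Its $\dot H^s$-norm is therefore $R^{\frac n2-1-s}\|\phi\|_{\dot H^s}\to0$, and similarly its $\dot H^k$-norm is $R^{\frac n2-1-k}\|\phi\|_{\dot H^k}\to0$.
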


\begin{proof}
By using the decay properties stated in Lemma \ref{Lemmaf} and Lemma \ref{Le:Potential} one can easily check that $f^{\prime}$ and $V_f$ belong to  $\dot{H}^{\lfloor s \rfloor}_r (\R^n) \cap \dot{H}^{k}_r(\R^n)$ since $\lfloor s \rfloor  \geq \lfloor \frac{n}{2} - 1 \rfloor  >  \frac{n}{2} - 2 $.  By the continuous embedding $\dot{H}^{\lfloor s \rfloor}_r (\R^n) \cap \dot{H}^{k}_r(\R^n) \hookrightarrow X_s^k(\R^n)$ the statement for $f^{\prime}$ and $V_f$ follows. To see that $\mathbf{f}$ belongs to $X_s^k(\R^n)$, we use the fact that $|\partial^{\beta} \mathbf{f}(x)| \lesssim \langle x \rangle^{-1-|\beta|}$ for all $\beta \in \N_0^{d}$ and apply Lemma $2.1$ from \cite{DonSchWit2026}.
\end{proof}

\begin{proposition}
The restrictions $L_0^X := L_0 \big|_{X_s^k(\R^n)}$ and $L_f^X := L_f \big|_{X_s^k(\R^n)}$ of the linear operators $L_0$ and $L_f$ to $X_s^k(\R^n)$ with domains $\mathcal{D}(L_0^X) = \mathcal{D}(L_f^X) = \{ u \in \mathcal{D}(L_f) \cap X_s^k(\R^n) : L_f u  \in X_s^k(\R^n) \}$, respectively, generate strongly continuous one-parameter semigroups of bounded linear operators $(S_0^X(\tau) )_{\tau \geq 0}$ and $(S_f^X(\tau) )_{\tau \geq 0}$ on $X_s^k(\R^n)$, that are given by the restrictions of $\left( S_0(\tau) \right)_{\tau \geq 0}$ and $\left( S_f(\tau) \right)_{\tau \geq 0}$ to $X_s^k(\R^n)$ respectively, i.e. $S_0^X(\tau) = S_0(\tau) \big|_{X_s^k(\R^n)}$ and $S_f^X(\tau) = S_f(\tau) \big|_{X_s^k(\R^n)}$ for all $\tau \geq 0$. Finally, the Schwartz functions $\mc S(\R^n)$ are a core for $L_f$.
\end{proposition}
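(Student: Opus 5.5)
Since $L_f = L_0 + L'_f$ with $L'_f$ acting as multiplication by $V_f$, I would start with the free semigroup and then treat $L'_f$ as a bounded perturbation, carried out on $X_s^k(\R^n)$ in parallel with the perturbation on $H$.

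\emph{Step 1: free semigroup on $X$.} Use the explicit formula \eqref{Freesemigroup}. Dilation by $e^{-\tau/2}$ scales the $\dot H^p$ seminorm by $e^{-\tau(p-n/2)/2}$, so including the prefactor $e^{-\tau/2}$ gives a factor $e^{-\tau(p+1-n/2)/2}$. Convolution with the heat kernel $G_{\alpha(\tau)}$ is a contraction on every $\dot H^p$, because its Fourier multiplier $e^{-\alpha(\tau)|\xi|^2}$ is bounded by $1$. Both operations preserve radial symmetry. Hence $S_0(\tau)$ maps $C^\infty_{c,r}$ into the radial Schwartz class, and $\|S_0(\tau)u\|_{X}\le C e^{\omega\tau}\|u\|_X$ for $u \in C^\infty_{c,r}(\R^n)$; it extends by density.

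For strong continuity at $\tau=0$, check it on Schwartz functions in Fourier variables using dominated convergence, then use the uniform bound on $[0,1]$ together with density. The semigroup law on $X$ is inherited from $H$, since $X\hookrightarrow H$ by Lemma \ref{embeddinglemma}. Let $A$ be the generator of this restricted semigroup. By the standard result on restricted semigroups (Engel--Nagel, II.2.3), $A$ is the part of $L_0$ in $X$, that is, $\mathcal D(A)=\{u\in\mathcal D(L_0)\cap X: L_0u\in X\}$. This gives the claim for $L_0^X$.

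\emph{Step 2: bounded perturbation.} Multiplication by $V_f$ is bounded on $X$ by the algebra property \eqref{Embedding} and Lemma \ref{Functions}, since $V_f\in X$. By the bounded perturbation theorem, $L_0^X+V_f$ with domain $\mathcal D(L_0^X)$ generates a $C_0$-semigroup $T(\tau)$ on $X$. This semigroup is given by the Dyson--Phillips series, or equivalently by the Duhamel formula
\[
T(\tau)u=S_0^X(\tau)u+\int_0^\tau S_0^X(\tau-\tau')V_fT(\tau')u\,d\tau'.
\]
On $H$, $S_f$ satisfies the same integral equation with $S_0$. Since $X\hookrightarrow H$ and the Volterra equation has a unique solution in $C([0,\tau_0],H)$, we get $T(\tau)=S_f(\tau)|_X$. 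The domain statement follows because $\mathcal D(L_f)=\mathcal D(L_0)$ in $H$, as $L'_f$ is bounded on $H$, so $\mathcal D(L_f^X)=\mathcal D(L_0^X)$ as given.

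\emph{Step 3: core.} I read the last claim as saying that radial Schwartz functions form a core for $L_f$ (and also for $L_f^X$). The radial Schwartz class is invariant under $S_0(\tau)$, since the semigroup maps it to itself by the formula. It lies in $\mathcal D(L_0^X)$ and is dense in $X$. The core theorem for invariant dense subspaces (Engel--Nagel, II.1.7) then shows it is a core for $L_0^X$, and likewise for $L_0$ on $H$. Because the perturbation is bounded, the graph norms of $L_f$ and $L_0$ are equivalent, so the same set is a core for $L_f$ and $L_f^X$.

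The main obstacle is Step 1. One has to handle the homogeneous norm with $s<n/2$ carefully and justify that the completion $X$ is really a subspace of $H$ on which these restrictions make sense; the embedding from Lemma \ref{embeddinglemma} is exactly what resolves this.
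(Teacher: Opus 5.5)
Your proposal is correct and follows essentially the same route as the paper. Both arguments use the explicit dilation/heat-kernel bound $\|S_0(\tau)u\|_{X_s^k(\R^n)}\le e^{\omega_0\tau}\|u\|_{X_s^k(\R^n)}$, Fourier-side dominated convergence with density, Engel--Nagel II.2.3 for the part of $L_0$ in $X_s^k(\R^n)$, the bounded perturbation theorem via the algebra property and $V_f\in X_s^k(\R^n)$, and the invariant dense subspace argument for the core. Your Duhamel/Volterra-uniqueness argument in $H$, which identifies the perturbed semigroup with $S_f(\tau)|_{X_s^k(\R^n)}$, fills in a step that the paper only asserts.
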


\begin{proof}
Let $\tau \geq 0$ and $u \in C^{\infty}_{c,r}(\R^n)$, then a straightforward calculation shows
\begin{align} \label{Estimatefreesemigroup}
\Vert S_0(\tau) u \Vert_{X_s^k(\R^n)} \leq e^{\omega_0 \tau} \Vert u \Vert_{X_s^k(\R^n)},
\end{align}
where $\omega_0 := \frac{1}{2} \left( \frac{n}{2}-1-s \right) <0$. Furthermore, since 
\begin{align*}
\mathcal{F}(S_0(\tau) u)(\cdot) = e^{\left(\frac{n}{2}-\frac{1}{2} \right) \tau} e^{(1-e^{\tau})|\cdot|^2} \mathcal{F}(u)(e^{\frac{\tau}{2}} \cdot),
\end{align*}
we find
\begin{align*}
\Vert |\cdot|^s (\mathcal{F}(S_0(\tau) u-u)) \Vert_{L^2(\R^n)} = \Vert |\cdot|^s (e^{(\frac{n}{2}-\frac{1}{2})\tau} e^{(1-e^{\tau})|\cdot|^2} \mathcal{F}(u)(e^{\frac{\tau}{2}} \cdot) - \mathcal{F}(u)) \Vert_{L^2(\R^n)}.
\end{align*}
By the Dominated Convergence Theorem we infer 
\begin{align*}
\lim_{\tau \rightarrow 0^+} \Vert S_0(\tau)u-u \Vert_{X_s^k(\R^n)} = 0.
\end{align*}
Density of $C^{\infty}_{c,r}(\R^n)$ in $X_s^k(\R^n)$ implies that $S_0(\tau)$ defines a strongly continuous semigroup on $X_s^k(\R^n)$. Since  $X_s^k(\R^n)$ is invariant under $\left( S_0(\tau) \right)_{\tau \geq 0}$ we find by Proposition 2.3, Chapter II, \cite{EngelNagel}, that $L_0^X := L_0 \big|_{X_s^k(\R^n)}$ with domain $\mathcal{D}(L_0^X) = \{ u \in \mathcal{D}(L_0) \cap X_s^k(\R^n) : L_0 u \in X_s^k(\R^n) \}$ generates the restricted semigroup $( S_0^X(\tau) )_{\tau \geq 0}$ in $X_s^k(\R^n)$, where $S_0^X(\tau) = S_0(\tau) \big|_{X_s^k(\R^n)}$. The  Bounded Perturbation Theorem (see, e.g., \cite{EngelNagel}, Chapter III, Theorem 1.3) implies that $L_f^X = L_0^X +L_f^{\prime} = (L_0 + L_f^{\prime}) \big|_{X_s^k(\R^n)}$  with $\mathcal{D}(L_f^X) = \mathcal{D}(L_0^X)$ generates the strongly continuous semigroup $(S_f^X(\tau))_{\tau \geq 0}$ on $X_s^k(\R^n)$, which is given by the restriction $S_f^X(\tau) = S_f(\tau) \big|_{X_s^k(\R^n)}$ for all $\tau \geq 0$. This follows since
\begin{align*}
\Vert L_f^{\prime} u \Vert_{X_s^k(\R^n)} = \Vert V_f u \Vert_{X_s^k(\R^n)} \lesssim \Vert V_f \Vert_{X_s^k(\R^n)} \Vert u \Vert_{X_s^k(\R^n)},
\end{align*}
for all $u \in X_s^k(\R^n)$ by \eqref{Embedding}, where $V_f \in X_s^k(\R^n)$ by Lemma \ref{Functions}.\\

The last statement follows from the fact that $\mc S(\R^n)$ is dense in $X_s^k(\R^n)$ and $S^X_0(\tau)$ leaves Schwartz functions invariant, hence $\mc S(\R^n)$ is a core for $L_0^X$. Since $L_f^X$ is a bounded perturbation of $L_0^X$, the space $\mc S(\R^n)$ is also a core for $L_f^X$.
\end{proof}

\begin{lemma} \label{Lemmacompactperturbation}
The semigroup $(S_f^X(\tau))_{\tau \geq 0}$ is a compact perturbation of the free semigroup $(S^X_0(\tau))_{\tau \geq 0}$, meaning that the right-hand side of the variantion of parameters formula
\begin{align*}
S_f^X(\tau) - S^X_0(\tau) = \int_0^{\tau} S_f^X(\tau - \tau^{\prime}) L'_f S^X_0(\tau^{\prime}) d \tau^{\prime},
\end{align*}
is a compact operator for each $\tau \geq 0$.
\end{lemma}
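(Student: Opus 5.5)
We will show that for each fixed $\tau' > 0$ the operator $L'_f S_0^X(\tau')$ is compact on $X_s^k(\R^n)$. Compactness of the integral then follows by standard facts, which we recall first.

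The map $\tau' \mapsto S_f^X(\tau-\tau') L'_f S_0^X(\tau')$ is strongly continuous and uniformly bounded on $[0,\tau]$. Indeed, $\|S_f^X(\tau-\tau')\| \lesssim_\tau 1$, $\|L'_f\| \lesssim \|V_f\|_{X_s^k}$ by \eqref{Embedding}, and $\|S_0^X(\tau')\| \le 1$ by \eqref{Estimatefreesemigroup}. If each integrand for $\tau'>0$ is compact and the integrand depends continuously on $\tau'$ in operator norm on $(0,\tau]$, then the integral over $[\delta,\tau]$ is a norm limit of Riemann sums of compact operators, hence compact. The remaining piece over $[0,\delta]$ has operator norm $\lesssim \delta$. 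Compact operators form a closed subspace, so the whole integral is compact.

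Norm continuity on $(0,\tau]$ needs care, since the outer semigroup $S_f^X$ is only strongly continuous. To avoid this, we use the strong integral together with the following fact: if $K(\tau')$ is compact for each $\tau'$ and $\tau'\mapsto K(\tau')$ is norm continuous, then $S_f^X(\tau-\tau')K(\tau')$ is norm continuous. This holds because strongly continuous, uniformly bounded families converge uniformly on compact sets, and the images of the unit ball under $K(\tau')$, $\tau' \in [\delta,\tau]$, lie in a common compact set. So it suffices to show that $K(\tau') := L'_f S_0^X(\tau')$ is compact and norm continuous for $\tau' > 0$.

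Compactness of $K(\tau')$ is the main step. We write $S_0(\tau')u$ via its Fourier representation $e^{(\frac n2-\frac12)\tau'}e^{(1-e^{\tau'})|\xi|^2}\hat u(e^{\tau'/2}\xi)$. The Gaussian factor $e^{-(e^{\tau'}-1)|\xi|^2}$ shows that $S_0^X(\tau')$ is bounded from $X_s^k$ into $\dot H^{s}\cap\dot H^{m}$ for every $m\ge k$. In particular it gains one extra derivative, mapping into $\dot H^s \cap \dot H^{k+1}$ with norm $\lesssim_{\tau'} 1$.

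Next, multiplication by $V_f$ gains decay. By Lemma \ref{Le:Potential}, $|\partial^\beta V_f(x)| \lesssim \langle x\rangle^{-2-|\beta|}$. We then argue by Rellich on balls plus tail smallness. Take a bounded sequence $u_j$ in $X_s^k$, so that $g_j := S_0(\tau')u_j$ is bounded in $\dot H^s\cap\dot H^{k+1}$. For a cutoff $\chi_R$ at scale $R$:
\begin{itemize}
\item $\chi_R V_f g_j$ is bounded in $H^{k+1}(B_{2R})$, using the $L^\infty$ bound from Lemma \ref{LemmaEmbedding} for the low derivatives. Rellich then gives a subsequence converging in $H^k$, hence in $X_s^k$, since $s<k$ and the support is compact.
\item The tail $(1-\chi_R)V_f g_j$ has $X_s^k$ norm $\lesssim R^{-\epsilon}$ uniformly in $j$, by the decay $\langle x\rangle^{-2}$ of $V_f$ and its derivatives.
\end{itemize}
The tail estimate in the fractional, homogeneous norm $\dot H^s$ is the expected obstacle. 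We would handle it by interpolating between $\dot H^{\lfloor s\rfloor}$ and $\dot H^{\lceil s\rceil}$, and applying Hardy's inequality to weights such as $|x|^{-1}$, which is valid because $\lceil s \rceil < n/2$ for our range of $s$. This bounds the commutator terms by $\sup_{|x|>R}\langle x\rangle^{-1}\lesssim R^{-1}$, with the remaining factor $|x|^{-1}$ absorbed by Hardy. A diagonal argument in $R$ then yields a convergent subsequence.

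Finally, norm continuity of $\tau'\mapsto K(\tau')$ on $(0,\tau]$ follows from the same splitting. In the Fourier representation of $S_0^X(\tau'_1)-S_0^X(\tau'_2)$ acting into the smoother space, the multiplier and dilation differences are small uniformly on bounded sets, modulo the tail argument just described.
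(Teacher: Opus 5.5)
Your outline is sound and would prove the lemma, but it differs from the paper's argument in two places. One justification in your tail estimate is also wrong as stated.

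\textbf{Integrand versus integral.} To pass from the integrand to the integral, the paper invokes \cite{EngelNagel}, Theorem C.7: the strong integral of a strongly continuous family that is compact for each $\tau'\in(0,\tau)$ is compact. So the paper only needs compactness of $L_f'S_0^X(\tau')$ for fixed $\tau'$. Your Riemann-sum route additionally requires norm continuity of $\tau'\mapsto L_f'S_0^X(\tau')$ on $(0,\tau]$. This is true, but not via the Fourier representation as you suggest. The dilation $\hat u(e^{\tau'/2}\xi)$ moves mass at radius $R$ by roughly $R|\tau_1'-\tau_2'|$, so $S_0^X$ itself is not norm continuous. Smallness appears only after the cutoff $\chi_R$, e.g.\ from the Gaussian-kernel bound $|\partial_x^\gamma (S_0(\tau_1')-S_0(\tau_2'))u|\lesssim_{R,\delta}|\tau_1'-\tau_2'|\,\|u\|_{L^\infty}$ on $B_{2R}$ together with $X_s^k\hookrightarrow L^\infty$, combined with your tail bound. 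So this detour costs an extra lemma the paper does not need.

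\textbf{Compactness of $L_f'S_0^X(\tau')$.} The mechanism is the same as in the paper: one derivative is gained from $S_0^X(\tau')$, local compactness holds (your Rellich, the paper's Kolmogorov--Riesz-type criterion from \cite{HanOls2010} applied to $\partial^\alpha(V_f S_0^X(\tau')u)$ with $|\alpha|\in\{\lfloor s\rfloor,k\}$), and tails decay uniformly by $|\partial^\beta V_f|\lesssim\langle x\rangle^{-2-|\beta|}$. The paper, however, never touches the fractional norm. It proves relative compactness in $\dot H^{\lfloor s\rfloor}_r\cap\dot H^k_r$ and uses the continuous embedding of this space into $X_s^k$. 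Low-order tails are controlled by weighted radial $L^\infty$ bounds, with exponents $\gamma$ chosen case by case as in \eqref{Gamma1}, \eqref{Gamma2}.

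\textbf{The flawed point.} Your claim $\lceil s\rceil<\frac n2$ fails for $n\in\{6,8\}$, i.e.\ $d\in\{4,6\}$. There $s$ lies just above the integer $\frac n2-1$, so $\lceil s\rceil=\frac n2$. More generally, a fixed Hardy weight $|x|^{-1-|\beta|}$ is inadmissible once $1+|\beta|\ge \frac n2$, which happens when most derivatives fall on $V_f$. The tail estimate is still true if you choose the Hardy exponent term by term. For $\partial^\beta((1-\chi_R)V_f)\,\partial^{\alpha-\beta}g$ with $|\alpha-\beta|<s$, take $a\in[s-|\alpha-\beta|,\min\{\frac n2,2+|\beta|\})$. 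This interval is nonempty because $s<\frac n2$ and $|\alpha|\ge\lfloor s\rfloor$. It gives the bound $R^{a-2-|\beta|}\|g\|_{\dot H^{|\alpha-\beta|+a}}$ with $|\alpha-\beta|+a\in[s,k+1]$. For $\beta=\alpha$ you can instead use $g\in L^\infty$ and $\langle x\rangle^{-2-|\alpha|}\in L^2(\{|x|>R\})$, exactly as the paper does. Interpolating between $\dot H^{\lfloor s\rfloor}$ and $\dot H^k$, rather than bringing in $\dot H^{\lceil s\rceil}$, also simplifies the bookkeeping.
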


\begin{proof}
The proof is based on a variant of the Rellich-Kondrachov Theorem. The details are provided in Appendix \ref{App1}.
\end{proof}

With this result at hand we are able to prove that there exist at most a finite number of unstable eigenvalues of the linear operator.

\begin{proposition} \label{K0}
The set $K_f := \sigma(L_f^X) \cap \{ \lambda \in \C : \Re(\lambda) \geq 0 \}$ consists of finitely many eigenvalues of $L_f^X$, all of which are real and have finite algebraic multiplicity.
\end{proposition}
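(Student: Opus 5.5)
Since $S_f^X(\tau)-S_0^X(\tau)$ is compact by Lemma \ref{Lemmacompactperturbation}, the plan is to compare essential growth bounds of the two semigroups and then use that $X_s^k(\R^n)\hookrightarrow H$ to inherit reality of eigenvalues from the self-adjoint problem.

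\textbf{Step 1 (essential growth bound).} By \eqref{Estimatefreesemigroup}, $\|S_0^X(\tau)\|\le e^{\omega_0\tau}$ with $\omega_0<0$. The essential norm $\|T\|_{\mathrm{ess}}$ (distance to the compact operators) is unchanged by compact perturbations. Lemma \ref{Lemmacompactperturbation} therefore gives, for every $\tau\ge 0$,
\[
\|S_f^X(\tau)\|_{\mathrm{ess}}=\|S_0^X(\tau)\|_{\mathrm{ess}}\le e^{\omega_0\tau}.
\]
Hence the essential growth bound satisfies $\omega_{\mathrm{ess}}(S_f^X)\le\omega_0<0$. By the standard result (Engel--Nagel, Chapter IV, Corollary 2.11), the set $\sigma(L_f^X)\cap\{\Re\lambda>\omega_0\}$ is finite. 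Moreover, each of its points is a pole of the resolvent whose Riesz projection has finite rank, so it is an eigenvalue of finite algebraic multiplicity. In particular, $K_f$ is finite and consists of such eigenvalues.

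\textbf{Step 2 (reality).} Let $\lambda\in K_f$ with eigenfunction $u\in\mathcal D(L_f^X)$, so $L_f^X u=\lambda u$. By the domain description in the preceding Proposition, $u\in\mathcal D(L_f)$ and $L_f u=\lambda u$ in $H$. Since $u\ne0$ and the embedding into $H$ is injective, $\lambda$ is an eigenvalue of the self-adjoint operator $L_f$ from Proposition \ref{Prop:LinEvol_Lsigma}, and therefore $\lambda\in\R$.

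\textbf{Main obstacle.} The delicate point is Step 1: one has to justify that the essential growth bound is controlled by the essential norm of the semigroup at a fixed time. Concretely, this uses $\omega_{\mathrm{ess}}=\lim_{\tau\to\infty}\tau^{-1}\log\|S(\tau)\|_{\mathrm{ess}}$ together with the equality of the essential norms established above. One then has to invoke the spectral decomposition relating $\sigma(S(\tau))$ outside the essential spectral radius to isolated eigenvalues of the generator with finite-rank spectral projections. All of this is available in Engel--Nagel, so the proof reduces to citing those results carefully.
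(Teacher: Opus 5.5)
Your proposal is correct and follows essentially the route the paper takes via the cited Proposition 3.10 of \cite{GloKisSch26}. Compactness of $S_f^X(\tau)-S_0^X(\tau)$ gives $\omega_{\mathrm{ess}}(S_f^X)\le\omega_0<0$. Engel--Nagel IV.2.11 then yields finitely many eigenvalues with finite-rank Riesz projections in $\{\Re\lambda\ge 0\}$, and reality is inherited from the self-adjoint $L_f$ on $H$ through the restriction structure.

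One small precision: Corollary IV.2.11 gives finiteness of $\sigma(L_f^X)\cap\{\Re\lambda\ge w\}$ for each fixed $w>\omega_{\mathrm{ess}}$. It does not give finiteness of all of $\sigma(L_f^X)\cap\{\Re\lambda>\omega_0\}$, but taking $w=0$ is all you need.
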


\begin{proof}
See Proposition 3.10 in \cite{GloKisSch26}.
\end{proof}

To prove stability of a shrinking solution $F$ with profile $f$ we need to show exponential decay of the semigroup $(S_f^X(\tau))_{\tau \geq 0}$ in $X_s^k(\R^n)$ on a stable subspace. To define a stable subspace we consider for each unstable eigenvalue $\lambda \in K_f$ its Riesz-projection
\begin{align*}
P_{\lambda} := \frac{1}{2 \pi i} \int_{\gamma_{\lambda}} R_{L_f^X}(\lambda^{\prime}) d \lambda^{\prime},
\end{align*}
where $\gamma_{\lambda}$ is a positively oriented circle centered at $\lambda$, contained in $\rho(L_f^X)$ and which, apart from $\lambda$, contains no other spectral points in its interior.\\
For each $d \in \{3,4,5,6\}$ and each shrinker profile $f$ there exists $N_f \in \N$ with $K_f = \{\lambda_j \}_{j=1}^{N_f} \subset \mathbb R$. Furthermore, we can choose $\{ u_j^{i} \}_{i=1}^{N_j}$ with $u_j^{i}$ real valued, as a basis of the corresponding finite dimensional eigenspace to the eigenvalue $\lambda_j$. By definition of the operator $L_f^X$ as the restriction of $L_f$ to $X_s^k(\R^n)$ and the structure of the spectrum of $L_f^X$ it follows that each Riesz projection can be expressed as 
\begin{align*}
P_{\lambda_j} u = \sum_{i=1}^{N_j} P_j^{i} u, \quad \text{with} \quad P_j^{i} u = \alpha_j^{i} \langle u, u_j^{i} \rangle_{L^2_{\sigma}(\R^n)} u_j^{i}, \quad \alpha_j^{i} \in \R \setminus \{ 0 \},
\end{align*}
for $u \in X_{s}^k(\R^n)$. In particular,
\begin{align} \label{ProjectionPj}
\text{rg}(P_{\lambda_j}) = \text{span}(u_j^1,...,u_j^{N_j}),
\end{align}
for all $j \in \{ 1,...,N_f \}$. Furthermore, the projections are mutually transversal with $P_{\lambda_j} P_{\lambda_l} =0$ for $j \neq l$.
In the following we define the projection onto all unstable directions by
\begin{align}
P_f := \sum_{j=1}^{N_f} \sum_{i=1}^{N_j} P_j^{i}.
\end{align}

\begin{proposition}
Let $f_0$ be the shrinker profile from Theorem \ref{Th:Existence_f0} and $L_{f_0}^X = L_0^X + L_{f_0}^{\prime}$ the corresponding linear operator. Then we have $K_{f_0} = \{ 1 \}$, where the geometric and algebraic eigenspaces corresponding to the eigenvalue $\lambda = 1$ coincide, are one-dimensional and spanned by $f_0^{\prime}$.
\end{proposition}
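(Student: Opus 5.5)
Since $K_{f_0} = \sigma(L_{f_0}^X) \cap \{\Re\lambda \geq 0\}$ consists of finitely many eigenvalues by Proposition \ref{K0}, the plan is to transfer information between $X_s^k(\R^n)$ and $H$. The key observation is that every eigenfunction of $L_{f_0}^X$ is an eigenfunction of $L_{f_0}$ on $H$. Indeed, $\mathcal{D}(L_{f_0}^X) \subset \mathcal{D}(L_{f_0})$ and $L_{f_0}^X$ is the restriction of $L_{f_0}$, so if $\lambda \in K_{f_0}$ and $u \in \mathcal{D}(L_{f_0}^X)\setminus\{0\}$ with $(\lambda - L_{f_0}^X)u = 0$, then $u \in H \setminus \{0\}$ by the embedding $X_s^k(\R^n) \hookrightarrow H$ of Lemma \ref{embeddinglemma}. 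Hence $\lambda \in \sigma(L_{f_0})$. Theorem \ref{Th:Existence_f0} gives $\sigma(L_{f_0}) \cap \{\Re\lambda\geq 0\} \subset \{1\}$, so $K_{f_0} \subset \{1\}$.

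For the reverse inclusion, I will use Lemma \ref{Functions}: $f_0' \in X_s^k(\R^n)$. Differentiating the profile equation (time-translation symmetry) gives $L_{f_0} f_0' = f_0'$ pointwise. Since $f_0'$ is smooth with the decay of Lemma \ref{Lemmaf}, and $\mathcal{S}(\R^n)$ is a core, I will check that $f_0' \in \mathcal{D}(L_{f_0})$ and $L_{f_0} f_0' = f_0' \in X_s^k(\R^n)$. Hence $f_0' \in \mathcal{D}(L_{f_0}^X)$ and $1 \in K_{f_0}$. The geometric eigenspace of $L^X_{f_0}$ at $1$ injects into that of $L_{f_0}$, which is one-dimensional by Theorem \ref{Th:Existence_f0}. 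Therefore it is exactly $\operatorname{span}(f_0')$.

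The main obstacle is the algebraic multiplicity, i.e.\ excluding Jordan chains. I will first argue that $1$ is a pole of the resolvent of finite order, which follows from finite algebraic multiplicity (Proposition \ref{K0}). Then the algebraic eigenspace is $\ker(1-L_{f_0}^X)^m$ for some $m$. Suppose $u \in \mathcal{D}((L^X_{f_0})^2)$ satisfies $(1-L_{f_0}^X)u = c f_0'$. Viewing this in $H$, take the $H$-inner product with $f_0' \in \mathcal{D}(L_{f_0})$ and use self-adjointness of $L_{f_0}$ on $H$ (Proposition \ref{Prop:LinEvol_Lsigma}):
\[
c\,\|f_0'\|_H^2 = \langle (1-L_{f_0})u, f_0'\rangle_H = \langle u, (1-L_{f_0}) f_0'\rangle_H = 0 .
\]
This gives $c=0$, so $u \in \ker(1-L^X_{f_0}) = \operatorname{span}(f_0')$. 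Inductively, $\ker(1-L_{f_0}^X)^m = \ker(1-L_{f_0}^X)$, so the algebraic and geometric eigenspaces coincide and are spanned by $f_0'$.

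If needed, I will also note that the range of the Riesz projection $P_1$ equals the algebraic eigenspace, which is consistent with \eqref{ProjectionPj} for $N_{f_0}=1$, $N_1 = 1$ and $u_1^1 = f_0'$.
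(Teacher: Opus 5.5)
Your proof is correct. For the inclusions $K_{f_0}\subset\{1\}$ and $1\in K_{f_0}$, and for the one-dimensionality of the geometric eigenspace, it is the paper's argument. $L^X_{f_0}$ is the part of $L_{f_0}$ in $X_s^k(\R^n)\hookrightarrow H$, so eigenfunctions of $L^X_{f_0}$ are eigenfunctions of $L_{f_0}$. Conversely, $f_0'\in X_s^k(\R^n)$ by Lemma \ref{Functions}.

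The two proofs differ on the algebraic multiplicity. The paper simply invokes \eqref{ProjectionPj}. That is the structural description, stated after Proposition \ref{K0} with only a brief justification, of the Riesz projections of $L^X_f$ as sums of rank-one maps $u\mapsto \alpha^i_j\langle u,u^i_j\rangle_H u^i_j$ onto eigenfunctions. You instead rule out Jordan chains directly. You use that $1$ is a pole of finite order with $\rg P_1=\ker(1-L^X_{f_0})^m$, and that symmetry of $L_{f_0}$ on $H$ forces $c\|f_0'\|_H^2=0$ whenever $(1-L^X_{f_0})u=cf_0'$.

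Your argument is self-contained and works verbatim for any shrinker $f$ and any real eigenvalue, so it supplies the justification behind \eqref{ProjectionPj}. The paper's formulation gives something extra: the explicit form of $P_1$. It is the restriction to $X_s^k(\R^n)$ of the $H$-Riesz projection, since $R_{L^X_{f_0}}(\lambda)=R_{L_{f_0}}(\lambda)|_{X_s^k(\R^n)}$ on a small circle around $1$. This explicit form is what is used later in \eqref{scalarproduct}.

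One minor point. $f_0'\in\mathcal D(L_{f_0})$ is already contained in Theorem \ref{Th:Existence_f0}. The fact that $\mathcal S(\R^n)$ is a core would not give it by itself, since $f_0'$ decays only polynomially. If you want to check it directly, use cutoffs $\chi_R f_0'$ and closedness of $L_{f_0}$.
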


\begin{proof}
By Proposition \ref{K0} it follows that $K_{f_0}$ consists of finitely many eigenvalues of $L_{f_0}^X$, all of which have finite algebraic multiplicity. By Lemma \ref{Functions} we know that $f_0^{\prime}$ belongs to $X_s^k(\R^n)$ and since $f_0^{\prime}$ is an eigenfunction of $L_{f_0}$ by Theorem \ref{Th:Existence_f0} it follows that $f_0^{\prime} \in \mathcal{D}(L^X_f)$ with $L_{f_0}^X f_0 = f_0$. This shows $\{ 1 \} \subseteq K_{f_0}$.
To prove the opposite inclusion let $\lambda \in \sigma_p(L_{f_0}^X) \cap \{ \lambda \in \C : \Re(\lambda) \geq 0 \}$, then $\lambda$ is also an eigenvalue of $L_{f_0}$. By Theorem \ref{Th:Existence_f0} we infer $\lambda =1$. That the geometric and algebraic eigenspaces coincide and are one-dimensional follows from \eqref{ProjectionPj} and Theorem \ref{Th:Existence_f0}.
\end{proof}

The following result will be one of the main ingredients to prove nonlinear stability of a shrinking solution.

\begin{proposition} \label{Semigroupdec}
For all $d \in \{3,4,5,6 \}$ and every shrinker profile $f$ there exists $\nu > 0$ and $C \geq 1$, such that
\begin{align} \label{Semigroupdecay}
\Vert S_f^X(\tau)(1-P_f)u \Vert_{X_s^k(\R^n)} \leq C e^{-\nu \tau} \Vert (1-P_f)u \Vert_{X_s^k(\R^n)},
\end{align}
for all $\tau \geq 0$ and $u \in X_s^k(\R^n)$.
\end{proposition}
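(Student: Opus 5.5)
The plan is to deduce the decay estimate from spectral information via compactness. By Lemma \ref{Lemmacompactperturbation}, for each $\tau>0$ the difference $S_f^X(\tau)-S_0^X(\tau)$ is compact. By \eqref{Estimatefreesemigroup}, the free semigroup satisfies $\|S_0^X(\tau)\|\le e^{\omega_0\tau}$ with $\omega_0<0$. Hence the essential growth bound satisfies $\omega_{\mathrm{ess}}(S_f^X)\le\omega_{\mathrm{ess}}(S_0^X)\le\omega_0<0$, because the essential spectral radius is invariant under compact perturbations (Engel--Nagel, Chapter IV, Proposition 2.12). The general growth bound formula then gives
\[
\omega_0(S_f^X)=\max\{\omega_{\mathrm{ess}}(S_f^X),\, s(L_f^X)\}.
\]
Moreover, for every $w>\omega_{\mathrm{ess}}$, the set $\sigma(L_f^X)\cap\{\Re\lambda\ge w\}$ is finite and consists of eigenvalues of finite algebraic multiplicity. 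This agrees with Proposition \ref{K0}.

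Next restrict to the stable subspace. The projection $P_f$ is the sum of the Riesz projections of the finitely many eigenvalues in $K_f$. It commutes with $S_f^X(\tau)$ and $L_f^X$, so $\ker P_f=\operatorname{rg}(1-P_f)$ is a closed invariant subspace. The part $L_s$ of $L_f^X$ in $\ker P_f$ generates the restriction $S_f^X(\tau)|_{\ker P_f}$, and its spectrum is $\sigma(L_f^X)\setminus K_f\subset\{\Re\lambda<0\}$. The restricted semigroup is still a compact perturbation of a semigroup with negative growth bound. Indeed, $S_f^X(\tau)(1-P_f)=S_0^X(\tau)(1-P_f)+\text{compact}$, and $\|S_0^X(\tau)(1-P_f)\|\le C e^{\omega_0\tau}$. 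It follows that $\omega_{\mathrm{ess}}$ of the restriction is at most $\omega_0$.

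Fix $w\in(\omega_0,0)$. Only finitely many eigenvalues of $L_s$ lie in $\{\Re\lambda\ge w\}$, and all of them have negative real part. Hence $s(L_s)<0$, and so
\[
\omega_0(S_f^X|_{\ker P_f})=\max\{\omega_{\mathrm{ess}},s(L_s)\}<0.
\]
Choose $\nu$ with $0<\nu<-\omega_0(S_f^X|_{\ker P_f})$. The definition of the growth bound then yields $\|S_f^X(\tau)v\|\le Ce^{-\nu\tau}\|v\|$ for $v\in\ker P_f$. Applying this with $v=(1-P_f)u$ gives \eqref{Semigroupdecay}.

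The main technical points are as follows. The first is the compactness from Lemma \ref{Lemmacompactperturbation}, which we take as given. The second is to ensure that the growth bound theorem applies in its essential-spectrum form rather than requiring a spectral mapping theorem. That form is exactly $\omega_0=\max\{\omega_{\mathrm{ess}},s(A)\}$, valid for any $C_0$-semigroup, so no eventual norm continuity is needed. One must also check that the essential growth bound of the restricted semigroup is controlled. To do so, write the restricted operator as $S_0^X(\tau)|_{\ker P_f}$ composed appropriately, plus a compact operator. Alternatively, argue directly on the whole space, where $r_{\mathrm{ess}}(S_f^X(\tau))\le e^{\omega_0\tau}<1$. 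Then the part of $\sigma(S_f^X(1))$ outside the disk of radius $e^{w}$ consists of finitely many eigenvalues, which correspond to $e^{\lambda}$ for $\lambda\in K_f$ and to eigenvalues with $\Re\lambda\in[w,0)$. After removing $K_f$ through $1-P_f$, the spectral radius of $S_f^X(1)(1-P_f)$ is strictly less than $1$, and the standard argument gives exponential decay.
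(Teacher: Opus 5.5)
Your proposal is correct and follows essentially the same route as the paper, which defers the details to \cite{GloKisSch26}, Proposition~3.8. The compactness of $S_f^X(\tau)-S_0^X(\tau)$ from Lemma~\ref{Lemmacompactperturbation}, together with $\|S_0^X(\tau)\|\le e^{\omega_0\tau}$ and $\omega_0<0$, gives a negative essential growth bound, and the formula (growth bound $=\max\{\omega_{\mathrm{ess}},s(\cdot)\}$) applied to the part of $L_f^X$ in $\ker P_f$, whose spectrum is $\sigma(L_f^X)\setminus K_f$, then yields the decay; the one step worth writing out is the one you flag, namely $S_f^X(\tau)|_{\ker P_f}=(1-P_f)S_0^X(\tau)|_{\ker P_f}+(1-P_f)\bigl(S_f^X(\tau)-S_0^X(\tau)\bigr)|_{\ker P_f}$, so that both pieces act on $\ker P_f$ and the essential growth bound of the restriction is at most $\omega_0$.
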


\begin{proof}
The proof is a direct consequence of the decomposition of $X_s^k(\R^n)$ into $\ker(P_f)$ and $\rg(P_f)$. A detailed proof can be found in \cite{GloKisSch26}, Proposition 3.8.
\end{proof}

\subsection{The nonlinear time evolution}
From now on, we restrict ourselves to real valued functions and use again the symbol $X_s^k(\R^n)$ to denote the corresponding closed subspace of $X_s^k(\R^n)$.
We recall the definition of the nonlinear operator for a suitable function $\varphi : \R^n \rightarrow \R$
\begin{align*}
[N(\varphi)](\xi) = \frac{n-3}{|\xi|^3} ( \eta^{\prime}(f(|\xi|)) |\xi| \varphi (\xi) + \eta(f(|\xi|))- \eta(f(|\xi|)+|\xi| \varphi (\xi))),
\end{align*}
for $\xi \in \R^n$ and $\eta(z) = \frac{1}{2} \sin(2 z)$ for $z \in \R$. By the following result we infer that $N$ is locally Lipschitz continuous on $X_s^k(\R^n)$ which is necessary to construct strong solutions to our problem.

\begin{lemma}
The nonlinearity $N$ given by \eqref{Nonlinearity} extends to a map $N: X_s^k(\R^n) \rightarrow X_s^k(\R^n)$ satisfying 

\begin{align*}
\Vert N(\varphi) - N(\phi) \Vert_{X_s^k(\R^n)} \leq \gamma(\Vert \varphi \Vert_{X_s^k(\R^n)}, \Vert \phi \Vert_{X_s^k(\R^n)} )(\Vert \varphi \Vert_{X_s^k(\R^n)} + \Vert \phi \Vert_{X_s^k(\R^n)}) \Vert \varphi - \phi \Vert_{X_s^k(\R^n)},
\end{align*}

for all $\varphi, \phi \in X_s^k(\R^n)$, where $\gamma : [0, \infty) \times [0,\infty) \rightarrow [0, \infty)$ is a continuous function.
\end{lemma}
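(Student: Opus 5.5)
We plan to rewrite the nonlinearity in a form where the singular factor $|\xi|^{-3}$ is absorbed into smooth functions, and then apply the algebra property of Lemma \ref{embeddinglemma} together with Schauder-type composition estimates from \cite{Glo2025}. Write $a(\xi):=f(|\xi|)$ and $z:=|\xi|\varphi$. Taylor's formula with integral remainder gives
\[
\eta(a)+\eta'(a) z-\eta(a+z) = -z^2\int_0^1 (1-t)\,\eta''(a+tz)\,dt ,
\]
so that
\[
[N(\varphi)](\xi) = -\frac{n-3}{|\xi|}\,\varphi(\xi)^2 \int_0^1 (1-t)\,\eta''\bigl(f(|\xi|)+t|\xi|\varphi(\xi)\bigr)\,dt .
\]
Since $\eta''(z)=-2\sin(2z)$, we can expand $\sin(2a+2tz)=\sin(2a)\cos(2tz)+\cos(2a)\sin(2tz)$. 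Here $\sin(2f(|\xi|))/|\xi|$ is smooth, and by Lemma \ref{Lemmaf} it is bounded with all its derivatives decaying. Also $\sin(2t|\xi|\varphi)/|\xi| = 2t\varphi\, G(t^2|\xi|^2\varphi^2)$ with $G$ entire and even. Hence $N(\varphi)$ is $\varphi^2$ multiplied by a sum of terms of the form
\[
m(\xi)\cdot \Phi\bigl(|\xi|\varphi(\xi)\bigr) \quad\text{or}\quad m(\xi)\,\varphi(\xi)\,\Psi\bigl(|\xi|^2\varphi(\xi)^2\bigr),
\]
where $m\in C^\infty_b$ is radial with symbol-type decay $|\partial^\beta m|\lesssim\langle\xi\rangle^{-1-|\beta|}$ and $\Phi,\Psi$ are smooth bounded functions.

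The argument then proceeds in three steps. First, multiplication by a smooth radial symbol $m$ of order $0$ is bounded on $X_s^k(\R^n)$. For this we use the mapping properties from \cite{DonSchWit2026}, as in Lemma \ref{Functions}, or simply the fact that $m-m(\infty)$ lies in $X_s^k$ plus the algebra property. Second, the composition $\varphi\mapsto \Phi(|\xi|\varphi)$, which is unbounded in $|\xi|$, is controlled. Here the real difficulty is that $|\xi|\varphi$ need not decay, so we do not place it in $X_s^k$. Instead we invoke the estimate from \cite{Glo2025}: for radial $\varphi\in X_s^k$ and smooth $\Phi$ with bounded derivatives,
\[
\|g\,\Phi(|\cdot|\varphi)\|_{X_s^k}\le \gamma(\|\varphi\|_{X_s^k})\,\|g\|_{X_s^k}.
\]
This is the main obstacle. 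If we had to prove it ourselves, we would distribute derivatives via Faà di Bruno. Each derivative on $|\xi|\varphi$ produces $\partial^j(|\xi|\varphi)$, and the extra weight $|\xi|$ is compensated by the radial decay $|\partial^j\varphi(\xi)|\lesssim|\xi|^{s-n/2-j}\|\varphi\|$ for radial Sobolev functions (Strauss-type). The condition $s<\frac n2-1+\frac1{2(n-2)}$ would enter exactly here. For the $\dot H^s$ part we would use a fractional Leibniz and chain rule \cite{GraOh14} together with an interpolation between $\dot H^{\lfloor s\rfloor}$ and $\dot H^{\lceil s\rceil}$.

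Third, with these tools, $N(\varphi)=\varphi^2 R(\varphi)$ with $\|R(\varphi)\|$ bounded by $\gamma(\|\varphi\|)$. For the difference we write
\[
N(\varphi)-N(\phi)=(\varphi-\phi)(\varphi+\phi)R(\varphi)+\phi^2\bigl(R(\varphi)-R(\phi)\bigr),
\]
and treat $R(\varphi)-R(\phi)$ through the fundamental theorem of calculus in the argument of $\Phi$. This produces a factor $|\xi|(\varphi-\phi)$, and the extra $|\xi|$ is absorbed by one factor $\phi$ against the $|\xi|^{-1}$ weight already present, again via \cite{Glo2025}. The algebra property \eqref{Embedding} then yields the stated bound with the prefactor $\|\varphi\|+\|\phi\|$. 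Finally, all these identities hold for $\varphi,\phi\in C^\infty_{c,r}$, so $N$ extends to $X_s^k$ by density and the Lipschitz bound.
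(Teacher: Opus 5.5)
Your overall plan is the right one: cancel the weight $|\xi|^{-3}$ against powers of $|\xi|$ produced by Taylor expansion, then use the Schauder-type estimate of \cite{Glo2025}. The gap is the estimate you put at the centre of Step 2, namely $\|g\,\Phi(|\cdot|\varphi)\|_{X_s^k}\le\gamma(\|\varphi\|_{X_s^k})\|g\|_{X_s^k}$ for a single $g\in X_s^k(\R^n)$ and any smooth $\Phi$ with bounded derivatives. This is false as stated, and it is not what \cite{Glo2025} provides.

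First, $\Phi$ must be even. Otherwise $\Phi(|\xi|\varphi)$ has a conical singularity at the origin (e.g.\ $g\sin(|\xi|\varphi)\approx g(0)\varphi(0)|\xi|$ near $\xi=0$), which is not locally in $\dot H^{n+1}$.

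Second, and more seriously, one factor of decay is not enough. The top-order part of $\partial^k[g\,\Phi(|\xi|\varphi)]$ is $g\,\Phi'(|\xi|\varphi)\,|\xi|\,\partial^k\varphi$. Since $\partial^k\varphi$ is only in $L^2$ and has no pointwise (Strauss-type) decay, the weight $|\xi|$ must be absorbed by $g$. But $g$ decays only like $|\xi|^{s-\frac n2}$, and $s-\frac n2+1>0$. Concretely, take $g_\lambda=\lambda^{s-\frac n2}\psi(\cdot/\lambda)$ with $\psi\in C^\infty_{c,r}(\R^n)$, $\psi\equiv1$ near the unit sphere, and $\varphi_\lambda=g_\lambda+\lambda^{-\frac{n-1}{2}}\chi(|\cdot|-\lambda)$ with a fixed bump $\chi$. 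Both are bounded in $X_s^k(\R^n)$. On the shell $\lambda-1\le|\xi|\le\lambda+1$, however, the term above has $L^2$-norm of order $\lambda^{s-\frac n2+1}|\sin(\lambda^{s-\frac n2+1})|$, while all other terms there are $o(1)$. Hence $\|g_\lambda\cos(|\cdot|\varphi_\lambda)\|_{\dot H^k}$ is unbounded along a sequence $\lambda\to\infty$. This is exactly why \eqref{Eq:Schauder} is trilinear: the product $u_1u_2u_3$ decays like $|\xi|^{3(s-\frac n2)}$, which absorbs the slow growth of $|\xi|v$.

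The terms you actually produce for $N(\varphi)$ are, up to constants and the $t$-integration, $\varphi^2\,\frac{\sin 2f(|\xi|)}{|\xi|}\cos(2t|\xi|\varphi)$ and $\cos(2f(|\xi|))\,\varphi^3\,\mathrm{sinc}(2t|\xi|\varphi)$, where $\mathrm{sinc}(z)=\sin z/z$. These do contain three decaying factors and an even outer function, so they can be treated. But only by feeding the whole product into the trilinear estimate, not by first bounding $R(\varphi)$ (in a norm you leave unspecified) and reattaching $\varphi^2$ via \eqref{Embedding}.

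For the difference there is a further problem you do not address. Applying the fundamental theorem of calculus to the even $\Phi=\cos(2t\,\cdot)$ produces the odd $\Phi'$. Your first-type contribution then becomes $\phi^2(\varphi-\phi)\sin(2f(|\xi|))\sin(2t|\xi|w)$. This is a product of two factors that are each non-smooth at the origin, and it is not of the form covered by \eqref{Eq:Schauder}. Evenness is restored only after recombining with the second-type term into $\cos\bigl(2|\xi|(\mathbf f+tw)\bigr)$, i.e.\ after undoing your splitting of the sine.

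The paper avoids all of this with one identity. Since $\eta''(0)=0$, three applications of the fundamental theorem of calculus give $N(\varphi)-N(\phi)=(n-3)\int_0^1\int_0^1\int_0^1 u_1u_2u_3\,\eta'''(z|\xi|u_3)\,dz\,dy\,dx$, with $u_1=\phi-\varphi$, $u_2=\varphi+x(\phi-\varphi)$ and $u_3=\mathbf f+yu_2$. In this form:
\begin{itemize}
\item $|\xi|^{-3}$ cancels exactly;
\item the outer function $\eta'''$ is even, so \eqref{Eq:Schauder} applies directly (with $\mathbf f$ admitted in $u_3$ and $v$);
\item $\|u_2\|_{X_s^k}\le\|\varphi\|_{X_s^k}+\|\phi\|_{X_s^k}$ gives the stated Lipschitz form;
\item the bound for $N(\varphi)$ itself is the case $\phi=0$.
\end{itemize}
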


\begin{proof}
By density if suffices to prove the inequality for functions belonging to $C^{\infty}_{c,r}(\R^n)$. 
We first establish an auxiliary identity for functions $u \in C^3(\R)$ with $u^{\prime \prime}(0) = 0$. Let $a, b, c \in \R$, then by three times application of the Fundamental Theorem of Calculus we have
 
\begin{align*}
&u(a+c) - u(a+b) - u^{\prime}(a)(c-b)\\
&=(c-b) \int_{0}^1 (b + x(c-b)) \int_0^1 (a+ w(b+x(c-b))) \int_0^1 u^{\prime \prime \prime} (z (a+ w(b+x(c-b)))) dz dw dx.
\end{align*}

For the nonlinearity we obtain
\begin{align*}
&[N(\varphi)- N(\phi)](\xi)\\
&= \frac{n-3}{|\xi|^3}\bigg( \eta \Big (f(|\xi|) + |\xi| \phi (\xi) \Big ) - \eta \Big (f(|\xi|) + |\xi| \varphi (\xi) \Big ) - \eta^{\prime} \Big (f(|\xi| \Big )|\xi| (\phi(\xi) - \varphi(\xi)) \Big ).
\end{align*}
Since $\eta$ is an odd function we have $\eta^{\prime \prime}(0) = 0$ and the above identity yields
\begin{align*}
\frac{[N(\varphi)- N(\phi)](\xi)}{n-3} \\
= \int_0^1 \int_0^1 \int_0^1 \Big  (\phi(\xi))&-\varphi(\xi)\Big  ) \Big  (\varphi(\xi)+x (\phi(\xi)-\varphi(\xi))\Big )\Big (\mathbf{f}(|\xi|) + y(\varphi(\xi) + x(\phi(\xi)- \varphi(\xi)))\Big )\\
&\cdot \eta^{(3)}\bigg (|\xi| z \Big(\mathbf{f}(|\xi|) + y \big (\varphi(\xi)+x ( \phi(\xi)- \varphi(\xi)) \big ) \Big )\bigg  ) dz dy dx.
\end{align*}
Note that $\eta^{(3)}$ is an even function having all derivatives bounded. We use the generalized Schauder estimate given in \cite{Glo2025}, Proposition A.1, which reads
\begin{align}\label{Eq:Schauder}
\begin{split}
\| u_1 u_2 u_3 \eta^{(3)}(|\cdot| v) \|_{X_s^k(\R^n)} &  \lesssim \| u_1 u_2 u_3 \eta^{(3)}(|\cdot| v) \|_{\dot H^{k_1}\cap \dot H^k(\R^n)}  \\
& \lesssim  \prod_{i = 1} ^{3} \|u_i \|_{X_s^k(\R^n)} \sum_{j = 0}^{k} \|v \|_{X_s^k(\R^n)}^{2j},
\end{split}
\end{align}
for $k_1 := \lfloor\frac{n}{2}-2 \rfloor$, where $u_1,u_2,u_3, v \in C^{\infty}_{c,r }(\R^n)$ and $v$ is real-valued. By inspection, the same bound holds for $u_3$ being replaced by $\mathbf{f}(|\cdot|)$, respectively for $v \in C^{\infty}_r(\R^n) \cap X^{k}_s(\R^n)$.
\end{proof}

The last ingredient to construct strong solutions to \eqref{mainproblem} is the following characterization of the initial data operator. We recall that $U$ is given by
\begin{align*}
U(\mathbf{h},T) = \sqrt{T} \mathbf{f}(\sqrt{T} |\cdot|) + \sqrt{T} \mathbf{h}(\sqrt{T} |\cdot|) - \mathbf{f}(|\cdot|),
\end{align*}
for $\mathbf{h} \in X_s^k(\R^n)$.

\begin{lemma} \label{InitialMap}
Let $0 < \delta \leq \frac{1}{2}$. Then the map 
\begin{align*}
T \mapsto U(\mathbf{h}, T):[1-\delta,1+ \delta] \rightarrow X_s^k(\R^n),
\end{align*}
as defined in \eqref{Initialdataoperator} is continuous for $\mathbf{h} \in X_s^k(\R^n)$. Furthermore, we have 
\begin{align} \label{Initialestimate}
\Vert U(\mathbf{h},T)\Vert_{X_s^k(\R^n)} \lesssim \Vert \mathbf{h} \Vert_{X_s^k(\R^n)} + |T-1|,
\end{align}
for all $\mathbf{h} \in X_s^k(\R^n)$ and $T \in \left[ \frac{1}{2}, \frac{3}{2} \right]$.
\end{lemma}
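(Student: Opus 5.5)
We split $U(\mathbf{h},T) = D_T \mathbf{h} + (D_T\mathbf{f} - \mathbf{f})$ and treat the two pieces separately, where
\[
[D_T g](\xi) := \sqrt{T}\, g(\sqrt{T}\xi)
\]
is the rescaling operator on radial functions.

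\textbf{Scaling of the homogeneous norms.} By the Fourier-side definition of $\dot H^p$, a change of variables gives
\[
\|D_T g\|_{\dot H^p(\R^n)} = T^{\frac12 + \frac{p}{2} - \frac n4}\, \|g\|_{\dot H^p(\R^n)}
\]
for $g \in C^\infty_{c,r}(\R^n)$. For $T\in[\frac12,\frac32]$ and $p\in\{s,k\}$ the prefactor is bounded above and below uniformly. Hence $D_T$ extends to a bounded operator on $X_s^k(\R^n)$ with $\sup_{T\in[1/2,3/2]}\|D_T\|\lesssim 1$. This yields $\|D_T\mathbf{h}\|_{X_s^k}\lesssim \|\mathbf{h}\|_{X_s^k}$.

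\textbf{Continuity in $T$ of $D_T\mathbf{h}$.} For $g \in C^\infty_{c,r}$, we have $\mathcal{F}(D_T g)(\omega) = T^{\frac12-\frac n2}\hat g(\omega/\sqrt T)$, and this converges in the weighted $L^2$ norms to $\mathcal{F}(D_{T_0}g)$ as $T\to T_0$ by dominated convergence, using the rapid decay of $\hat g$. Alternatively, $g$ is smooth with compact support, so $D_Tg\to D_{T_0}g$ in $H^k$ and in $\dot H^s$. The uniform bound on $\|D_T\|$ together with density of $C^\infty_{c,r}$ then gives strong continuity of $T\mapsto D_T\mathbf{h}$ for all $\mathbf{h}\in X_s^k(\R^n)$, via a standard $\varepsilon/3$ argument.

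\textbf{The profile term.} We need $T\mapsto D_T\mathbf{f}-\mathbf{f}$ to be continuous, and Lipschitz at $T=1$, as a map into $X_s^k$. We write
\[
D_T\mathbf{f}-\mathbf{f} = \int_1^T \partial_S (D_S\mathbf{f})\,dS, \qquad \partial_S D_S\mathbf{f} = S^{-1} D_S\big(\tfrac12\mathbf{f} + \tfrac12\,\xi\cdot\nabla\mathbf{f}\big) = S^{-1}D_S(\Lambda\mathbf{f}).
\]
So it suffices to show $\Lambda\mathbf{f}\in X_s^k(\R^n)$. Then
\[
\|D_T\mathbf{f}-\mathbf{f}\|_{X_s^k}\le \int_1^T S^{-1}\|D_S\|\,\|\Lambda \mathbf{f}\|_{X_s^k}\,dS\lesssim |T-1|,
\]
where the Bochner integral is justified by the strong continuity of $S\mapsto D_S(\Lambda\mathbf{f})$ from the previous step. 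Continuity in $T$ follows from the same representation. Since $(\Lambda\mathbf{f})(\xi)=\frac12 f'(|\xi|)$, membership in $X_s^k$ is Lemma \ref{Functions}. (Alternatively, one can check $|\partial^\beta\Lambda\mathbf{f}(x)|\lesssim\langle x\rangle^{-1-|\beta|}$ from Lemma \ref{Lemmaf} and apply the same criterion used for $\mathbf{f}$.)

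\textbf{Conclusion and main obstacle.} Combining the two pieces gives both the continuity and the estimate \eqref{Initialestimate}. The only delicate point is that $\mathbf{f}$ is not in $L^2$-based spaces at low regularity. Consequently $D_T\mathbf{f}$ and $\mathbf{f}$ individually require the decay-based membership result from \cite{DonSchWit2026}, and the difference must be controlled through the derivative $\Lambda\mathbf{f}$, not by naive bounds on each term. The reduction to Lemma \ref{Functions} handles this.
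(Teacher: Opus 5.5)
Your proposal is correct and follows essentially the same route as the paper: uniform scaling bounds for the dilations on $\dot H^s\cap\dot H^k$, a density/$\varepsilon/3$ argument for continuity, and the fundamental theorem of calculus in the scaling parameter, which reduces the $|T-1|$ bound to Lemma~\ref{Functions}. The only (minor, and favorable) difference is how you differentiate. You differentiate the full rescaling $D_S\mathbf f$, so by $\Lambda\mathbf f=\tfrac12 f'(|\cdot|)$ you need only $f'\in X_s^k$, and the first-order term $\tfrac12(T-1)f'$ behind \eqref{TaylorofU} appears explicitly. The paper instead differentiates only in the spatial argument and handles the $\sqrt T$ prefactor separately, so it also uses $\mathbf f\in X_s^k$, both for the estimate and for its continuity argument.
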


\begin{proof}
To show continuity of the given map, let $\mathbf{h} \in X_s^k(\R^n)$, $T_1, T_2 \in [1- \delta, 1+ \delta]$ and write 
\begin{align*}
 &U(\mathbf{h}, T_1) -  U(\mathbf{h}, T_2)\\
 &= (\sqrt{T_1} - \sqrt{T_2})[\mathbf{f} + \mathbf{h}](\sqrt{T_1} |\cdot|) + \sqrt{T_2} ( [\mathbf{f} + \mathbf{h}](\sqrt{T_1} |\cdot|) - [\mathbf{f} + \mathbf{h}](\sqrt{T_2} |\cdot|) ).
\end{align*}
Next let $\varepsilon >0$, then there exists $\chi \in C^{\infty}_{c,r}(\R^n)$, such that $\Vert \chi - [\mathbf{f} + \mathbf{h}] \Vert_{X_s^k(\R^n)} \leq \varepsilon$. By this we can write the second term from above as
\begin{align*}
[\mathbf{f} +\mathbf{h}](\sqrt{T_1} |\cdot|) - [\mathbf{f} +\mathbf{h}](\sqrt{T_2} |\cdot|) &= ([\mathbf{f} + \mathbf{h}](\sqrt{T_1} |\cdot|) - \chi(\sqrt{T_1} \cdot)) + (\chi (\sqrt{T_1} \cdot) - \chi (\sqrt{T_2} \cdot))\\
& +( \chi (\sqrt{T_2} \cdot) - [\mathbf{f} + \mathbf{h}](\sqrt{T_2} |\cdot|)),
\end{align*}
to infer
\begin{align*}
\lim_{T_2 \rightarrow T_1} \Vert  U(\mathbf{h}, T_1) -  U(\mathbf{h}, T_2) \Vert_{X_s^k(\R^n)} \leq C \varepsilon,
\end{align*}
for some $C>0$. Here we used the fact that $\lim_{T_2 \rightarrow T_1} \Vert \chi (\sqrt{T_1} \cdot) - \chi (\sqrt{T_2} \cdot) \Vert_{X_s^k(\R^n)} =0$, as the function $\chi$ is smooth and compactly supported. As $\varepsilon >0$ was chosen arbitrarily, continuity follows. Next we show \eqref{Initialestimate}. Let $\mathbf{h} \in X_s^k(\R^n)$ and $T \in [\frac{1}{2}, \frac{3}{2} ]$, then we have
\begin{align*}
\Vert U(\mathbf{h}, T) \Vert_{X_s^k(\R^n)} &= \Vert \sqrt{T} \mathbf{f}(\sqrt{T} |\cdot|) + \sqrt{T} \mathbf{h}(\sqrt{T} |\cdot|) - \mathbf{f} \Vert_{X_s^k(\R^n)} \\
&\leq  \sqrt{T} \Vert \mathbf{h} (\sqrt{T} |\cdot|)  \Vert_{X_s^k(\R^n)} + \sqrt{T} \Vert  \mathbf{f}(\sqrt{T} |\cdot|) - \mathbf{f} \Vert_{X_s^k(\R^n)} + | \sqrt{T} -1| \Vert \mathbf{f} \Vert_{X_s^k(\R^n)} \\
&\lesssim \Vert \mathbf{h} \Vert_{X_s^k(\R^n)} + |T-1| \Vert \mathbf{f} \Vert_{X_s^k(\R^n)} + \Vert  \mathbf{f}(\sqrt{T} |\cdot|) - \mathbf{f} \Vert_{X_s^k(\R^n)}.
\end{align*}
To estimate the remaining term we do the following. For $z \in \R^+$ we obtain
\begin{align*}
\mathbf{f}( \sqrt{T} z) - \mathbf{f} (z) = z (\sqrt{T}-1) \int_0^1 \mathbf{f}^{\prime}(z ((\sqrt{T} -1) t +1)) dt,
\end{align*}
by the Fundamental Theorem of Calculus, where we have by definition
\begin{align*}
\mathbf{f}^{\prime} (z) = \frac{f^{\prime}(z)}{z} - \frac{f(z)}{z^2}, \quad z \in \R^+.
\end{align*}
This implies
\begin{align*}
\frac{\Vert \mathbf{f}(\sqrt{T} |\cdot|) - \mathbf{f} \Vert_{X_s^k(\R^n)}}{|\sqrt{T}-1|} &\leq \int_0^1 (1+t(\sqrt{T}-1))^{-1}(\Vert f^{\prime} (|\cdot|(1+ t(\sqrt{T}-1))) \Vert_{X_s^k(\R^n)} dt\\
&+ \int_0^1 (1+t(\sqrt{T}-1))^{-1} \Vert \mathbf{f}(|\cdot|(1+ t(\sqrt{T}-1))) \Vert_{X_s^k(\R^n)} dt.
\end{align*}
Using the fact that
\begin{align*}
&\Vert u(|\cdot|(1+ t(\sqrt{T}-1))) \Vert_{X_s^k(\R^n)}\\
&\leq \max \{ (1+ t(\sqrt{T}-1))^{s-\frac{n}{2}}, (1+ t(\sqrt{T}-1))^{k-\frac{n}{2}} \} \Vert u \Vert_{X_s^k(\R^n)},
\end{align*}
for $u \in X_s^k(\R^n)$ gives
\begin{align*}
\Vert \mathbf{f}(\sqrt{T} |\cdot|) - \mathbf{f} \Vert_{X_s^k(\R^n)} &\lesssim |\sqrt{T}-1| (\Vert f^{\prime} \Vert_{X_s^k(\R^n)} + \Vert \mathbf{f}  \Vert_{X_s^k(\R^n)}) \\
&\lesssim |T-1|,
\end{align*}
where we exploited that all components of the integrals are positive and bounded due to $k > s > \frac{n}{2}-1$ and that $f^{\prime},\mathbf{f} \in X_s^k(\R^n)$ by Lemma \ref{Functions}.
\end{proof}

\subsection{Construction of strong solutions} In this section we only give a short outline how to construct strong solutions to \eqref{mainproblem} for any shrinker profile $f$. A detailed proof can be found in \cite{GloKisSch26}, section 4.2. We emphasize that all the arguments used there are applicable to our problem since they are based on the exponential decay of the semigroup together with the local Lipschitz continuity of the nonlinear operator.\\
We call a function $\varphi$ a strong solution to \eqref{mainproblem}, if $\varphi \in C([0, \infty), X_s^k(\R^n))$ and if it satisfies
\begin{align} \label{Strongsolution}
\varphi(\tau) = S_f^X(\tau) U(\mathbf{h}_0,T) + \int_0^{\tau} S_f^X(\tau- \tau^{\prime}) N(\varphi(\tau^{\prime}) d \tau^{\prime}, \quad \tau \geq 0.
\end{align}
If $f$ is any shrinker profile, there might exist more unstable eigenvalues than $\lambda =1$ which is caused by the time translation symmetry of the blowup time $T$. This lack of information on the spectral problem only permits a codimension stability result. More precisely, by constructing a codimension $N_f$ Lipschitz manifold $M_f$ in the initial data space we correct the initial perturbations of $\mathbf{f}$ along the unstable directions such that the corresponding solution has the same blowup regime as the shrinking solution $\mathbf{f}$. For that we set $T=1$ which reduces the initial data map to $U(\mathbf{h}_0,1) = \mathbf{h}_0$.
We define the Banach space
\begin{align*}
\mathcal{X}_{s}^k := \{ \varphi \in C([0,\infty),X_s^k(\R^n)) : \Vert \varphi \Vert_{\mathcal{X}_{s}^k} := \sup_{\tau \geq 0} e^{\nu \tau} \Vert \varphi(\tau) \Vert_{X_{s}^k(\R^n)} < \infty \},
\end{align*}
with $\nu >0$ being the constant from \eqref{Semigroupdecay}. To run a fixed point argument on a stable subspace, we define for each unstable eigenvalue $\lambda_j \in K_f$, $j \in \{1,...,N_f \}$ a correction term
\begin{align}\label{Correction1}
C_j(\varphi,u) := P_j u + P_j \int_0^{\infty} e^{- \lambda_j \tau^{\prime}} N(\varphi(\tau^{\prime})) d \tau^{\prime},
\end{align}
and set
\begin{align}\label{Correction2}
C_f(\varphi,u) := \sum_{j=1}^{N_f} C_j(\varphi,u),
\end{align}
for $\varphi \in \mathcal{X}_{s}^k$ and $u \in X_s^k(\R^n)$.  With this at hand we are able to define the operator 
\begin{align} \label{MapK}
[K_f(\varphi,u)](\tau) := S_f^X(\tau)(u- C_f(\varphi,u)) + \int_0^{\tau} S_f^X(\tau-\tau^{\prime}) N(\varphi(\tau^{\prime}))d \tau^{\prime}, \quad \tau \geq 0.
\end{align}
The first step to prove stability is to apply the contraction mapping principle to $K_f$ provided the initial data $u$ is chosen sufficient small. This gives a unique fixed point $\varphi_u$ of $K_f$ associated to the initial condition $u$ and we abbreviate $C_f(u) := C_f(\varphi_u,u)$. The next step is to show that for every $M >0$ there exists a sufficiently small $\delta >0$ such that if $\mathbf{h}_0 \in X_s^k(\R^n)$ satisfies $\Vert \mathbf{h}_0 \Vert_{X_s^k(\R^n)} \leq \frac{\delta}{M^2}$, then there exist parameters $a_{i,j} \in [-\frac{\delta}{M}, \frac{\delta}{M} ]$, $(i,j) \in I$ with $I := \{(i,j) : 1 \leq i \leq N_j : 1 \leq j \leq N_f \}$ such that 
\begin{align*}
C\Big(\mathbf{h}_0 + \sum_{(i,j) \in I} a_{i,j} u_j^{i}\Big) =0.
\end{align*}
Furthermore, the parameters $a_{i,j} = a_{i,j}(\mathbf{h}_0)$ depend Lipschitz continuously on the data, i.e.
\begin{align*}
\sum_{(i,j) \in I} | a_{i,j}(\mathbf{h}_0) - a_{i,j}(\mathbf{h}_1) | \lesssim \Vert \mathbf{h}_0 - \mathbf{h}_1 \Vert_{X_{s}^k(\R^n)},
\end{align*}
for all $\mathbf{h}_0, \mathbf{h}_1 \in X_{s}^k(\R^n)$ satisfying $\Vert \mathbf{h}_0 \Vert_{X_{s}^k(\R^n)}, \Vert \mathbf{h}_1 \Vert_{X_{s}^k(\R^n)} \leq \frac{\delta}{M^2}$.
These results together with smoothing properties of the free semigroup $(S_0(\tau))_{\tau \geq 0}$ and Sobolev embeddings lead to a classical solution $\varphi \in C^{\infty}([0, \infty) \times \R^n)$ to the problem
\begin{align} \label{EQ11}
\begin{cases} \partial_{\tau} \varphi = L_f \varphi (\tau, \cdot) + N(\varphi(\tau, \cdot)), \\ \varphi(0, \cdot) = \mathbf{h}_0 + \sum_{(i,j) \in I} a_{i,j} u_j^{i},
\end{cases}
\end{align}
where $\mathbf{h}_0 \in \mathcal{S}_r(\R^n)$ satisfies $\Vert \mathbf{h}_0 \Vert_{X_s^k(\R^n)} \leq \frac{\delta}{M^2}$. Furthermore, $\varphi$ satisfies $\Vert \varphi(\tau, \cdot) \Vert_{X_{s}^k(\R^n)} \leq \delta e^{-\nu \tau}$ for $\tau \geq 0$.

\subsection{Proof of Theorem \ref{Th:CodimensionStability}}
Let $\mathbf{h}_0 \in \mathcal{S}_r(\R^n)$, then we can identify $\mathbf{h}_0$ also with a radial Schwartz function on $\R^d$. For $H_0(x) = x \mathbf{h}_0(|x|)$, $x \in \R^d$ we have an equivalence of Sobolev norms
\begin{align*}
\Vert H_0 \Vert_{\dot{H}^s \cap \dot{H}^k(\R^d)} \simeq \Vert \mathbf{h}_0 \Vert_{X_s^k(\R^n)},
\end{align*}
see  \cite{Glo22}, Proposition A.5. Hence, we can choose $\varepsilon > 0$ small enough to guarantee $\Vert \mathbf{h}_0 \Vert_{X_s^k(\R^n)} \leq \frac{\delta}{M^2}$ and $\Vert H_0 \Vert_{\dot{H}^s \cap \dot{H}^k(\R^d)} \leq \varepsilon$.
Let $\varphi \in C^{\infty}([0, \infty) \times \R^n)$ be the resulting solution to \eqref{EQ11}. Translating everything back into physical coordinates $x \in \R^n$ and $t \in [0, 1)$ via
\begin{align*}
w(t,x) = (1-t)^{-\frac{1}{2}} \left( f \left( \frac{|x|}{\sqrt{1-t}} \right)\left( \frac{|x|}{\sqrt{1-t}} \right)^{-1} + \varphi \left( \log \left( \frac{1}{1-t} \right), \frac{x}{\sqrt{1-t}} \right) \right),
\end{align*}
gives a solution $w$ to \eqref{Problemw}. This leads to a classical solution $U \in C^{\infty}([0,1) \times \R^d)$ via
\begin{align*}
U(t,x) &=\frac{x}{\sqrt{1-t}} \left( f \left( \frac{|x|}{\sqrt{1-t}} \right)\left( \frac{|x|}{\sqrt{1-t}} \right)^{-1} + \varphi \left( \log \left( \frac{1}{1-t} \right), \frac{x}{\sqrt{1-t}} \right) \right)\\
&= F \left( \frac{x}{\sqrt{1-t}} \right) + H \left(t, \frac{x}{\sqrt{1-t}} \right),
\end{align*}
where $H(t, x) := x \varphi \left( \log \left( \frac{1}{1-t} \right),x \right)$ for $x \in \R^d$ and $t \in [0,1)$. We point out that the spatial variable $x$ now belongs to $\R^d$ instead of $\R^n$ which is possible since $\varphi$ is radial. Furthermore, we have
\begin{align*}
\Vert H(t, \cdot) \Vert_{\dot{H}^s \cap \dot{H}^k(\R^d)} \simeq \Vert \varphi(\tau, \cdot) \Vert_{X_{s}^k(\R^n)} \leq \delta e^{-\nu \tau} = \delta (1-t)^{\nu},
\end{align*}
which shows \eqref{DecayH}.

\subsection{Stable blowup via $f_0$}
In case of the shrinker profile $f_0$ we know that the only unstable eigenvalue is $\lambda =1$ and the corresponding (algebraic and geometric) eigenspace is one-dimensional and spanned by $f_0^{\prime}$. Since this instability is caused by the freedom of choosing the blowup time $T$ we exploit this causality to prove stability of $f_0$ without correcting the initial data. For that, we consider Duhamel's formula \eqref{Strongsolution} with initial data $U(\mathbf{h}_0,T)$ for some $T >0$. As before, exponential decay of the semigroup $(S_{f_0}^X(\tau))_{\tau \geq 0}$ on the stable subspace $(1-P_{f_0})$ together with the local Lipschitz continuity of the nonlinearity $N$ lead to a unique fixed point $\varphi_u$ of the map $K$ given by \eqref{MapK} provided $u$ is chosen small enough.\\
To prove vanishing of the correction term \eqref{Correction2} we make use of Lemma \ref{InitialMap} that allows for a Taylor expansion of $\sqrt{T} \mathbf{f}_0 (\sqrt{T} \cdot) - \mathbf{f}_0$ in $T=1$. More precisely, we have
\begin{align} \label{TaylorofU}
\mathcal{U}(\mathbf{h}_0,T) = \sqrt{T} \mathbf{h}_0(\sqrt{T} \cdot) + C (1-T) f_0^{\prime}(\cdot) + (1-T)^2 R(T, \cdot),
\end{align}
for some $C \in \R \setminus \{  0 \}$ and a remainder term satisfying $\Vert R(T,\cdot) \Vert_{X_s^k(\R^n)} \lesssim 1$ for $T$ close to $1$. By definition of the correction term \eqref{Correction2} it follows that $C(\varphi_{U(\mathbf{h}_0,T)},U(\mathbf{h}_0,T)) = 0$ if and only if
\begin{align} \label{scalarproduct}
\left\langle P_{f_0} U(\mathbf{h}_0,T) + \int_0^{\infty} e^{-\tau^{\prime}} N(\varphi_{U(\mathbf{h}_0,T)}(\tau^{\prime})) d \tau^{\prime}, f_0^{\prime} \right\rangle = 0.
\end{align}
The Taylor expansion \eqref{TaylorofU} together with the local Lipschitz continuity of $N$ imply that \eqref{scalarproduct} is equivalent to 
\begin{align*}
T = W(T) +1,
\end{align*}  
for a continuous function $W$. The main idea is to show that for sufficient constants $M,\delta >0$ the map $T \mapsto W(T) +1$ maps the interval into itself. The Intermediate Value Theorem implies the existence of a strong solution $\varphi$ that solves \eqref{Strongsolution}. In summary, we have the following result.

\begin{proposition} \label{PropSolution}
There exists $M>0$ sufficiently large and $\delta >0$ sufficiently small, such that for all real-valued $\mathbf{h}_0 \in X_s^k(\R^n)$ with $\Vert \mathbf{h}_0 \Vert_{X_s^k} \leq \frac{\delta}{M^2}$, there exists a $T = T(\mathbf{h}_0) \in [1- \frac{\delta}{M}, 1+ \frac{\delta}{M} ]$ and a unique solution $\varphi \in C([0, \infty), X_s^k(\R^n))$ satisfying \eqref{Strongsolution} for all $\tau \geq 0$, such that 
\begin{align} \label{estimateforvarphi}
\Vert \varphi(\tau) \Vert_{X_s^k(\R^n)} \leq \delta e^{- \nu \tau}, \quad \forall \tau \geq 0,
\end{align}
where $\nu >0$ is the constant from Proposition \ref{Semigroupdec}.
\end{proposition}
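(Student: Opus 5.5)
We follow the standard Lyapunov–Perron scheme combined with a one-parameter shooting in the blowup time $T$. Throughout, $P:=P_{f_0}$ is the rank-one Riesz projection onto $\operatorname{span}(f_0')$. By the preceding proposition, $K_{f_0}=\{1\}$ and $S^X_{f_0}(\tau)P=e^{\tau}P$. Proposition \ref{Semigroupdec} gives $\|S^X_{f_0}(\tau)(1-P)u\|\le Ce^{-\nu\tau}\|(1-P)u\|$, and we may assume $\nu<1$.

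\textbf{Step 1: modified fixed point.} For $u\in X_s^k(\R^n)$ small and $\varphi\in\mathcal X_s^k$ consider
\[ K(\varphi,u)(\tau)=S^X_{f_0}(\tau)\big(u-C(\varphi,u)\big)+\int_0^\tau S^X_{f_0}(\tau-\tau')N(\varphi(\tau'))d\tau', \]
with $C(\varphi,u)=Pu+P\int_0^\infty e^{-\tau'}N(\varphi(\tau'))d\tau'$. Splitting $K$ into its $P$ and $(1-P)$ parts, the unstable part reduces to
\[ -\int_\tau^\infty e^{\tau-\tau'}PN(\varphi(\tau'))d\tau'. \]
Using $\|N(\varphi)\|\le\gamma\|\varphi\|^2$, this is bounded by $\lesssim\delta^2e^{-2\nu\tau}$. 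The stable part is bounded by $Ce^{-\nu\tau}\|u\|+\int_0^\tau Ce^{-\nu(\tau-\tau')}\gamma\delta^2e^{-2\nu\tau'}d\tau'\lesssim e^{-\nu\tau}(\|u\|+\delta^2)$. Hence for $\|u\|\le\delta/M$ with $M$ large and $\delta$ small, $K(\cdot,u)$ maps the ball $\{\|\varphi\|_{\mathcal X}\le\delta\}$ into itself. The Lipschitz bound on $N$ gives the analogous contraction estimate with constant $\lesssim\delta$. Banach's fixed point theorem then yields a unique fixed point $\varphi_u$. Repeating the estimate with two data shows $u\mapsto\varphi_u$ is Lipschitz, and hence continuous.

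\textbf{Step 2: choosing $T$.} Set $u=U(\mathbf h_0,T)$. By Lemma \ref{InitialMap}, $\|u\|\lesssim\delta/M^2+|T-1|\le\delta/M$ once $T\in[1-\delta/M,1+\delta/M]$ and $M$ is large. Moreover $T\mapsto u$ is continuous, so $T\mapsto C(\varphi_{u},u)$ is continuous. Since $\operatorname{rg}P$ is one-dimensional, $C=0$ is equivalent to the scalar equation \eqref{scalarproduct}. Insert the Taylor expansion \eqref{TaylorofU}: $PU(\mathbf h_0,T)=c(1-T)f_0'+O(\|\mathbf h_0\|+|T-1|^2)$ with $c\neq0$. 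Here $c$ is a nonzero multiple of $\langle f_0',f_0'\rangle$ after differentiating $\sqrt T\mathbf f_0(\sqrt T\cdot)$ in $T$; that derivative is a multiple of $\Lambda\mathbf f_0$, which is proportional to the eigenfunction. Dividing by $c$, the equation becomes $T=1+W(T)$ with
\[ |W(T)|\lesssim \|\mathbf h_0\|+|T-1|^2+\delta^2\lesssim \delta/M^2+\delta^2/M^2+\delta^2. \]
Choose $M$ large and then $\delta$ small (in particular $\delta\ll 1/M$) so that $|W(T)|\le\delta/M$. Then $T\mapsto1+W(T)$ is a continuous self-map of $[1-\delta/M,1+\delta/M]$. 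By the intermediate value theorem applied to $T-1-W(T)$, it has a fixed point $T$.

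\textbf{Step 3: conclusion.} For this $T$ we have $C=0$, so $\varphi=\varphi_u$ satisfies \eqref{Strongsolution} together with \eqref{estimateforvarphi}. Uniqueness holds in the ball of $\mathcal X_s^k$. The main obstacle is the consistency of constants in Step 2: the $\delta^2$ contribution from $N$ must stay below $\delta/M$, which forces the ordering ``$M$ large first, then $\delta$ small''. It also requires the careful verification that $c\neq0$, i.e.\ that the $T$-derivative of the rescaled profile has nonvanishing projection onto $f_0'$.
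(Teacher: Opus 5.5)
Your proposal is correct and follows the same route as the paper: a Lyapunov--Perron fixed point with the correction term $C(\varphi,u)$ for data $U(\mathbf h_0,T)$, then the Taylor expansion \eqref{TaylorofU} reduces $C=0$ to a scalar equation $T=1+W(T)$, solved on $[1-\delta/M,1+\delta/M]$ by the intermediate value theorem (and indeed $\partial_T[\sqrt T\mathbf f_0(\sqrt T\cdot)]|_{T=1}=\Lambda\mathbf f_0=\tfrac12 f_0'$, so the coefficient is nonzero). The only slip is cosmetic: Lemma \ref{InitialMap} gives $\|U(\mathbf h_0,T)\|\lesssim\delta/M$ rather than $\le\delta/M$, which is harmless because the smallness condition in your Step 1 absorbs a fixed constant once $M$ is large.
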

To generate classical solutions to our problem we again exploit the smoothing of the free semigroup. It follows that if $\mathbf{h}_0 \in \mathcal{S}_r(\R^n)$ satisfies $\Vert \mathbf{h}_0 \Vert_{X_s^k(\R^n)} \leq \frac{\delta}{M^2}$, we infer a classical solution $\varphi \in C^{\infty}([0, \infty) \times \R^n)$ to the problem
\begin{align} \label{EQ11}
\begin{cases} \partial_{\tau} \varphi = L_{f_0} \varphi (\tau, \cdot) + N(\varphi(\tau, \cdot)), \\ \varphi(0, \cdot) = U(\mathbf{h}_0,T),
\end{cases}
\end{align}
with the blowup time $T = T(\mathbf{h}_0)$ from Proposition \ref{PropSolution}.

\subsection{Proof of Theorem \ref{TheoremStability}}
The proof is analogous to the proof of Theorem \ref{Th:CodimensionStability}. The only difference lies in the self-similar variables $(\tau,\xi)$ used to transform $\varphi$ back to $U$. They are given by $\tau = \log \left( \frac{T}{T-t} \right)$ and $\xi = \frac{x}{\sqrt{T-t}}$ with $T = T(\mathbf{h}_0)$ being the blowup time from Proposition \ref{PropSolution}.


\section{Existence of a spectrally stable profile $f_0$}\label{Sec:Existence_Spec_Stab}

The goal of this section is to prove Theorem \ref{Th:Existence_f0}. For this, we generalize the analytic framework of \cite{BieDon18} to higher dimensions and adapt the methods of \cite{DonSch26} to prove rigorous bounds using computer-assistance. The implementation and all required coefficient files are available at \url{https://git.uibk.ac.at/csaw4329/hmhf_paper_code}.

We start by introducing a closed form approximate solution to Eq.~\eqref{Eqf}.
\begin{definition}\label{def:ftil_0}
For $d\in\{3, 4, 5, 6\}$, we define
\begin{equation*}
    \ftil_0(y) := 2\arctan\left(\sum_{k=0}^{K} c_k(\ftil_0) T_{2k+1}\left(\frac{y}{\sqrt{y^2+4}}\right)\right),
\end{equation*}
with rational coefficients $(c_k(\ftil_0))_{k=0}^{K}$ given in Tables~\ref{tab:coef_f0_d3}-\ref{tab:coef_f0_d6}, and $K$ determined in each case by the number of listed coefficients. The corresponding CVS-file is \texttt{coefs\_f}.\footnote{\ Note that we use a different transformed variable than in \cite{BieDon18}, which is convenient for later steps. As a result, the coefficients and the subsequently derived bounds are not directly comparable.}
\end{definition}

We note that this ansatz simplifies the nonlinearity in Eq.~\eqref{Eqf} as
\begin{equation*}
    \sin(2\ftil_0)=\sin(4\arctan(g)) = \frac{4(g-g^3)}{(g^2+1)^2},
\end{equation*}
where we let
\begin{equation}\label{eqn:g_chebsum}
    g(y) = \sum_{k=0}^{K} c_k(\ftil_0) T_{2k+1}\left(\frac{y}{\sqrt{y^2+4}}\right).
\end{equation}
Furthermore, we use only odd Chebyshev polynomials, as extending Eq.~\eqref{Eqf} to negative $y$ shows that the solution must be odd.

\begin{remark}
The coefficients $(c_k(\ftil_0))_{k=0}^K$ are computed by inserting the ansatz for $\ftil_0$ into Eq.~\eqref{Eqf} and applying the variable transformation $x=\frac{y}{\sqrt{y^2+4}}$, with $x\in[0,1]$. We impose that the transformed differential equation must be satisfied at the points
\begin{equation*}
    x_k = \cos\left(\tfrac{(2k-1)\pi}{4(K+1)}\right), \qquad \text{for } k=1, 2, ..., K+1,
\end{equation*}
which are the positive Chebyshev nodes of degree $2K+2$. The resulting system of nonlinear equations is then solved numerically. The results are converted to rational numbers using an absolute tolerance based on the smallest coefficient and the number of coefficients.
\end{remark}

\subsection{Existence of $f_0$}
In this section, we prove that Eq.~\eqref{Eqf} has a solution $f_0$ which is well approximated by $\ftil_0$. To formulate the result, we define the following weight functions.

\begin{definition}\label{def:weights}
For $d\in\{3, 4, 5, 6\}$, we define $p_1$, $p_2$, $p_3 : (0,\infty)\to\R$ as
\begin{align*}
    p_1(y) &:= \frac{\sqrt{y^2+4}}{y}, \\
   p_2(y) &:= \frac{(y^2+4)^{5/2}}{y}\Bigg(\sum_{k=0}^{K_N} c_{k,N}(p_2)\ y^{k}\Bigg)\Bigg/\Bigg(\sum_{k=0}^{K_D} c_{k,D}(p_2)\ y^{k}\Bigg), \\
    p_3(y) &:= \frac{(y^2+4)^{3/2}}{4},
\end{align*}
with the coefficients $(c_{k,N}(p_2))_{k=0}^{K_N}$ and $(c_{k,D}(p_2))_{k=0}^{K_D}$, respectively given in the CSV-files \texttt{coefs\_p2\_n} and \texttt{coefs\_p2\_d}, and $K_N, K_D$ determined in each case by the number of listed coefficients. 
\end{definition}

\begin{remark}
The weight $p_2$ is positive and for $d\in\{3, 5\}$ we have $c_{k,N}(p_2)=c_{k,D}(p_2)=0$ for all odd $k$. Furthermore, the behavior of $p_2$ at both $0$ and $\infty$ is given by
\begin{equation*}
    p_2 \sim \frac{(y^2 + 4)^{5/2}}{(d+1)y^3 + (4d + 16)y}.
\end{equation*}

In this paper we only include in Appendix \ref{Appendix:Coeff} those coefficients which are necessary to to derive all other quantities and to entirely reproduce our results. We refrain from including all coefficients due to their length and instead refer to the files in the online repository. 
\end{remark}

To simplify the notation, we denote the $L^\infty$-norm on $[0,\infty)$ by $\|\cdot\|_\infty$ in the following.

\begin{definition}\label{def:weighted_norm}
We define 
\begin{equation*}
    \|f\|_{    \mc Y} := \|p_1 f\|_\infty + \|p_3 f'\|_\infty
\end{equation*}
and denote by $\mc Y$ be the space $C^1([0,\infty))$ equipped with $\| \cdot \|_{\mc Y}$.
\end{definition}

We now state the first main result of this section.

\begin{theorem}\label{thm:closeness}
Let $d\in\{3, 4, 5, 6\}$, then there exists $f_0\in C^\infty([0,\infty))$ solving Eq.~\eqref{Eqf} with
\begin{equation*}
    \|f_0 - \ftil_0 \|_{\mc Y} \leq \varepsilon = 
    \left\{\begin{array}{@{}l l}
        \frac{1}{20940}, & d=3, \\ 
        \frac{1}{143829}, & d=4, \\
        \frac{1}{1268555}, & d=5, \\
        \frac{1}{15875639}, & d=6.
    \end{array}\right.
\end{equation*}
\end{theorem}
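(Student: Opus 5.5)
We write $f_0 = \ftil_0 + \delta$ and reformulate Eq.~\eqref{Eqf} as a fixed-point problem for $\delta$ in a closed ball of $\mc Y$. Inserting the ansatz gives
\[
\mathcal L \delta = -R(\ftil_0) - N(\delta),
\]
where $R(\ftil_0) := \ftil_0'' + (\frac{d-1}{y} - \frac y2)\ftil_0' - \frac{d-1}{2y^2}\sin(2\ftil_0)$ is the residual of the approximation. The operator $\mathcal L\delta := \delta'' + (\frac{d-1}{y}-\frac y2)\delta' - \frac{d-1}{y^2}\cos(2\ftil_0)\delta$ is the linearization. The remainder $N(\delta) = -\frac{d-1}{2y^2}\big(\sin(2\ftil_0+2\delta) - \sin(2\ftil_0) - 2\cos(2\ftil_0)\delta\big)$ is quadratic in $\delta$. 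Since $\cos(2\ftil_0)$ is not a rational expression with a known fundamental system, I would replace it by a simpler approximating potential. Concretely, I would use an operator $\widetilde{\mathcal L}$ whose fundamental system $\{\phi_0,\phi_\infty\}$ is known in closed form or approximated by explicit rational functions. Here $\phi_0$ is regular at $0$ ($\sim y$) and $\phi_\infty$ is regular at infinity ($\sim y^{-2}$, in line with Lemma \ref{Lemmaf}). The difference $(\mathcal L - \widetilde{\mathcal L})\delta$ is then moved to the right-hand side as a small linear perturbation.

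Using variation of constants with the Wronskian $W = \phi_0\phi_\infty' - \phi_0'\phi_\infty$, which is proportional to $y^{1-d}e^{y^2/4}$, I would define the solution operator
\[
\mathcal J g(y) = \phi_\infty(y)\int_0^y \frac{\phi_0 g}{W}\,dx + \phi_0(y)\int_y^\infty \frac{\phi_\infty g}{W}\,dx .
\]
This operator selects the unique solution that vanishes at $0$ and decays at infinity, so any fixed point satisfies $f_0(0)=0$ and $f_0(\infty)=\mathrm{const}$. The key bound is $\|\mathcal J g\|_{\mc Y} \le C_{\mathcal J}\|p_2 g\|_\infty$. The weight $p_2$ is constructed precisely so that this estimate reduces to pointwise bounds on explicit rational functions. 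Its behaviour $\sim y^{-1}$ at $0$ and $\sim y^2$ at $\infty$ matches the integration kernels.

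The map is then $\Phi(\delta) := \mathcal J\big(-R(\ftil_0) - (\mathcal L-\widetilde{\mathcal L})\delta - N(\delta)\big)$, and the proof reduces to four rigorous estimates.
\begin{enumerate}
\item A residual bound $\|p_2 R(\ftil_0)\|_\infty \le r$, where $r$ is tiny because of the Chebyshev collocation. Since $\sin(2\ftil_0)$ is rational in $g$ and $x = y/\sqrt{y^2+4}$, this is a bound on a rational function of $x\in[0,1]$.
\item A bound $\|p_2(\mathcal L - \widetilde{\mathcal L})\delta\|_\infty \le q\|\delta\|_{\mc Y}$ with $C_{\mathcal J}q<1$.
\item A quadratic estimate $\|p_2 N(\delta)\|_\infty \le c\|\delta\|_{\mc Y}^2$. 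This follows from Taylor's theorem, $|\delta| \le y\,p_1^{-1}$-type control near $0$, and the $y^{-2}$ factor.
\item The Lipschitz analogue of (3).
\end{enumerate}
Given these, the choice $\varepsilon \approx C_{\mathcal J}r/(1-C_{\mathcal J}q)$ makes $\Phi$ a contraction on the $\varepsilon$-ball of $\mc Y$. This yields a $C^2$ solution $f_0$, and smoothness follows by bootstrapping the ODE together with the regular-singular analysis at $0$, as in Lemma \ref{Lemmaf}. All bounds on rational functions on $[0,1]$ would be verified by the evaluation method of \cite{DonSch26}, applied to exact truncated Chebyshev expansions with rigorous tail bounds.

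I expect the main obstacle to be the kernel estimate $C_{\mathcal J}$, meaning explicit and sharp control of the fundamental system. In even dimensions the Frobenius exponents at $0$ differ by an integer, so logarithmic terms appear and $\phi_\infty$ cannot be a simple rational function. My first attempt would be to take $\widetilde{\mathcal L}$ with a rational potential for which $\phi_0$ is an explicit rational function. I would then obtain $\phi_\infty$ via reduction of order, $\phi_\infty = \phi_0\int_y^\infty \frac{W}{\phi_0^2}$, and bound this integral by explicit rational majorants and minorants, splitting at a point $y_0$. If the splitting constants are too lossy, the target $\varepsilon$ values (down to $1/15875639$ for $d=6$) would fail, so the weights have to be tuned to the kernel.
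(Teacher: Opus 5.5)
Your overall architecture coincides with the paper's: the ansatz $f_0=\ftil_0+\delta$, the residual, the quadratic nonlinearity, an approximate operator with explicit Green's function, the difference moved to the right-hand side, a $p_2$-weighted inverse bound, and a contraction on $\mc Y_\varepsilon$ checked by the evaluation method. The gap lies in the step you yourself flag as the main obstacle, and the concrete plan you give for it would not work, for three reasons.

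\emph{Wrong asymptotics at infinity.} The two solutions behave like $1+\Oh(y^{-2})$ and $y^{-d}e^{y^2/4}$ (cf.\ $v_1$, $v_0$ in \eqref{eqn:homogeneous_solution_v0}--\eqref{eqn:homogeneous_solution_v1}). Nothing decays like $y^{-2}$. Lemma \ref{Lemmaf} gives $f-f(\infty)=\Oh(y^{-2})$, which matches $1+\Oh(y^{-2})$, not $y^{-2}$. Consequently $\delta$ must be allowed to tend to a nonzero constant, since $\ftil_0(\infty)=2\arctan\sum_k c_k(\ftil_0)$ need not equal $f_0(\infty)$. This is why $p_1\to1$, and asking for decay at infinity overdetermines the problem.

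\emph{$\phi_0$ cannot be rational.} Take any potential that is $\Oh(y^{-2})$ at infinity. This is forced, since $p_2(\mathcal L-\widetilde{\mathcal L})\delta$ must be small for bounded $\delta$ while $p_2\sim y^2$. Balancing against the term $\frac y2\phi_0'$ then makes any rational solution tend to a constant. So a rational $\phi_0$ regular at $0$ would lie in $\mc Y$ and be annihilated by $\widetilde{\mathcal L}$, which is therefore not invertible. Equivalently, your integral $\int_y^\infty W/\phi_0^2\,dx$ diverges, because $W\sim y^{1-d}e^{y^2/4}$. The admissible $\phi_0$ must carry the factor $e^{y^2/4}$.

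\emph{The regular solution has a zero.} Even with the right growth, the regular solution of the homogeneous linearized equation has exactly one zero in $(0,\infty)$; compare Lemma \ref{lem:one_zero}. This is forced by oscillation theory, since $\lambda=-1$, with positive eigenfunction $yf_0'$, lies below $0$. The paper's $\wtil_0$ accordingly is positive on $(0,1]$ and negative at $y^\ast$. At that zero $W/\phi_0^2$ has a non-integrable double pole, so reduction of order with rational majorants breaks down. Likewise, a potential designed to make $\phi_0$ an exact solution is generically singular there.

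The missing idea is to stop looking for an exactly solvable potential and instead prescribe \emph{both} approximate solutions. The paper takes
\[
\wtil_0=\vtil_0\sum_k c_k(\wtil_0)T_{2k}\big(\tfrac{y}{\sqrt{y^2+4}}\big), \qquad \wtil_1=\vtil_1\sum_k c_k(\wtil_1)T_k\big(\tfrac{y-2}{y+2}\big),
\]
where $\vtil_0=ye^{y^2/4}\cdot(\text{rational})$ and $\vtil_1=y^{1-d}\cdot(\text{rational})$ only reproduce the leading behaviour at both endpoints. The logarithms for even $d$ are therefore irrelevant. One then \emph{defines} $\Lctil=\Lc+P\partial_y+Q$ with $P,Q$ as in Lemma \ref{lem:approx_linear_operator}. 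Both functions are then exact solutions, and only $W(\wtil_0,\wtil_1)\neq0$ is needed, so the sign change of $\wtil_0$ is harmless. The price is a first-order error term, controlled by $\|p_2P/p_3\|_\infty$ and $\|p_2Q/p_1\|_\infty$.

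The inverse is then bounded not by reduction of order but by factoring $\Gtil=(\Gtil/\Gtil_0)\,\Gtil_0$, with $\Gtil_0$ built from the positive, monotone $\vtil_0,\vtil_1$ (Lemma \ref{lem:v_sign}). The ratios are bounded by the evaluation method. The $\Gtil_0$-integrals follow exactly from the defining relations $\Lctil_0(1/p_1)=1/p_2$ and $(1/p_1)'=1/p_3$, which give $\int_0^\infty\Gtil_0(x,y)\,p_1(y)/p_2(x)\,dx=1$ and $I_1-I_0=1$. This leaves only a Gaussian-tail estimate for $I_1$.
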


The proof of Theorem \ref{thm:closeness} is based on a fixed-point argument. For this, we insert the ansatz 
\[ f_0 = \ftil_0 + \delta\]
into Eq.~\eqref{Eqf}  and write the resulting equation as 
\begin{equation}\label{eqn:delta_ode}
    \Lc(\delta) = \Rc(\ftil_0) + \Nc(\delta) 
\end{equation}
with
\begin{align}
    \Lc(\delta) & := -\delta'' - \left(\frac{d-1}{y} - \frac{y}{2}\right)\delta' + \frac{d-1}{y^2}\delta 
        - \frac{2(d-1)}{y^2}\sin^2(\ftil_0)\delta \label{eqn:linear_op} \\
    \Rc(\ftil_0) & : = \ftil_0'' + \left(\frac{d-1}{y} - \frac{y}{2}\right)\ftil_0' 
        - \frac{d-1}{2y^2}\sin(2\ftil_0) \label{eqn:remainder_op} \\
    \begin{split}
    \Nc(\delta) &: =\frac{d-1}{y^2}\cos(2\ftil_0)\delta + \frac{d-1}{2y^2}\sin(2\ftil_0) 
        - \frac{d-1}{2y^2}\sin(2\ftil_0 + 2\delta) \\
    &= \frac{d-1}{y^2}\big(\cos(2\ftil_0)\delta - \cos(2\ftil_0 + \delta)\sin(\delta)\big).
    \end{split} \label{eqn:nonlinear_op}
\end{align}

The obvious strategy is to show that
\begin{equation*}
    \Kc(\delta)=\Lc^{-1}\big(\Rc(\ftil_0) + \Nc(\delta)\big)
\end{equation*}
is a contraction on the closed subset
\begin{equation}\label{eqn:functionspace_delta}
    \mc Y_{\varepsilon}  =\{\delta\in C^1([0, \infty)),\ \|\delta\|_ {\mc Y} \leq\varepsilon\}.
\end{equation}

A formal inverse of $\Lc$ can be written as
\begin{equation*}
    \Lc^{-1}(\delta)(y) = \int_0^\infty G(x,y)\delta(x)\,dx
\end{equation*}
where $G$ is the Green's function defined by
\begin{equation*}
    G(x,y) = \frac{-1}{W(w_0,w_1)(x)} 
    \left\{\begin{array}{@{}l l}
        w_0(y)w_1(x), & y\leq x,\\
        w_0(x)w_1(y), & x < y ,
    \end{array}\right.
\end{equation*}
with $\{w_0, w_1\}$ a fundamental system of $\Lc$ and $W$ denoting the Wronskian. However, because of the complicated form of $\ftil_0$, it is impossible to find closed form expressions for $w_0$ and $w_1$, which would be required to obtain explicit bounds. So instead of working with $\Lc$ directly, we provide closed form approximations $\wtil_0$ and $\wtil_1$ and use them to define an approximate linear operator $\Lctil$, see Section \ref{Subsubsec:Approx_L}. Consequently, we consider the problem
\begin{equation}\label{eqn:FP}
    \widetilde{\Lc}(\delta)=\Rc(\ftil_0) + \Nc(\delta) + (\widetilde{\Lc}-\Lc)(\delta)
\end{equation}
and show that
\begin{equation}\label{eqn:contraction_op_ktil}
    \widetilde{\Kc}(\delta)=\widetilde{\Lc}^{-1}\big(\Rc(\ftil_0) + \Nc(\delta) + (\widetilde{\Lc}-\Lc)(\delta)\big)
\end{equation}
is a contraction on $ \mc Y_{\varepsilon}$. For $\Lctil^{-1} \alpha$ to be in $ \mc Y$, we impose restrictions on the endpoint behavior of $\alpha$. The admissible behavior is enforced through the weight $p_2$, with the actual defining properties given in Section~\ref{sec:inverse_op}. The bound on the inverse operator then has the form
\begin{equation*}
    \|\Lctil^{-1} \alpha\|_{\mc Y} \leq c_{\Lctil} \|p_2 \alpha\|_\infty,
\end{equation*}
for some constant $c_{\Lctil}>0$. Consequently, the contraction argument requires bounds on
\begin{equation*}
    \|p_2 \Rc(\ftil_0)\|_\infty,\quad \|p_2 \Nc(\delta)\|_\infty,\quad \|p_2(\Nc(\delta) - \Nc(\gamma))\|_\infty,\quad \|p_2(\Lctil - \Lc)(\delta)\|_\infty,
\end{equation*}
for $\delta, \gamma \in \mc Y_\varepsilon$.


\subsubsection{Bounds for the remainder term}\label{sec:remainder}
We start be rewriting Eq.~\eqref{eqn:remainder_op} using that $\ftil_0(y) = 2\arctan(g(y))$, for $g$ defined in (\ref{eqn:g_chebsum}), giving
\begin{align}
    \Rc(\ftil_0) &= \frac{2g''(1+g^2) - 4g(g')^2}{(1+g^2)^2} + \left(\frac{d-1}{y}-\frac{y}{2}\right)\frac{2g'}{1+g^2} 
        - \frac{d-1}{2y^2}\frac{4(g-g^3)}{(1+g^2)^2} \nonumber \\
    &= \frac{2}{1+g^2} \left(g'' + \left(\frac{d-1}{y} - \frac{y}{2} - \frac{2gg'}{1+g^2}\right)g' 
        + \frac{d-1}{y^2}\frac{g^3-g}{1+g^2} \right). \label{eqn:remainder_g}   
\end{align}
Since $T_{2k+1}(x)=x P_k(x^2)$ for a polynomial $P_k$, we have that
\begin{equation*}
    g(y) = \frac{y}{\sqrt{y^2+4}} F_1(y^2),
\end{equation*}
where $F_1$ is a rational function with only rational coefficients
\begin{equation*}
    F_1(y^2) = \sum_{k=0}^K c_k(\ftil_0) P_k\left(\frac{y^2}{y^2+4}\right).
\end{equation*}
Similarly we have that 
\begin{equation*}
    g'(y) = \frac{1}{\sqrt{y^2+4}} F_2(y^2),\qquad g''(y) = \frac{y}{\sqrt{y^2+4}} F_3(y^2),
\end{equation*}
where $F_2$ and $F_3$ are also rational functions with rational coefficients
\begin{align*}
    F_2(y^2) &= 2y^2 F_1'(y^2) + \frac{4}{y^2+4}F_1(y^2) \\
    F_3(y^2) &= 2 F_2'(y^2) - \frac{1}{y^2+4} F_2(y^2).
\end{align*}
Inserting these representations in \eqref{eqn:remainder_g} shows that for $d\in\{3, 5\}$ the remainder term $p_2\Rc(\ftil_0)$ is a rational function in $y^2$ with only rational coefficients. However due to the more complicated form of the weight $p_2$ for $d\in\{4, 6\}$ the weighted term is only a rational function in $y$. Since the evaluation method requires a finite domain, we apply a variable transformation $y\in[0,\infty)$ to  $z\in[0,1]$. For $d\in\{3, 5\}$, we use
\begin{equation}\label{eqn:variable_transform_01}
    y^2 = \frac{4z}{1-z},
\end{equation}
and for $d\in\{4, 6\}$,
\begin{equation}\label{eqn:variable_transform_02}
    y = \frac{4z}{1-z}.
\end{equation}
The coefficients for the numerator and denominator of the transformed rational function are given in the CSV-files \texttt{coefs\_resid\_n} and \texttt{coefs\_resid\_d}. After applying the evaluation method we have
\begin{equation}\label{eqn:bound_remainder}
    \| p_2\Rc(\ftil_0) \|_\infty \leq c_\Rc = 
    \left\{\begin{array}{@{}l l}
        \frac{93}{440690713}, & d=3, \\
        \frac{96}{7276398917}, & d=4, \\
        \frac{95}{377877930362}, & d=5, \\
        \frac{62}{17322232151807}, & d=6.
    \end{array}\right.
\end{equation}


\subsubsection{Bounds for the nonlinear term}\label{sec:bounds_nonlinear_term}
First we rewrite the nonlinear term \eqref{eqn:nonlinear_op} 
\begin{align*}
    \Nc(\delta) &= \frac{d-1}{y^2}\big(\cos(2\ftil_0)\delta - \cos(2\ftil_0+\delta)\sin(\delta)\big) \\
    &= \frac{d-1}{y^2}\big(\cos(2\ftil_0)(\delta-\sin(\delta)) + 4\cos(\delta/2)\sin^2(\delta/2)\sin(2\ftil_0+\delta/2)\big).
\end{align*}
After applying the bounds 
\begin{equation}\label{eqn:sin_bounds}
    |\delta-\sin(\delta)| \leq \frac{1}{6}|\delta|^3, \quad 
    |\sin^2(\delta/2)| \leq \frac{1}{4}|\delta|^2, \quad
    |\sin(2\ftil_0+\delta/2)| \leq |\sin(2\ftil_0)| + \frac{1}{2}|\delta|
\end{equation}
we get
\begin{equation*} 
    |\Nc(\delta)| \leq \frac{d-1}{y^2} \left( \frac{1}{6}|\delta|^3 
        + |\delta|^2 \left( |\sin(2\ftil_0)| + \frac{1}{2}|\delta| \right) \right)
    = \frac{d-1}{y^2} |\delta|^2 \left( |\sin(2\ftil_0)| + \frac{2}{3}|\delta| \right). 
\end{equation*}
Next we insert the weights for the norm of $\delta$ and the bound for the nonlinear term
\begin{equation*}
    |p_2\Nc(\delta)| \leq (p_1\delta)^2\left(\left| \frac{(d-1)p_2}{p_1^2y^2}\sin(2\ftil_0)\right| 
        + \frac{2(d-1)p_2}{3p_1^3y^2}|p_1\delta|\right).
\end{equation*}
For  $\delta\in \mc Y_{\varepsilon}$, it follows that
\begin{equation}
    |p_2\Nc(\delta)| \leq \left(\left| \frac{(d-1)p_2}{p_1^2y^2}\sin(2\ftil_0)\right| 
        + \frac{2(d-1)p_2}{3p_1^3y^2}\varepsilon\right) \| \delta \|_{\mc Y}^2.
\end{equation}
Similar to the analysis for the remainder term, the sine term is a rational function in $y^2$ for $d\in\{3, 5\}$ and a rational function in $y$ for $d\in\{4, 6\}$. The same also holds for the weight term. Applying the evaluation method with the variable transformation (\ref{eqn:variable_transform_01}) for odd $d$ and (\ref{eqn:variable_transform_02}) for even $d$ to the sine term and the weight term separately, gives the bounds \footnote{\ The corresponding CSV-files are \texttt{coefs\_sin\_n(d)} and \texttt{coefs\_weights\_n(d)}.} 
\begin{equation}\label{eqn:bound_nonlinear}
    \|p_2\Nc(\delta)\|_\infty \leq c_\Nc \|\delta\|_{\mc Y}^2, \qquad c_\Nc = 
 \left\{\begin{array}{@{}l l}
        \frac{295}{86}, & d=3, \\
        \frac{310}{89}, & d=4, \\
        \frac{361}{83}, & d=5, \\
        \frac{622}{97}, & d=6.
    \end{array}\right.
\end{equation}
Next let $\delta, \gamma\in \mc Y_{\varepsilon}$. Rewriting the difference gives
\begin{align*}
    \Nc(\delta)-\Nc(\gamma) &= \frac{d-1}{y^2}\big(\cos(2\ftil_0)(\delta-\gamma) 
        - \cos(2\ftil_0+\delta)\sin(\delta) + \cos(2\ftil_0+\gamma)\sin(\gamma)\big)\\
    &= \frac{d-1}{y^2}\big(\cos(2\ftil_0)((\delta-\gamma)-\sin(\delta-\gamma)) \\
    &\quad    + 2\sin((\delta+\gamma)/2)\sin(\delta-\gamma)\sin(2\ftil_0+(\delta+\gamma)/2)\big).
\end{align*}
Similar to before, applying the bounds \eqref{eqn:sin_bounds} yields
\begin{align*}
    |\Nc(\delta)-\Nc(\gamma)| &\leq \frac{d-1}{y^2}\left( \frac{1}{6}|\delta-\gamma|^3 
        + |\delta+\gamma||\delta-\gamma| \left(|\sin(2\ftil_0)| + \frac{1}{2}|\delta+\gamma|\right) \right) \\
    &\leq \frac{d-1}{y^2}|\delta-\gamma|(|\delta| + |\gamma|) \left( |\sin(2\ftil_0)| + \frac{2}{3}(|\delta| + |\gamma|) \right).
\end{align*}
With the appropriate weights included, we have
\begin{equation*}
    |p_2(\Nc(\delta)-\Nc(\gamma))| \leq |p_1(\delta-\gamma)|(|p_1\delta|+|p_1\gamma|)
        \left(\left|\frac{(d-1)p_2}{p_1^2y^2}\sin(2\ftil_0)\right| 
        + \frac{2(d-1)p_2}{3p_1^3y^2}(|p_1\delta| + |p_1\gamma|)\right),
\end{equation*}
which then gives
\begin{equation}
    \|p_2(\Nc(\delta)-\Nc(\gamma))\|_\infty \leq \left(\left|\frac{(d-1)p_2}{p_1^2y^2}\sin(2\ftil_0)\right| 
        + \frac{4(d-1)p_2}{3p_1^3y^2}\varepsilon\right)\|\delta-\gamma\|_{\mc Y}(\|\delta\|_{\mc Y}+\|\gamma\|_{\mc Y}).
\end{equation}
We can reuse the previous result to finally get the estimate
\begin{equation}\label{eqn:bound_nonlinear_lipschitz}
     \|p_2(\Nc(\delta)-\Nc(\gamma))\|_\infty \leq c_\Nc\|\delta-\gamma\|_{\mc Y}(\|\delta\|_{\mc Y}+\|\gamma\|_{\mc Y}).
\end{equation}
Note that (\ref{eqn:bound_nonlinear}) already accounts for the slightly larger constant needed in (\ref{eqn:bound_nonlinear_lipschitz}).

\subsubsection{Approximate Linear Operator}\label{Subsubsec:Approx_L}

We first isolate the endpoint behavior by considering the free part of $\Lc$, namely
\begin{equation}\label{eqn:linear_term_free}
    \Lc_0(\delta) : = -\delta'' - \left(\frac{d-1}{y}-\frac{y}{2}\right)\delta' + \frac{d-1}{y^2}\delta.
\end{equation}
A fundamental system $\{v_0, v_1 \}$ for $\Lc_0$ is given by
\begin{align}
    v_0(y) &= \left\{\begin{array}{@{}l l}
        \frac{3}{y^2}e^{y^2/4}\left((y^2+2)D_+\left(\frac{y}{2}\right)-y\right), & d=3,\\
        \frac{4e^{y^2/8}}{3y}\left(y^2 I_0\left(\frac{y^2}{8}\right)-(y^2+4)I_1\left(\frac{y^2}{8}\right)\right), & d=4,\\
        \frac{15}{4y^4}e^{y^2/4}\left((y^4+4y^2+12)D_+\left(\frac{y}{2}\right)-(y^3+6y)\right), & d=5, \\
        \frac{8e^{y^2/8}}{5y^3}\left((y^4 + 2y^2)I_0\left(\frac{y^2}{8}\right) - (y^4 + 6y^2 + 32) I_1\left(\frac{y^2}{8}\right)\right), & d=6
    \end{array}\right. \label{eqn:homogeneous_solution_v0}\\
    v_1(y) &= \left\{\begin{array}{@{}l l}
        1 + \frac{2}{y^2}, & d=3,\\
        \frac{8}{y^{3}} U\left(-\frac{3}{2}, -1, \frac{y^{2}}{4}\right), & d=4,\\
        1 + \frac{4(y^2+3)}{y^4}, & d=5, \\
        \frac{y}{2} U\left(\frac{1}{2}, 4, \frac{y^{2}}{4}\right), & d=6,
    \end{array}\right. \label{eqn:homogeneous_solution_v1}
\end{align}
where $D_+$ is the Dawson integral, $I_\nu$ is the modified Bessel function of the first kind and $U$ denotes Tricomi’s confluent hypergeometric function. The asymptotic behavior is given by
\begin{align*}
    v_0(y) &= y+\mathcal O(y^3), &&\text{for } y\to 0,\\
    v_0(y) &\sim y^{-d}e^{y^2/4}, &&\text{for } y\to\infty,
\end{align*}
and
\begin{align*}
    v_1(y) &\sim y^{1-d}, &&\text{for } y\to 0,\\
    v_1(y) &= 1+\mathcal O(y^{-2}), &&\text{for } y\to\infty.
\end{align*}

Since these special functions are difficult do deal with in the derivation of quantitative bounds, we note that is suffices to isolate their endpoint behavior. Hence, we provide rational approximations of $v_0$ and $v_1$, which simultaneously match the dominating behavior both at $0$ and $\infty$.

\begin{definition}\label{approximate_homogeneous_solutions}
For $d\in\{3, 4, 5, 6\}$ we define 
\begin{align*}
    \vtil_0(y) &:= y e^{y^2/4} \Bigg(\sum_{k=0}^{K_N} c_{k, N}(\vtil_0)\ y^{k}\Bigg)\Bigg/\Bigg(\sum_{k=0}^{K_D} c_{k, D}(\vtil_0)\ y^{k}\Bigg) \\
    \vtil_1(y) &:= y^{1-d} \Bigg(\sum_{k=0}^{K_N} c_{k, N}(\vtil_1)\ y^{k}\Bigg)\Bigg/\Bigg(\sum_{k=0}^{K_D} c_{k, D}(\vtil_1)\ y^{k}\Bigg),
\end{align*}
with the coefficients given in Appendix  \ref{Appendix:Coeff}, with $K_N, K_D$ depending on $d$. The corresponding CVS-files are \texttt{coefs\_v0\_n(d)} and \texttt{coefs\_v1\_n(d)}.
\end{definition}

\begin{remark}
For $d\in\{3, 5\}$ the approximation $\vtil_0$ is obtained by replacing the Dawson integral with a 4-term truncation of the 2-point expansion provided in \cite{McCabe_1974}, while $\vtil_1$ coincides with $v_1$. For $d\in\{4, 6\}$ the coefficients of $\vtil_0$ were computed by applying the method described in \cite{Andrade_2003} to $\frac{v_0}{ye^{y^2/4}}$. For $\vtil_1$, we expanded $v_1$ and the ansatz both at $0$ and $\infty$ and solved for the coefficients. 
\end{remark}

For later use, we state the following technical lemma. 

\begin{lemma}\label{lem:v_sign}
For all $y\in(0, \infty)$, the functions $\vtil_0$ and $\vtil_1$, as well as $\vtil_0'$ are positive, while $\vtil_1'$ is negative.    
\end{lemma}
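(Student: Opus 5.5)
Lemma \ref{lem:v_sign} is a sign statement about explicit functions of the form (elementary prefactor) $\times$ (rational function with rational coefficients). The plan is to reduce each of the four claims to positivity of a polynomial on $[0,\infty)$ and then certify that with the evaluation method of \cite{DonSch26}, as in Sections \ref{sec:remainder}--\ref{sec:bounds_nonlinear_term}.

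\emph{Step 1: positivity of $\vtil_0$ and $\vtil_1$.} For $y>0$ the prefactors $y e^{y^2/4}$ and $y^{1-d}$ are positive. So it suffices to show that the numerator polynomials $N_0=\sum_k c_{k,N}(\vtil_0)y^k$ and $N_1$, and the denominator polynomials $D_0$ and $D_1$, are each positive on $(0,\infty)$. If all coefficients happen to be nonnegative, this is immediate. Otherwise, substitute $y=\frac{4z}{1-z}$ (or $y^2=\frac{4z}{1-z}$ in the odd-dimensional cases, where only even powers occur) and multiply by the appropriate power of $(1-z)$. This gives polynomials on $[0,1]$ with rational coefficients whose minima we bound from below rigorously. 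The endpoint values are already known from the asymptotics: $\vtil_0\sim y$ at $0$, $\vtil_1\sim y^{1-d}$ at $0$, and $\vtil_1\to 1$ at $\infty$. Hence the leading and trailing coefficients are positive, and the transformed polynomials do not vanish at $z\in\{0,1\}$.

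\emph{Step 2: the derivatives.} Write
\[
\vtil_0'(y)=e^{y^2/4}\,\frac{(1+\tfrac{y^2}{2})N_0D_0+y(N_0'D_0-N_0D_0')}{D_0^2}.
\]
Then $\vtil_0'>0$ reduces to positivity of the polynomial $P_0:=(1+\tfrac{y^2}{2})N_0D_0+y(N_0'D_0-N_0D_0')$. Similarly,
\[
\vtil_1'(y)=y^{-d}\,\frac{(1-d)N_1D_1+y(N_1'D_1-N_1D_1')}{D_1^2},
\]
so $\vtil_1'<0$ reduces to positivity of $P_1:=(d-1)N_1D_1-y(N_1'D_1-N_1D_1')$. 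Both $P_0$ and $P_1$ have exact rational coefficients, which can be computed symbolically from the coefficient files. Near $y=0$, $P_0(0)=N_0(0)D_0(0)>0$ and $P_1(0)=(d-1)N_1(0)D_1(0)>0$. At infinity, the leading coefficients are fixed by $\vtil_0\sim y^{-d}e^{y^2/4}$ and by $\vtil_1=1+\mathcal O(y^{-2})$ (respectively the corresponding decay for $d\in\{4,6\}$), which makes them positive.

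\emph{Step 3: certification.} After compactifying to $z\in[0,1]$, apply the exact truncated Chebyshev expansion with rigorous tail bounds from Section \ref{Sec:ComAss} to each polynomial. Obtain a rigorous lower bound $>0$, or, where the polynomial degenerates at an endpoint, divide out the known vanishing factor $z^m$ or $(1-z)^m$ first.

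\emph{Main obstacle.} I expect the difficulty to be $\vtil_1'$ near $y=\infty$ for $d\in\{4,6\}$. There $\vtil_1\to$ a constant plus $\mathcal O(y^{-2})$, so $y^d\vtil_1'$ decays and the leading coefficients of $P_1$ nearly cancel. After compactification the polynomial vanishes to some order at $z=1$. I would compute that order exactly from the coefficients, divide it out symbolically, and check that the remaining polynomial is positive at $z=1$ before running the evaluation method.
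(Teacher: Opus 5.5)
Your proposal is correct and is essentially the paper's argument. The paper also reduces everything to signs of explicit rational functions, handles $\vtil_1$ and $\vtil_1'$ by coefficient inspection, and certifies $\vtil_0$, $\vtil_0'$ with the evaluation method under the substitutions \eqref{eqn:variable_transform_01}/\eqref{eqn:variable_transform_02}. The only difference is that it bounds the weighted quantities $\vtil_0 e^{-y^2/4}\frac{1+y^{d+1}}{y}$ and $\vtil_0' e^{-y^2/4}(1+y^{d-1})$ from below, instead of your polynomials $N_0$, $D_0$, $P_0$.

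Your anticipated obstacle does not arise. For every $d$, not only $d\in\{4,6\}$, the two top coefficients of $P_1$ vanish exactly and all remaining coefficients are nonnegative (e.g.\ $P_1=4$ for $d=3$, $P_1=48+8y^2$ for $d=5$), so $\vtil_1'<0$ follows without any computer assistance, which is how the paper argues.
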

\begin{proof}
Since all coefficients are positive or zero, it is clear that $\vtil_1$ is positive for $y\in(0, \infty)$. After a direct calculation, we have that $\vtil_1'$ has a positive denominator with all coefficients in the numerator non-positive. For $\vtil_0$ and $\vtil_0'$, we apply the evaluation method with variable transformations (\ref{eqn:variable_transform_01}) for $d\in\{3, 5\}$ and (\ref{eqn:variable_transform_02}) for $d\in\{4, 6\}$ to show that \footnote{\ The corresponding CSV-files are \texttt{coefs\_v0pos\_n(d)} and \texttt{coefs\_v0dpos\_n(d)}.}
\begin{align*}
    \vtil_0(y) e^{-y^2/4}\frac{1+y^{d+1}}{y} &\geq 
    \left\{\begin{array}{@{}l l}
        \frac{71}{78}, & d=3,\\
        \frac{68}{75}, & d=4,\\
        \frac{37}{41}, & d=5,\\
        \frac{22}{25}, & d=6
    \end{array}\right.\\ 
    \vtil_0'(y) e^{-y^2/4}(1+y^{d-1}) &\geq 
    \left\{\begin{array}{@{}l l}
        \frac{55}{61}, & d=3,\\
        \frac{89}{97}, & d=4,\\
        \frac{68}{71}, & d=5,\\
        \frac{10}{11}, & d=6.
    \end{array}\right. 
\end{align*}
for all $y \in (0,\infty)$. This implies the claim. 
\end{proof}

Next, we derive a closed-form approximate fundamental system for $\Lc$.

\begin{definition}\label{def:approximate_fundamental_system}
For $d\in\{3, 4, 5, 6\}$ we define 
\begin{align*}
    \wtil_0(y) &:= \vtil_0(y) \sum_{k=0}^{K} c_k(\wtil_0) T_{2k}\left(\frac{y}{\sqrt{y^2+4}}\right), \\
    \wtil_1(y) &:= \vtil_1(y) \sum_{k=0}^{K} c_k(\wtil_1) T_k\left(\frac{y-2}{y+2}\right),
\end{align*}
with the coefficients given in Tables~\ref{tab:coef_w0_d3}-\ref{tab:coef_w0_d6} and Tables~\ref{tab:coef_w1_d3}-\ref{tab:coef_w1_d6}, and $K$ determined in each case by the number of listed coefficients. The corresponding CVS-files are \texttt{coefs\_w0} and \texttt{coefs\_w1}.
\end{definition}

\begin{remark}
Extending $\Lc$ to negative arguments shows that the equation admits an odd solution. Since the antisymmetric behavior is already contained in $\vtil_0$, the Chebyshev expansion must be even to ensure the correct endpoint behavior. For the second solution, we use a general Chebyshev expansion and enforce the correct endpoint behavior through the coefficients. 
The computation of the coefficients is very similar to the computation of the coefficients of $\ftil_0$. However, after having determined $(c_k(\wtil_1))_{k=0}^{K-2}$, we exactly solve 
\begin{equation*}
    \sum_{k=0}^{K} c_k(\wtil_1) T_k'(-1) = 0, \qquad
    \sum_{k=0}^{K} c_k(\wtil_1) T_k'(1) = 0,
\end{equation*}
for $c_{K-1}(\wtil_1)$ and $c_{K}(\wtil_1)$, ensuring the correct endpoint behavior. These two coefficients are uniquely determined by these two conditions and are thus not explicitly given in the tables, but can be found in the CSV-files.
\end{remark}

\begin{lemma}\label{lem:approx_linear_operator}
Let $\mc L$ be given as in \eqref{eqn:linear_op} and $\wtil_0, \wtil_1 \in C^2((0,\infty))$ as defined above. Then $\{\wtil_0, \wtil_1 \}$ is a fundamental system for the linear operator $\Lctil = \Lc + P\partial_y + Q$ with
\begin{equation}\label{eqn:PQ_operator}
\begin{aligned}
    P &= \frac{-1}{W(\wtil_0,\wtil_1)}(\wtil_0\Lc(\wtil_1) - \Lc(\wtil_0)\wtil_1), \\
    Q &= \frac{1}{W(\wtil_0,\wtil_1)}(\wtil_0'\Lc(\wtil_1) - \Lc(\wtil_0)\wtil_1').
\end{aligned}
\end{equation}
\end{lemma}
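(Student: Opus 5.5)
The claim is an algebraic identity, so I would verify it by direct computation. The key facts are that $\Lctil(\wtil_j)=0$ for $j=0,1$ and that $W(\wtil_0,\wtil_1)$ does not vanish on $(0,\infty)$. Once both hold, $\{\wtil_0,\wtil_1\}$ is a fundamental system of the second order operator $\Lctil$.

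The operator $\Lctil = \Lc + P\partial_y + Q$ is of second order with the same leading part $-\partial_y^2$ as $\Lc$. The condition $\Lctil(\wtil_j)=0$ reads
\begin{equation*}
P\wtil_j' + Q\wtil_j = -\Lc(\wtil_j), \qquad j=0,1.
\end{equation*}
This is a $2\times 2$ linear system for the unknowns $P(y),Q(y)$ at each point $y$. Its matrix is
\begin{equation*}
\begin{pmatrix} \wtil_0' & \wtil_0 \\ \wtil_1' & \wtil_1 \end{pmatrix},
\end{equation*}
whose determinant is $\wtil_0'\wtil_1-\wtil_0\wtil_1' = -W(\wtil_0,\wtil_1)$. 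Solving by Cramer's rule gives
\begin{align*}
P &= \frac{-\Lc(\wtil_0)\wtil_1 + \Lc(\wtil_1)\wtil_0}{-W}, &
Q &= \frac{-\wtil_0'\Lc(\wtil_1) + \wtil_1'\Lc(\wtil_0)}{-W},
\end{align*}
which is exactly \eqref{eqn:PQ_operator}. As a check, substituting back gives
\begin{equation*}
P\wtil_0'+Q\wtil_0 = \frac{-1}{W}\bigl(\wtil_0\wtil_0'\Lc(\wtil_1)-\wtil_0'\wtil_1\Lc(\wtil_0) - \wtil_0\wtil_0'\Lc(\wtil_1)+\wtil_0\wtil_1'\Lc(\wtil_0)\bigr) = -\Lc(\wtil_0),
\end{equation*}
and the computation for $j=1$ is analogous. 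Regularity is immediate: $\wtil_j\in C^2((0,\infty))$, so $\Lc(\wtil_j)$ is continuous on $(0,\infty)$, and hence so are $P$ and $Q$ wherever $W\neq0$.

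The main obstacle is showing $W(\wtil_0,\wtil_1)\neq0$ on all of $(0,\infty)$, which is needed both for $P,Q$ to be defined and for linear independence. For the unperturbed factors this follows from Lemma \ref{lem:v_sign}: since $\vtil_0,\vtil_0',\vtil_1>0$ and $\vtil_1'<0$, we get $W(\vtil_0,\vtil_1)=\vtil_0\vtil_1'-\vtil_0'\vtil_1<0$.

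Writing $\wtil_j=\vtil_j a_j$ with $a_j$ the Chebyshev sums, the Wronskian expands as
\begin{equation*}
W(\wtil_0,\wtil_1)=a_0a_1W(\vtil_0,\vtil_1)+\vtil_0\vtil_1(a_0a_1'-a_0'a_1).
\end{equation*}
I would then divide by $W(\vtil_0,\vtil_1)$. The quotient is a rational function after the substitution \eqref{eqn:variable_transform_01} or \eqref{eqn:variable_transform_02}, up to the explicit factors $e^{y^2/4}$ and powers of $y$, which cancel in the quotient. I would bound it from below by a positive constant on $[0,1]$ using the evaluation method, which yields $W(\wtil_0,\wtil_1)<0$. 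The endpoint conditions imposed on $c_{K-1}(\wtil_1)$ and $c_K(\wtil_1)$ ensure the quotient extends continuously to the endpoints, so the bound holds on the closed interval.
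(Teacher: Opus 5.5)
Your proposal is correct and is essentially the paper's proof, which consists only of the observation that $\Lctil\wtil_0=\Lctil\wtil_1=0$ by construction; that is exactly your Cramer's-rule computation. The nonvanishing of $W(\wtil_0,\wtil_1)$ that you single out is tacitly assumed in the lemma and only certified later, in Section~\ref{sec:inverse_op}. There the paper proves an evaluation-method bound on $\|W(\vtil_0,\vtil_1)/W(\wtil_0,\wtil_1)\|_\infty$, the reciprocal of your quotient, using \eqref{eqn:variable_transform_02} for every $d$ because $\wtil_1$ is rational in $y$ but not in $y^2$. One correction: your quotient extends continuously to both endpoints whether or not $c_{K-1}(\wtil_1)$ and $c_K(\wtil_1)$ satisfy their conditions. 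Those conditions instead give $\Lc(\wtil_1)$ the right endpoint decay, so that $\frac{p_2}{p_3}P$ and $\frac{p_2}{p_1}Q$ stay bounded.
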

\begin{proof}
By construction, $\Lctil \wtil_0 = 0$ and $\Lctil \wtil_1 = 0$.
\end{proof}

Next, we show that $\Lctil$ is a reasonable approximation to $\Lc$. More precisely, we consider the difference
\begin{equation*}
    p_2(\Lctil-\Lc)(\delta) = p_2(P\delta' + Q\delta),
\end{equation*}
where $P$ and $Q$ are defined in \eqref{eqn:PQ_operator}. After adding the appropriate weights, we get the bound
\begin{align}
    \|p_2(\Lctil-\Lc)(\delta)\|_\infty &\leq 
        \left(\left\|\frac{p_2}{p_3}P\right\|_\infty \|p_3\delta'\|_\infty 
        + \left\|\frac{p_2}{p_1}Q\right\|_\infty \|p_1\delta\|_\infty\right) \label{eqn:bound_approx_linear_parts} \\
    &\leq \left(\left\|\frac{p_2}{p_3}P\right\|_\infty + \left\|\frac{p_2}{p_1}Q\right\|_\infty\right) 
        \|\delta\|_{\mc Y} .\nonumber
\end{align}
A direct calculation shows that both fractions are rational functions in $y$. Applying the evaluation method with variable transformation \eqref{eqn:variable_transform_02}, we get \footnote{\ The corresponding CVS-files are \texttt{coefs\_P\_n(d)} and \texttt{coefs\_Q\_n(d)}.}
\begin{equation}\label{eqn:bound_approx_linear}
    \|p_2(\Lctil-\Lc)(\delta)\|_\infty \leq (c_P + c_Q) \|\delta\|_{\mc Y} = c_\Lc \|\delta\|_{\mc Y},
\end{equation}
where
\begin{align}
    \left\|\frac{p_2}{p_3}P\right\|_\infty \leq c_P &= 
    \left\{\begin{array}{@{}l l}
        \frac{73}{1227666}, & d=3, \\
        \frac{4}{68085}, & d=4, \\
        \frac{35}{17955088}, & d=5, \\
        \frac{17}{401628416}, & d=6
    \end{array}\right. \label{eqn:c_linear_P} \\
    \left\|\frac{p_2}{p_1}Q\right\|_\infty \leq c_Q &= 
    \left\{\begin{array}{@{}l l}
        \frac{25}{484609}, & d=3, \\
        \frac{29}{422128}, & d=4, \\ 
        \frac{87}{9583418}, & d=5, \\ 
        \frac{88}{490373645}, & d=6.
    \end{array}\right. \label{eqn:c_linear_Q}
\end{align}

\subsubsection{Bounds for the inverse operator}\label{sec:inverse_op}

The goal is then to derive a bound of the form
\begin{equation}\label{eqn:bound_inverse_shape}
    \|\Lctil^{-1}\alpha\|_{\mc Y} = \|p_1\Lctil^{-1}\alpha\|_\infty + \|p_3(\Lctil^{-1}\alpha)'\|_\infty \leq c_{\Lctil} \|p_2\alpha\|_\infty,
\end{equation}
where $p_2$ is the weight introduced in Definition~\ref{def:weights}. With $\Gtil$ denoting the Green's function of $\Lctil$, 
\begin{align}
    \Gtil(x,y) &= \frac{-1}{W(\wtil_0,\wtil_1)(x)} \left\{ 
    \begin{array}{@{}l l}
        \wtil_0(y)\wtil_1(x), & y \leq x,\\
        \wtil_0(x)\wtil_1(y), & x < y,
    \end{array}\right. \label{eqn:greens_approx}
\end{align}
a formal inverse of $\Lctil$ is given by 
\begin{align*}
  \Lctil^{-1}(\alpha)(y) &= \int_0^\infty \Gtil(x, y) \alpha(x)\, dx.
\end{align*}

For further use, we introduce the approximate free part of the linear operator $\Lc$
\begin{equation}
    \Lctil_0 := \Lc_0 + P_0\partial_y + Q_0,
\end{equation}
for 
\begin{equation}
\begin{aligned}
    P_0 &= \frac{-1}{W(\vtil_0, \vtil_1)}(\vtil_0\Lc_0(\vtil_1) - \Lc_0(\vtil_0)\vtil_1), \\
    Q_0 &= \frac{1}{W(\vtil_0, \vtil_1)}(\vtil_0'\Lc_0(\vtil_1) - \Lc_0(\vtil_0)\vtil_1').
\end{aligned}
\end{equation}
By construction, a fundamental system of $\Lctil_0$ is given by $\{\vtil_0, \vtil_1\}$. The associated Green's function is denoted as
\begin{align}
    \Gtil_0(x,y) &= \frac{-1}{W(\vtil_0,\vtil_1)(x)} \left\{ 
    \begin{array}{@{}l l}
        \vtil_0(y)\vtil_1(x), & y \leq x,\\
        \vtil_0(x)\vtil_1(y), & x < y.
    \end{array}\right. \label{eqn:greens_approx_homo}
\end{align}

We start by considering the $p_1$=term in \eqref{eqn:bound_inverse_shape}. After including the appropriate weights and factoring out the endpoint behavior, we have the bound
\begin{equation}\label{eqn:bound_inverse_first}
    \|p_1\Lctil^{-1}\alpha\|_\infty \leq \underset{y\geq 0}{\sup} \left(\int_0^\infty \left|\Gtil_0(x, y)\frac{p_1(y) }{p_2(x)}\right|\, dx \right)\left\|\frac{\Gtil}{\Gtil_0}\right\|_{L^\infty([0,\infty)^2)}\|p_2\alpha\|_\infty.
\end{equation}
Similarly for the $p_3$-term, we have 
\begin{equation} \label{eqn:bound_inverse_second}
    \|p_3(\Lctil^{-1}\alpha)'\|_\infty \leq \underset{y\geq 0}{\sup} \left(\int_0^\infty \left|\partial_y\Gtil_0(x, y)\frac{p_3(y) }{p_2(x)}\right|\, dx \right)\left\|\frac{\partial_y\Gtil}{\partial_y\Gtil_0}\right\|_{L^\infty([0,\infty)^2)}\|p_2\alpha\|_\infty.
\end{equation}
Next, we derive upper bounds on the supremum of the two integrals. For this, we note that the weight functions satisfy 
\begin{equation}\label{eqn:defining_properties_weigths}
    \Lctil_0\left(\frac{1}{p_1}\right)=\frac{1}{p_2}, \qquad \left(\frac{1}{p_1}\right)' = \frac{1}{p_3},
\end{equation}
which are actually their defining properties. Using the formal inverse of $\Lctil_0$ given through $\Gtil_0$, this implies 
\begin{align}
    \int_0^\infty \Gtil_0(x,y)\frac{p_1(y)}{p_2(x)}\, dx &= 1 \label{eqn:weight_int_ident_1}, \\
    \int_0^\infty \partial_y \Gtil_0(x,y)\frac{p_3(y)}{p_2(x)}\, dx &= 1. \label{eqn:weight_int_ident_2}
\end{align}
However, we cannot directly apply these identities to the inequalities \eqref{eqn:bound_inverse_first} and \eqref{eqn:bound_inverse_second} without first considering the signs of the integrands. The weights $p_1$, $p_2$ and $p_3$ are positive on $(0, \infty)$ so it remains to determine the signs of $\Gtil_0(x,y)$ and $\partial_y \Gtil_0(x,y)$.
It follows from Lemma~\ref{lem:v_sign}, that $\Gtil_0$ is non-negative, while $\partial_y \Gtil_0(x,y) \geq 0$ if  $y\leq x$ and $\partial_y \Gtil_0(x,y)\leq0$ if $x<y$. The absolute value in the integral of (\ref{eqn:bound_inverse_second}) can thus be omitted by splitting the integration domain
\begin{align*}
    \int_0^\infty \left|\partial_y \Gtil_0(x,y)\frac{p_3(y)}{p_2(x)}\right|\, dx &= 
       \vtil_1'(y) p_3(y) \int_0^y \frac{1}{W(\vtil_0, \vtil_1)(x)} \frac{\vtil_0(x)}{p_2(x)}\, dx \\
        & - \vtil_0'(y) p_3(y) \int_y^\infty \frac{1}{W(\vtil_0, \vtil_1)(x)} \frac{\vtil_1(x)}{p_2(x)}\, dx.
\end{align*}
To abbreviate the notation we define
\begin{align}
    I_0(y) &:= \vtil_1'(y) p_3(y) \int_0^y \frac{1}{W(\vtil_0, \vtil_1)(x)} \frac{\vtil_0(x)}{p_2(x)}\, dx
        \label{eqn:definition_I_0}, \\
    I_1(y) &:= -\vtil_0'(y) p_3(y) \int_y^\infty \frac{1}{W(\vtil_0, \vtil_1)(x)} \frac{\vtil_1(x)}{p_2(x)}\, dx
        \label{eqn:definition_I_1},
\end{align}
and note that (\ref{eqn:weight_int_ident_2}) gives
\begin{equation}\label{eqn:integral_one}
    I_1(y)-I_0(y) = 1.
\end{equation}
Direct integration is infeasible, instead we factor out the exponential behavior
\[     I_1(y) = -\vtil_0'(y) p_3(y) e^{-y^2/4} e^{y^2/4}  \int_y^\infty e^{-x^2/4}  \frac{e^{x^2/4}}{W(\vtil_0, \vtil_1)(x)} \frac{\vtil_1(x)}{p_2(x)}\, dx \]
and first show that 
\begin{equation}\label{eqn:upper_bound_integrant}
    -\frac{e^{x^2/4}}{W(\vtil_0, \vtil_1)(x)} \frac{\vtil_1(x)}{p_2(x)} \leq 
    \left\{\begin{array}{@{}l l}
        \frac{81}{101}, & d=3 \\
        \frac{195}{704}x + \frac{93}{1517}, & d=4,\\
        \frac{1}{10}x^2 + \frac{80}{63}, & d=5, \\
        \frac{91}{2816}x^3 + \frac{65}{2816} x + \frac{53}{97}, & d=6.
    \end{array}\right.  
\end{equation}
More precisely, to obtain these bounds, we apply the evaluation method to compute a negative upper bound on the maximum of the difference, using the variable transformation
\begin{equation}\label{eqn:variable_transform_03}
    x = \frac{4z}{1-z^2},
\end{equation}
which maps $x\in[0, \infty)$ to $z\in[0, 1]$ and converts the $\sqrt{x^2+4}$ term introduced by $p_2$ to a rational function.\footnote{\ The corresponding CSV-files are \texttt{coefs\_ingrtneg\_n(d)}.} Repeated integration by parts shows that
\begin{equation*}
    \int_y^\infty e^{-x^2/4} (c_3 x^3 + c_2 x^2 + c_1 x + c_0)\, dx 
    = 2 e^{-y^2/4} (c_3 (y^2+4) + c_2 y + c_1) + \int_y^\infty e^{-x^2/4}(2c_2 + c_0)\, dx.
\end{equation*}
After inserting the upper bound \eqref{eqn:upper_bound_integrant}, we have
\begin{equation}\label{eqn:upper_bound_integral}
    e^{y^2/4} \int_y^\infty \frac{-1}{W(\vtil_0, \vtil_1)(x)} \frac{\vtil_1(x)}{p_2(x)}\, dx \leq 
    \left\{\begin{array}{@{}l l}
        \frac{81}{101}\frac{16y + 18}{8y^2 + 9y + 10}, & d=3,\\
        \frac{195}{352} + \frac{93}{1517}\frac{16y + 18}{8y^2 + 9y + 10}, & d=4,\\
        \frac{1}{5}y + \frac{463}{315}\frac{16y + 18}{8y^2 + 9y + 10}, & d=5,\\
        \frac{91}{1408}y^2 + \frac{39}{128} + \frac{53}{97}\frac{16y + 18}{8y^2 + 9y + 10}, & d=6,
    \end{array}\right.
\end{equation}
where we used a root free variation of the upper bound \cite[7.1.13]{Abramowitz_1964} 
\begin{equation}
    e^{y^2/4} \int_y^\infty e^{-x^2/4}\, dx \leq \frac{4}{y + \sqrt{y^2+\tfrac{16}{\pi}}} \leq \frac{16y + 18}{8y^2 + 9y + 10},\qquad y\geq 0.
\end{equation}
Applying the evaluation method with the transformation \eqref{eqn:variable_transform_03}, with $y$ in place of $x$, to the product of $\vtil_0' p_3 e^{-y^2/4}$ and the upper bound in \eqref{eqn:upper_bound_integral} yields an upper bound on $I_1$.\footnote{\ The corresponding CSV-files are \texttt{coefs\_I1\_n(d)}.} Combining this bound with \eqref{eqn:integral_one}, we conclude that
\begin{equation}
    \int_0^\infty \left|\partial_y \Gtil_0(x,y)\frac{p_3(y)}{p_2(x)}\right|\, dx = I_0(y) + I_1(y) = 2I_1(y) - 1 \leq 
    \left\{\begin{array}{@{}l l}
        \frac{82}{17}, & d=3,\\
        \frac{59}{13}, & d=4,\\
        \frac{271}{28}, & d=5,\\
        \frac{67}{9}, & d=6.
    \end{array}\right. 
\end{equation}
Lastly, we derive rigorous bounds for the Green function ratios appearing in \eqref{eqn:bound_inverse_first} and \eqref{eqn:bound_inverse_second}. Since
\begin{align*}
    \left\|\frac{\Gtil}{\Gtil_0}\right\|_{L^\infty([0,\infty)^2)} &\leq \left\|\frac{W(\vtil_0, \vtil_1)}{W(\wtil_0, \wtil_1)}\right\|_\infty 
        \left\|\frac{\wtil_0}{\vtil_0}\right\|_\infty \left\|\frac{\wtil_1}{\vtil_1}\right\|_\infty, \\
    \left\|\frac{\partial_y\Gtil}{\partial_y\Gtil_0}\right\|_{L^\infty([0,\infty)^2)} &\leq \left\|\frac{W(\vtil_0, \vtil_1)}{W(\wtil_0, \wtil_1)}\right\|_\infty 
        \max\left(\left\|\frac{\wtil_0'}{\vtil_0'}\right\|_\infty \left\|\frac{\wtil_1}{\vtil_1}\right\|_\infty,
        \left\|\frac{\wtil_0}{\vtil_0}\right\|_\infty \left\|\frac{\wtil_1'}{\vtil_1'}\right\|_\infty\right),
\end{align*}
it remains to apply the evaluation method to each of the fractions. All terms are rational functions in $y$. Furthermore, $\frac{\wtil_0}{\vtil_0}$ for $d\in\{3,4,5,6\}$, and $\frac{\wtil_0'}{\vtil_0'}$ for $d\in\{3,5\}$ are rational functions in $y^2$. Applying the evaluation method with the variable transformation (\ref{eqn:variable_transform_01}) for the terms in $y^2$ and (\ref{eqn:variable_transform_02}) for all other terms and combining the results, we have the final estimate \footnote{\ The corresponding CSV-files are \texttt{coefs\_wron\_n(d)}, \texttt{coefs\_w0\_n(d)}, \texttt{coefs\_w1\_n(d)}, \texttt{coefs\_w0d\_n(d)} and \texttt{coefs\_w1d\_n(d)}.}
\begin{equation}\label{eqn:bound_inverse}
    \|\Lctil^{-1} \alpha\| \leq \left(\left\|\frac{\Gtil}{\Gtil_0}\right\|_{L^\infty([0,\infty)^2)} + (2I_1-1)\left\|\frac{\partial_y\Gtil}{\partial_y\Gtil_0}\right\|_{L^\infty([0,\infty)^2)}\right) \|p_2\alpha\|_\infty 
        \leq c_{\Lctil} \|p_2\alpha\|_\infty, 
\end{equation}
for all $\alpha \in \mc Z$, where 
\begin{equation}\label{eqn:c_approx_linear}
    c_{\Lctil} = \left\{\begin{array}{@{}l l}
        \frac{6467}{31}, & d=3,\\
        \frac{24119}{52}, & d=4,\\
        2941, & d=5,\\
        \frac{1042526}{65}, & d=6.
    \end{array}\right. 
\end{equation}

\subsubsection{Proof of Theorem~\ref{thm:closeness}}\label{sec:proof_closeness}
We show that $\Kctil$ defined in Eq.~\eqref{eqn:contraction_op_ktil} is a contraction on $\mc Y_{\varepsilon}$ for $\varepsilon$ as stated in Theorem \ref{thm:closeness}. Given $\delta \in \mc Y_{\varepsilon}$, we apply the bounds derived in the previous sections to get
\begin{align*}
    \|\Kctil(\delta)\|_{\mc Y} &= \|\Lctil^{-1}(\Rc(\ftil_0) + \Nc(\delta) + (\Lctil-\Lc)(\delta))\|_{\mc Y} \\
    &\leq c_{\Lctil}(\|p_2 \Rc(\ftil_0)\|_\infty + \|p_2 \Nc(\delta)\|_\infty 
        + \|p_2 (\Lctil-\Lc)(\delta)\|_\infty) \\
    &\leq c_{\Lctil}(c_\Rc + c_\Nc\|\delta\|_{\mc Y}^2 + c_\Lc\|\delta\|_{\mc Y}) \\
    &\leq c_{\Lctil}(c_\Rc\varepsilon^{-1} + c_\Nc\varepsilon + c_\Lc)\varepsilon.
\end{align*}
Then $\Kctil(\delta) \in \mc Y_{\varepsilon}$ follows from
\begin{equation}\label{eqn:c_self_mapping}
    c_{\Lctil}(c_\Rc\varepsilon^{-1} + c_\Nc\varepsilon + c_\Lc) \leq 
    \left\{\begin{array}{@{}l l}
        \frac{95}{97}, & d=3, \\ 
        \frac{77}{81}, & d=4, \\
        \frac{51}{52}, & d=5, \\
        \frac{47}{51}, & d=6.
    \end{array}\right.
\end{equation}
Similarly, given $\delta, \gamma \in \mc Y_{\varepsilon}$ we have \begin{align*}
    \|\Kctil(\delta) - \Kctil(\gamma)\|_{\mc Y} &= 
        \|\Lctil^{-1}(\Nc(\delta) - \Nc(\gamma) + (\Lctil-\Lc)(\delta-\gamma))\|_{\mc Y} \\
    &\leq c_{\Lctil}(\|p_2(\Nc(\delta)-\Nc(\gamma))\|_\infty+\|p_2(\Lctil-\Lc)(\delta-\gamma)\|_\infty) \\
    &\leq c_{\Lctil}(c_\Nc(\|\delta\|_{\mc Y} + \|\gamma\|_{\mc Y}) + c_\Lc) \|\delta - \gamma\|_{\mc Y} \\
    &\leq c_{\Lctil}(2c_\Nc\varepsilon + c_\Lc) \|\delta - \gamma\|_{\mc Y},
\end{align*}
where $\Kctil$ being a contraction then follows with
\begin{equation}\label{eqn:c_contraction}
    c_{\Lctil}(2c_\Nc\varepsilon + c_\Lc) \leq 
    \left\{\begin{array}{@{}l l}
        \frac{9}{98}, & d=3, \\ 
        \frac{4}{49}, & d=4, \\ 
        \frac{1}{19}, & d=5, \\ 
        \frac{9}{545}, & d=6.
    \end{array}\right.
\end{equation}

By the contraction mapping principle there exists a $\delta_0\in  \mc Y_{\varepsilon}$ solving \eqref{eqn:delta_ode}. Since $\ftil_0\in C^\infty([0,\infty))$ and the ODE (\ref{eqn:delta_ode}) has smooth coefficients on $(0,\infty)$, we have $\delta_0 \in C^\infty((0, \infty))$. Furthermore, the definition of $\mc Y$ implies $\delta_0(y) = \Oh(y)$ as $y\to0$, excluding the singular behavior at the origin. Thus $\delta_0\in C^\infty([0,\infty))$ and $f_0 = \ftil_0 + \delta_0$ is a smooth solution of Eq.~\eqref{Eqf} satisfying $\|f_0-\ftil_0\|_{\mc Y} \leq\varepsilon$, completing the proof of Theorem~\ref{thm:closeness}.


\subsection{Spectral stability of $f_0$}

In this section, we prove that the solution $f_0$ constructed in Theorem~\ref{thm:closeness} is spectrally stable. This yields Theorem \ref{Th:Existence_f0}. The proof follows the general approach of \cite{BieDon18} and utilizes the closed form approximations from the previous section. 

First, with the result of Theorem~\ref{thm:closeness}, we can defined the self-adjoint linearized operator 
\[ L_{f_0}: \mc D(L_{f_0}) \subset H \to H  \]
where $n = d+2$. By definition, $H$ is a space of radially symmetric functions. Hence, by introducing the weighted space $\mc H:= L^2_{\rho}([0,\infty))$ with $\rho(y) = y^{d-1} e^{-y^2/4}$ and a unitary map 

\[  \mc U: \mc H \to  H, \quad \tilde u \mapsto  | \mathbb S^{n-1} |^{-\frac{1}{2}} |\cdot|^{-1} \tilde u(|\cdot|) \]

we can define the unitarily equivalent  operator $\Acn := - \mc U^{-1} L_{f_0} \mc U$ with domain 
$\mc D( \Acn) := \mc U^{-1} \mc D(L_{f_0}) \subset \mc H$ and 

\begin{equation}\label{eqn:acn}
    \Acn w(y) = - w''(y)  - \left(\frac{d-1}{y} - \frac{y}{2}\right)  w'(y) 
        + \frac{d-1}{y^2} \cos(2f_0(y)) w(y).
\end{equation}
After reducing the operator to Schrödinger form with the substitution $v(y) = \sqrt{\rho(y)} w(y)$, Theorems 6.4 and 6.6 in \cite{Weidmann_1987} imply that $\Acn$ is in the limit-point case at both $0$ and $\infty$.

For $\lambda\in\R$, we consider the solution regular at the origin, satisfying the spectral ODE 
\begin{equation}\label{eqn:eigenvalue_ivp}
    \Acn W = \lambda W, \qquad W(0)=0, \qquad W'(0)=1.
\end{equation} 
By the limit-point case at $0$, the admissible behavior is unique up to normalization, which we fix by choosing $W'(0)=1$. We denote the solution of \eqref{eqn:eigenvalue_ivp} by $W_\lambda$.

Since the operator is limit-point at both endpoints, it admits a unique self-adjoint realization and the domain $\mc D(\Acn)$ must coincide with the maximal domain of definition. Thus, for a solution $W_\lambda$ of \eqref{eqn:eigenvalue_ivp} to be an eigenfunction, it suffices to check if $W_\lambda\in\mc H$.

Our goal is to prove Theorem~\ref{Th:Existence_f0} by applying Theorem 1.2 of \cite{GesSimTes1996}. This reduces counting the negative eigenvalues to determining the number of zeros of $W_\lambda$. In particular, we count the zeros of $W_\lambda$ for $\lambda=0$, $\lambda=-1$ and $\lambda<-1$.


\subsubsection{The case $\lambda=0$}\label{sec:6_1}

The goal is to determine the number of zeros $W_0$ has on $(0,\infty)$. For additional control, we define $q:=\frac{W_0}{y}$, which solves
\begin{equation}\label{eqn:q_ivp}
\begin{aligned}
    q''(y) &= \left(\frac{y}{2} - \frac{d+1}{y}\right)q'(y) 
        + \left(\frac{1}{2} - \frac{2(d-1)}{y^2} + \frac{2(d-1)}{y^2}\cos^2(f_0(y))\right) q(y),
\end{aligned}
\end{equation}
and note that $W_0$ and $q$ share the same zeros on $(0,\infty)$. Notice that for $y\geq y^\ast$, where
\begin{equation}
    y^\ast = \frac{d+3}{2}
\end{equation}
the following inequalities hold
\begin{equation*}
    \frac{y}{2} - \frac{d+1}{y} > 0, \qquad \frac{1}{2} - \frac{2(d-1)}{y^2} \geq 0.
\end{equation*}
If we assume $q(y^\ast)<0$ and $q'(y^\ast)<0$, then by \eqref{eqn:q_ivp}, it follows that $q''(y^\ast)<0$. Consequently for any $y\geq y^\ast$ we have
\begin{equation*}
    q(y)<0, \qquad q'(y)<0,
\end{equation*}
which implies that any zeros of $q$ on $(0,\infty)$ must lie in $(0, y^\ast)$. This reduces determining the number of zeros of $W_0$ on $(0,\infty)$ to finding rigorous bound on the behavior of $W_0$ and $q$ on the interval $[0, y^\ast]$.

For this, we compare the operators $\Acn$ in Eq.~\eqref{eqn:acn} and $\Lc$ in Eq.~\eqref{eqn:linear_op}, and notice that
\begin{equation*}
    \Acn = \Lc + (V_0-\Vtil_0),
\end{equation*}
where $V_0$ is given in (\ref{eqn:acn}) and
\begin{equation}
    \Vtil_0 = -\frac{2(d-1)}{y^2}\sin^2(\ftil_0).
\end{equation}
By Theorem~\ref{thm:closeness}, we expect the difference $\Vtil_0-V_0$ to be small, implying that an approximate solution of $\Lc w=0$ should also be a reasonable approximate solution of $\Acn W=0$. 

This is established in the following lemma. The proof closely follows that of Theorem~\ref{thm:closeness}, allowing us to reuse the previously established bounds.

\begin{lemma}\label{lem:wtil_approx_W0}
There exists a constant $c_0>0$ such that
\begin{equation}
  \|\chi p_1(W_0 - c_0\wtil_0)\|_{\infty} + \|\chi p_3(W'_0 - c_0\wtil'_0)\|_{\infty}   \leq c_0\widehat{\varepsilon},
\end{equation}
where $\wtil_0$ is defined in (\ref{def:approximate_fundamental_system}), $\chi$ denotes the indicator function on $[0, y^\ast]$ and
\begin{equation}\label{eqn:varepsilon_hat}
    \widehat{\varepsilon} = \left\{\begin{array}{@{}l l}
        \frac{5}{3}, & d=3, \\ 
        \frac{6}{5}, & d=4, \\ 
        \frac{9}{13}, & d=5, \\
        \frac{7}{29}, & d=6.
    \end{array}\right.
\end{equation}
\end{lemma}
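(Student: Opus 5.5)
We would prove the lemma by a perturbative fixed-point argument patterned on the proof of Theorem~\ref{thm:closeness}, now set up for the linear equation $\Acn W=0$ restricted to $[0,y^\ast]$. Write $W_0 = c_0(\wtil_0 + \delta)$. Since $\Acn = \Lc + (V_0-\Vtil_0)$ and $\Lctil\wtil_0 = 0$, the equation $\Acn W_0=0$ becomes
\[
\Lctil \delta = (\Lctil - \Lc)\delta + (\Lctil-\Lc)\wtil_0 - (V_0-\Vtil_0)(\wtil_0+\delta),
\]
where we used $\Lc\wtil_0 = \Lctil\wtil_0 - (\Lctil-\Lc)\wtil_0 = -(P\wtil_0'+Q\wtil_0)$. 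We invert with a Volterra-type (initial value) Green's function of $\Lctil$ on $[0,y^\ast]$,
\[
\delta(y) = \int_0^y \frac{\wtil_0(x)\wtil_1(y)-\wtil_0(y)\wtil_1(x)}{W(\wtil_0,\wtil_1)(x)}\,\alpha(x)\,dx ,
\]
up to the sign convention matching $\Lctil$. This choice keeps $\delta(y)=o(y)$ at $0$, so that $c_0\wtil_0'(0)\cdot$(normalization) fixes $c_0$ via $W_0'(0)=1$. The contraction is run in the norm $\|\chi p_1\cdot\|_\infty+\|\chi p_3(\cdot)'\|_\infty$.

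The key estimates come in the following order.
\begin{enumerate}
\item \emph{Potential difference.} Bound $|V_0-\Vtil_0| = \frac{2(d-1)}{y^2}|\sin^2 f_0-\sin^2\ftil_0| \le \frac{2(d-1)}{y^2}|f_0-\ftil_0|$. By Theorem~\ref{thm:closeness} this gives $\lesssim \varepsilon/(y^2p_1)$, which is smooth and $O(\varepsilon\, y^{-1})$ near $0$.
\item \emph{Linear-operator error.} Reuse \eqref{eqn:bound_inverse_shape}-type weighted bounds: the terms $p_2(\Lctil-\Lc)$ are already controlled by $c_P,c_Q$. The inhomogeneity $(\Lctil-\Lc)\wtil_0$ needs $\|\chi p_2(P\wtil_0'+Q\wtil_0)\|_\infty$, which is obtained from $c_P,c_Q$ together with bounds on $p_3\wtil_0'$ and $p_1\wtil_0$ on $[0,y^\ast]$.
\item \emph{Inverse bound.} On the compact interval the Volterra operator with weight $p_2$ is bounded by a constant $c$. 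This can be estimated like $I_0,I_1$, or more crudely, since only finite $y^\ast$ matters; the exponential growth of $\wtil_0$ is harmless on $[0,y^\ast]$.
\end{enumerate}

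With these in hand the fixed-point inequality reads $\|\delta\|\le c(A+B\|\delta\|)$, where $B$ is small because of $\varepsilon$ and $c_\Lc$. This gives $\|\delta\|\le cA/(1-cB)\le\widehat\varepsilon$; the rather large values of $\widehat\varepsilon$ indicate that the bound is coarse. Since the equation is linear, we do not need a contraction in a ball: the Volterra series converges, and the lemma follows with $c_0$ determined by the normalization $W_0'(0)=1$.

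The main obstacle is step 3 together with the growth of $p_1\wtil_0$ near $y^\ast$. The weights are tuned for the global problem, and the $\Lctil^{-1}$ bound of the previous section is for the two-sided Green's function, not the initial-value one. I would first try to reuse $\Gtil$ directly by noting that $W_0 - c_0\wtil_0$ differs from the two-sided solution only by a multiple of $\wtil_1$, which is excluded by regularity at $0$. Failing that, I would verify the needed sup-bounds on $[0,y^\ast]$ with the evaluation method.
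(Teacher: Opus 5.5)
There is a genuine gap, and it is the step you yourself flag as the main obstacle.

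\textbf{The inverse bound is missing.} The lemma is quantitative: the values of $\widehat{\varepsilon}$ are exactly what Lemma~\ref{lem:one_zero} uses. Your sketch never bounds the inverse it relies on, and the final claim $cA/(1-cB)\le\widehat{\varepsilon}$ is only asserted.

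\textbf{The Volterra inverse is intrinsically worse than the two-sided one.} By Lemma~\ref{lem:v_sign}, the model kernel $G_V(x,y)=\big(\vtil_0(y)\vtil_1(x)-\vtil_1(y)\vtil_0(x)\big)/W(\vtil_0,\vtil_1)(x)$, $x<y$, and $\partial_yG_V$ each have one sign. Since $\Lctil_0(1/p_1)=1/p_2$ and $(1/p_1)'(0)=\tfrac12=\tfrac12\vtil_0'(0)$, the Volterra solution with source $1/p_2$ is $1/p_1-\tfrac12\vtil_0$. Hence the weighted norms of the model Volterra inverse are $\sup_{[0,y^\ast]}(\tfrac12p_1\vtil_0-1)$ and $\sup_{[0,y^\ast]}(\tfrac12p_3\vtil_0'-1)$. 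The $e^{y^2/4}$ growth of $\vtil_0$ is no longer cancelled: by a rough evaluation, for $d=3$ the second quantity is about $19$, versus $82/17$ for the two-sided kernel.

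In addition, the product-of-ratios bound used for $\Gtil/\Gtil_0$ does not carry over, because the Volterra kernel is a difference of products vanishing on the diagonal. The stated $\widehat{\varepsilon}$ are not coarse either: the self-mapping factors are $74/75$ and $92/93$ for $d=4,6$, so $\widehat{\varepsilon}$ is close to the smallest value for which the estimate closes even with $c_{\Lctil}$. It is therefore far from clear that your route reproduces them.

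\textbf{Step 1 loses a factor of $y$.} Replacing $|\sin(f_0+\ftil_0)|$ by $1$ discards a factor $O(y)$ at the origin. With $p_2\sim y^{-1}$ and $|\delta|\lesssim y\|\delta\|$, the term $p_2(\Vtil_0-V_0)\delta$ is then of size $\varepsilon\|\delta\|/y$, so it is not in the $p_2$-weighted class you want to invert on. The paper keeps $|\sin(2\ftil_0+\delta_0)|\le|\sin(2\ftil_0)|+|\delta_0|$, which makes $\frac{p_2}{p_1}(\Vtil_0-V_0)$ bounded and yields $c_V$.

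\textbf{How the paper avoids this.} It keeps the global two-sided $\Lctil^{-1}$, whose bound $c_{\Lctil}$ is already established. It makes the problem admissible on $[0,\infty)$ by multiplying only the $\wtil_0$-inhomogeneities by the cutoff $\chi$, and finds a fixed point of $\delta\mapsto\Lctil^{-1}\big(\chi(\Lctil-\Lc)\wtil_0+\chi(\Vtil_0-V_0)\wtil_0+(\Lctil-\Lc)\delta+(\Vtil_0-V_0)\delta\big)$ in $\mc Y_{\hat\varepsilon}$. The only new inputs are $c_V$ and $c_w$; the latter comes from $c_P,c_Q,c_V$ together with bounds on $\|p_1\wtil_0\|_{L^\infty([0,y^\ast])}$ and $\|p_3\wtil_0'\|_{L^\infty([0,y^\ast])}$.

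Then $\widehat W_0=\wtil_0+\widehat\delta_0$ solves $\Acn W=0$ on $[0,y^\ast]$ and is $O(y)$ at $0$. Hence $W_0=c_0\widehat W_0$ there with $c_0=1/\widehat W_0'(0)$, which is positive because $|\widehat\delta_0'(0)|\le\widehat{\varepsilon}/p_3(0)<\wtil_0'(0)$.

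\textbf{Your fallback has the roles reversed.} Two solutions of $\Lctil u=\alpha$ that are regular at $0$ differ by a multiple of $\wtil_0$, not $\wtil_1$. That is why the normalization is $1/\widehat W_0'(0)$ rather than $1/\wtil_0'(0)$, and why it needs the positivity check. Without the cutoff, the source $(\Lctil-\Lc)\wtil_0$ inherits the $e^{y^2/4}$ growth of $\wtil_0$, so $\Gtil$ cannot be applied on $[0,\infty)$.
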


\begin{proof}
The proof is divided into three key steps. We first formulate the fixed-point problem, then derive two additional bounds, and finally apply them in the contraction argument. 

Let $\wtil_0 + \widehat{\delta}$ be a solution of $\Acn W=0$. Then $\widehat{\delta}$ satisfies
\begin{equation*} 
    \Lctil(\widehat{\delta}) = (\Lctil - \Lc)(\wtil_0 + \widehat{\delta}) + (\Vtil_0 - V_0)(\wtil_0 + \widehat{\delta}). 
\end{equation*} 
As we are only interested in the behavior of $\widehat{\delta}$ on $[0, y^\ast]$, we apply a cutoff $\chi$
\begin{equation*} 
    \Lctil(\delta) = \chi(\Lctil-\Lc)(\wtil_0) 
        + \chi(\Vtil_0-V_0)(\wtil_0) + (\Lctil-\Lc)(\delta) + (\Vtil_0-V_0)(\delta).
\end{equation*}
The cutoff eliminates the exponential growth of $\wtil_0$ for large $y$ without changing the behavior of $\widehat{\delta}$ on $[0, y^\ast]$. This leads to the definition of
\begin{equation}\label{eqn:contraction_op_khat}
    \Kchat(\delta) = \Lctil^{-1} \big(\chi(\Lctil-\Lc)(\wtil_0) 
        + \chi(\Vtil_0-V_0)(\wtil_0) + (\Lctil-\Lc)(\delta) + (\Vtil_0-V_0)(\delta)\big).
\end{equation}

We show that $\Kchat$ is a contraction on
\begin{equation}\label{eqn:functionspace_delta_hat}
 \mc Y_{\hat \varepsilon} :=\{\delta\in C^1([0, \infty)),\ \|\delta\|_{\mc Y} \leq\widehat{\varepsilon}\}.
\end{equation}
While we can reuse the bounds (\ref{eqn:bound_approx_linear}) and (\ref{eqn:bound_inverse}), we also need to derive two additional estimates. Starting with the bound for $\Vtil_0-V_0$, we have
\begin{equation}\label{eqn:bound_potential_part}
    \|p_2(\Vtil_0-V_0) \delta\|_\infty \leq 
        \left\| \frac{p_2}{p_1} (\Vtil_0-V_0) \right\|_\infty \|p_1\delta\|_\infty \leq
        \left\| \frac{p_2}{p_1} (\Vtil_0-V_0) \right\|_\infty \|\delta\|_{\mc Y}.
\end{equation}
Substituting $f_0 = \ftil_0 + \delta_0$ gives
\begin{align*}
    \left| \frac{p_2}{p_1} (\Vtil_0-V_0) \right| &= 
        \left| \frac{2(d-1)p_2}{p_1 y^2}(\sin^2(\ftil_0 + \delta_0) - \sin^2(\ftil_0)) \right| \\
    &= \left| \frac{2(d-1)p_2}{p_1 y^2}\sin(2\ftil_0 + \delta_0)\sin(\delta_0) \right|.
\end{align*}
After applying bounds 
\begin{equation*}
    |\sin(2\ftil_0 + \delta_0)| \leq |\sin(2\ftil_0)| + |\delta_0|, \qquad |\sin(\delta_0)| \leq |\delta_0|
\end{equation*}
and including the appropriate weights, we have 
\begin{align*}
    \left| \frac{p_2}{p_1} (\Vtil_0-V_0) \right| &\leq
        |p_1\delta_0| \left(\left| \frac{2(d-1)p_2}{p_1^2y^2}\sin(2\ftil_0)\right| 
        + \frac{2(d-1)p_2}{p_1^3y^2} |p_1\delta_0|\right) \\
    &\leq \left(2 \left|\frac{(d-1)p_2}{p_1^2y^2}\sin(2\ftil_0)\right| 
        + \frac{2(d-1)p_2}{p_1^3y^2}\varepsilon\right) \varepsilon.
\end{align*}
Here we used $\delta_0 \in \mc Y_{\varepsilon}$, where $\mc Y_{\varepsilon}$ is defined in (\ref{eqn:functionspace_delta}). We can reuse the bound on the sine and the weight terms, calculated in Section~\ref{sec:bounds_nonlinear_term}, to get
\begin{equation}\label{eqn:bound_potential}
    \|p_2(\Vtil_0-V_0) \delta\|_\infty \leq c_V \|\delta\|_{\mc Y},
\end{equation}
where
\begin{equation}\label{eqn:c_potential}
    c_V = \left\{\begin{array}{@{}l l}
        \frac{25}{76309}, & d=3, \\ 
        \frac{97}{2002923}, & d=4, \\
        \frac{87}{12687862}, & d=5, \\
        \frac{69}{85422589}, & d=6.
    \end{array}\right.
\end{equation}
Having established the bound on the potential term, we now continue with the $\wtil_0$ terms. Due to the cutoff, it suffices to find rigorous bounds for $\|p_1\wtil_0\|_{L^\infty([0,y^\ast])}$ and $\|p_3\wtil_0'\|_{L^\infty([0,y^\ast])}$. However, both $p_1\wtil_0$ and $p_3\wtil_0'$ contain an exponential function and a square root, preventing us from directly applying the evaluation method. However, with a bit more work, we find that 

\begin{equation}
    \|\chi p_2(\Lctil-\Lc)(\wtil_0)\|_\infty 
        + \|\chi p_2(\Vtil_0-V_0)(\wtil_0)\|_\infty \leq c_w,
\end{equation}
where
\begin{equation}\label{eqn:c_w}
     c_w = \left\{\begin{array}{@{}l l}
        \frac{27}{4492}, & d=3, \\
        \frac{69}{29468}, & d=4, \\
        \frac{20}{98769}, & d=5, \\
        \frac{99}{6763105}, & d=6,
    \end{array}\right.
\end{equation}
and refer to Appendix~\ref{sec:app_bound2} for a detailed derivation. To finish the proof, let $\delta, \gamma\in  \mc Y_{\hat \varepsilon}$, then $\Kchat$ is self mapping since
\begin{align*}
    \|\Kchat(\delta)\|_{\mc Y} &\leq c_{\Lctil}(c_w + (c_\Lc + c_V)\|\delta\|_{\mc Y}) \leq c_{\Lctil}(c_w\widehat{\varepsilon}^{-1} + (c_\Lc + c_V)) \widehat{\varepsilon},
\end{align*}
where 
\begin{equation}\label{eqn:c_self_mapping_hat}
    c_{\Lctil}(c_w\widehat{\varepsilon}^{-1} + (c_\Lc + c_V)) \leq 
    \left\{\begin{array}{@{}l l}
        \frac{92}{109}, & d=3, \\ 
        \frac{74}{75}, & d=4, \\ 
        \frac{21}{23}, & d=5, \\
        \frac{92}{93}, & d=6.
    \end{array}\right.
\end{equation}
Similarly, $\Kchat$ being a contraction follows with
\begin{equation*}
    \|\Kchat(\delta) - \Kchat(\gamma)\|_{\mc Y} \leq c_{\Lctil}(c_\Lc + c_V) \|\delta - \gamma\|_{\mc Y},
\end{equation*}
where we have
\begin{equation}\label{eqn:c_contraction_hat}
    c_{\Lctil}(c_\Lc + c_V) \leq 
    \left\{\begin{array}{@{}l l}
        \frac{9}{98}, & d=3, \\ 
        \frac{4}{49}, & d=4, \\
        \frac{1}{19}, & d=5, \\
        \frac{9}{545}, & d=6.
    \end{array}\right.
\end{equation}

Since $\Kchat$ is a contraction on $ \mc Y_{\hat \varepsilon}$, there exists a unique $\widehat{\delta}_0 \in  \mc Y_{\hat \varepsilon}$ such that $\widehat{W}_0 := \wtil_0 + \widehat{\delta}_0$ is a solution of
\begin{equation*}
    \Acn W = (1-\chi)(\Lctil - \Lc)(\wtil_0) + (1-\chi)(\Vtil_0 - V_0)(\wtil_0).
\end{equation*}

Thus, $\widehat{W}_0$ is also a solution of $\Acn W = 0$ for $y\in[0,y^\ast]$. 

Next we adjust $\widehat{W}_0$ to satisfy the boundary conditions in (\ref{eqn:eigenvalue_ivp}). We already have that $\widehat{W}_0(0)=0$, since $\wtil_0(0)=0$ and $\widehat{\delta}_0(0)=0$. After normalizing with $c_0 := 1/\widehat{W}_0'(0)$ we have $W_0(y)=c_0\widehat{W}_0(y)$ for $y\in[0,y^\ast]$. Note that $c_0>0$, since $\wtil_0'(0)>0$ and $|\widehat{\delta}_0'(0)| \leq \frac{\widehat{\varepsilon}}{p_3(0)} < \wtil_0'(0)$. 

Finally with
\begin{equation*}
 \|\chi p_1(W_0 - c_0\wtil_0)\|_{\infty} + \|\chi p_3(W'_0 - c_0\wtil'_0)\|_{\infty}  \leq \| c_0 \widehat{\delta}_0\|_{\mc Y} \leq c_0\widehat{\varepsilon},
\end{equation*}
the result of the lemma follows.
\end{proof}

\begin{lemma}\label{lem:one_zero}
The function $W_0$ has exactly one zero in $(0,\infty$).    
\end{lemma}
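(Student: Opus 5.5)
We would combine the reduction already carried out before Lemma~\ref{lem:wtil_approx_W0} with the quantitative closeness statement of that lemma. Recall that any zero of $W_0$ on $(0,\infty)$ is a zero of $q=W_0/y$, and that once $q(y^\ast)<0$ and $q'(y^\ast)<0$ hold, $q$ stays negative and decreasing on $[y^\ast,\infty)$. So it suffices to show three things: $W_0$ has exactly one zero in $(0,y^\ast)$, and both $q(y^\ast)<0$ and $q'(y^\ast)<0$. All three will be transferred from the explicit approximation $c_0\wtil_0$ to $W_0$ via Lemma~\ref{lem:wtil_approx_W0}. Since $c_0>0$, we may divide by it. On $[0,y^\ast]$ we then have
\[
|W_0/c_0-\wtil_0|\le \widehat\varepsilon/p_1 = \widehat\varepsilon\, y/\sqrt{y^2+4},
\qquad
|W_0'/c_0-\wtil_0'|\le \widehat\varepsilon/p_3 = 4\widehat\varepsilon\,(y^2+4)^{-3/2}.
\]

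\textbf{Step 1: the approximation has a simple zero.} Write $\wtil_0=\vtil_0\, S$, where $S(y)=\sum_k c_k(\wtil_0)T_{2k}\bigl(y/\sqrt{y^2+4}\bigr)$. By Lemma~\ref{lem:v_sign}, $\vtil_0>0$, so the sign of $\wtil_0$ is the sign of $S$. Moreover $S$ is a rational function of $y^2$, hence a polynomial in $z$ after the substitution \eqref{eqn:variable_transform_01}. Using the evaluation method on $[0,y^\ast]$, we would locate a small interval $[a,b]$ and verify a strict margin for the perturbed functions. Concretely, we would check
\[
\frac{\wtil_0}{y}-\frac{\widehat\varepsilon}{\sqrt{y^2+4}}>0 \ \text{ on } (0,a],
\qquad
\frac{\wtil_0}{y}+\frac{\widehat\varepsilon}{\sqrt{y^2+4}}<0 \ \text{ on } [b,y^\ast],
\qquad
\wtil_0' + 4\widehat\varepsilon\,(y^2+4)^{-3/2}<0 \ \text{ on } [a,b].
\]
By the bounds above, these give $W_0>0$ on $(0,a]$, $W_0<0$ on $[b,y^\ast]$, and $W_0'<0$ on $[a,b]$, so $W_0$ has exactly one zero in $(0,y^\ast]$. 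The factor $e^{y^2/4}$ in $\vtil_0$ prevents a direct rational evaluation. We would handle it by factoring $\vtil_0$ out and using the explicit positive lower bounds of Lemma~\ref{lem:v_sign}, or alternatively by bounding $e^{y^2/4}$ between rational functions on the compact interval, as in Appendix~\ref{sec:app_bound2}. Near $y=0$ we use $W_0'(0)=1>0$ together with the $y$-scaling of the weight $p_1$.

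\textbf{Step 2: endpoint signs at $y^\ast$.} From Step 1 we already have $q(y^\ast)=W_0(y^\ast)/y^\ast<0$. For the derivative we use $q'=(yW_0'-W_0)/y^2$, so it suffices to show $y^\ast W_0'(y^\ast)-W_0(y^\ast)<0$. This follows from the explicit value of $y^\ast\wtil_0'(y^\ast)-\wtil_0(y^\ast)$ at the single rational point $y^\ast=(d+3)/2$, provided this value is negative with a margin exceeding $\widehat\varepsilon\bigl(y^\ast/p_3(y^\ast)+1/p_1(y^\ast)\bigr)$. This is a finite exact computation, up to a rigorous enclosure of $e^{(y^\ast)^2/4}$.

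\textbf{Main obstacle.} The error sizes $\widehat\varepsilon$ are not small: $5/3$ for $d=3$ and $6/5$ for $d=4$. The margin conditions therefore hold only if $\wtil_0$ is large relative to the weights, which is where the exponential growth of $\vtil_0$ helps for larger $y$. The delicate region is near the zero of $\wtil_0$ and near $y=0$. There we would first try to refine the interval $[a,b]$ and rely on the derivative bound. If the margins still fail, we would rerun Lemma~\ref{lem:wtil_approx_W0} with the already established constants $c_\Lc$ and $c_V$ to extract a sharper effective $\widehat\varepsilon$ on subintervals.
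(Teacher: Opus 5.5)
Your proposal is correct and follows essentially the same route as the paper. The paper takes $a=1$ and $b=y^\ast$: it verifies $\wtil_0/y-\widehat{\varepsilon}/(p_1y)>0$ on $[0,1]$ and $\wtil_0'+\widehat{\varepsilon}/p_3<0$ on $[1,y^\ast]$, and then evaluates the upper bounds for $q(y^\ast)$ and $q'(y^\ast)$ exactly as in your Step 2, handling $e^{y^2/4}$ by the truncated Taylor lower bound and $\sqrt{y^2+4}$ by the substitution $y=4z/(1-z^2)$ (with the paper's $\widehat{\varepsilon}$ all margins hold, so your fallback refinement is not needed).
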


\begin{proof}
The bounds throughout this proof are on $[0,y^\ast]$, unless stated otherwise. As a direct result of Lemma~\ref{lem:wtil_approx_W0} we have the bounds
\begin{align}
    |W_0 - c_0\wtil_0| &\leq \frac{c_0\widehat{\varepsilon}}{p_1} \label{eqn:bound_W}, \\
    |W_0' - c_0\wtil_0'| &\leq \frac{c_0\widehat{\varepsilon}}{p_3} \label{eqn:bound_Wd},
\end{align}
which, in turn, imply 
\begin{align}
    \left|q -c_0\frac{\wtil_0}{y}\right| &\leq \frac{c_0\widehat{\varepsilon}}{p_1y} \label{eqn:bound_q}, \\
    \left|q' - c_0\frac{\wtil_0'}{y} + c_0\frac{\wtil_0}{y^2}\right| &\leq 
        \frac{c_0\widehat{\varepsilon}}{p_1y^2}+\frac{c_0\widehat{\varepsilon}}{p_3y}. \label{eqn:bound_qd}
\end{align}
We will show that $q>0$ on $[0, 1]$, by using the lower bound provided by (\ref{eqn:bound_q})
\begin{equation*}
    \frac{c_0\wtil_0}{y} - \frac{c_0\widehat{\varepsilon}}{p_1 y} \leq q.
\end{equation*}
Similarly, we will derive $W_0'<0$ on $[1, y^\ast]$, with the upper bound of (\ref{eqn:bound_Wd})
\begin{equation*}
    W_0' \leq c_0\wtil_0' + \frac{c_0\widehat{\varepsilon}}{p_3}.
\end{equation*}
As these bounds contain both exponential functions and square roots, we will refer to the Appendix~\ref{sec:app_bound3} for the derivation of the bounds

\begin{align}
    \frac{\wtil_0}{y}-\frac{\widehat{\varepsilon}}{p_1 y} > 0,& \qquad\text{for } y\in [0, 1], \label{eqn:lower_bound_q}\\
    \wtil_0' + \frac{\widehat{\varepsilon}}{p_3} < 0,& \qquad\text{for } y \in [1, y^\ast] \label{eqn:upper_bound_w0}.
\end{align}
Evaluating the upper bounds on $q$ and $q'$ at $y^\ast$ gives
\begin{align}
    q(y^\ast) &\leq 
        \left. c_0\left(\frac{\wtil_0}{y} + \frac{\widehat{\varepsilon}}{p_1 y}\right)\right\vert_{y=y^\ast} \leq 
        -c_0\left\{\begin{array}{@{}l l}
            \frac{1}{94}, & d=3, \\ 
            \frac{5}{1652}, & d=4, \\
            \frac{9}{716}, & d=5, \\
            \frac{2}{197}, & d=6, 
        \end{array}\right. \label{eqn:q_negative_bound}\\
    q'(y^\ast) &\leq 
        \left. c_0\left(\frac{\wtil_0'}{y} - \frac{\wtil_0}{y^2} 
            + \frac{\widehat{\varepsilon}}{p_1 y^2} 
            + \frac{\widehat{\varepsilon}}{p_3 y}\right)\right\vert_{y=y^\ast} \leq
        -c_0\left\{\begin{array}{@{}l l}
            \frac{7}{40}, & d=3, \\
            \frac{5}{57}, & d=4, \\
            \frac{1}{28}, & d=5, \\
            \frac{5}{721}, & d=6.
        \end{array}\right. \label{eqn:dq_negative_bound}
\end{align}
With $q>0$ on $[0, 1]$, it follows that $W_0$ is positive on $(0,1]$. Since $W_0'<0$ on $[1, y^\ast]$ and $q(y^\ast)<0$, we know that $W_0$ crosses zero exactly once in $(0, y^\ast]$.

As previously outlined, if $q(y^\ast)<0$ and $q'(y^\ast)<0$, then both $q$ and $W_0$ remain negative. Therefore, $W_0$ has exactly one zero on $(0, \infty)$, completing the proof of the lemma.
\end{proof}

We note that $\widehat{\varepsilon}$ was chosen to be almost as large as possible without (\ref{eqn:lower_bound_q}), (\ref{eqn:upper_bound_w0}), (\ref{eqn:q_negative_bound}) or (\ref{eqn:dq_negative_bound}) failing. This minimizes the necessary number of coefficients in the definitions of $\ftil_0$, $\wtil_0$ and $\wtil_1$, thereby reducing the computational cost of the evaluation method.

\begin{lemma}\label{lem:zero_no_eigenvalue}
    The operator $\Acn$ has no eigenvalue $\lambda=0$.
\end{lemma}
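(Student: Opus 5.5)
The plan is to show that $W_0 \notin \mc H$. Since $\Acn$ is limit-point at $0$, the only candidate eigenfunction for $\lambda=0$ is a multiple of $W_0$. So it suffices to prove that $W_0$ grows like $e^{y^2/4}$ times a power at infinity, which makes $\int_0^\infty W_0^2\, y^{d-1}e^{-y^2/4}\,dy$ diverge.

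The natural tool is the monotonicity argument already used for $q = W_0/y$. By the proof of Lemma~\ref{lem:one_zero} we have $q(y^\ast)<0$ and $q'(y^\ast)<0$. The comparison argument based on \eqref{eqn:q_ivp} then shows that $q<0$ and $q'<0$ on $[y^\ast,\infty)$. Hence $-q$ is positive and increasing, and $q''<0$ there.

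To get exponential growth I would quantify this. On $[y^\ast,\infty)$ set $u=-q>0$. Using $\cos^2 f_0\ge 0$ and $\tfrac12 - \tfrac{2(d-1)}{y^2}\ge 0$, equation \eqref{eqn:q_ivp} gives
\[ u'' \geq \Big(\frac{y}{2}-\frac{d+1}{y}\Big)u', \qquad u'>0. \]
Integrating the inequality for $\log u'$ yields
\[ u'(y) \geq u'(y^\ast)\,\frac{(y^\ast)^{d+1}}{y^{d+1}}\,e^{(y^2-(y^\ast)^2)/4}. \]
This lower bound on $u'$ is integrable from below and, after integration, shows that $u(y)\gtrsim y^{-d-2}e^{y^2/4}$ for large $y$, for instance by integrating over $[y-1,y]$. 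Therefore $|W_0(y)| = y\,u(y)\gtrsim y^{-d-1}e^{y^2/4}$, so $W_0^2\,\rho \gtrsim y^{-d-3}e^{y^2/4}$, which is not integrable at infinity. Consequently $W_0\notin\mc H$ and $0$ is not an eigenvalue.

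There is no real obstacle in this argument. The only point needing care is to confirm that the sign information $q'(y^\ast)<0$ gives a strictly positive constant $u'(y^\ast)$; this is supplied by the rigorous bound \eqref{eqn:dq_negative_bound} together with $c_0>0$. The rest is an elementary ODE comparison.
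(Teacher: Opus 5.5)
Your proof is correct, but it takes a different route from the paper.

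The paper argues through the asymptotic dichotomy at infinity. Up to normalization, every solution of $\Acn W=0$ behaves either like $1+\Oh(y^{-2})$ or like $y^{-d}e^{y^2/4}(1+\Oh(y^{-2}))$. The first is ruled out because it would give $q=y^{-1}+\Oh(y^{-3})$ and $q'=-y^{-2}+\Oh(y^{-4})$, which have opposite signs for large $y$ and so contradict $q,q'<0$ on $[y^\ast,\infty)$. The second is visibly not in $\mc H$.

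You instead turn the same sign information at $y^\ast$ directly into a quantitative growth bound. Dropping the zeroth-order term is legitimate because $u>0$ and $\tfrac12-\tfrac{2(d-1)}{y^2}\ge 0$ for $y\ge y^\ast$ in all four dimensions. This gives $(\log u')'\ge \tfrac y2-\tfrac{d+1}{y}$. The integration over $[y-1,y]$ also works as you claim: there $s^{-d-1}e^{s^2/4}\ge y^{-d-1}e^{y^2/4}e^{-y(y-s)/2}$. Hence $u(y)\ge 2C\,y^{-d-2}e^{y^2/4}(1-e^{-y/2})$ with $C=u'(y^\ast)(y^\ast)^{d+1}e^{-(y^\ast)^2/4}>0$.

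Your version is self-contained. It needs neither the existence of a fundamental system with the stated asymptotics nor the tacit step that a solution with a nonzero growing component is dominated by that component. The paper asserts those asymptotics without proof, and they require an asymptotic analysis at infinity with a non-explicit potential decaying like $y^{-2}$.

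The price is losing one power of $y$ against the true rate $y^{-d}e^{y^2/4}$, which does not matter for non-integrability against $\rho$. The paper's argument is shorter once the standard asymptotics are granted, and it identifies the exact growth.
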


\begin{proof}
For $y\to\infty$ the solution $W_0$ has, up to normalization, the two possible asymptotics
\begin{align*}
    W_0(y) &= 1+ \Oh(y^{-2}) \\
    W_0(y) & = y^{-d} e^{y^2/4} (1 + \Oh(y^{-2})).
\end{align*}
The first gives 
\begin{equation*}
    q(y) = y^{-1} + \Oh(y^{-3}), \qquad q'(y) = -y^{-2} + \Oh(y^{-4}),
\end{equation*}
which implies that for large enough $y$, $q$ and $q'$ would have different signs. This directly contradicts the previous conclusion that $q$ and $q'$ are both negative for all $y\geq y^\ast$.

This only leaves the second behavior. However, then
\begin{equation*}
    \big| W_0(y)\big|^2\rho(y) = \big|y^{-d}e^{y^2/4}(1+\Oh(y^{-2}))\big|^2\,y^{d-1}e^{-y^2/4} = y^{-d-1}e^{y^2/4}(1+\Oh(y^{-2}))^2
\end{equation*}
and we conclude that  $W_0\notin\mathcal{H}$.
\end{proof}


\subsubsection{The case $\lambda=-1$}\label{sec:6_2}

In this section, we continue with the case of $\lambda=-1$. We show that $W_{-1}$ is the eigenfunction associated with the gauge eigenvalue $\lambda=-1$ and determine its number of zeros on $(0, \infty)$.

\begin{lemma}\label{lem:w1_determined}
There exists a constant $c_{-1}>0$ such that $W_{-1}(y)=c_{-1}yf_0'(y)$. Furthermore, $W_{-1}$ is the eigenfunction of $\Acn$ corresponding to the eigenvalue $\lambda=-1$.
\end{lemma}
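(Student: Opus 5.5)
The plan is to obtain the eigenfunction from the scaling structure of the profile equation \eqref{Eqf}, rather than from any computer-assisted estimate. For $\mu>0$ set $f_\mu(y):=f_0(\mu y)$. A direct substitution into \eqref{Eqf} shows that $f_\mu$ solves
\begin{equation*}
f_\mu''(y)+\Big(\frac{d-1}{y}-\frac{\mu^2 y}{2}\Big)f_\mu'(y)-\frac{d-1}{2y^2}\sin(2f_\mu(y))=0 .
\end{equation*}
The only $\mu$-dependence is in the drift term, since all other terms are scale invariant. Differentiating with respect to $\mu$ at $\mu=1$, with $\partial_\mu f_\mu|_{\mu=1}=y f_0'(y)=:g(y)$, gives
\begin{equation*}
g''+\Big(\frac{d-1}{y}-\frac{y}{2}\Big)g'-\frac{d-1}{y^2}\cos(2f_0)\,g-y f_0'=0 .
\end{equation*}
Since $y f_0'=g$, comparison with \eqref{eqn:acn} shows this is exactly $\Acn g=-g$. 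As a consistency check, this is the unitary image under $\mc U$ of the time-translation eigenvalue $1$ of $L_{f_0}$, because $\Acn=-\mc U^{-1}L_{f_0}\mc U$.

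Next I will match $g$ to the normalized solution $W_{-1}$ of \eqref{eqn:eigenvalue_ivp}. By Lemma~\ref{Lemmaf}, $f_0$ is smooth and odd-extendable, so $g(0)=0$ and $g'(0)=f_0'(0)$. I need $f_0'(0)>0$. This follows from Theorem~\ref{thm:closeness}: the $p_3$-part of the $\mc Y$-norm controls $|f_0'(0)-\ftil_0'(0)|\le \varepsilon/p_3(0)=\varepsilon/2$, and $\ftil_0'(0)=2g'(0)$ is an explicit rational number of order one. Alternatively, $f_0'(0)=0$ would force $f_0\equiv0$ by uniqueness of the regular solution at the singular point $0$, which contradicts $f_0>0$.

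Because $\Acn$ is limit-point at $0$, the solution of $\Acn W=-W$ that is regular at the origin is unique up to a multiple. Hence $W_{-1}=c_{-1}g$ with $c_{-1}=1/f_0'(0)>0$.

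It remains to show that $W_{-1}$ is an eigenfunction, that is, $W_{-1}\in\mc H$; this suffices because the domain is the maximal one. Near $0$, $g$ is smooth with $g=\Oh(y)$. At infinity, Lemma~\ref{Lemmaf} gives $|g(y)|\le C y^{-2}$, so $\int_0^\infty |g|^2 y^{d-1}e^{-y^2/4}\,dy<\infty$, and similarly $\Acn g=-g\in\mc H$. Thus $g\neq0$ lies in the maximal domain and is an eigenfunction with eigenvalue $-1$.

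The only delicate point is the regularity and uniqueness at the singular endpoint $y=0$, used to identify $W_{-1}$ with $c_{-1}g$. This is handled by the limit-point property already established via \cite{Weidmann_1987}, so I do not expect a genuine obstacle.
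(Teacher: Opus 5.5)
Your argument is correct and follows the paper's proof. Differentiating the rescaled profile equation in $\mu$ is just an organized way of doing the paper's direct calculation $\Acn(yf_0')=-yf_0'$. Positivity of $f_0'(0)$ comes from Theorem~\ref{thm:closeness} via $p_3(0)=2$, and uniqueness of the regular solution gives $W_{-1}=yf_0'/f_0'(0)$. For $W_{-1}\in\mc H$ you use the decay in Lemma~\ref{Lemmaf}, whereas the paper uses the $\mc Y$-bound of Theorem~\ref{thm:closeness}; either works given the Gaussian weight.

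Two small points. In ``$\ftil_0'(0)=2g'(0)$'' the $g$ has to be the Chebyshev sum \eqref{eqn:g_chebsum}, not your $g=yf_0'$. Also, your fallback via $f_0>0$ would be circular as written, because in the paper's logical order positivity of $f_0$ is only deduced after this lemma, through Lemma~\ref{lem:Wm1_positive}.
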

\begin{proof}
A direct calculation, using that $f_0$ solves \eqref{Eqf}, shows
\begin{equation*}
    \Acn(yf_0') = -yf_0'.
\end{equation*}
From Theorem~\ref{thm:closeness}, we know that $f_0'(0)>0$, implying $c_{-1}=1/f_0'(0)>0$. Finally, Theorem~\ref{thm:closeness} also implies that $W_{-1} \in \mc H$, completing the proof.
\end{proof}




\begin{lemma}\label{lem:Wm1_positive}
The function $W_{-1}$ is positive on $(0,\infty)$.
\end{lemma}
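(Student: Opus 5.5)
By Lemma~\ref{lem:w1_determined} we have $W_{-1}(y)=c_{-1}\,y f_0'(y)$ with $c_{-1}>0$. On $(0,\infty)$ the factor $y$ is positive, so the claim is equivalent to $f_0'>0$ on $(0,\infty)$. I would prove this by combining the closeness estimate of Theorem~\ref{thm:closeness} with an explicit lower bound for $\ftil_0'$. The result also delivers the strict monotonicity asserted in Theorem~\ref{Th:Existence_f0}.

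\emph{Step 1 (reduction to a weighted derivative bound).} Write $f_0=\ftil_0+\delta_0$ with $\|\delta_0\|_{\mc Y}\le\varepsilon$. This gives $|\delta_0'(y)|\le \varepsilon/p_3(y)=4\varepsilon(y^2+4)^{-3/2}$. It therefore suffices to show
\[
p_3(y)\,\ftil_0'(y) > \varepsilon \quad\text{for all } y\in[0,\infty).
\]
Since $\ftil_0=2\arctan g$, we have $\ftil_0'=2g'/(1+g^2)$, and by Section~\ref{sec:remainder} $g'(y)=F_2(y^2)/\sqrt{y^2+4}$. Hence
\[
p_3\ftil_0'=\frac{(y^2+4)\,F_2(y^2)}{2(1+g^2)},\qquad g^2=\frac{y^2}{y^2+4}F_1(y^2)^2 .
\]
This is a rational function of $y^2$ with rational coefficients, free of square roots. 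After the transformation \eqref{eqn:variable_transform_01} it becomes a rational function of $z\in[0,1]$. The evaluation method then yields a rigorous positive lower bound, which I would check to exceed $\varepsilon$ from Theorem~\ref{thm:closeness}.

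\emph{Step 2 (endpoint behaviour).} At $z=1$, i.e.\ $y\to\infty$, one has $\ftil_0'\sim y^{-3}$ by Lemma~\ref{Lemmaf}, so $p_3\ftil_0'$ tends to a finite limit. That limit is essentially $2g'y^3/(1+g^2)$ at infinity. Its positivity is a condition on the coefficients, namely that the leading decay coefficient of $\sum c_k T_{2k+1}$ near $x=1$ is positive; this can be checked exactly in rational arithmetic. At $y=0$ the value is $4\cdot 2g'(0)\cdot 2/2$, which is positive and explicit. The weight $p_3$ was chosen so that the weighted quantity is bounded and nondegenerate at both ends. The evaluation method therefore applies on the compact interval without singularities.

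\emph{Main obstacle.} The difficulty is that the lower bound must hold uniformly up to $z=1$, where $f_0'$ decays. The margin is governed by the asymptotic constant in $f_0'\sim C y^{-3}$, which may be small relative to $\varepsilon$ in some dimension. If the direct bound $p_3\ftil_0'>\varepsilon$ fails near infinity, I would use a fallback. For $y\ge y_1$, I would use the profile ODE \eqref{Eqf} for $u=f_0'$, which gives $(y^{d-1}e^{-y^2/4}f_0')'=\frac{d-1}{2}y^{d-3}e^{-y^2/4}\sin(2f_0)$. Together with $0<f_0<\pi/2$ on the tail (which follows from the $p_1$-closeness and a bound on $\ftil_0$), integrating from $y_1$ shows that $y^{d-1}e^{-y^2/4}f_0'$ is increasing there. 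Hence $f_0'$ stays positive on $[y_1,\infty)$ once $f_0'(y_1)>0$. This reduces the computer-assisted check to the compact interval $[0,y_1]$.
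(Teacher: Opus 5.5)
Your Step 1 is the paper's proof. The paper writes $y\ftil_0'-\varepsilon y/p_3=y(y^2+4)^{-3/2}\bigl((y^2+4)^{3/2}\ftil_0'-4\varepsilon\bigr)$ and notes that the bracket is rational in $y^2$. The evaluation method after \eqref{eqn:variable_transform_01} then gives $(y^2+4)^{3/2}\ftil_0'-4\varepsilon\ge\frac74,\frac53,1,\frac47$ for $d=3,4,5,6$. Since $4\varepsilon<2\cdot10^{-4}$ in every case, the margin is large, your ``main obstacle'' never arises, and the fallback is not needed.

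Your fallback would nevertheless fail as stated, because $f_0$ is \emph{not} below $\pi/2$ on the tail. My rough evaluation of the tabulated coefficients gives $\ftil_0(\infty)=2\arctan\bigl(\sum_k c_k(\ftil_0)\bigr)\approx 2.14,\ 1.88,\ 1.73,\ 1.63$ for $d=3,4,5,6$. All of these exceed $\pi/2$, consistent with the paper's remark that $0\le f_0<\pi$, and $p_1$-closeness transfers this to $f_0$. Hence $\sin(2f_0)<0$ for large $y$, so $y^{d-1}e^{-y^2/4}f_0'$ is \emph{decreasing} there, and positivity at $y_1$ does not propagate.

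The tail argument works if you integrate from infinity instead. Since $y^{d-1}e^{-y^2/4}f_0'(y)\to0$ as $y\to\infty$, you get $y^{d-1}e^{-y^2/4}f_0'(y)=-\frac{d-1}{2}\int_y^\infty s^{d-3}e^{-s^2/4}\sin(2f_0(s))\,ds>0$ whenever $\pi/2<f_0<\pi$ on $[y,\infty)$.

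Two minor slips. First, $p_3(0)\ftil_0'(0)=4g'(0)$, not $8g'(0)$. Second, the $y^{-3}$ decay of $\ftil_0'$ follows directly from $\frac{d}{dy}\frac{y}{\sqrt{y^2+4}}=4(y^2+4)^{-3/2}$, not from Lemma~\ref{Lemmaf}, which concerns exact solutions.
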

\begin{proof}
From Lemma~\ref{lem:w1_determined} we have $W_{-1}=c_{-1}yf_0'$. Theorem~\ref{thm:closeness} implies
\begin{equation*}
    |f_0'-\ftil_0'| \leq \frac{\varepsilon}{p_3},
\end{equation*}
which gives the lower bound
\begin{equation*}
    W_{-1} \geq c_{-1}y\ftil_0' - \frac{c_{-1}\varepsilon y}{p_3}.
\end{equation*}
After rewriting this, we get
\begin{equation*}
    y\ftil_0' - \frac{\varepsilon y}{p_3} = \frac{y}{(y^2+4)^{3/2}}\left((y^2+4)^{3/2}\,\ftil_0' 
        - 4\,\varepsilon\right).
\end{equation*}
Based on the Analysis in Section~\ref{sec:remainder}, the second term is a rational function in $y^2$. After applying the evaluation method with variable transformation (\ref{eqn:variable_transform_01}) we have \footnote{\ The corresponding CSV-files are \texttt{coefs\_fdpos\_n(d)}.}
\begin{equation}\label{eqn:fd_positive}
    (y^2+4)^{3/2}\,\ftil_0' - 4\,\varepsilon \geq 
    \left\{\begin{array}{@{}l l}
        \frac{7}{4}, & d=3, \\
        \frac{5}{3}, & d=4, \\
        1, & d=5, \\
        \frac{4}{7}, & d=6,
    \end{array}\right.
\end{equation}
for all $y\in [0, \infty)$, which in turn implies the result of the lemma.
\end{proof}


\subsubsection{The case $\lambda<-1$}

For the final case of $\lambda<-1$, we use the Picone Identity \cite{Picone_1910} to determine the number of zeros of $W_{\lambda}$ on $(0,\infty)$, closely following the proof of Lemma~3.5 in \cite{BieDon18}. 

\begin{lemma}[Picone Identity]\label{lem:picone}
For $a_1, a_2, b_1, b_2 \in C^1(\R)$ let $u$ and $v$ be solutions of
\begin{equation*}
    (a_1 u')' + b_1u = 0 \qquad \text{and} \qquad (a_2v')'+b_2v = 0.
\end{equation*}
Then for all $x\in\R$ with $v(x)\neq0$ we have
\begin{equation*}
    \bigg(\frac{u}{v}(a_1u'v - a_2uv')\bigg)' = (b_2-b_1)u^2 + (a_1-a_2)(u')^2 
        + a_2\left(u'-\frac{uv'}{v}\right)^2.
\end{equation*}
\end{lemma}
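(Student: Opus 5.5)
The identity is a purely local computation, so I will prove it by differentiating the left-hand side directly. At a point where $v\neq 0$ I use the equations to replace $(a_1u')'$ by $-b_1u$ and $(a_2v')'$ by $-b_2v$, and then reorganize the result into the stated sum of squares. No earlier result of the paper is needed; only the product and quotient rules and the two ODEs enter.

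Concretely, first write the bracket as
\[
\frac{u}{v}(a_1u'v - a_2uv') = a_1uu' - a_2\frac{u^2v'}{v},
\]
which is valid and $C^1$ near any $x$ with $v(x)\neq 0$. Since $a_1u'$ and $a_2v'$ are differentiable by the equations, the product rule gives
\[
(a_1uu')' = a_1(u')^2 + u(a_1u')' = a_1(u')^2 - b_1u^2.
\]
Next, write $a_2\frac{u^2v'}{v} = (a_2v')\cdot\frac{u^2}{v}$. Differentiating gives
\[
\Big(a_2\frac{u^2v'}{v}\Big)' = -b_2v\cdot\frac{u^2}{v} + a_2v'\Big(\frac{2uu'}{v} - \frac{u^2v'}{v^2}\Big) = -b_2u^2 + 2a_2\frac{uu'v'}{v} - a_2\frac{u^2(v')^2}{v^2}.
\]
Subtracting the second derivative from the first yields
\[
(b_2-b_1)u^2 + a_1(u')^2 - 2a_2\frac{uu'v'}{v} + a_2\frac{u^2(v')^2}{v^2}.
\]

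Finally, I split $a_1(u')^2 = (a_1-a_2)(u')^2 + a_2(u')^2$. The terms carrying the factor $a_2$ then form the perfect square $a_2\big(u' - \frac{uv'}{v}\big)^2$, which gives the claimed identity.

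There is no real obstacle here. The only point to watch is regularity. We cannot differentiate $u'$ and $v'$ separately, since $a_i$ is only $C^1$ and $u,v$ need not be $C^2$. This is why the computation keeps the combinations $a_1u'$ and $a_2v'$ intact, and it works as long as $u,v$ are understood as solutions with $a_1u',\,a_2v'\in C^1$.
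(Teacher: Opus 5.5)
Your proposal is correct and is the same direct computation the paper intends; the paper's proof simply states that the identity follows by direct computation. Your rewriting of the bracket as $a_1uu' - (a_2v')\,u^2/v$, the substitution of the two equations, and the completion of the square all check out, and keeping $a_1u'$ and $a_2v'$ together correctly avoids assuming $a_i\neq 0$ or $u,v\in C^2$.
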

\begin{proof}
    The result follows immediately through a direct computation.
\end{proof}

\begin{lemma}\label{lem:smaller_m1}
For any $\lambda<-1$, $W_\lambda$ has no zeros in $(0,\infty)$.
\end{lemma}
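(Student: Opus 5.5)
We compare $W_\lambda$ with $W_{-1}=c_{-1}yf_0'$ through the Picone identity (Lemma~\ref{lem:picone}), in the spirit of Sturm comparison. First we write the spectral equation $\Acn W=\lambda W$ in Sturm--Liouville form. Multiplying \eqref{eqn:acn} by $\rho(y)=y^{d-1}e^{-y^2/4}$ gives
\[
(\rho W')' + \Big(\lambda - \tfrac{d-1}{y^2}\cos(2f_0)\Big)\rho W = 0 .
\]
We apply Lemma~\ref{lem:picone} with $u=W_\lambda$ and $v=W_{-1}$, taking $a_1=a_2=\rho$, $b_1=(\lambda-V_0)\rho$ and $b_2=(-1-V_0)\rho$, where $V_0=\frac{d-1}{y^2}\cos(2f_0)$. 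Then $b_2-b_1=(-1-\lambda)\rho>0$ for $\lambda<-1$ and $a_1-a_2=0$. Since $W_{-1}>0$ on $(0,\infty)$ by Lemma~\ref{lem:Wm1_positive}, the identity holds on all of $(0,\infty)$ and gives
\[
\Big(\frac{W_\lambda}{W_{-1}}\,\rho\,(W_\lambda' W_{-1}-W_\lambda W_{-1}')\Big)' = (-1-\lambda)\rho W_\lambda^2 + \rho\Big(W_\lambda'-\frac{W_\lambda W_{-1}'}{W_{-1}}\Big)^2 >0
\]
wherever $W_\lambda\not\equiv 0$. (The identity is stated for $C^1(\R)$ coefficients, but only the local computation on an open interval is used.)

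Now suppose $W_\lambda$ has a zero, and let $y_1>0$ be its first zero. We integrate the identity over $[y_0,y_1]$ and let $y_0\to 0^+$. The bracket vanishes at $y_1$ because $W_\lambda(y_1)=0$. Near the origin we use $W_\lambda(y)=y+\Oh(y^3)$ and $W_{-1}(y)=c_{-1}f_0'(0)\,y+\Oh(y^3)=y+\Oh(y^3)$, with normalisation $c_{-1}f_0'(0)=1$ from Lemma~\ref{lem:w1_determined}. Oddness comes from Lemma~\ref{Lemmaf}, and the same expansion holds for $W_\lambda$ by the regular-singular structure of \eqref{eqn:eigenvalue_ivp}. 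Hence $W_\lambda/W_{-1}\to 1$, the Wronskian-type term $W_\lambda'W_{-1}-W_\lambda W_{-1}'=\Oh(y^2)$, and $\rho=\Oh(y^{d-1})$, so the boundary term at $0$ tends to zero. The left side therefore integrates to $0$ while the right side is a strictly positive integral, which is a contradiction. So $W_\lambda$ has no zero in $(0,\infty)$.

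The main obstacle is the boundary behavior at $y=0$: we must justify that $W_\lambda$ and $W_{-1}$ have matching leading asymptotics $y$, and that $W_\lambda/W_{-1}$ stays bounded. I would obtain this from a Frobenius analysis at the regular singular point $y=0$, with indicial roots $1$ and $1-d$. Here $\cos(2f_0)=1+\Oh(y^2)$ and $f_0$ is odd and smooth, so the regular solution has the form $y\,\phi(y^2)$ with $\phi$ analytic. A further, smaller point is that the possible zero $y_1$ must be a first zero, so that $W_\lambda\neq0$ on $(0,y_1)$. Even this is not strictly needed, since only $W_{-1}\neq 0$ is required.
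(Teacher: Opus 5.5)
Your proof is correct and follows the paper's argument. You apply the Picone identity with $u=W_\lambda$, $v=W_{-1}$, $a_1=a_2=\rho$ and $b_2-b_1=(-1-\lambda)\rho$, integrate up to a putative first zero, and conclude $W_\lambda\equiv 0$; your limit $y_0\to 0^+$, using the common leading behaviour $W_\lambda, W_{-1}\sim y$, is a more careful justification of the vanishing boundary term at the origin, which the paper attributes only to $\rho(0)=0$.
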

\begin{proof}
Applying the Picone identity for $u=W_\lambda$ and $v=W_{-1}$, with $a_1=a_2=\rho$ and $b_2-b_1=(-1-\lambda)\rho$ gives
\begin{equation}\label{eqn:aux_lemma_Wl}
    \left(\rho\frac{W_\lambda}{W_{-1}}(W_\lambda'W_{-1} - W_\lambda W_{-1}')\right)' = 
    (-1-\lambda)\rho W_\lambda^2 + \rho\left(W_\lambda' - \frac{W_\lambda W_{-1}'}{W_{-1}}\right)^2.
\end{equation}
By Lemma~\ref{lem:Wm1_positive} this must hold for all $y\in(0,\infty)$. Now suppose there exists $y_0\in(0,\infty)$, the first zero of $W_\lambda$. Integrating then gives
\begin{equation*}
    \int_0^{y_0} \left(\rho\frac{W_\lambda}{W_{-1}}(W_\lambda'W_{-1} - W_\lambda W_{-1}')\right)'\, dy =
        \rho\frac{W_\lambda}{W_{-1}}(W_\lambda'W_{-1} - W_\lambda W_{-1}')\bigg|_0^{y_0} = 0,
\end{equation*}
where we used $\rho(0)=0$ and $W_\lambda(y_0)=0$. Since $-1-\lambda>0$ and $\rho>0$ the right-hand side of (\ref{eqn:aux_lemma_Wl}) can only be zero if the both squared terms are zero on $[0, y_0]$. This then gives the contradiction that $W_\lambda(y)=0$ for all $y\in[0,y_0]$. 
\end{proof}

\subsubsection{Proof of Theorem \ref{Th:Existence_f0}}

By Lemma~\ref{lem:one_zero}, the function $W_0$ has exactly one zero on $(0, \infty)$. Moreover, Lemmas~\ref{lem:Wm1_positive} and~\ref{lem:smaller_m1}, show that for any $\lambda \in (-\infty, -1]$, the function $W_\lambda$ has no zeros on $(0, \infty)$.

Applying Theorem 1.2 of \cite{GesSimTes1996}, we conclude that for any $\lambda \in (-\infty, -1]$, the interval $[\lambda, 0)$ contains exactly one simple eigenvalue, which is $\lambda = -1$, by Lemma~\ref{lem:w1_determined}. Finally, since Lemma~\ref{lem:zero_no_eigenvalue} ensures that $\lambda = 0$ is not an eigenvalue, we infer that $\sigma(\mc A_0) \cap (-\infty,0] = \{-1\}$. 

The spectral result follows by unitary equivalence of $L_{f_0}$ and $-\mc A_0$, noting that
\begin{equation*}
    \mc U(yf_0') = | \mathbb S^{n-1} |^{-\frac{1}{2}} f_0'(|\cdot|).
\end{equation*}
The monotonicity of $f_0$ follows from Lemma~\ref{lem:Wm1_positive}, which also implies $f_0>0$ on $(0,\infty)$. This completes the proof of Theorem~\ref{Th:Existence_f0}.

\section{Computer-assisted methods}\label{Sec:ComAss}

In this section, we describe the computer-assisted methods used to derive the rigorous bounds required throughout the previous section. The approach, introduced in Section~5 of \cite{DonSch26}, combines exact rational arithmetic and analytical bounds. We first recall the fundamental ideas, then derive a truncated Chebyshev expansion tailored to our application.The implementation and all required coefficient files are available at \url{https://git.uibk.ac.at/csaw4329/hmhf_paper_code}.

\subsection{Evaluation Methods}\label{sec:evaluation_method}
The main idea of the evaluation method is to evaluate the function at multiple points and control the behavior in between using rigorous bounds on the derivative. These bounds are established in the following lemma.

\begin{lemma}\label{lem:mvt_bound}
For $a, b \in\R$ with $a<b$, let $f\in C^1([a, b])$. Then for any $M$ satisfying $\| f' \|_{L^\infty([a,b])}\leq M$ and any $x \in [a, b]$, we have
\begin{equation*}
    \left| f(x) - \frac{f(b)+f(a)}{2}\right| \leq \frac{b-a}{2}M.
\end{equation*}
\end{lemma}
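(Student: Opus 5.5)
This is a mean value theorem argument: compare $f(x)$ with each endpoint value separately, then average the two estimates.

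Fix $x\in[a,b]$. Since $f\in C^1([a,b])$ and $|f'|\le M$ on $[a,b]$, the mean value theorem (or the fundamental theorem of calculus, $f(x)-f(a)=\int_a^x f'(t)\,dt$) gives
\begin{equation*}
|f(x)-f(a)| \le M(x-a), \qquad |f(x)-f(b)| \le M(b-x).
\end{equation*}

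Next, write the deviation from the midpoint value as the average of the two endpoint deviations:
\begin{equation*}
f(x) - \frac{f(b)+f(a)}{2} = \frac{1}{2}\big(f(x)-f(a)\big) + \frac{1}{2}\big(f(x)-f(b)\big).
\end{equation*}

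Applying the triangle inequality and the two bounds above yields
\begin{equation*}
\left| f(x) - \frac{f(b)+f(a)}{2}\right| \le \frac{M}{2}\big((x-a)+(b-x)\big) = \frac{b-a}{2}M,
\end{equation*}
which is the claim.

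No step is a genuine obstacle. The only point to note is that one must use both endpoints rather than a single one: bounding via $a$ alone would give $M(b-a)$, twice as large. The averaging identity recovers the factor $\tfrac{1}{2}$ uniformly in $x$, because the two distances $x-a$ and $b-x$ always sum to $b-a$.
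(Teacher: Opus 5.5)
Your proof is correct and is essentially the paper's argument: both bound $f(x)$ against each endpoint via the mean value theorem and then combine the two estimates. The paper adds the two-sided inequalities directly, while you average the deviations and apply the triangle inequality, which amounts to the same computation.
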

\begin{proof}
Applying the Mean Value Theorem for $x \in [a, b]$ directly gives
\begin{align*}
    f(a) - (x-a)M &\leq f(x) \leq f(a) + (x-a)M\\
    f(b) - (b-x)M &\leq f(x) \leq f(b) + (b-x)M
\end{align*}    
The result follows after adding the inequalities.
\end{proof}

For $N\in\mathbb N$, evaluate the function on the set
\begin{equation}
    \Omega_N = \left\{\left. x_k := a + \frac{b-a}{N}k \right| k=0, ..., N \right\}
\end{equation}
and applying Lemma~\ref{lem:mvt_bound} on the intervals $[x_k, x_{k+1}]$ gives rigorous bounds on the function. These bounds can be improved arbitrarily by increasing the number of evaluation points.

To obtain the required derivative bounds, we consider polynomials in Chebyshev basis \footnote{\ The star indicates that the first summand of the summation is halved.} 

\begin{equation*}
    p(x) = \sumast_{n=0}^K \cnhat T_n\left(\frac{2x}{b-a} - \frac{a+b}{b-a}\right)
\end{equation*}
and define the $T$-norm of $p$ on $[a, b]$ as
\begin{equation*}
    \|p\|_{T([a, b])} := \sumast_{n=0}^K \abs{\cnhat}
\end{equation*}

The following lemma then applies this approach to rational functions.

\begin{lemma}\label{lem:frac_bound}
For $a,b \in\R$ with $a<b$, let $p,q : [a, b] \to\R$ be polynomials and assume that $\underset{[a,b]}{\min}\ |q| > 0$. For $N\in\mathbb N$, set
\begin{equation*}
    \varepsilon = \frac{b-a}{2N} \frac{\| p'q - pq' \|_{T([a,b])}}{\underset{[a,b]}{\min}\ q^2}
\end{equation*}
we then have
\begin{equation*}
    \underset{[a,b]}{\max}\ \frac{p}{q} \leq 
        \underset{\Omega_N}{\max}\ \frac{p}{q} + \varepsilon, \qquad
    \underset{[a,b]}{\min}\ \frac{p}{q} \geq 
        \underset{\Omega_N}{\min}\ \frac{p}{q} - \varepsilon
\end{equation*}
and
\begin{equation*}
    \left\| \frac{p}{q} \right\|_{L^\infty([a,b])} \leq 
        \underset{\Omega_N}{\max}\ \left| \frac{p}{q} \right| + \varepsilon.
\end{equation*}
\end{lemma}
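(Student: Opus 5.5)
The proof applies Lemma~\ref{lem:mvt_bound} on each subinterval $[x_k,x_{k+1}]$ of the uniform grid $\Omega_N$ to $f=p/q$. The only real work is a uniform bound $M$ on $(p/q)'$ over $[a,b]$. Since $q$ does not vanish on $[a,b]$, the quotient $p/q$ is $C^1$ there, and
\begin{equation*}
    \left(\frac{p}{q}\right)' = \frac{p'q-pq'}{q^2}.
\end{equation*}
I would bound the numerator and denominator separately.

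For the numerator, $p'q-pq'$ is a polynomial. After the affine change of variables $t=\frac{2x}{b-a}-\frac{a+b}{b-a}$, which maps $[a,b]$ onto $[-1,1]$, it has an expansion $\sumast_{n}\hat c_n T_n(t)$. Since $|T_n(t)|\le 1$ on $[-1,1]$, the triangle inequality gives
\begin{equation*}
    \|p'q-pq'\|_{L^\infty([a,b])}\le \|p'q-pq'\|_{T([a,b])}.
\end{equation*}
The halved first coefficient in the star convention causes no trouble, because the $T$-norm is defined with the same convention. For the denominator, $q^2\ge \min_{[a,b]} q^2>0$. Together these give
\begin{equation*}
    M:=\frac{\|p'q-pq'\|_{T([a,b])}}{\min_{[a,b]} q^2}\ge \left\|\left(\frac pq\right)'\right\|_{L^\infty([a,b])}.
\end{equation*}

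Now take any $x\in[a,b]$ and choose $k$ with $x\in[x_k,x_{k+1}]$; this interval has length $(b-a)/N$. Lemma~\ref{lem:mvt_bound} gives
\begin{equation*}
    \frac pq(x)\le \frac12\left(\frac pq(x_k)+\frac pq(x_{k+1})\right)+\frac{b-a}{2N}M.
\end{equation*}
The average of the two endpoint values is at most $\max_{\Omega_N} p/q$, and $\frac{b-a}{2N}M=\varepsilon$, which yields the upper bound for $\max_{[a,b]}p/q$. The lower bound follows in the same way from the other side of Lemma~\ref{lem:mvt_bound}, using that the average is at least $\min_{\Omega_N} p/q$. Finally, the $L^\infty$ bound follows by combining the two one-sided bounds: $|p/q(x)|$ is at most the average of the absolute values at the two grid points plus $\varepsilon$, hence at most $\max_{\Omega_N}|p/q|+\varepsilon$.

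There is no genuine obstacle here. The only points to check are that the $T$-norm dominates the sup norm under the affine rescaling (with the star convention on the first coefficient), and that Lemma~\ref{lem:mvt_bound} is applied on subintervals of length $(b-a)/N$ so that the factor $\frac{b-a}{2N}$ comes out exactly as in the definition of $\varepsilon$.
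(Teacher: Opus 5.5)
Your proof is correct and follows the same route as the paper: apply Lemma~\ref{lem:mvt_bound} on each subinterval $[x_k,x_{k+1}]$ of length $(b-a)/N$, with $M=\|p'q-pq'\|_{T([a,b])}/\min_{[a,b]}q^2$, which dominates $\|(p/q)'\|_{L^\infty([a,b])}$ because $|T_n|\le 1$ on $[-1,1]$. Your treatment of the halved first coefficient and of the $L^\infty$ bound via $\tfrac12|f(x_k)+f(x_{k+1})|\le\max_{\Omega_N}|f|$ supplies the details the paper leaves to \cite{DonSch26}.
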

\begin{proof}
Follows after applying Lemma~\ref{lem:mvt_bound} to each $[x_k, x_{k+1}]$ and using that $|T_n|\leq 1$ on $[-1, 1]$. For further details we refer to \cite{DonSch26}.
\end{proof}

The bounds in the above lemma still hold if we replace the minimum of $q^2$ with a lower bound, as long as this bound remains positive. To determine such a bound, we apply the same approach to compute a positive lower bound on $\underset{[a,b]}{\min}\ \abs{q}$.

Even with exact values on
\begin{equation*}
    \|pq' - p'q\|_{L^\infty([a,b])} \quad \text{and} \quad\underset{[a,b]}{\min}\ q^2
\end{equation*}
the bound on the derivative in Lemma \ref{lem:frac_bound} may be very inefficient if these values are attained at two distant points. In some applications the resulting number of evaluations will be too large to be computed in any reasonable time frame. 

To resolve this, we split the interval $[a,b]$ and recursively apply the methods on $[a,(a+b)/2]$ and $[(a+b)/2,b]$ before combining the results. Note that this happens at the very start of the method, to avoid computing unnecessary Chebyshev expansions and further subdivisions take place as needed when applying the methods to the subintervals.

\begin{remark}
In all applications throughout this paper, the endpoints and all coefficients are rational numbers. Hence the evaluations on $\Omega_N$, as well as the resulting bounds, can be computed using exact rational arithmetic. This ensures that the computer-assisted estimates are rigorous.
\end{remark}


\subsection{Truncated Chebyshev Expansion}\label{sec:truncated_chebyshev_expansion}
The main computational cost of the evaluation method lies in computing the Chebyshev coefficients and evaluating the function on the grid. Working with truncated Chebyshev expansions instead of the full Chebyshev transformation significantly reduces the cost of both steps. The corresponding rigorous tail bounds are then carried through all subsequent estimates, ensuring that the method remains rigorous.

In this section, we introduce a method for efficiently computing the exact low-degree coefficients and deriving rigorous bounds on high-degree tail.

Let $p(x) = \sum_{k=0}^K c_k x^k$, for $c_k \in \Q$. Then for $\alpha, \beta \in\Q$, with $\beta>0$ the Chebyshev coefficients of $p$ of the interval $[\alpha-\beta, \alpha+\beta]$ are
\begin{equation}
    \cnhat = \frac{2}{\pi} \int_{-1}^1 p(\alpha + \beta x)\, T_n(x)\, \frac{dx}{\sqrt{1-x^2}}, \qquad n=0, ..., K,
\end{equation}
where $T_n$ denotes the $n$-th Chebyshev polynomial, defined by
\begin{equation}
    T_{n+1} = 2xT_n - T_{n-1}\quad\text{with}\quad T_0(x)=1,\ T_1(x)=x.
\end{equation}
The polynomial can now be written as
\begin{equation*}
    p(\alpha + \beta x) = \sumast_{n=0}^K \cnhat T_n(x), 
\end{equation*}
and truncating with $N\leq K$ gives the bounds
\begin{equation}
    \Bigg| p(\alpha + \beta x) - \sumast_{n=0}^N \cnhat T_n(x) \Bigg| \leq \sum_{n=N+1}^K \abs{\cnhat}.
\end{equation}
The goal is now to efficiently compute $(\cnhat)_{n=0}^N$ and find a rigorous upper bound on $\sum_{n=N+1}^K \abs{\cnhat}$. For this we define
\begin{equation}\label{eqn:Ikn_definition}
    I_{k,n}(\alpha, \beta) := \frac{2}{\pi} \int_{-1}^1 (\alpha + \beta x)^k\, T_n(x)\, \frac{dx}{\sqrt{1-x^2}}.
\end{equation}
\begin{lemma}\label{lem:Ikn_closed_form}
    Let $k, n \in \N_0$, then $I_{k,n}(\alpha, \beta)$ evaluates to 
\begin{equation*}
    I_{k,n}(\alpha, \beta) = \sum_{m=0}^{\left\lfloor\frac{k-n}{2}\right\rfloor} \binom{k}{n+2m}\binom{n+2m}{m} 2^{1-n-2m} \alpha^{k-n-2m} \beta^{n+2m}.
\end{equation*}
\end{lemma}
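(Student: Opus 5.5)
We would prove this by substituting $x=\cos\theta$, which turns the Chebyshev-weighted integral into a Fourier cosine coefficient. With $dx/\sqrt{1-x^2}=d\theta$ and $T_n(\cos\theta)=\cos(n\theta)$, the definition \eqref{eqn:Ikn_definition} becomes
\begin{equation*}
    I_{k,n}(\alpha,\beta) = \frac{2}{\pi}\int_0^\pi (\alpha+\beta\cos\theta)^k \cos(n\theta)\, d\theta .
\end{equation*}
Expanding with the binomial theorem gives
\begin{equation*}
    I_{k,n}(\alpha,\beta) = \sum_{j=0}^k \binom{k}{j}\alpha^{k-j}\beta^j J_{j,n}, \qquad J_{j,n}:=\frac{2}{\pi}\int_0^\pi \cos^j\theta\,\cos(n\theta)\,d\theta .
\end{equation*}
This reduces the lemma to evaluating the moments $J_{j,n}$.

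To compute $J_{j,n}$ we write $\cos\theta=\tfrac12(e^{i\theta}+e^{-i\theta})$, so that
\begin{equation*}
    \cos^j\theta = 2^{-j}\sum_{l=0}^j \binom{j}{l} e^{i(j-2l)\theta} = 2^{-j}\sum_{l=0}^j \binom{j}{l}\cos((j-2l)\theta).
\end{equation*}
We then use orthogonality on $[0,\pi]$: $\frac{2}{\pi}\int_0^\pi \cos(m\theta)\cos(n\theta)\,d\theta$ equals $\delta_{|m|,n}$ for $n\ge1$, and equals $2$ when $m=n=0$.

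For $n\ge 1$, the sum picks up the two indices $l$ with $j-2l=\pm n$. This requires $j\ge n$ and $j\equiv n \pmod 2$. Writing $j=n+2m$, these indices are $l=m$ and $l=n+m$, and since $\binom{n+2m}{m}=\binom{n+2m}{n+m}$ we obtain
\begin{equation*}
    J_{n+2m,n} = 2\cdot 2^{-n-2m}\binom{n+2m}{m} = 2^{1-n-2m}\binom{n+2m}{m},
\end{equation*}
with $J_{j,n}=0$ for all other $j$. For $n=0$ and $j=2m$, there is the single index $l=m$, but it carries the orthogonality factor $2$. This gives the same value $2^{1-2m}\binom{2m}{m}$, so the formula is uniform in $n$.

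Substituting $j=n+2m$ with $0\le m\le\lfloor (k-n)/2\rfloor$ into the binomial sum yields exactly the stated closed form. When $n>k$ the sum is empty and $I_{k,n}=0$, which is consistent. The only real obstacle is bookkeeping: one must treat the $n=0$ case separately, with its doubled orthogonality constant, and check that it agrees with the general formula. One should also note that the paper's normalization $\tfrac{2}{\pi}$ together with the starred sum (halved first coefficient) is consistent with this uniform $2^{1-n-2m}$ factor.
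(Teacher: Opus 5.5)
Your proposal is correct and follows the same route as the paper: binomial expansion of $(\alpha+\beta x)^k$, then the Chebyshev moment formula $\frac{2}{\pi}\int_{-1}^1 x^j T_n(x)\frac{dx}{\sqrt{1-x^2}} = 2^{1-j}\binom{j}{(j-n)/2}$ for $j\ge n$, $j\equiv n \pmod 2$ (zero otherwise), and the reindexing $j=n+2m$. The only difference is that the paper states this moment formula without proof, while you derive it via $x=\cos\theta$ and orthogonality, correctly handling the doubled constant in the case $n=0$.
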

\begin{proof}
After expanding \eqref{eqn:Ikn_definition} with the binomial theorem, we have
\begin{equation*}
    I_{k,n}(\alpha,\beta) = \sum_{j=0}^k \binom{k}{j} \alpha^{k-j} \beta^j \left(\frac{2}{\pi} \int_{-1}^1 x^j\, T_n(x)\, \frac{dx}{\sqrt{1-x^2}}\right).
\end{equation*}
For $j \geq n$ and $j-n$ even the integral is non-zero with
\begin{equation*}
    \frac{2}{\pi} \int_{-1}^1 x^j\, T_n(x)\, \frac{dx}{\sqrt{1-x^2}} = 2^{1-j}\binom{j}{\frac{j-n}{2}}.
\end{equation*}
After the substitution $j=n+2m$ for $0\leq m \leq \left\lfloor\frac{k-n}{2}\right\rfloor$, the claimed result follows.
\end{proof}

To shorten the notation we will henceforth only write $I_{k,n}$. Given $I_{k,n}$ for $k,n\leq K$, we can write the Chebyshev coefficients as 
\begin{equation}\label{eqn:cnhat_Ikn}
    \cnhat = \sum_{k=0}^K c_k I_{k, n}.
\end{equation}

\begin{remark}
Note that $I_{k,n}=0$ for $k<n$ and that If \(\alpha\neq0\), then
\[
\text{sign}(I_{k,n}(\alpha,\beta))=
\text{sign}(\alpha)^{k-n}\text{sign}(\beta)^n.
\]
In any case, for \(\beta>0\),
\[
|I_{k,n}(\alpha,\beta)|=I_{k,n}(|\alpha|,\beta).
\]

\end{remark}

Due to the computational cost of evaluating the closed form formula we will not directly
apply it to determine $I_{k,n}$. Instead we use the following recurrence relation.

\begin{lemma}\label{lem:Ikn_recurrence}
The values $(I_{k,n})_{k,n\geq0}$ satisfy the recurrence relation
\begin{equation*}
    I_{k+1, n} = \alpha I_{k,n} + \frac{\beta}{2}(I_{k, n+1} + I_{k, n-1}) \quad\textup{with}\quad I_{0,0}=2,\ I_{k,n}=0 \textup{  for  } k<n,
\end{equation*}
in the case of $n=0$ we have
\begin{equation*}
    I_{k+1, 0} = \alpha I_{k, 0} + \beta I_{k, 1}.
\end{equation*}
\end{lemma}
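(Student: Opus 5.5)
The recurrence comes from the three-term identity $xT_n(x) = \tfrac12\bigl(T_{n+1}(x) + T_{n-1}(x)\bigr)$ for $n\ge1$, together with $xT_0 = T_1$. I will substitute this into the defining integral \eqref{eqn:Ikn_definition} after splitting off one factor of $(\alpha+\beta x)$, and then verify the initial and boundary conditions directly.

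For the main step I write $(\alpha+\beta x)^{k+1} = \alpha(\alpha+\beta x)^k + \beta x(\alpha+\beta x)^k$. Inserting this into \eqref{eqn:Ikn_definition} with index $k+1$ gives
\begin{equation*}
I_{k+1,n} = \alpha I_{k,n} + \beta\,\frac{2}{\pi}\int_{-1}^1 (\alpha+\beta x)^k\, xT_n(x)\,\frac{dx}{\sqrt{1-x^2}}.
\end{equation*}
For $n\ge1$ I replace $xT_n$ by $\tfrac12(T_{n+1}+T_{n-1})$. This turns the second term into $\tfrac{\beta}{2}(I_{k,n+1}+I_{k,n-1})$, which is the claimed recurrence. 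For $n=0$ I use $xT_0=T_1$ instead, so the second term becomes $\beta I_{k,1}$, which is the special case in the statement.

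It remains to check the boundary data.
\begin{itemize}
\item $I_{0,0} = \tfrac{2}{\pi}\int_{-1}^1 (1-x^2)^{-1/2}\,dx = 2$.
\item $I_{k,n}=0$ for $k<n$, because $(\alpha+\beta x)^k$ is a polynomial of degree $k<n$, and $T_n$ is orthogonal to all polynomials of lower degree with respect to the Chebyshev weight. The same fact also follows from Lemma~\ref{lem:Ikn_closed_form}, since the sum is empty in that case.
\item These conditions determine all $I_{k,n}$ uniquely by induction on $k$. In particular the term $I_{k,n+1}$ with $n+1>k$ is correctly zero, so the recurrence is consistent with the closed form.
\end{itemize}

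There is no real obstacle here. The only point that needs care is the $n=0$ case, because the identity $T_1 = \tfrac12(T_1+T_{-1})$ would require the convention $T_{-1}=T_1$. This is exactly why the statement lists the $n=0$ recurrence separately.
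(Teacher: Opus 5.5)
Your proposal is correct and follows the paper's proof exactly. Like the paper, you split $(\alpha+\beta x)^{k+1}=\alpha(\alpha+\beta x)^k+\beta x(\alpha+\beta x)^k$ and insert $xT_n=\tfrac12(T_{n+1}+T_{n-1})$, using $xT_0=T_1$ for $n=0$; your checks of $I_{0,0}=2$ and of $I_{k,n}=0$ for $k<n$ via orthogonality are correct details that the paper leaves implicit.
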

\begin{proof}
Follows directly after splitting $(\alpha + \beta x)^{k+1} = \alpha (\alpha + \beta x)^k + \beta x(\alpha + \beta x)^k$ and inserting the recurrence relation $x T_n = \frac{T_{n+1} + T_{n-1}}{2}$.
\end{proof}

Starting from $I_{0,0}=2$ and $I_{k,n}=0$ for $k<n$, we can now compute $I_{k, n}$ for $k, n = 0, ..., K$. However, for the truncated Chebyshev expansion we only require the moments $I_{k,n}$ for $k=0, ..., K$ and $n=0, ..., N$. As illustrated in Figure~\ref{fig:momenta}, the desired moments form the region $\mathcal D$, while the recurrence relation also requires the additional moments indicated by the auxiliary region $\mathcal A$.

\begin{figure}[h]
\centering

\begin{minipage}{0.45\textwidth}
\raggedright
\scalebox{0.8}{
\begin{tikzpicture}[>=stealth]

\pgfmathsetmacro{\Kval}{5.75}
\pgfmathsetmacro{\Nval}{1}
\pgfmathsetmacro{\DiagHit}{(\Kval+\Nval)/2}

\fill[gray!50] (\Nval,\Nval) -- (\Kval,\Nval) -- (\Kval,0) -- (0,0) -- cycle;
\node at (3.25,0.4) {$\mathcal D$};

\fill[gray!15] (\Nval, \Nval) -- (\Kval, \Nval) -- (\DiagHit,\DiagHit) -- cycle;
\node at (3.25,1.75) {$\mathcal A$};

\draw[dashed] (\Kval,\Nval) -- (\DiagHit,\DiagHit);
\node[left] at (0,\DiagHit) {$\frac{K+N}{2}$};

\draw[->,thick] (0,0) -- (7,0) node[right] {$k$};
\draw[->,thick] (0,0) -- (0,6) node[above] {$n$};

\draw[thick] (0,0) -- (6,6) node[pos=0.75, above left] {$k=n$};

\draw[dashed] (\Kval,0) -- (\Kval,\Kval);
\node[below] at (\Kval,0) {$K$};

\draw[dashed] (\Nval,\Nval) -- (\Kval,\Nval);
\node[left] at (0,\Nval) {$N$};

\end{tikzpicture}
}
\end{minipage}
\hfill
\begin{minipage}{0.45\textwidth}
\raggedleft
\scalebox{0.8}{
\begin{tikzpicture}[>=stealth]

\pgfmathsetmacro{\Kval}{5.75}
\pgfmathsetmacro{\Nval}{1}
\pgfmathsetmacro{\DiagHit}{(\Kval+\Nval)/2}

\fill[gray!50] (\Nval,\Nval) -- (\Kval,\Nval) -- (\Kval,0) -- (0,0) -- cycle;
\node at (3.25,0.4) {$\mathcal D$};

\fill[gray!15] (\Kval, \Nval) -- (\Kval,0) -- (\Nval+\Kval,0) -- cycle;
\node at (6.0,0.4) {$\mathcal A$};

\draw[dashed] (\Kval,\Nval) -- (\Nval+\Kval,0);
\node[below] at (\Kval+\Nval,0) {$K+N$};

\draw[->,thick] (0,0) -- (7.5,0) node[right] {$k$};
\draw[->,thick] (0,0) -- (0,6) node[above] {$n$};

\draw[thick] (0,0) -- (6,6) node[pos=0.75, above left] {$k=n$};

\draw[dashed] (\Kval,0) -- (\Kval,\Kval);
\node[below] at (\Kval,0) {$K$};

\draw[dashed] (\Nval,\Nval) -- (\Kval,\Nval);
\node[left] at (0,\Nval) {$N$};

\end{tikzpicture}
}
\end{minipage}

\caption{Comparison of the computational complexity for the direct use of Lemma~\ref{lem:Ikn_recurrence} on the left and the modified procedure using also the $n=0$ recurrence from Lemma~\ref{lem:I_n0_recurrence} on the right.}
\label{fig:momenta}
\end{figure}

\begin{remark}
While Lemma~\ref{lem:Ikn_recurrence} could also be used to determine $I_{k,n}$ for $k=0, ..., K$ and $n=0, ..., N$, this would again scale as $\Oh(K^2)$. If instead $I_{k,0}$ for $k=0, ..., K+N$ can be computed efficiently, the method scales as $\Oh(NK)$. This, however, requires a different recurrence relation.
\end{remark}

\begin{lemma}\label{lem:I_n0_recurrence}
The sequence $(I_{k,0})_{k\geq0}$ satisfies the recurrence relation
\begin{equation*}
    I_{k, 0} = \frac{2k-1}{k}\alpha I_{k-1, 0} + \frac{k-1}{k}(\beta^2-\alpha^2)I_{k-2, 0} \quad\textup{with}\quad I_{0,0} = 2,\ I_{1, 0} = 2\alpha.
\end{equation*}
\end{lemma}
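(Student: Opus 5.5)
The natural route is a three-term recurrence obtained by integrating by parts, which is how such recurrences arise for Legendre-type moments. I would work with the substitution $x=\cos\theta$. Then
\[
I_{k,0} = \frac{2}{\pi}\int_0^\pi (\alpha+\beta\cos\theta)^k\,d\theta .
\]
The initial values follow at once: $I_{0,0}=2$, and $I_{1,0}=2\alpha$ because $\int_0^\pi\cos\theta\,d\theta=0$.

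For the recurrence, set $J_k:=\int_0^\pi u^k\,d\theta$ with $u=\alpha+\beta\cos\theta$. I would split one factor as $u^k = u^{k-1}(\alpha+\beta\cos\theta)$, giving
\[
J_k=\alpha J_{k-1}+\beta\int_0^\pi u^{k-1}\cos\theta\,d\theta .
\]
The second integral is integrated by parts with $\cos\theta\,d\theta=d(\sin\theta)$. The boundary terms vanish since $\sin 0=\sin\pi=0$. Using $u'=-\beta\sin\theta$, this gives
\[
\beta\int_0^\pi u^{k-1}\cos\theta\,d\theta=(k-1)\beta^2\int_0^\pi u^{k-2}\sin^2\theta\,d\theta .
\]

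Next I express $\beta^2\sin^2\theta$ in terms of $u$. Since $\beta\cos\theta=u-\alpha$,
\[
\beta^2\sin^2\theta=\beta^2-(u-\alpha)^2=\beta^2-\alpha^2+2\alpha u-u^2 .
\]
Substituting this yields
\[
J_k=\alpha J_{k-1}+(k-1)\bigl[(\beta^2-\alpha^2)J_{k-2}+2\alpha J_{k-1}-J_k\bigr].
\]
Collecting the $J_k$ terms gives
\[
kJ_k=(2k-1)\alpha J_{k-1}+(k-1)(\beta^2-\alpha^2)J_{k-2}.
\]
Dividing by $k$ and multiplying through by $2/\pi$ gives the claimed recurrence.

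The only delicate point is the regime $k\ge2$ required for $J_{k-2}$, together with checking the algebra in the $\sin^2$ substitution. Both steps are routine. As a sanity check, one can alternatively verify the recurrence from the closed form in Lemma~\ref{lem:Ikn_closed_form} with $n=0$, or via the generating function $\sum_k J_k t^k/\pi = \bigl((1-\alpha t)^2-\beta^2t^2\bigr)^{-1/2}$. That function satisfies a first-order ODE whose coefficient comparison reproduces exactly this recurrence.
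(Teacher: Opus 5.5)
Your proof is correct, but it takes a different route from the paper's.

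The paper never goes back to the integral. It starts from the closed form of Lemma~\ref{lem:Ikn_closed_form} with $n=0$ and splits $kI_{k,0}=A_k+B_k$ by writing $k=(k-2m)+2m$ in each summand. It then uses $(k-2m)\binom{k}{2m}=k\binom{k-1}{2m}$ and Pascal's rule to show $A_k=k\alpha I_{k-1,0}$ and $B_k=(k-1)\beta^2 I_{k-2,0}+\alpha B_{k-1}$. Finally it eliminates $B_{k-1}=(k-1)(I_{k-1,0}-\alpha I_{k-2,0})$.

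Your integration by parts in $\theta$, together with $\beta^2\sin^2\theta=\beta^2-\alpha^2+2\alpha u-u^2$, replaces all of the binomial and index bookkeeping. That bookkeeping includes the upper limits $\lfloor k/2\rfloor$ versus $\lfloor (k-1)/2\rfloor$, the vanishing end terms, and the shift $m\mapsto m+1$. Instead you have a few lines working directly from the definition \eqref{eqn:Ikn_definition}. Your route also explains where the recurrence comes from: when $\alpha^2-\beta^2=1$, $J_k/\pi$ is Laplace's integral for the Legendre polynomial $P_k(\alpha)$, and your identity is exactly Bonnet's recursion $kP_k=(2k-1)\alpha P_{k-1}-(k-1)P_{k-2}$.

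The paper's version, in turn, stays entirely with the explicit binomial sum the section already uses, so it is visibly an identity of polynomials in $\alpha,\beta$ with rational coefficients. Your argument loses nothing here: both sides are polynomials, and the integration by parts needs no sign or size restrictions on $\alpha,\beta$.
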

\begin{proof}
We start by splitting $k I_{k,0} = A_k + B_k$ for
\begin{align*}
    A_k &= \sum_{m=0}^{\left\lfloor\frac{k}{2}\right\rfloor} (k-2m) \binom{k}{2m}\binom{2m}{m} 2^{1-2m} \alpha^{k-2m} \beta^{2m} \\
    B_k &= \sum_{m=1}^{\left\lfloor\frac{k}{2}\right\rfloor} 2m \binom{k}{2m}\binom{2m}{m} 2^{1-2m} \alpha^{k-2m} \beta^{2m}.
\end{align*}
For the first term we use $(k-2m)\binom{k}{2m} = k\binom{k-1}{2m}$ and have
\begin{align*}
    A_k &= \sum_{m=0}^{\left\lfloor\frac{k}{2}\right\rfloor} k \binom{k-1}{2m}\binom{2m}{m} 2^{1-2m} \alpha^{k-2m} \beta^{2m} \\
    &= \sum_{m=0}^{\left\lfloor\frac{k-1}{2}\right\rfloor} k \binom{k-1}{2m}\binom{2m}{m} 2^{1-2m} \alpha^{k-2m} \beta^{2m} \\
    &= k \alpha I_{k-1, 0}.
\end{align*}
Similarly for the second term we use $\binom{k}{2m} = \binom{k-1}{2m-1} + \binom{k-1}{2m}$ to further split the sum to get
\begin{align*}
    &\sum_{m=1}^{\left\lfloor\frac{k}{2}\right\rfloor} 2m \binom{k-1}{2m-1}\binom{2m}{m} 2^{1-2m} \alpha^{k-2m} \beta^{2m} \\
    &\qquad = \sum_{m=0}^{\left\lfloor\frac{k-2}{2}\right\rfloor} (2m+2) \binom{k-1}{2m+1}\binom{2m+2}{m+1} 2^{-1-2m} \alpha^{k-2m-2} \beta^{2m+2} \\
    &\qquad = \sum_{m=0}^{\left\lfloor\frac{k-2}{2}\right\rfloor} (k-1) \binom{k-2}{2m}\binom{2m}{m} 2^{1-2m} \alpha^{k-2m-2} \beta^{2m+2} \\
    &\qquad = (k-1) \beta^2 I_{k-2, 0}
\end{align*}
and  
\begin{align*}
    &\sum_{m=1}^{\left\lfloor\frac{k}{2}\right\rfloor} 2m \binom{k-1}{2m}\binom{2m}{m} 2^{1-2m} \alpha^{k-2m} \beta^{2m} \\
    &\qquad = \sum_{m=1}^{\left\lfloor\frac{k-1}{2}\right\rfloor} 2m \binom{k-1}{2m}\binom{2m}{m} 2^{1-2m} \alpha^{k-2m} \beta^{2m} \\
    &\qquad = \alpha B_{k-1}.
\end{align*}
Similarly splitting $I_{k-1, 0}$ gives 
\begin{equation*}
    B_{k-1} = (k-1)I_{k-1, 0} - A_{k-1} = (k-1)(I_{k-1, 0} - \alpha I_{k-2, 0}).
\end{equation*}
Combing these results gives the claimed recurrence relation.
\end{proof}

The remaining moments in the truncated strip can then be computed from
\[
I_{k,1}=\frac{I_{k+1,0}-\alpha I_{k,0}}{\beta},
\]
and, for \(n\ge2\),
\[
I_{k,n}
=
\frac{2}{\beta}
\left(I_{k+1,n-1}-\alpha I_{k,n-1}\right)
-
I_{k,n-2}.
\]
Thus, after computing $I_{k,0}$ for $k=0,\dots,K+N$, all moments $I_{k,n}$ with $0\le k\le K$ and $0\le n\le N$  are obtained in $\Oh(NK)$ arithmetic operations.\\

It remains to find a rigorous upper bound on $\sum_{n=N+1}^K \abs{\cnhat}$. For this we define 
\begin{equation}
    S_k := \sum_{n=N+1}^k \abs{I_{k,n}}.
\end{equation}
\begin{lemma}
For $N\leq K$ we have that
\begin{equation*}
    \sum_{n=N+1}^K \abs{\cnhat} \leq \sum_{k=0}^K \abs{c_k} S_k.
\end{equation*}    
\end{lemma}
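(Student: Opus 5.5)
The bound follows from \eqref{eqn:cnhat_Ikn} by the triangle inequality and an exchange of the two finite sums. For each $n \in \{N+1,\dots,K\}$ we write $\cnhat = \sum_{k=0}^K c_k I_{k,n}$ and estimate
\[
\abs{\cnhat} \leq \sum_{k=0}^K \abs{c_k}\,\abs{I_{k,n}} .
\]
Summing over $n$ from $N+1$ to $K$ gives
\[
\sum_{n=N+1}^K \abs{\cnhat} \leq \sum_{n=N+1}^K \sum_{k=0}^K \abs{c_k}\,\abs{I_{k,n}} .
\]

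Both sums are finite, so we interchange them:
\[
\sum_{n=N+1}^K \sum_{k=0}^K \abs{c_k}\,\abs{I_{k,n}} = \sum_{k=0}^K \abs{c_k} \sum_{n=N+1}^K \abs{I_{k,n}} .
\]
We then use the remark that $I_{k,n}=0$ for $k<n$, which is immediate from the closed form in Lemma~\ref{lem:Ikn_closed_form} because the sum there is empty. It follows that, for each fixed $k$, the inner sum only receives contributions from $n \le k$, so $\sum_{n=N+1}^K \abs{I_{k,n}} = \sum_{n=N+1}^k \abs{I_{k,n}} = S_k$. For $k \le N$ the range is empty and $S_k=0$, consistent with the definition. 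Inserting this gives the claimed inequality.

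There is no real obstacle. The only point needing care is the truncation of the inner index range at $k$ via the vanishing of $I_{k,n}$ for $k<n$, which is what makes the inner sum agree exactly with the definition of $S_k$. One might also note for later use that $\abs{I_{k,n}} = I_{k,n}(\abs{\alpha},\beta)$, which makes $S_k$ computable by the same recurrences with nonnegative entries. This is not needed for the inequality itself.
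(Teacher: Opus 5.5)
Your proposal is correct and is essentially the paper's proof: triangle inequality applied to $\hat{c}_n = \sum_{k} c_k I_{k,n}$, interchange of the finite sums, and truncation of the inner sum at $n \le k$ via $I_{k,n} = 0$ for $k < n$ to identify it with $S_k$. Your extra remark that $S_k = 0$ for $k \le N$ (empty range) is a harmless clarification the paper leaves implicit.
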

\begin{proof}
Using the representation in (\ref{eqn:cnhat_Ikn}) the claimed bound follows with
\begin{equation*}
    \sum_{n=N+1}^K \abs{\cnhat} =  \sum_{n=N+1}^K \abs{\sum_{k=0}^K c_k I_{k, n}} \leq \sum_{k=0}^K \abs{c_k} \sum_{n=N+1}^K \abs{I_{k, n}} = \sum_{k=0}^K \abs{c_k} S_k,
\end{equation*}
where we used that $I_{k,n}=0$ for $k<n$.
\end{proof}

The last step is to efficiently compute $S_0, S_1, ..., S_K$.

\begin{lemma}
For $1 \leq N\leq K$, the sequence $(S_k)_{k\geq0}$ satisfies the recurrence relation
\begin{equation*}
    S_{k+1} = (\abs{\alpha} + \beta) S_k + \left(\abs{\alpha} + \frac{\beta}{2}\right)\abs{I_{k,N}} + \frac{\beta}{2}\abs{I_{k, N-1}} - \abs{I_{k+1, N}}\quad\textup{with}\quad S_0 = 0.
\end{equation*}
\end{lemma}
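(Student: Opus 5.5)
The plan is to reduce to nonnegative moments and then sum the three-term recurrence of Lemma~\ref{lem:Ikn_recurrence} over the tail indices. By the remark following Lemma~\ref{lem:Ikn_closed_form}, for $\beta>0$ we have $\abs{I_{k,n}(\alpha,\beta)} = I_{k,n}(\abs{\alpha},\beta)$. Every quantity in the claimed identity involves only absolute values, so I will replace $\alpha$ by $\abs{\alpha}$ and assume without loss of generality that $\alpha \geq 0$. Then the closed form of Lemma~\ref{lem:Ikn_closed_form} shows $I_{k,n}\geq 0$ for all $k,n$, and the absolute values can be dropped: $S_k=\sum_{n=N+1}^k I_{k,n}$. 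The recurrence of Lemma~\ref{lem:Ikn_recurrence} is a polynomial identity in $(\alpha,\beta)$, so it also holds with $\abs{\alpha}$ in place of $\alpha$.

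Next I sum the general recurrence over $n=N+1,\dots,k+1$. This is legitimate because all these indices satisfy $n\geq N+1\geq 2$, so the $n\ge1$ form applies. I also use $I_{k,n}=0$ for $k<n$, which lets me extend the upper summation limits freely. The three resulting sums are
\[
\sum_{n=N+1}^{k+1} I_{k,n} = S_k,\qquad \sum_{n=N+1}^{k+1} I_{k,n+1} = S_k - I_{k,N+1},\qquad \sum_{n=N+1}^{k+1} I_{k,n-1} = S_k + I_{k,N}.
\]
Combining them gives
\[
S_{k+1} = (\alpha+\beta)S_k + \frac{\beta}{2}\big(I_{k,N}-I_{k,N+1}\big).
\]

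Finally I match this with the stated form. Since $N\geq 1$, the recurrence applies at index $N$:
\[
I_{k+1,N}=\alpha I_{k,N}+\frac{\beta}{2}\big(I_{k,N+1}+I_{k,N-1}\big).
\]
From this,
\[
\Big(\alpha+\frac{\beta}{2}\Big)I_{k,N}+\frac{\beta}{2}I_{k,N-1}-I_{k+1,N} = \frac{\beta}{2}\big(I_{k,N}-I_{k,N+1}\big),
\]
which is exactly the correction term above. The initial value $S_0=0$ holds because the defining sum is empty. Small $k$ (namely $k+1\le N$) need no separate treatment, since the vanishing $I_{k,n}=0$ for $k<n$ makes the summation argument valid for all $k\ge0$.

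The only delicate step is the reduction to $\alpha\geq0$. It needs the sign identity $\abs{I_{k,n}(\alpha,\beta)}=I_{k,n}(\abs{\alpha},\beta)$, which follows from the closed form since each summand carries the factor $\alpha^{k-n-2m}$ with the same parity $k-n$. Without this reduction one would face absolute values of sums that cannot be split.
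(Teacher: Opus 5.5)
Your proof is correct and follows the same route as the paper. You reduce to $\alpha\ge 0$ via $\abs{I_{k,n}(\alpha,\beta)}=I_{k,n}(\abs{\alpha},\beta)$, sum the three-term recurrence over $n=N+1,\dots,k+1$, and then eliminate $I_{k,N+1}$ using the recurrence at index $N\ge 1$. That last step, and the check that the summation argument also covers small $k$, are left implicit in the paper's final line, and you have simply written them out.
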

\begin{proof}
Since $\abs{I_{k,n}(\alpha, \beta)} = I_{k,n}(\abs{\alpha}, \beta)$, we start by assuming that $\alpha\geq 0$. Then $I_{k,n}\geq 0$ for all $k,n\geq 0$ and after inserting the recurrence from Lemma \ref{lem:Ikn_recurrence} we get
\begin{align*}
    S_{k+1} &= \sum_{n=N+1}^{k+1} \alpha I_{k,n} + \frac{\beta}{2}(I_{k, n+1} + I_{k, n-1}) \\
    &= \alpha S_k + \frac{\beta}{2}(S_k - I_{k, N+1}) + \frac{\beta}{2}(S_k + I_{k, N}) \\
    &= (\alpha + \beta) S_k + \left(\alpha + \frac{\beta}{2}\right)I_{k,N} + \frac{\beta}{2}I_{k, N-1} - I_{k+1, N}.
\end{align*}
The claimed result follows after taking absolute values of all terms depending on $\text{sign}(\alpha)$.
\end{proof}

This gives a method for computing a truncated Chebyshev expansion of degree $N$ with a rigorous tail bound for a polynomial of degree $K$ in $\Oh (NK)$ operations. In practice, this significantly reduces the computational cost, since $N$ is typically much smaller than $K$.

\appendix

\section{Proof of Lemma \ref{Lemmacompactperturbation}} \label{App1}

Let $\tau > 0$, then by \cite{EngelNagel}, Theorem C.7 it suffices to show that $S_f^X(\tau - \tau^{\prime}) L_f^{\prime} S^X_0(\tau^{\prime})$ is a compact operator for each $\tau^{\prime} \in (0, \tau)$. Furthermore, since $S_f^X(\tau - \tau^{\prime})$ is a linear and bounded operator, it suffices to show that $L_f^{\prime} S^X_0(\tau^{\prime})$ is a compact operator for each $\tau^{\prime} \in (0, \tau)$. To prove this we use a variant of the Rellich-Kondrachov Theorem (see Theorem 10, \cite{HanOls2010}) and show that $[L_f^{\prime} S_0^X(\tau)](B_1^X)$ is relatively compact in $\dot{H}^{\lfloor s \rfloor}_r(\R^n) \cap \dot{H}^k_r(\R^n)$, where $B_1^X = \{ u \in X_s^k(\R^n) : \Vert u \Vert_{X_s^k(\R^n)} \leq 1\}$. The continuous embedding of $\dot{H}^{\lfloor s \rfloor}_r(\R^n) \cap \dot{H}^k_r(\R^n)$ into $X_s^k(\R^n)$ then implies relative compactness of $[L_f^{\prime} S_0^X(\tau)](B_1^X)$ in $X_s^k(\R^n)$.\\
Let $\alpha_1, \alpha_2 \in \N_0^n$ with $|\alpha_1| = \lfloor s \rfloor$ and $|\alpha_2| = k$ and define $K_{\alpha_1} := \partial^{\alpha_1} ( V \cdot S_0^X(\tau)(B_1^X))$ and $K_{\alpha_2} := \partial^{\alpha_2} (V \cdot S_0^X(\tau)(B_1^X))$. To apply Theorem 10 in \cite{HanOls2010} we need to show that each of these subsets is bounded in $H^1(\R^3)$ and for all $\varepsilon > 0$ there exists $R >0$ such that for all $u$ belonging to the corresponding subspace we have that 
\begin{align} \label{CompactnessTheorem}
\int_{|x| \geq R} |u(x)|^2 + |\nabla u(x)|^2 dx < \varepsilon.
\end{align}
If $u \in B_1^X$ and $\alpha_i \in \N_0^n$, $i \in \{ 1,2 \}$ we have 
\begin{align*}
\partial^{\alpha_i} (V_f \cdot S_0^X(\tau)(u)) = \sum_{\beta \leq \alpha_i} \binom{\alpha_i}{\beta} \partial^{\beta} V_f \cdot \partial^{\alpha_i - \beta} (S_0^X(\tau)(u)).
\end{align*}
For each $\beta \in \N_0^n$, $\beta \leq \alpha_i$, $i \in \{1,2 \}$ we define
\begin{align*}
u_{\beta}^{i} := \partial^{\beta} V_f \cdot \partial^{\alpha_i - \beta} u_{S_0},
\end{align*}
where we abbreviate $u_{S_0} := S_0^X(\tau)(u)$. By additivity, it suffices to prove that any $u_{\beta}^{i}$ with $\beta \in \N_0^n$, $\beta \leq \alpha_i$, $i \in \{1,2 \}$ is bounded in $H^1(\R^n)$ and that for all $\varepsilon > 0$ there exists $R >0$ such that for all $u_{\beta}^{i}$ estimate \eqref{CompactnessTheorem} holds. Let $u_{\beta}^{i}$ with $\beta \in \N_0^n$, $\beta \leq \alpha_i$, $i \in \{1,2 \}$, then we observe
\begin{align} \label{H1norm}
\Vert u_{\beta}^{i} \Vert_{H^1(\R^n)}^2 &= \int_{\R^n} |u_{\beta}^{i}(x)|^2 + |\nabla u_{\beta}^{i}(x)|^2 dx \nonumber \\
&\lesssim \int_{\R^n} \left( |\partial^{\beta} V_f(x)|^2 +  |\nabla \partial^{\beta} V_f(x)|^2 \right) |\partial^{\alpha_i - \beta} u_{S_0}(x)|^2 + |\partial^{\beta} V_f(x)|^2 |\nabla \partial^{\alpha_i - \beta} u_{S_0}(x)|^2 dx.
\end{align}
In the following we proof boundedness in $H^1(\R^n)$ by splitting the integral above into one that integrats over $B_R(\R^n)$ and one that integrats over $B_R^c(\R^n)$ for some $R >1$. In addition, we are able to proof \eqref{CompactnessTheorem} in this way too. In the following we will make extensive use of estimate  of Lemma \ref{Le:Potential} for any partial derivative of the potential $V_f$. Furthermore, we use that we can estimate
\begin{align*}
\Vert u_{S_0} \Vert_{X_s^k(\R^n)} \leq e^{-\omega_0 \tau} \Vert u \Vert_{X_s^k(\R^n)} \leq 1,
\end{align*}
by \eqref{Estimatefreesemigroup} for $u \in B_1^X$.\\
\\
First, let $i=1$, $\alpha_1 \in \N_0^n$ with $|\alpha_1| = \lfloor s \rfloor$ and $\beta \in \N_0^n$ with $\beta \leq \alpha_1$. By Lemma \ref{LemmaEmbedding} we infer
\begin{align} \label{Estimate1}
\Vert \partial^{\alpha_1-\beta} u_{S_0} \Vert_{L^{\infty}(\R^n)} \lesssim \Vert \partial^{\alpha_1-\beta} u_{S_0} \Vert_{\dot{H}_r^{s-|\alpha_1|+|\beta|} \cap \dot{H}_r^{k-|\alpha_1|+|\beta|}(\R^n)} \lesssim \Vert u_{S_0} \Vert_{X_s^k(\R^n)} \lesssim 1.
\end{align}
By this the first integral over the ball with radius $R >1$ can be estimated by
\begin{align*}
\int_{B_R(\R^n)} \left( |\partial^{\beta} V_f(x)|^2 +  |\nabla \partial^{\beta} V_f(x)|^2 \right) |\partial^{\alpha_i - \beta} u_{S_0}(x)|^2 dx \lesssim 1,
\end{align*}
where we used \eqref{Estimate1} and boundedness of all partial derivatives of $V_f$. On $B_R^c(\R^n)$ we need to distinguish two cases. If $|\beta|=|\alpha_1|$, i.e. if $\beta = \alpha_1$ we observe by the decay of $V_f$ and \eqref{Estimate1}
\begin{align*}
\int_{B_R^c(\R^n)} \left( |\partial^{\alpha_1} V_f(x)|^2 +  |\nabla \partial^{\alpha_1} V_f(x)|^2 \right) |u_{S_0}(x)|^2 dx \lesssim \int_{B_R^c(\R^n)} |x|^{-4-|\alpha1|} dx \lesssim R^{-(4-n+2 |\beta|)},
\end{align*}
where $4-n+2|\beta| \geq 2$ since $|\alpha_1| = \lfloor s \rfloor = \lfloor \frac{n}{2} \rfloor -1 \geq \frac{n}{2}-\frac{3}{2}$. If $|\beta| < |\alpha_1|$ we choose 
\begin{align} \label{Gamma1}
\gamma \in \left( \frac{n}{2}-2-|\beta|, \min \left\lbrace \frac{n-1}{2}, |\alpha_1|-|\beta| + \frac{n}{2}-s \right\rbrace \right),
\end{align}
to estimate
\begin{align*}
&\int_{B_R^c(\R^n)} \left( |\partial^{\beta} V_f(x)|^2 +  |\nabla \partial^{\beta} V_f(x)|^2 \right) |\partial^{\alpha_i - \beta} u_{S_0}(x)|^2 dx \\
&\lesssim \left( \int_{B_R^c(\R^n)} |x|^{-4-2|\beta|-2\gamma} dx \right) \Vert |\cdot|^{\gamma} \partial^{\alpha_i - \beta} u_{S_0} \Vert_{L^{\infty}(\R^n)}^2 \\
&\lesssim R^{-(4-n+2|\beta|+2 \gamma)} \Vert u_{S_0} \Vert_{\dot{H}^{|\alpha_1|-|\beta|+\frac{n}{2}-\gamma}} \\
&\lesssim R^{-(4-n+2|\beta|+2 \gamma)},
\end{align*}
where we used an weighted $L^{\infty}(\R^n)$ estimate for radial homogenous Sobolev spaces,  see Proposition B.1 in \cite{Glo25}, that is applicable due to $\frac{1}{2} < \gamma < \frac{n}{2}$ by \eqref{Gamma1}. Furthermore, also by \eqref{Gamma1} we find $s \leq |\alpha_1|-|\beta|+\frac{n}{2}-\gamma \leq k$ which means we can estimate 
\begin{align*}
\Vert u_{S_0} \Vert_{\dot{H}^{|\alpha_1|-|\beta|+\frac{n}{2}-\gamma}} \lesssim \Vert u_{S_0} \Vert_{X_s^k(\R^n)} \lesssim 1.
\end{align*}
To treat the second part of the integral \eqref{H1norm} we distinguish three cases. First, if $|\beta| =0$ we find
\begin{align*}
&\int_{B_R(\R^n)} | V_f(x)|^2 |\nabla \partial^{\alpha_i} u_{S_0}(x)|^2 dx + \int_{B_R^c(\R^n)} + | V_f(x)|^2 |\nabla \partial^{\alpha_i} u_{S_0}(x)|^2 dx \\
&\lesssim \left(1 + \Vert V_f \Vert_{L^{\infty}(\B_R^c(\R^n)}^2 \right) \Vert \nabla \partial^{\alpha_1} u_{S_0} \Vert_{L^2(\R^n)}^2 \\
&\lesssim 1 + R^{-4},
\end{align*}
where we exploited $k > |\alpha_1|+1 = \lfloor s \rfloor +1 >s$ and the decay of $V_f$. If $|\beta| = |\alpha_1|$, i.e. $\beta = \alpha_1$ we observe
\begin{align*}
&\int_{B_R(\R^n)} |\partial^{\alpha_1} V_f(x)|^2 |\nabla u_{S_0}(x)|^2 dx + \int_{B_R^c(\R^n)} |\partial^{\alpha_1} V_f(x)|^2 |\nabla u_{S_0}(x)|^2 dx \\
&\lesssim \left(1 + \int_{B_R^c(\R^n)} |x|^{-4-2|\alpha_1|} dx \right) \Vert \nabla u_{S_0} \Vert_{L^{\infty}(\R^n)}^2 \\
&\lesssim (1 + R^{-2}) \Vert \nabla u_{S_0} \Vert_{\dot{H}_r^{s-1} \cap \dot{H}_r^{k-1}(\R^n)}^2\\
&\lesssim 1 + R^{-2},
\end{align*}
by Lemma \ref{LemmaEmbedding}. If $1 \leq |\beta| \leq |\alpha_1|-1$ we have 
\begin{align*}
\Vert \nabla \partial^{\alpha_i -\beta} u_{S_0} \Vert_{L^{\infty}(\R^n)} \lesssim \Vert \nabla \partial^{\alpha_i -\beta} u_{S_0} \Vert_{\dot{H}_r^{s-|\alpha_1|+|\beta|-1} \cap \dot{H}_r^{k-|\alpha_1|+|\beta|-1}(\R^n)} \lesssim \Vert u_{S_0} \Vert_{X_s^k(\R^n)} \lesssim 1,
\end{align*}
by Lemma \ref{LemmaEmbedding}. This implies
\begin{align*}
\int_{B_R(\R^n)} |\partial^{\beta} V_f(x)|^2  |\nabla \partial^{\alpha_i - \beta} u_{S_0}(x)|^2 dx \lesssim 1.
\end{align*}
To estimate the integral over $B_R^c(\R^n)$ we choose
\begin{align} \label{Gamma2}
\gamma \in \left( \frac{n}{2}-2-|\beta|, \min \left\lbrace \frac{n-1}{2}, |\alpha_1|-|\beta| + \frac{n}{2}+1 -s \right\rbrace \right),
\end{align}
to find 
\begin{align*}
&\int_{B_R(\R^n)^c} |\partial^{\beta} V_f(x)|^2  |\nabla \partial^{\alpha_i - \beta} u_{S_0}(x)|^2 dx\\
&\lesssim \left( \int_{B_R^c(\R^n)} |x|^{-4-2|\beta|-2 \gamma} dx \right) \Vert |\cdot|^{\gamma} \nabla \partial^{\alpha_i - \beta} u_{S_0} \Vert_{L^{\infty}(\R^n)}^2\\
&\lesssim R^{-(4-n+2|\beta|+2 \gamma)} \Vert u_{S_0} \Vert_{\dot{H}^{|\alpha_1|-|\beta|+1+\frac{n}{2}-\gamma}} \\
&\lesssim R^{-(4-n+2|\beta|+2 \gamma)},
\end{align*}
where we again used Proposition B.1 in \cite{Glo25}. Furthermore, by \eqref{Gamma2} we have $4-n+2|\beta|+2 \gamma >0$.\\
\\
Next, let $i=2$, $\alpha_2 \in \N_0^n$ with $|\alpha_2| = k = n+1$ and $\beta \in \N_0^n$ with $\beta \leq \alpha_2$. We distinguish again between different values of $|\alpha_2|-|\beta|$. If $|\alpha_2|-|\beta| \leq \lfloor \frac{n+1}{2} \rfloor$ we can estimate
\begin{align*}
\Vert \partial^{\alpha_2-\beta} u_{S_0} \Vert_{L^{\infty}(\R^n)} \lesssim \Vert \partial^{\alpha_2-\beta} u_{S_0} \Vert_{\dot{H}^s \cap \dot{H}^{\frac{n+1}{2} (\R^n)}} \lesssim \Vert u_{S_0} \Vert_{\dot{H}^{s+|\alpha_2|-|\beta|} \cap \dot{H}^{\frac{n+1}{2} + |\alpha_2|-|\beta|} (\R^n)} \lesssim \Vert u_{S_0} \Vert_{X_s^k(\R^n)},
\end{align*}
by Lemma \ref{LemmaEmbedding} and the fact that $\frac{n+1}{2} + |\alpha_2|-|\beta| \leq n+1$. This implies
\begin{align*}
&\int_{\R^n} \left( |\partial^{\beta} V_f(x)|^2 +  |\nabla \partial^{\beta} V_f(x)|^2 \right) |\partial^{\alpha_2 - \beta} u_{S_0}(x)|^2 dx \\
&\lesssim \left( 1 + \int_{B_R^c(\R^n)} |x|^{-4-2|\beta|} dx \right) \Vert \partial^{\alpha_2-\beta} u_{S_0} \Vert_{L^{\infty}(\R^n)}^2 \\
&\lesssim 1 + R^{-5},
\end{align*}
since $|\beta| \geq \frac{n+1}{2}$ in this case. If $|\alpha_2|-|\beta| \geq \lfloor \frac{n+1}{2} \rfloor +1$ we estimate
\begin{align*}
&\int_{\R^n} \left( |\partial^{\beta} V_f(x)|^2 +  |\nabla \partial^{\beta} V_f(x)|^2 \right) |\partial^{\alpha_2 - \beta} u_{S_0}(x)|^2 dx \\
&\lesssim \left( 1 + \Vert \partial^{\beta} V_f \Vert_{L^{\infty}(B_R^c(\R^n))}^2 + \Vert \nabla \partial^{\beta} V _f\Vert_{L^{\infty}(B_R^c(\R^n))}^2 \right) \Vert \partial^{\alpha_2-\beta} u_{S_0} \Vert_{L^{2}(\R^n)}^2 \\
&\lesssim 1 + R^{-4-2|\beta|},
\end{align*}
as we have $s < |\alpha_2|-|\beta| \leq n+1$. For the second part of the integral \eqref{H1norm} we distinguish three cases. First, if $|\alpha_2|-|\beta|+1 \leq \lfloor \frac{n+1}{2} \rfloor$ we do the same as for the first integral part of \eqref{H1norm}. We have
\begin{align*}
\Vert \nabla \partial^{\alpha_2-\beta} u_{S_0} \Vert_{L^{\infty}(\R^n)} \lesssim \Vert \nabla \partial^{\alpha_2-\beta} u_{S_0} \Vert_{\dot{H}^s \cap \dot{H}^{\frac{n+1}{2} (\R^n)}} \lesssim \Vert u_{S_0} \Vert_{\dot{H}^{s+1+|\alpha_2|-|\beta|} \cap \dot{H}^{\frac{n+1}{2} +1+ |\alpha_2|-|\beta|} (\R^n)} \lesssim \Vert u_{S_0} \Vert_{X_s^k(\R^n)},
\end{align*}
by Lemma \ref{LemmaEmbedding} and the fact that $\frac{n+1}{2} +1  + |\alpha_2|-|\beta| \leq n+1$. Again, we infer 
\begin{align*}
&\int_{\R^n} |\partial^{\beta} V_f(x)|^2 |\nabla \partial^{\alpha_2 - \beta} u_{S_0}(x)|^2 dx \\
&\lesssim \left( 1 + \int_{B_R^c(\R^n)} |x|^{-4-2|\beta|} dx \right) \Vert \nabla \partial^{\alpha_2-\beta} u_{S_0} \Vert_{L^{\infty}(\R^n)}^2 \\
&\lesssim 1 + R^{-7},
\end{align*}
since $|\beta| \geq \frac{n+3}{2}$. Next, if $n+1 \geq |\alpha_2|-|\beta| +1 \geq \lfloor \frac{n+1}{2} \rfloor +1$, we have
\begin{align*}
&\int_{\R^n} |\partial^{\beta} V_f(x)|^2 |\nabla \partial^{\alpha_2 - \beta} u_{S_0}(x)|^2 dx \\
&\lesssim \left( 1 + \Vert \partial^{\beta} V_f \Vert_{L^{\infty}(B_R^c(\R^n))}^2 \right) \Vert \nabla \partial^{\alpha_2-\beta} u_{S_0} \Vert_{L^{2}(\R^n)}^2 \\
&\lesssim 1 + R^{-4-2|\beta|},
\end{align*}
since $s < |\alpha_2|-|\beta| +1 \leq n+1$. For the remaining case $|\beta| =0$ we exploit the following property of the free semigroup $(S_0^X(\tau))_{\tau \geq 0}$. 
\begin{align*}
\Vert \nabla \partial^{\alpha_2} (S_0^X(\tau) u) \Vert_{L^2(\R^n)} &\lesssim \Vert S_0^X(\tau) u \Vert_{\dot{H}^{n+2}(\R^n)} \\
&\lesssim e^{-\frac{1}{2} \left( \frac{n}{2}+3 \right) \tau} \Vert |\cdot|^2 \mathcal{F}(H_{\alpha(\tau)}) \Vert_{L^{\infty}(\R^n)} \Vert u \Vert_{\dot{H}^{n+1}(\R^n)} \\
&\lesssim m(\tau) \Vert u \Vert_{X_s^k(\R^n)},
\end{align*}
by definition of the free semigroup \eqref{Freesemigroup} for some constant $m(\tau) >0$. This shows
\begin{align*}
\int_{\R^n} |V_f(x)|^2 |\nabla \partial^{\alpha_2} u_{S_0}(x)|^2 dx \lesssim (1+ \Vert V_f \Vert_{L^{\infty}(B_R^c(\R^n))}^2 \Vert \nabla \partial^{\alpha_2} (S_0^X(\tau) u) \Vert_{L^2(\R^n)} \lesssim 1 + R^{-4}.
\end{align*}
By Theorem 10 in \cite{HanOls2010}, we infer that each subset $K_{\alpha_1}, K_{\alpha_2}$ for all possible choices of $\alpha_1, \alpha_2 \in \N_0^n$ is relatively compact in $L^2(\R^n)$, implying that $[L_f^{\prime} S_0^X(\tau)](B_1^X)$ is relatively compact in $\dot{H}^{\lfloor s \rfloor}_r(\R^n) \cap \dot{H}^k_r(\R^n)$ and hence also in $X_s^k(\R^n)$.

\section{Additional bounds} \label{App2}
\subsection{Bounds required for Lemma~\ref{lem:wtil_approx_W0}}\label{sec:app_bound2}

In this section we derive the upper bounds for $\|\chi p_2(\Lctil-\Lc)(\wtil_0)\|_\infty$ and $\|\chi p_2(\Vtil_0-V_0)(\wtil_0)\|_\infty$, which are required for the proof of Lemma~\ref{lem:wtil_approx_W0}. Since we now apply these operators to the explicitly know $\wtil_0$, the bounds $\|p_1\wtil_0\|_\infty \leq \|\wtil_0\|$ and $\|p_3\wtil_0'\|_\infty \leq \|\wtil_0\|$, previously required in (\ref{eqn:bound_approx_linear_parts}) and (\ref{eqn:bound_potential_part}), are no longer necessary. This gives the more accurate bounds
\begin{align}
    \|\chi p_2(\Lctil-\Lc)(\wtil_0)\|_\infty &\leq c_P\|p_3\wtil_0'\|_{L^\infty([0, y^\ast])} 
        + c_Q\|p_1\wtil_0\|_{L^\infty([0, y^\ast])}\\
    \|\chi p_2(\Vtil_0-V_0)(\wtil_0)\|_\infty &\leq c_V \|p_1\wtil_0\|_{L^\infty([0, y^\ast])}.
\end{align}
It now remains to find rigorous bounds on $\|p_1\wtil_0\|_{L^\infty([0, y^\ast])}$ and $\|p_3\wtil_0'\|_{L^\infty([0, y^\ast])}$, which include both an exponential and a root term. Since we are considering compact intervals, we can can derive simple bounds on the behavior of the exponential by truncating the Taylor expansion and bounding the tail. Truncating after the $n$-th term, then for $y\in[0, n+2)$ we have the tail bound
\begin{equation*}
    \sum_{k=n+1}^\infty \frac{y^k}{k!} \leq \frac{y^{n+1}}{(n+1)!} \sum_{k=0}^\infty \frac{y^k}{(n+2)^k} = \frac{y^{n+1}}{(n+1)!} \frac{n+2}{n+2-y},
\end{equation*}
which gives the bounds 
\begin{equation}\label{eqn:exp_bounds}
    \sum_{k=0}^n \frac{y^k}{k!} \leq e^y \leq \sum_{k=0}^n \frac{y^k}{k!} + \frac{y^{n+1}}{(n+1)!} \frac{n+2}{n+2-y}.
\end{equation}
For our application we truncate after the 10-th term and since $\frac{{y^\ast}^2}{4} < 6$ we get good bounds on the behavior of $e^{y^2/4}$ for all $y\in[0, y^\ast]$. After replacing the exponential term in both $\wtil_0$ and $\wtil_0'$ with the upper bound, we apply the evaluation method with the variable transformation (\ref{eqn:variable_transform_03}) to convert the $\sqrt{y^2+4}$ term to a rational function.\footnote{\ The corresponding CSV-files are \texttt{coefs\_w0norm\_n(d)} and \texttt{coefs\_w0dnorm\_n(d)}.} Note that this transformation maps $y^\ast$ to an irrational number, hence we apply the method to a slightly larger interval with 
\begin{equation}\label{eqn:yast_transformed}
    \frac{\sqrt{{y^\ast}^2+4} - 2}{y^\ast} \leq 
    \left\{\begin{array}{@{}l l}
        \frac{7}{13}, & d=3, \\
        \frac{7}{12}, & d=4, \\
        \frac{5}{8}, & d=5, \\
        \frac{2}{3}, & d=6.
    \end{array}\right.
\end{equation}
This gives the bounds
\begin{align}
    \|p_1\wtil_0\|_{L^\infty([0, y^\ast])} &\leq 
    \left\{\begin{array}{@{}l l}
        \frac{1107}{89}, & d=3, \\
        \frac{32}{3}, & d=4, \\
        \frac{791}{72}, & d=5, \\
        \frac{554}{39}, & d=6
    \end{array}\right. \\
    \|p_3\wtil_0'\|_{L^\infty([0, y^\ast])} &\leq 
    \left\{\begin{array}{@{}l l}
        \frac{1284}{59}, & d=3, \\
        \frac{725}{39}, & d=4, \\
        \frac{999}{71}, & d=5, \\
        \frac{1351}{93}, & d=6.
    \end{array}\right.
\end{align}


\subsection{Bounds required for Lemma~\ref{lem:one_zero}}\label{sec:app_bound3}
The proof of Lemma \ref{lem:one_zero} required $\frac{\wtil_0}{y} - \frac{\widehat{\varepsilon}}{p_1 y} > 0$ on $[0, 1]$ and $\wtil_0' + \frac{\widehat{\varepsilon}}{p_3}<0$ on $[1, y^\ast]$. Both terms again contain an exponential and a root term, preventing a direct application of the evaluation method.

We proceed similarly to the previous section. After replacing the exponential term with the lower bound in (\ref{eqn:exp_bounds}) we use the evaluation method with the variable transformation (\ref{eqn:variable_transform_03}) and compute a lower bound on the minimum. Note that $1$ is not mapped to a rational number and we apply the evaluation method to a slightly larger interval using \footnote{\ The corresponding CSV-files are \texttt{coefs\_w0pos\_n(d)}.}
\begin{equation*}
    \sqrt{5}-2 < \frac{9}{38}.
\end{equation*}
This gives the bound
\begin{equation}
   \frac{\wtil_0}{y} - \frac{\widehat{\varepsilon}}{p_1 y} \geq 
   \left\{\begin{array}{@{}l l}
        \frac{82}{245}, & d=3, \\
        \frac{23}{39}, & d=4, \\
        \frac{71}{106}, & d=5, \\
        \frac{22}{43}, & d=6,
    \end{array}\right.\qquad \text{for } y\in [0, 1].
\end{equation}
Similarly for the second bound we also replace the exponential term with the lower bound in (\ref{eqn:exp_bounds}) and apply the evaluation method with variable transformation (\ref{eqn:variable_transform_03}) to compute an upper bound on the maximum. The method is again applied to a larger interval using
\begin{equation*}
    \sqrt{5} - 2 > \frac{4}{17}
\end{equation*}
for the left endpoint and (\ref{eqn:yast_transformed}) for the right endpoint.\footnote{\ The corresponding CSV-files are \texttt{coefs\_w0dneg\_n(d)}.} This gives the bounds
\begin{equation}
   \wtil_0' + \frac{\widehat{\varepsilon}}{p_3} \leq 
   \left\{\begin{array}{@{}l l}
        - \frac{93}{121}, & d=3, \\
        - \frac{70}{127}, & d=4, \\
        - \frac{56}{191}, & d=5, \\
        - \frac{44}{487}, & d=6,
    \end{array}\right.\qquad \text{for } y\in [1, y^\ast].
\end{equation}

\section{Tables of Coefficients}\label{Appendix:Coeff}
\hspace{1pt}

\begin{table}[ht!]
\centering
\caption{Coefficients of \(\ftil_0\) for \(d=3\)}
\label{tab:coef_f0_d3}
\vspace{-0.3cm}
\begin{tabular}{c|c c|c c|c c|c} 
\textbf{\(k\)} & \textbf{\(c_k(\ftil_0)\)} & \textbf{\(k\)} & \textbf{\(c_k(\ftil_0)\)} & \textbf{\(k\)} & \textbf{\(c_k(\ftil_0)\)} & \textbf{\(k\)} & \textbf{\(c_k(\ftil_0)\)} \\ \hline
0 & \( \frac{17917987}{9041390} \) & 1 & \( \frac{-1787334}{10331077} \) & 2 & \( \frac{73964}{2277441} \) & 3 & \( \frac{-19969}{2807730} \) \\ 
4 & \( \frac{2443}{1310683} \) & 5 & \( \frac{-559}{1078135} \) & 6 & \( \frac{279}{1751716} \) & 7 & \( \frac{-112}{2211443} \) \\ 
8 & \( \frac{71}{4121060} \) & 9 & \( \frac{-34}{5675721} \) & 10 & \( \frac{5}{2282942} \) & 11 & \( \frac{-1}{1231629} \) \\ 
12 & \( \frac{1}{3193614} \) & 13 & \( \frac{-1}{8212397} \) & 14 & \( \frac{1}{20443233} \) & 15 & \( \frac{-1}{50686210} \) \\ 
16 & \( \frac{1}{122132135} \) & 17 & \( \frac{-1}{294420759} \) & 18 & \( \frac{1}{691376676} \) & 19 & \( \frac{-1}{1596862914} \) \\ 
20 & \( \frac{1}{3804447767} \) & 21 & \( \frac{-1}{6917614371} \) & 22 & \( \frac{1}{14073643006} \) & &  \\ 
\end{tabular}
\end{table}

\begin{table}[ht!]
\centering
\caption{Coefficients of \(\ftil_0\) for \(d=4\)}
\label{tab:coef_f0_d4}
\vspace{-0.3cm}
\begin{tabular}{c|c c|c c|c c|c} 
\textbf{\(k\)} & \textbf{\(c_k(\ftil_0)\)} & \textbf{\(k\)} & \textbf{\(c_k(\ftil_0)\)} & \textbf{\(k\)} & \textbf{\(c_k(\ftil_0)\)} & \textbf{\(k\)} & \textbf{\(c_k(\ftil_0)\)} \\ \hline
0 & \( \frac{1678407}{1120145} \) & 1 & \( \frac{-2537013}{16423454} \) & 2 & \( \frac{216093}{7214351} \) & 3 & \( \frac{-14695}{2150586} \) \\ 
4 & \( \frac{17007}{9639343} \) & 5 & \( \frac{-4182}{8629697} \) & 6 & \( \frac{160}{1126623} \) & 7 & \( \frac{-335}{7759386} \) \\ 
8 & \( \frac{99}{7219261} \) & 9 & \( \frac{-13}{2911069} \) & 10 & \( \frac{23}{15298106} \) & 11 & \( \frac{-1}{1941051} \) \\ 
12 & \( \frac{1}{5513680} \) & 13 & \( \frac{-1}{15468537} \) & 14 & \( \frac{1}{42408044} \) & 15 & \( \frac{-1}{115311156} \) \\ 
16 & \( \frac{1}{307176919} \) & 17 & \( \frac{-1}{814242470} \) & 18 & \( \frac{1}{2118692601} \) & 19 & \( \frac{-1}{5508577091} \) \\ 
20 & \( \frac{1}{13839342729} \) & 21 & \( \frac{-1}{36680946502} \) & 22 & \( \frac{1}{74602017724} \) & 23 & \( \frac{-1}{160169802612} \) \\ 
\end{tabular}
\end{table}

\begin{table}[ht!]
\centering
\caption{Coefficients of \(\ftil_0\) for \(d=5\)}
\label{tab:coef_f0_d5}
\vspace{-0.3cm}
\begin{tabular}{c|c c|c c|c c|c} 
\textbf{\(k\)} & \textbf{\(c_k(\ftil_0)\)} & \textbf{\(k\)} & \textbf{\(c_k(\ftil_0)\)} & \textbf{\(k\)} & \textbf{\(c_k(\ftil_0)\)} & \textbf{\(k\)} & \textbf{\(c_k(\ftil_0)\)} \\ \hline
0 & \( \frac{35906507}{27171432} \) & 1 & \( \frac{-21019909}{114472475} \) & 2 & \( \frac{209749}{4768329} \) & 3 & \( \frac{-2918307}{236413223} \) \\ 
4 & \( \frac{87910}{23206417} \) & 5 & \( \frac{-142361}{115811713} \) & 6 & \( \frac{30995}{74455296} \) & 7 & \( \frac{-24641}{169239825} \) \\ 
8 & \( \frac{1635}{31240396} \) & 9 & \( \frac{-1670}{86801659} \) & 10 & \( \frac{109}{15106525} \) & 11 & \( \frac{-163}{59214350} \) \\ 
12 & \( \frac{106}{99366663} \) & 13 & \( \frac{-14}{33404487} \) & 14 & \( \frac{16}{95936727} \) & 15 & \( \frac{-3}{44694464} \) \\ 
16 & \( \frac{1}{36622696} \) & 17 & \( \frac{-1}{89170688} \) & 18 & \( \frac{1}{215145821} \) & 19 & \( \frac{-1}{514878404} \) \\ 
20 & \( \frac{1}{1222530085} \) & 21 & \( \frac{-1}{2882474052} \) & 22 & \( \frac{1}{6749831264} \) & 23 & \( \frac{-1}{15710621422} \) \\ 
24 & \( \frac{1}{36342676884} \) & 25 & \( \frac{-1}{83664242268} \) & 26 & \( \frac{1}{191410485855} \) & 27 & \( \frac{-1}{436135649340} \) \\ 
28 & \( \frac{1}{992887333445} \) & 29 & \( \frac{-1}{2183756191730} \) & 30 & \( \frac{1}{5155418015968} \) & 31 & \( \frac{-1}{9113413538542} \) \\ 
32 & \( \frac{1}{16722637027588} \) & &  & &  & &  \\ 
\end{tabular}
\end{table}

\begin{table}[ht!]
\centering
\caption{Coefficients of \(\ftil_0\) for \(d=6\)}
\label{tab:coef_f0_d6}
\vspace{-0.3cm}
\begin{tabular}{c|c c|c c|c c|c} 
\textbf{\(k\)} & \textbf{\(c_k(\ftil_0)\)} & \textbf{\(k\)} & \textbf{\(c_k(\ftil_0)\)} & \textbf{\(k\)} & \textbf{\(c_k(\ftil_0)\)} & \textbf{\(k\)} & \textbf{\(c_k(\ftil_0)\)} \\ \hline
0 & \( \frac{3181181323}{2539737309} \) & 1 & \( \frac{-807089099}{3187292536} \) & 2 & \( \frac{458518339}{5341055703} \) & 3 & \( \frac{-302197667}{8954113777} \) \\ 
4 & \( \frac{135400793}{9476708028} \) & 5 & \( \frac{-102660047}{16214513459} \) & 6 & \( \frac{38274411}{13214175724} \) & 7 & \( \frac{-10583201}{7798195796} \) \\ 
8 & \( \frac{1703288}{2628406219} \) & 9 & \( \frac{-502044}{1597449671} \) & 10 & \( \frac{1152857}{7465200546} \) & 11 & \( \frac{-168586}{2196583141} \) \\ 
12 & \( \frac{286457}{7435715019} \) & 13 & \( \frac{-73604}{3772695307} \) & 14 & \( \frac{332375}{33373113963} \) & 15 & \( \frac{-39527}{7718373033} \) \\ 
16 & \( \frac{13295}{5015245563} \) & 17 & \( \frac{-7748}{5611740159} \) & 18 & \( \frac{1013}{1400702663} \) & 19 & \( \frac{-1873}{4918072611} \) \\ 
20 & \( \frac{1507}{7477051893} \) & 21 & \( \frac{-186}{1735641941} \) & 22 & \( \frac{147}{2568511333} \) & 23 & \( \frac{-131}{4268198649} \) \\ 
24 & \( \frac{59}{3570443384} \) & 25 & \( \frac{-23}{2575546883} \) & 26 & \( \frac{43}{8878425910} \) & 27 & \( \frac{-16}{6070734717} \) \\ 
28 & \( \frac{19}{13204573173} \) & 29 & \( \frac{-2}{2538101677} \) & 30 & \( \frac{3}{6931479602} \) & 31 & \( \frac{-1}{4194716286} \) \\ 
32 & \( \frac{1}{7594934235} \) & 33 & \( \frac{-1}{13715673692} \) & 34 & \( \frac{1}{24707413670} \) & 35 & \( \frac{-1}{44401449457} \) \\ 
36 & \( \frac{1}{79609903024} \) & 37 & \( \frac{-1}{142421390209} \) & 38 & \( \frac{1}{254248102472} \) & 39 & \( \frac{-1}{452948657393} \) \\ 
40 & \( \frac{1}{805343108516} \) & 41 & \( \frac{-1}{1429179159267} \) & 42 & \( \frac{1}{2531576930286} \) & 43 & \( \frac{-1}{4476422230126} \) \\ 
44 & \( \frac{1}{7901770349367} \) & 45 & \( \frac{-1}{13924970285842} \) & 46 & \( \frac{1}{24503395881364} \) & 47 & \( \frac{-1}{43036104656234} \) \\ 
48 & \( \frac{1}{75539854140593} \) & 49 & \( \frac{-1}{132189039954001} \) & 50 & \( \frac{1}{231454552391722} \) & 51 & \( \frac{-1}{404345929357245} \) \\ 
52 & \( \frac{1}{698920210540690} \) & 53 & \( \frac{-1}{1240781608305367} \) & 54 & \( \frac{1}{1990463536013411} \) & 55 & \( \frac{-1}{3697906578379550} \) \\ 
56 & \( \frac{1}{4678996313335638} \) & 57 & \( \frac{-1}{6623962259232476} \) & &  & &  \\ 
\end{tabular}
\end{table}

\begin{table}[ht!]

\begin{minipage}[t]{0.5\linewidth}
\caption{Coefficients of \(\vtil_0\) for \(d=3\)}
\centering
\label{tab:coef_v0_d3}
\vspace{-0.3cm}
\begin{tabular}{c|c}
\textbf{\(k\)} & \textbf{\(c_{k, N}(\vtil_0)\)} \\
\hline
0 & \(1680\) \\ 
1 & \(0\) \\ 
2 & \(144\) \\ 
3 & \(0\) \\ 
4 & \(12\) \\  
\hline
\textbf{\(k\)} & \textbf{\(c_{k, D}(\vtil_0)\)} \\
\hline
0 & \(1680\) \\ 
1 & \(0\) \\ 
2 & \(480\) \\ 
3 & \(0\) \\ 
4 & \(72\) \\ 
5 & \(0\) \\ 
6 & \(8\) \\ 
7 & \(0\) \\ 
8 & \(1\) \\ 
\end{tabular}
\end{minipage}%
\begin{minipage}[t]{0.5\linewidth}
\caption{Coefficients of \(\vtil_0\) for \(d=5\)}
\centering
\label{tab:coef_v0_d5}
\vspace{-0.3cm}
\begin{tabular}{c|c}
\textbf{\(k\)} & \textbf{\(c_{k, N}(\vtil_0)\)} \\
\hline
0 & \(1680\) \\ 
1 & \(0\) \\ 
2 & \(120\) \\ 
\hline
\textbf{\(k\)} & \textbf{\(c_{k, D}(\vtil_0)\)} \\
\hline
0 & \(1680\) \\ 
1 & \(0\) \\ 
2 & \(480\) \\ 
3 & \(0\) \\ 
4 & \(72\) \\ 
5 & \(0\) \\ 
6 & \(8\) \\ 
7 & \(0\) \\ 
8 & \(1\) \\ 
\end{tabular}
\end{minipage}

\end{table}


\begin{table}[ht!]
\centering
\caption{Coefficients of \(\vtil_0\) for \(d=4\)}
\label{tab:coef_v0_d4}
\vspace{-0.3cm}\resizebox{.75\linewidth}{!}{
\begin{tabular}{c|c}
\textbf{\(k\)} & \textbf{\(c_{k, N}(\vtil_0)\)} \\
\hline
0 & \(579366515126277800970711700618505876785582870162990694400\) \\ 
1 & \(1193764586299578819416080927604020919434885633379197255680\) \\ 
2 & \(677436184484477983704065888154602186570197945061791498240\) \\ 
3 & \(93347599518394264229682388561028304480121080136324136960\) \\ 
4 & \(34612529389692782776324853572974768010557313876947026944\) \\ 
5 & \(6035849857615260007987783579326728373580191580323140096\) \\ 
6 & \(1765310305619229813592352943991965857236661039250215040\) \\ 
7 & \(241875323693909850166395643425619884960878762112698880\) \\ 
8 & \(133689567968387754607166779571556514083067908778713600\) \\ 
9 & \(-49391262615220223047263124844768449073699276986339328\) \\ 
10 & \(28855505866526366713167482609816632801664854421025280\) \\ 
\hline
\textbf{\(k\)} & \textbf{\(c_{k, D}(\vtil_0)\)} \\
\hline
0 & \(579366515126277800970711700618505876785582870162990694400\) \\ 
1 & \(1193764586299578819416080927604020919434885633379197255680\) \\ 
2 & \(798137541802452525572964159116790910900527709679081226240\) \\ 
3 & \(342048554997473184941365915145199329362388920423656898560\) \\ 
4 & \(187689472975216926753781638335816816057870768721739551744\) \\ 
5 & \(50094298476564583251265296847766216144246504970491306496\) \\ 
6 & \(23670684323321090095952051454732380690285053738707678720\) \\ 
7 & \(4924237385299052798359299547178072718288902305960107360\) \\ 
8 & \(2095582266046305333734588069995722799632331405002707790\) \\ 
9 & \(302693354082594553165646671332552830739585151949025027\) \\ 
10 & \(171310074915842619848910589859205035961504678684996528\) \\ 
11 & \(8402064008142629461528492684804552206140612182073595\) \\ 
12 & \(13540080848032974183739871760176862984975590010462660\) \\ 
13 & \(-293274498015283437065450785243991013554686200092400\) \\ 
14 & \(-1368081848006810155428453031921853347922068041525024\) \\ 
15 & \(799264722155204759810747032516227755159750939218740\) \\ 
\end{tabular}
}
\end{table}

\begin{table}[ht!]
\centering
\caption{Coefficients of \(\vtil_0\) for \(d=6\)}
\label{tab:coef_v0_d6}
\vspace{-0.3cm}\resizebox{.75\linewidth}{!}{
\begin{tabular}{c|c}
\textbf{\(k\)} & \textbf{\(c_{k, N}(\vtil_0)\)} \\
\hline
0 & \(5580692290047319809357206997409109485890437269425532724551700824719360\) \\ 
1 & \(11449616475552574765621120314918627769858387831876202125946150788792320\) \\ 
2 & \(6377957380372636613068591422206295021217735432271576161066642569953280\) \\ 
3 & \(741484200397125637106997435101220790149027494541364168246283733565440\) \\ 
4 & \(258077236913689397172354452124076290242430776833494444811607808196608\) \\ 
5 & \(37559978442428274803119169493491938977403798545703908636986515439616\) \\ 
6 & \(11576290130903980212442756605289061868662737858769252761939987489280\) \\ 
7 & \(822461754606230398948114002368940720709523800135239846369963812864\) \\ 
8 & \(792594470939244312895315467519605842119568274649305643911519240192\) \\ 
9 & \(-262948844569745663443589808206931392090037191175032988171951480832\) \\ 
10 & \(125369016643285531080861310782486158757584668151017183653402494976\) \\ 
\hline
\textbf{\(k\)} & \textbf{\(c_{k, D}(\vtil_0)\)} \\
\hline
0 & \(5580692290047319809357206997409109485890437269425532724551700824719360\) \\ 
1 & \(11449616475552574765621120314918627769858387831876202125946150788792320\) \\ 
2 & \(7598733818820487821365480452889537721256268584958411444562327125360640\) \\ 
3 & \(3246087804424251367086617503989670614805549832764283383297004218613760\) \\ 
4 & \(1782962910455287847162653285241797863012904550115877978916352354385920\) \\ 
5 & \(465873780207181616730609490741281780692259061412812487040000133693440\) \\ 
6 & \(225090384238373975621658432339037993997150767769559144723907085058560\) \\ 
7 & \(44372096335032259813884072431788645641552854412607647004887409034240\) \\ 
8 & \(19829386815053728528646829587072111379695810515677070923378879701520\) \\ 
9 & \(2831692269409545475466251225033455558690946894078936066784555200800\) \\ 
10 & \(1474990680230044734901674378336502438177329058286300486310404367414\) \\ 
11 & \(107656186628585981007139942252160766946643164603166884543907867073\) \\ 
12 & \(101024492479652905906099822577098991396118557148770596371602857716\) \\ 
13 & \(747496715514403170091026810481731135409010646233453156655541677\) \\ 
14 & \(6147078775874973465854430855402518829797086967096257089214686492\) \\ 
15 & \(-196174075342472880099303658283267844351174934535083512359708260\) \\ 
16 & \(-606948682423063498715672497636737943389645505198051996845768688\) \\ 
17 & \(289381608018947426145454019916960238609481655888356425336333884\) \\ 
\end{tabular}
}
\end{table}

\begin{table}[ht!]

\begin{minipage}[t]{0.5\linewidth}
\centering
\caption{Coefficients of \(\vtil_1\) for \(d=3\)}
\label{tab:coef_v1_d3}
\vspace{-0.3cm}
\begin{tabular}{c|c}
\textbf{\(k\)} & \textbf{\(c_{k, N}(\vtil_1)\)} \\
\hline
0 & \(2\) \\ 
1 & \(0\) \\ 
2 & \(1\) \\  
\hline
\textbf{\(k\)} & \textbf{\(c_{k, D}(\vtil_1)\)} \\
\hline
0 & \(1\) \\ 
\end{tabular}
\end{minipage}%
\begin{minipage}[t]{0.5\linewidth}
\centering
\caption{Coefficients of \(\vtil_1\) for \(d=4\)}
\label{tab:coef_v1_d4}
\vspace{-0.3cm}
\begin{tabular}{c|c}
\textbf{\(k\)} & \textbf{\(c_{k, N}(\vtil_1)\)} \\
\hline
0 & \(3675760\) \\ 
1 & \(3171168\) \\ 
2 & \(2108106\) \\ 
3 & \(1299597\) \\ 
4 & \(702702\) \\ 
5 & \(161694\) \\ 
\hline
\textbf{\(k\)} & \textbf{\(c_{k, D}(\vtil_1)\)} \\
\hline
0 & \(814515\) \\ 
1 & \(702702\) \\ 
2 & \(161694\) \\ 
\end{tabular}
\end{minipage}

\end{table}
\begin{table}[ht!]
\begin{minipage}[t]{0.5\linewidth}
\centering
\caption{Coefficients of \(\vtil_1\) for \(d=5\)}
\label{tab:coef_v1_d5}
\vspace{-0.3cm}
\begin{tabular}{c|c}
\textbf{\(k\)} & \textbf{\(c_{k, N}(\vtil_1)\)} \\
\hline
0 & \(12\) \\ 
1 & \(0\) \\ 
2 & \(4\) \\ 
3 & \(0\) \\ 
4 & \(1\) \\ 
\hline
\textbf{\(k\)} & \textbf{\(c_{k, D}(\vtil_1)\)} \\
\hline
0 & \(1\) \\ 
\end{tabular}
\end{minipage}%
\begin{minipage}[t]{0.5\linewidth}
\centering
\caption{Coefficients of \(\vtil_1\) for \(d=6\)}
\label{tab:coef_v1_d6}
\vspace{-0.3cm}
\begin{tabular}{c|c}
\textbf{\(k\)} & \textbf{\(c_{k, N}(\vtil_1)\)} \\
\hline
0 & \(35555190528\) \\ 
1 & \(38060923648\) \\ 
2 & \(26338094640\) \\ 
3 & \(14414091120\) \\ 
4 & \(6841779945\) \\ 
5 & \(3093704874\) \\ 
6 & \(1403257284\) \\ 
7 & \(421773300\) \\ 
8 & \(69802590\) \\ 
\hline
\textbf{\(k\)} & \textbf{\(c_{k, D}(\vtil_1)\)} \\
\hline
0 & \(984838374\) \\ 
1 & \(1054244334\) \\ 
2 & \(421773300\) \\ 
3 & \(69802590\) \\ 
\end{tabular}
\end{minipage}
\end{table}

\begin{table}[ht!]
\centering
\caption{Coefficients of \(\wtil_0\) for \(d=3\)}
\label{tab:coef_w0_d3}
\vspace{-0.3cm}
\begin{tabular}{c|c c|c c|c c|c c|c} 
\textbf{\(k\)} & \textbf{\(c_k(\wtil_0)\)} & \textbf{\(k\)} & \textbf{\(c_k(\wtil_0)\)} & \textbf{\(k\)} & \textbf{\(c_k(\wtil_0)\)} & \textbf{\(k\)} & \textbf{\(c_k(\wtil_0)\)} & \textbf{\(k\)} & \textbf{\(c_k(\wtil_0)\)} \\ \hline
0 & \( 1 \) & 1 & \( \frac{-650018}{302905} \) & 2 & \( \frac{2356067}{1728292} \) & 3 & \( \frac{-543204}{831913} \) & 4 & \( \frac{62301}{167735} \) \\ 
5 & \( \frac{-65326}{316559} \) & 6 & \( \frac{29919}{342124} \) & 7 & \( \frac{-100995}{1684276} \) & 8 & \( \frac{10466}{351979} \) & 9 & \( \frac{-5753}{549017} \) \\ 
10 & \( \frac{5544}{500639} \) & 11 & \( \frac{-743}{243232} \) & 12 & \( \frac{517}{321767} \) & 13 & \( \frac{-1168}{544833} \) & 14 & \( \frac{-67}{390178} \) \\ 
15 & \( \frac{-296}{486165} \) & 16 & \( \frac{207}{957608} \) & 17 & \( \frac{23}{200732} \) & 18 & \( \frac{42}{179105} \) & 19 & \( \frac{49}{567796} \) \\ 
20 & \( \frac{8}{196489} \) & 21 & \( \frac{-3}{102988} \) & 22 & \( \frac{-3}{81830} \) & 23 & \( \frac{-6}{164369} \) & 24 & \( \frac{-4}{209105} \) \\ 
25 & \( \frac{-3}{476996} \) & 26 & \( \frac{1}{267011} \) & 27 & \( \frac{2}{282875} \) & 28 & \( \frac{2}{286991} \) & 29 & \( \frac{1}{218864} \) \\ 
30 & \( \frac{1}{494167} \) & 31 & \( \frac{-1}{3484331490} \) & 32 & \( \frac{-1}{949515} \) & 33 & \( \frac{-1}{742170} \) & 34 & \( \frac{-1}{899237} \) \\ 
35 & \( \frac{-1}{1445920} \) & 36 & \( \frac{-1}{3850545} \) & 37 & \( \frac{1}{29028033} \) & 38 & \( \frac{1}{4970842} \) & 39 & \( \frac{1}{4414530} \) \\ 
40 & \( \frac{1}{5230596} \) & 41 & \( \frac{1}{10802323} \) & 42 & \( \frac{-1}{30791846} \) & &  & &  \\ 
\end{tabular}
\end{table}

\begin{table}[ht!]
\centering
\caption{Coefficients of \(\wtil_0\) for \(d=4\)}
\label{tab:coef_w0_d4}
\vspace{-0.3cm}
\begin{tabular}{c|c c|c c|c c|c c|c} 
\textbf{\(k\)} & \textbf{\(c_k(\wtil_0)\)} & \textbf{\(k\)} & \textbf{\(c_k(\wtil_0)\)} & \textbf{\(k\)} & \textbf{\(c_k(\wtil_0)\)} & \textbf{\(k\)} & \textbf{\(c_k(\wtil_0)\)} & \textbf{\(k\)} & \textbf{\(c_k(\wtil_0)\)} \\ \hline
0 & \( 1 \) & 1 & \( \frac{-90134}{48295} \) & 2 & \( \frac{988153}{858840} \) & 3 & \( \frac{-808427}{1556222} \) & 4 & \( \frac{74443}{291243} \) \\ 
5 & \( \frac{-9672}{73595} \) & 6 & \( \frac{40617}{757252} \) & 7 & \( \frac{-4929}{159305} \) & 8 & \( \frac{1247}{94867} \) & 9 & \( \frac{-4055}{775024} \) \\ 
10 & \( \frac{309}{67256} \) & 11 & \( \frac{-25}{116339} \) & 12 & \( \frac{212}{183409} \) & 13 & \( \frac{-208}{384851} \) & 14 & \( \frac{-103}{246042} \) \\ 
15 & \( \frac{-401}{658488} \) & 16 & \( \frac{-135}{568436} \) & 17 & \( \frac{-1}{428810} \) & 18 & \( \frac{57}{304454} \) & 19 & \( \frac{169}{868474} \) \\ 
20 & \( \frac{21}{168661} \) & 21 & \( \frac{13}{491256} \) & 22 & \( \frac{-19}{509239} \) & 23 & \( \frac{-44}{756931} \) & 24 & \( \frac{-22}{483891} \) \\ 
25 & \( \frac{-1}{47535} \) & 26 & \( \frac{-1}{29987759} \) & 27 & \( \frac{7}{650271} \) & 28 & \( \frac{5}{404094} \) & 29 & \( \frac{2}{226717} \) \\ 
30 & \( \frac{1}{244941} \) & 31 & \( \frac{1}{2392713} \) & 32 & \( \frac{-1}{661776} \) & 33 & \( \frac{-1}{492905} \) & 34 & \( \frac{-1}{579472} \) \\ 
35 & \( \frac{-1}{888822} \) & 36 & \( \frac{-1}{1911602} \) & 37 & \( \frac{-1}{13856017} \) & 38 & \( \frac{1}{4824953} \) & 39 & \( \frac{1}{3160054} \) \\ 
40 & \( \frac{1}{3184290} \) & 41 & \( \frac{1}{4334020} \) & 42 & \( \frac{1}{7517208} \) & 43 & \( \frac{1}{29234864} \) & 44 & \( \frac{-1}{44244926} \) \\ 
45 & \( \frac{-1}{17509391} \) & 46 & \( \frac{-1}{21018669} \) & 47 & \( \frac{-1}{29831524} \) & 48 & \( \frac{-1}{21891443} \) & &  \\ 
\end{tabular}
\end{table}

\begin{table}[ht!]
\centering
\caption{Coefficients of \(\wtil_0\) for \(d=5\)}
\label{tab:coef_w0_d5}
\vspace{-0.3cm}
\begin{tabular}{c|c c|c c|c c|c c|c} 
\textbf{\(k\)} & \textbf{\(c_k(\wtil_0)\)} & \textbf{\(k\)} & \textbf{\(c_k(\wtil_0)\)} & \textbf{\(k\)} & \textbf{\(c_k(\wtil_0)\)} & \textbf{\(k\)} & \textbf{\(c_k(\wtil_0)\)} & \textbf{\(k\)} & \textbf{\(c_k(\wtil_0)\)} \\ \hline
0 & \( 1 \) & 1 & \( \frac{-20785558}{11521181} \) & 2 & \( \frac{15495806}{13273235} \) & 3 & \( \frac{-9243359}{15900034} \) & 4 & \( \frac{1586702}{4973477} \) \\ 
5 & \( \frac{-1178712}{6882937} \) & 6 & \( \frac{728591}{9906161} \) & 7 & \( \frac{-161224}{3367667} \) & 8 & \( \frac{278719}{12497371} \) & 9 & \( \frac{-41798}{5168753} \) \\ 
10 & \( \frac{32880}{3921227} \) & 11 & \( \frac{-77147}{46080381} \) & 12 & \( \frac{2947}{2275160} \) & 13 & \( \frac{-5708}{3758725} \) & 14 & \( \frac{-4624}{12462993} \) \\ 
15 & \( \frac{-6743}{11314019} \) & 16 & \( \frac{169}{4745664} \) & 17 & \( \frac{1148}{12087581} \) & 18 & \( \frac{1027}{4689522} \) & 19 & \( \frac{860}{6394503} \) \\ 
20 & \( \frac{994}{13471875} \) & 21 & \( \frac{-11}{4469968} \) & 22 & \( \frac{-173}{5135922} \) & 23 & \( \frac{-340}{7823313} \) & 24 & \( \frac{-263}{8237766} \) \\ 
25 & \( \frac{-44}{2691859} \) & 26 & \( \frac{-39}{19737940} \) & 27 & \( \frac{147}{23468371} \) & 28 & \( \frac{127}{14022874} \) & 29 & \( \frac{139}{17808651} \) \\ 
30 & \( \frac{15}{3100814} \) & 31 & \( \frac{17}{9738220} \) & 32 & \( \frac{-3}{6158147} \) & 33 & \( \frac{-10}{6165203} \) & 34 & \( \frac{-3}{1649366} \) \\ 
35 & \( \frac{-5}{3472282} \) & 36 & \( \frac{-14}{16711441} \) & 37 & \( \frac{-1}{3689959} \) & 38 & \( \frac{1}{7860666} \) & 39 & \( \frac{1}{3067439} \) \\ 
40 & \( \frac{2}{5554409} \) & 41 & \( \frac{1}{3438757} \) & 42 & \( \frac{1}{5582427} \) & 43 & \( \frac{1}{14252949} \) & 44 & \( \frac{-1}{91669933} \) \\ 
45 & \( \frac{-1}{17718539} \) & 46 & \( \frac{-1}{14202646} \) & 47 & \( \frac{-1}{15962753} \) & 48 & \( \frac{-1}{22832272} \) & 49 & \( \frac{-1}{44010043} \) \\ 
50 & \( \frac{-1}{200747207} \) & 51 & \( \frac{1}{147482463} \) & 52 & \( \frac{1}{80167552} \) & 53 & \( \frac{1}{75627642} \) & 54 & \( \frac{1}{91491874} \) \\ 
55 & \( \frac{1}{138903121} \) & 56 & \( \frac{1}{286846120} \) & 57 & \( \frac{1}{2226377220} \) & 58 & \( \frac{-1}{698795889} \) & 59 & \( \frac{-1}{426937724} \) \\ 
60 & \( \frac{-1}{430620135} \) & 61 & \( \frac{-1}{545716544} \) & 62 & \( \frac{-1}{1092557571} \) & 63 & \( \frac{1}{14948803387} \) & &  \\ 
\end{tabular}
\end{table}

\begin{table}[ht!]
\centering
\caption{Coefficients of \(\wtil_0\) for \(d=6\)}
\label{tab:coef_w0_d6}
\vspace{-0.3cm}
\begin{tabular}{c|c c|c c|c c|c c|c} 
\textbf{\(k\)} & \textbf{\(c_k(\wtil_0)\)} & \textbf{\(k\)} & \textbf{\(c_k(\wtil_0)\)} & \textbf{\(k\)} & \textbf{\(c_k(\wtil_0)\)} & \textbf{\(k\)} & \textbf{\(c_k(\wtil_0)\)} & \textbf{\(k\)} & \textbf{\(c_k(\wtil_0)\)} \\ \hline
0 & \( 1 \) & 1 & \( \frac{-47652279}{26191717} \) & 2 & \( \frac{34644161}{25569907} \) & 3 & \( \frac{-11380805}{12877876} \) & 4 & \( \frac{32420909}{56818391} \) \\ 
5 & \( \frac{-37769270}{102927907} \) & 6 & \( \frac{4580539}{20023311} \) & 7 & \( \frac{-13416447}{92129545} \) & 8 & \( \frac{2558049}{28195928} \) & 9 & \( \frac{-1638758}{29087257} \) \\ 
10 & \( \frac{899323}{25117084} \) & 11 & \( \frac{-662771}{30633181} \) & 12 & \( \frac{330178}{23819583} \) & 13 & \( \frac{-339550}{39926651} \) & 14 & \( \frac{254327}{49496718} \) \\ 
15 & \( \frac{-177613}{51630619} \) & 16 & \( \frac{56186}{29583583} \) & 17 & \( \frac{-18031}{13818204} \) & 18 & \( \frac{108206}{135172509} \) & 19 & \( \frac{-13661}{32491369} \) \\ 
20 & \( \frac{20053}{55706850} \) & 21 & \( \frac{-1514}{10377671} \) & 22 & \( \frac{802}{6523541} \) & 23 & \( \frac{-2403}{28767967} \) & 24 & \( \frac{648}{29402173} \) \\ 
25 & \( \frac{-1538}{33549791} \) & 26 & \( \frac{367}{58310946} \) & 27 & \( \frac{-233}{19519592} \) & 28 & \( \frac{686}{65555331} \) & 29 & \( \frac{25}{12025148} \) \\ 
30 & \( \frac{150}{19922981} \) & 31 & \( \frac{61}{44062539} \) & 32 & \( \frac{43}{26290988} \) & 33 & \( \frac{-21}{15980876} \) & 34 & \( \frac{-27}{27989407} \) \\ 
35 & \( \frac{-25}{17061021} \) & 36 & \( \frac{-65}{91604749} \) & 37 & \( \frac{-18}{38307307} \) & 38 & \( \frac{1}{42645472} \) & 39 & \( \frac{7}{47930373} \) \\ 
40 & \( \frac{11}{41069434} \) & 41 & \( \frac{1}{4651679} \) & 42 & \( \frac{1}{5925529} \) & 43 & \( \frac{3}{36997513} \) & 44 & \( \frac{1}{40587070} \) \\ 
45 & \( \frac{-1}{43232326} \) & 46 & \( \frac{-2}{47952243} \) & 47 & \( \frac{-1}{21180596} \) & 48 & \( \frac{-1}{26146339} \) & 49 & \( \frac{-1}{39069371} \) \\ 
50 & \( \frac{-1}{87874316} \) & 51 & \( \frac{-1}{1805862275} \) & 52 & \( \frac{1}{151908411} \) & 53 & \( \frac{1}{106344548} \) & 54 & \( \frac{1}{108141322} \) \\ 
55 & \( \frac{1}{140526234} \) & 56 & \( \frac{1}{230114612} \) & 57 & \( \frac{1}{586953492} \) & 58 & \( \frac{-1}{3795425332} \) & 59 & \( \frac{-1}{695287583} \) \\ 
60 & \( \frac{-1}{533793708} \) & 61 & \( \frac{-1}{562469187} \) & 62 & \( \frac{-1}{730261268} \) & 63 & \( \frac{-1}{1175024011} \) & 64 & \( \frac{-1}{2757167363} \) \\ 
65 & \( \frac{1}{76975187354} \) & 66 & \( \frac{1}{4034497647} \) & 67 & \( \frac{1}{2851751791} \) & 68 & \( \frac{1}{2847902087} \) & 69 & \( \frac{1}{3477674193} \) \\ 
70 & \( \frac{1}{5140425808} \) & 71 & \( \frac{1}{9946705258} \) & 72 & \( \frac{1}{45349217889} \) & 73 & \( \frac{-1}{31617003600} \) & 74 & \( \frac{-1}{16395151285} \) \\ 
75 & \( \frac{-1}{14565805234} \) & 76 & \( \frac{-1}{16049911071} \) & 77 & \( \frac{-1}{21320418839} \) & 78 & \( \frac{-1}{33670810531} \) & 79 & \( \frac{-1}{77619629444} \) \\ 
80 & \( \frac{-1}{1420087279918} \) & 81 & \( \frac{1}{125158988872} \) & 82 & \( \frac{1}{89743734804} \) & 83 & \( \frac{1}{84631820919} \) & 84 & \( \frac{1}{116761871589} \) \\ 
85 & \( \frac{1}{195350597374} \) & 86 & \( \frac{1}{176578936899} \) & &  & &  & &  \\ 
\end{tabular}
\end{table}

\begin{table}[ht!]
\centering
\caption{Coefficients of \(\wtil_1\) for \(d=3\)}
\label{tab:coef_w1_d3}
\vspace{-0.3cm}
\begin{tabular}{c|c c|c c|c c|c c|c} 
\textbf{\(k\)} & \textbf{\(c_k(\wtil_1)\)} & \textbf{\(k\)} & \textbf{\(c_k(\wtil_1)\)} & \textbf{\(k\)} & \textbf{\(c_k(\wtil_1)\)} & \textbf{\(k\)} & \textbf{\(c_k(\wtil_1)\)} & \textbf{\(k\)} & \textbf{\(c_k(\wtil_1)\)} \\ \hline
0 & \( 1 \) & 1 & \( \frac{11803625}{8800252} \) & 2 & \( \frac{-3893}{481164} \) & 3 & \( \frac{-864831}{2757703} \) & 4 & \( \frac{167774}{2999557} \) \\ 
5 & \( \frac{50497}{701038} \) & 6 & \( \frac{-114461}{2889844} \) & 7 & \( \frac{-7856}{3180127} \) & 8 & \( \frac{12779}{1300443} \) & 9 & \( \frac{-3942}{1107947} \) \\ 
10 & \( \frac{-867}{2903818} \) & 11 & \( \frac{1300}{1817057} \) & 12 & \( \frac{-803}{2803689} \) & 13 & \( \frac{105}{2015003} \) & 14 & \( \frac{11}{1153578} \) \\ 
15 & \( \frac{-37}{1744403} \) & 16 & \( \frac{40}{2380829} \) & 17 & \( \frac{-2}{311935} \) & 18 & \( \frac{-1}{1266945} \) & 19 & \( \frac{7}{2922589} \) \\ 
20 & \( \frac{-1}{779671} \) & 21 & \( \frac{1}{5746076} \) & 22 & \( \frac{1}{4938094} \) & 23 & \( \frac{-1}{6166019} \) & 24 & \( \frac{1}{18271223} \) \\ 
25 & \( \frac{1}{272724931} \) & 26 & \( \frac{-1}{56968973} \) & 27 & \( \frac{1}{139599039} \) & 28 & \( \frac{1}{1317546995} \) & 29 & \(c_{29}(\wtil_1)\) \\
30 & \(c_{30}(\wtil_1)\) & &  & &  & &  & & \\
\end{tabular}
\end{table}

\begin{table}[ht!]
\centering
\caption{Coefficients of \(\wtil_1\) for \(d=4\)}
\label{tab:coef_w1_d4}
\vspace{-0.3cm}
\begin{tabular}{c|c c|c c|c c|c c|c} 
\textbf{\(k\)} & \textbf{\(c_k(\wtil_1)\)} & \textbf{\(k\)} & \textbf{\(c_k(\wtil_1)\)} & \textbf{\(k\)} & \textbf{\(c_k(\wtil_1)\)} & \textbf{\(k\)} & \textbf{\(c_k(\wtil_1)\)} & \textbf{\(k\)} & \textbf{\(c_k(\wtil_1)\)} \\ \hline
0 & \( 1 \) & 1 & \( \frac{97762735}{65380431} \) & 2 & \( \frac{30504083}{106243288} \) & 3 & \( \frac{-30128105}{83898268} \) & 4 & \( \frac{-5633586}{47450267} \) \\ 
5 & \( \frac{2012842}{16321441} \) & 6 & \( \frac{1517799}{66254210} \) & 7 & \( \frac{-1321075}{32104308} \) & 8 & \( \frac{140793}{124498868} \) & 9 & \( \frac{1100359}{100849093} \) \\ 
10 & \( \frac{-93223}{38848750} \) & 11 & \( \frac{-361358}{165125365} \) & 12 & \( \frac{12733}{14243143} \) & 13 & \( \frac{51863}{138790902} \) & 14 & \( \frac{-11460}{48469339} \) \\ 
15 & \( \frac{-1562}{22160843} \) & 16 & \( \frac{7264}{114973269} \) & 17 & \( \frac{753}{51856541} \) & 18 & \( \frac{-1157}{60265016} \) & 19 & \( \frac{-43}{27833766} \) \\ 
20 & \( \frac{492}{89100845} \) & 21 & \( \frac{-19}{38373933} \) & 22 & \( \frac{-13}{9731089} \) & 23 & \( \frac{14}{41339867} \) & 24 & \( \frac{23}{85935810} \) \\ 
25 & \( \frac{-4}{35742063} \) & 26 & \( \frac{-4}{76202427} \) & 27 & \( \frac{1}{30295308} \) & 28 & \( \frac{1}{104467578} \) & 29 & \( \frac{-1}{110872658} \) \\ 
30 & \( \frac{-1}{511931277} \) & 31 & \( \frac{1}{337707502} \) & 32 & \( \frac{-1}{9664024017} \) & 33 & \( \frac{-1}{1652769343} \) & 34 & \( \frac{1}{1146857116493} \) \\ 
35 & \( \frac{1}{4165370537} \) & 36 & \( \frac{-1}{9742752883} \) & 37 & \( \frac{1}{106587665632} \) & 38 & \( \frac{-1}{50876909278} \) & 39 & \( \frac{1}{28821129587} \) \\ 
40 & \( \frac{-1}{37469561746} \) & 41 & \( \frac{1}{60878490681} \) & 42 & \( \frac{-1}{77216963898} \) & 43 & \( \frac{1}{84968894530} \) & 44 & \( \frac{-1}{101847610801} \) \\ 
45 & \( \frac{1}{129424165177} \) & 46 & \( \frac{-1}{159746819643} \) & 47 & \( \frac{1}{191448867071} \) & 48 & \( \frac{-1}{230116445748} \) & 49 & \( \frac{1}{275416015993} \) \\ 
50 & \( \frac{-1}{332555168008} \) & 51 & \( \frac{1}{401161361773} \) & 52 & \( \frac{-1}{463399831633} \) & 53 & \( \frac{1}{562437067981} \) & 54 & \( \frac{-1}{682033486903} \) \\ 
55 & \( \frac{1}{732113686242} \) & 56 & \( \frac{-1}{958808221604} \) & 57 & \( \frac{1}{1153554662275} \) & 58 & \( \frac{-1}{1027965331803} \) & 59 & \( \frac{1}{1898618895409} \) \\ 
60 & \( \frac{-1}{1867497289776} \) & 61 & \( \frac{1}{1181497339970} \) & 62 & \( \frac{-1}{50957953147904} \) & 63 & \( \frac{1}{2292720722907} \) & 64 & \( \frac{-1}{1080950121897} \) \\ 
65 & \( \frac{-1}{1406382606167} \) & 66 & \( \frac{-1}{928110102950} \) & 67 & \( \frac{1}{522894080259} \) & 68 & \(c_{68}(\wtil_1)\) & 69 & \(c_{69}(\wtil_1)\) \\ 
\end{tabular}
\end{table}

\begin{table}[ht!]
\centering
\caption{Coefficients of \(\wtil_1\) for \(d=5\)}
\label{tab:coef_w1_d5}
\vspace{-0.3cm}
\begin{tabular}{c|c c|c c|c c|c c|c} 
\textbf{\(k\)} & \textbf{\(c_k(\wtil_1)\)} & \textbf{\(k\)} & \textbf{\(c_k(\wtil_1)\)} & \textbf{\(k\)} & \textbf{\(c_k(\wtil_1)\)} & \textbf{\(k\)} & \textbf{\(c_k(\wtil_1)\)} & \textbf{\(k\)} & \textbf{\(c_k(\wtil_1)\)} \\ \hline
0 & \( 1 \) & 1 & \( \frac{26041005}{16807873} \) & 2 & \( \frac{4074199}{8786101} \) & 3 & \( \frac{-9573986}{31160203} \) & 4 & \( \frac{-7023207}{29585884} \) \\ 
5 & \( \frac{5816978}{67979357} \) & 6 & \( \frac{3715267}{42062527} \) & 7 & \( \frac{-376193}{11758379} \) & 8 & \( \frac{-758862}{26640751} \) & 9 & \( \frac{658053}{52983181} \) \\ 
10 & \( \frac{185690}{22513119} \) & 11 & \( \frac{-62335}{14162584} \) & 12 & \( \frac{-31329}{13597135} \) & 13 & \( \frac{63473}{43496631} \) & 14 & \( \frac{7557}{11416580} \) \\ 
15 & \( \frac{-6812}{14075011} \) & 16 & \( \frac{-6505}{33801961} \) & 17 & \( \frac{231}{1410914} \) & 18 & \( \frac{431}{8040597} \) & 19 & \( \frac{-1361}{24750956} \) \\ 
20 & \( \frac{-384}{27324749} \) & 21 & \( \frac{953}{52852186} \) & 22 & \( \frac{113}{31961419} \) & 23 & \( \frac{-164}{28134331} \) & 24 & \( \frac{-43}{50314655} \) \\ 
25 & \( \frac{127}{67488911} \) & 26 & \( \frac{5}{26927627} \) & 27 & \( \frac{-9}{14847415} \) & 28 & \( \frac{-1}{33618767} \) & 29 & \( \frac{3}{15506287} \) \\ 
30 & \( \frac{-1}{33429764811} \) & 31 & \( \frac{-1}{16368585} \) & 32 & \( \frac{1}{343942617} \) & 33 & \( \frac{1}{52209119} \) & 34 & \( \frac{-1}{551792563} \) \\ 
35 & \( \frac{-1}{167394615} \) & 36 & \( \frac{1}{1163144143} \) & 37 & \( \frac{1}{538720307} \) & 38 & \( \frac{-1}{2755610661} \) & 39 & \( \frac{-1}{1714550509} \) \\ 
40 & \( \frac{1}{7706223108} \) & 41 & \( \frac{1}{5341882956} \) & 42 & \( \frac{-1}{49770421587} \) & 43 & \( \frac{-1}{22574341232} \) & 44 & \( \frac{-1}{144162788918} \) \\ 
45 & \(c_{45}(\wtil_1)\) & 46 & \(c_{46}(\wtil_1)\) & &  & &  & &  \\
\end{tabular}
\end{table}

\begin{table}[ht!]
\centering
\caption{Coefficients of \(\wtil_1\) for \(d=6\)}
\label{tab:coef_w1_d6}
\vspace{-0.3cm}
\begin{tabular}{c|c c|c c|c c|c c|c} 
\textbf{\(k\)} & \textbf{\(c_k(\wtil_1)\)} & \textbf{\(k\)} & \textbf{\(c_k(\wtil_1)\)} & \textbf{\(k\)} & \textbf{\(c_k(\wtil_1)\)} & \textbf{\(k\)} & \textbf{\(c_k(\wtil_1)\)} & \textbf{\(k\)} & \textbf{\(c_k(\wtil_1)\)} \\ \hline
0 & \( 1 \) & 1 & \( \frac{158338445}{100094528} \) & 2 & \( \frac{46363281}{79772759} \) & 3 & \( \frac{-8520156}{35818093} \) & 4 & \( \frac{-4046322}{13230331} \) \\ 
5 & \( \frac{1007353}{47444741} \) & 6 & \( \frac{14449094}{116634659} \) & 7 & \( \frac{35341}{9966240} \) & 8 & \( \frac{-609003}{12870683} \) & 9 & \( \frac{-114867}{33971347} \) \\ 
10 & \( \frac{1070420}{61351359} \) & 11 & \( \frac{221995}{122111503} \) & 12 & \( \frac{-377407}{59580110} \) & 13 & \( \frac{-75481}{85762660} \) & 14 & \( \frac{633310}{275456861} \) \\ 
15 & \( \frac{22444}{56766371} \) & 16 & \( \frac{-31375}{37644457} \) & 17 & \( \frac{-14918}{88395113} \) & 18 & \( \frac{10443}{34788227} \) & 19 & \( \frac{25897}{366736009} \) \\ 
20 & \( \frac{-25835}{239923926} \) & 21 & \( \frac{-711}{24367261} \) & 22 & \( \frac{2975}{77074861} \) & 23 & \( \frac{971}{81940254} \) & 24 & \( \frac{-1464}{106083811} \) \\ 
25 & \( \frac{-142}{29930755} \) & 26 & \( \frac{91}{18528450} \) & 27 & \( \frac{299}{158699121} \) & 28 & \( \frac{-135}{77499397} \) & 29 & \( \frac{-55}{74042421} \) \\ 
30 & \( \frac{22}{35701675} \) & 31 & \( \frac{31}{106719848} \) & 32 & \( \frac{-38}{175004821} \) & 33 & \( \frac{-10}{88629059} \) & 34 & \( \frac{7}{91921724} \) \\ 
35 & \( \frac{1}{22932004} \) & 36 & \( \frac{-1}{37606009} \) & 37 & \( \frac{-1}{59624775} \) & 38 & \( \frac{1}{108189443} \) & 39 & \( \frac{1}{155788487} \) \\ 
40 & \( \frac{-1}{313018571} \) & 41 & \( \frac{-1}{408759847} \) & 42 & \( \frac{1}{911539877} \) & 43 & \( \frac{1}{1076457553} \) & 44 & \( \frac{-1}{2673485287} \) \\ 
45 & \( \frac{-1}{2846389642} \) & 46 & \( \frac{1}{7899944000} \) & 47 & \( \frac{1}{7560441790} \) & 48 & \( \frac{-1}{23380299632} \) & 49 & \( \frac{-1}{20038567545} \) \\ 
50 & \( \frac{1}{68829888534} \) & 51 & \( \frac{1}{51183267058} \) & 52 & \( \frac{-1}{233258372207} \) & 53 & \( \frac{-1}{123513075974} \) & 54 & \( \frac{-1}{5375176710463} \) \\ 
55 & \( \frac{1}{426885927463} \) & 56 & \( \frac{1}{1267188251902} \) & 57 & \(c_{57}(\wtil_1)\) & 58 & \(c_{58}(\wtil_1)\) & &  \\ 
\end{tabular}
\end{table}
        
\clearpage
\bibliography{Literatur}

\begin{thebibliography}{10}

\bibitem{Abramowitz_1964}
Milton Abramowitz and Irene~A. Stegun, editors.
\newblock {\em Handbook of Mathematical Functions}.
\newblock Number~55 in Applied Mathematics Series. National Bureau of
  Standards, 1972.
\newblock 10th printing.

\bibitem{BieBiz2011}
Pawe\l Biernat and Piotr Bizo\'{n}.
\newblock Shrinkers, expanders, and the unique continuation beyond generic
  blowup in the heat flow for harmonic maps between spheres.
\newblock {\em Nonlinearity}, 24(8):2211--2228, 2011.

\bibitem{BieDon18}
Pawe{\l} Biernat and Roland Donninger.
\newblock Construction of a spectrally stable self-similar blowup solution to
  the supercritical corotational harmonic map heat flow.
\newblock {\em Nonlinearity}, 31(8):3543--3566, 2018.

\bibitem{BieDonSch17}
Pawe{\l} Biernat, Roland Donninger, and Birgit Sch\"{o}rkhuber.
\newblock Stable self-similar blowup in the supercritical heat flow of harmonic
  maps.
\newblock {\em Calc. Var. Partial Differential Equations}, 56(6):Art. 171, 31,
  2017.

\bibitem{BieSek19}
Pawe{\l} Biernat and Yukihiro Seki.
\newblock Type {II} blow-up mechanism for supercritical harmonic map heat flow.
\newblock {\em Int. Math. Res. Not. IMRN}, (2):407--456, 2019.

\bibitem{BieSek20}
Pawe{\l} Biernat and Yukihiro Seki.
\newblock Transition of blow-up mechanisms in {$k$}-equivariant harmonic map
  heat flow.
\newblock {\em Nonlinearity}, 33(6):2756--2796, 2020.

\bibitem{BizonWasserman2015}
Piotr Bizo\'{n} and Arthur Wasserman.
\newblock Nonexistence of shrinkers for the harmonic map flow in higher
  dimensions.
\newblock {\em Int. Math. Res. Not. IMRN}, (17):7757--7762, 2015.

\bibitem{ChaDinYe92}
Kung-Ching Chang, Wei~Yue Ding, and Rugang Ye.
\newblock Finite-time blow-up of the heat flow of harmonic maps from surfaces.
\newblock {\em J. Differential Geom.}, 36(2):507--515, 1992.

\bibitem{CorGhi89}
Jean-Michel Coron and Jean-Michel Ghidaglia.
\newblock Explosion en temps fini pour le flot des applications harmoniques.
\newblock {\em C. R. Acad. Sci. Paris S\'{e}r. I Math.}, 308(12):339--344,
  1989.

\bibitem{Andrade_2003}
Eliana~X.L. {de Andrade}, John~H. McCabe, and A.~{Sri Ranga}.
\newblock The q-d algorithm for transforming series expansions into a
  corresponding continued fraction: an extension to cope with zero
  coefficients.
\newblock {\em Journal of Computational and Applied Mathematics},
  156(2):487--497, 2003.

\bibitem{DonSch26}
Roland Donninger and Birgit Sch\"{o}rkhuber.
\newblock Self-{S}imilar {B}lowup for the {C}ubic {S}chr\"{o}dinger {E}quation.
\newblock {\em Comm. Pure Appl. Math.}, 79(8):1831--1918, 2026.

\bibitem{DonSchWit2026}
Roland Donninger, Birgit Sch\"orkhuber, and Alexander Wittenstein.
\newblock Stable blowup for supercritical wave maps into perturbed spheres.
\newblock {\em J. Funct. Anal.}, 291(9):Paper No. 111601, 2026.

\bibitem{EngelNagel}
Klaus-Jochen Engel and Rainer Nagel.
\newblock {\em One-parameter semigroups for linear evolution equations}, volume
  194 of {\em Graduate Texts in Mathematics}.
\newblock Springer-Verlag, New York, 2000.
\newblock With contributions by S. Brendle, M. Campiti, T. Hahn, G. Metafune,
  G. Nickel, D. Pallara, C. Perazzoli, A. Rhandi, S. Romanelli and R.
  Schnaubelt.

\bibitem{Fan1999}
Huijun Fan.
\newblock Existence of the self-similar solutions in the heat flow of harmonic
  maps.
\newblock {\em Sci. China Ser. A}, 42(2):113--132, 1999.

\bibitem{Gas02}
Andreas Gastel.
\newblock Singularities of first kind in the harmonic map and {Y}ang-{M}ills
  heat flows.
\newblock {\em Math. Z.}, 242(1):47--62, 2002.

\bibitem{GerGhoMiu2017}
Pierre Germain, Tej-Eddine Ghoul, and Hideyuki Miura.
\newblock On uniqueness for the harmonic map heat flow in supercritical
  dimensions.
\newblock {\em Comm. Pure Appl. Math.}, 70(12):2247--2299, 2017.

\bibitem{GerRup2011}
Pierre Germain and Melanie Rupflin.
\newblock Selfsimilar expanders of the harmonic map flow.
\newblock {\em Ann. Inst. H. Poincar\'e{} C Anal. Non Lin\'eaire},
  28(5):743--773, 2011.

\bibitem{GesSimTes1996}
F.~Gesztesy, B.~Simon, and G.~Teschl.
\newblock Zeros of the {W}ronskian and renormalized oscillation theory.
\newblock {\em Amer. J. Math.}, 118(3):571--594, 1996.

\bibitem{GhoIbrNgu19}
Tej-eddine Ghoul, Slim Ibrahim, and Van~Tien Nguyen.
\newblock On the stability of type ii blowup for the 1-corotational
  energy-supercritical harmonic heat flow.
\newblock {\em Anal. PDE}, 12(1):113--187, 2019.

\bibitem{Glo22}
Irfan Glogi\'{c}.
\newblock Stable blowup for the supercritical hyperbolic {Y}ang-{M}ills
  equations.
\newblock {\em Adv. Math.}, 408(part B):Paper No. 108633, 52, 2022.

\bibitem{Glo2025}
Irfan Glogi\'{c}.
\newblock Globally stable blowup profile for supercritical wave maps in all
  dimensions.
\newblock {\em Calc. Var. Partial Differential Equations}, 64(2):Paper No. 46,
  34, 2025.

\bibitem{Glo25}
Irfan Glogi\'c.
\newblock Globally stable blowup profile for supercritical wave maps in all
  dimensions.
\newblock {\em Calc. Var. Partial Differential Equations}, 64(2):Paper No. 46,
  34, 2025.

\bibitem{GloKisSch24}
Irfan Glogi\'{c}, Sarah Kistner, and Birgit Sch\"{o}rkhuber.
\newblock Existence and stability of shrinkers for the harmonic map heat flow
  in higher dimensions.
\newblock {\em Calc. Var. Partial Differential Equations}, 63(4):Paper No. 96,
  33, 2024.

\bibitem{GloKisSch26}
Irfan Glogić, Sarah Kistner, and Birgit Schörkhuber.
\newblock Stable blowup profile for a semilinear heat equation with spatially
  inhomogeneous nonlinearity, 2026.

\bibitem{GraOh14}
Loukas Grafakos and Seungly Oh.
\newblock The {K}ato-{P}once inequality.
\newblock {\em Comm. Partial Differential Equations}, 39(6):1128--1157, 2014.

\bibitem{HanOls2010}
Harald Hanche-Olsen and Helge Holden.
\newblock The kolmogorov–riesz compactness theorem.
\newblock {\em Expositiones Mathematicae}, 28(4):385–394, 2010.

\bibitem{LinWan08}
Fanghua Lin and Changyou Wang.
\newblock {\em The analysis of harmonic maps and their heat flows}.
\newblock World Scientific Publishing Co. Pte. Ltd., Hackensack, NJ, 2008.

\bibitem{McCabe_1974}
John~H. McCabe.
\newblock A continued fraction expansion, with a truncation error estimate, for
  dawson's integral.
\newblock {\em Mathematics of Computation}, 28(127):811--816, 1974.

\bibitem{Picone_1910}
Mauro Picone.
\newblock Sui valori eccezionali di un parametro da cui dipende un'equazione
  differenziale lineare ordinaria del second'ordine.
\newblock {\em Ann. Scuola Norm. Sup. Pisa Cl. Sci.}, 11:144, 1910.

\bibitem{ShaTah94}
Jalal Shatah and A.~Shadi Tahvildar-Zadeh.
\newblock On the {C}auchy problem for equivariant wave maps.
\newblock {\em Comm. Pure Appl. Math.}, 47(5):719--754, 1994.

\bibitem{Str1988}
Michael Struwe.
\newblock On the evolution of harmonic maps in higher dimensions.
\newblock {\em J. Differential Geom.}, 28(3):485--502, 1988.

\bibitem{Weidmann_1987}
Joachim Weidmann.
\newblock {\em Spectral Theory of Ordinary Differential Operators}.
\newblock Lecture Notes in Mathematics. Springer-Verlag, Berlin, Heidelberg,
  1987.

\end{thebibliography}
\bibliographystyle{plain}

\end{document}